\newcolumntype{C}[1]{>{\centering\let\newline\\\arraybackslash\hspace{0pt}}m{#1}}
\newtheorem{thm}{Theorem}[section]
\newtheorem{cor}[thm]{Corollary}
\newtheorem{lemma}[thm]{Lemma}
\newtheorem{prop}[thm]{Proposition}
\newtheorem{conj}[thm]{Conjecture}
\newtheorem*{thm:pennergenusone}{\Cref{thm:pennergenusone}}
\newtheorem*{thm:totallyperiodicgenusone}{\Cref{thm:totallyperiodicgenusone}}
\newtheorem*{thm:horsuralmostequiv}{\Cref{thm:horsuralmostequiv}}
\theoremstyle{definition}
\newtheorem{defn}[thm]{Definition}
\newtheorem{rmk}[thm]{Remark}
\newtheorem{constr}[thm]{Construction}
\numberwithin{equation}{section}
\renewcommand{\epsilon}{\varepsilon}
\newcommand{\cut}{\!\bbslash\!}
\newcommand{\sing}{\mathrm{sing}}
\newcommand{\slope}{\mathrm{slope}}
\newcommand{\brloc}{\mathrm{brloc}}
\begin{document}

\title{Examples of Anosov flows with genus one Birkhoff sections}

\author{Chi Cheuk Tsang}
\address{Département de mathématiques \\
Université du Québec à Montréal \\
201 President Kennedy Avenue \\
Montréal, QC, Canada H2X 3Y7}
\email{tsang.chi\_cheuk@uqam.ca}

\maketitle

\begin{abstract}
We show that a transitive Anosov flow with orientable stable and unstable foliations that either (i) admits a Birkhoff section whose first return map is a Penner type pseudo-Anosov map, or (ii) is totally periodic admits a genus one Birkhoff section.
This provides evidence for a conjecture of Fried and Ghys.
The proof utilizes a result of the author on the horizontal Goodman surgery operation. To apply this result for showing (i), we establish correspondence between horizontal Goodman surgery on pseudo-Anosov flows and horizontal surgery on veering triangulations in the layered setting.
\end{abstract}


\section{Introduction} \label{sec:intro}

A \textbf{Birkhoff section} to a flow $\phi^t$ on a closed $3$-manifold is a cooriented surface $S$ that is positively transverse to $\phi^t$ in its interior and tangent to $\phi^t$ along its boundary, such that every orbit of $\phi^t$ intersects $S$ in finite forward and backward time. Given a Birkhoff section $S$, one can reduce questions about the $3$-dimensional dynamics of the flow $\phi^t$ into questions about the $2$-dimensional dynamics of the first return map on $S$.

When $\phi^t$ is a transitive Anosov flow, it is a classical result of Fried \cite{Fri83} that Birkhoff sections always exist. In this case, the first return map is a pseudo-Anosov map, for which there exists a wide selection of tools for studying its dynamics, see for example \cite{FLP79}, \cite{McM00}, \cite{Yoc10}, \cite{Ago11}.
In practice, however, it is often preferable to work with Birkhoff sections that are as simple as possible. Among other reasons, this ensures that one can apply these tools with a manageable number of computational parameters. 

A convenient criterion for simplicity in this context is to ask for the Birkhoff section to have genus one. This is because if in addition the stable and unstable foliations of $\phi^t$ are orientable, then the first return map will be an Anosov map on a punctured torus. 
The action of such a map can be described by a matrix in $\mathrm{SL}_2 \mathbb{Z}$. In particular one can understand its dynamics by very simple algebraic computations.

A conjecture of Fried and Ghys states that this ideal scenario is always realized.

\begin{conj}[Fried, Ghys] \label{conj:genusonesections}
Let $\phi^t$ be a transitive Anosov flow with orientable stable and unstable foliations. Then $\phi^t$ admits a genus one Birkhoff section.
\end{conj}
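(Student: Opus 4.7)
The plan is to approach \Cref{conj:genusonesections} in stages, first building surgery tools that relate different Anosov flows and then handling special cases in which the structure of the flow can be exploited directly. By Fried's theorem, any such $\phi^t$ admits \emph{some} Birkhoff section $S$ with pseudo-Anosov first return map $f$, so the task is to modify $S$ into a Birkhoff section of genus one without leaving the class of Birkhoff sections and without destroying transitivity.

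First I would set up the combinatorial machinery needed to manipulate Birkhoff sections efficiently. The natural object is the veering triangulation of the complement of the singular orbits of $\phi^t$, which becomes layered once $S$ is chosen. The key preliminary task is to establish a precise dictionary between horizontal surgery on this triangulation and horizontal Goodman surgery on the flow, so that combinatorial modifications of the triangulation correspond to genuine modifications of $\phi^t$ staying inside the class of transitive Anosov flows with orientable invariant foliations. With such a dictionary in hand, the problem of reducing the genus of a Birkhoff section becomes a search for a suitable sequence of face moves and horizontal surgeries on the veering triangulation.

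Next I would attack the two special cases singled out in the abstract as test grounds. When $f$ is Penner type, the train track coming from Penner's construction is explicit, built from a filling collection of curves on the base surface; I would try to exhibit a genus one Birkhoff section by tracking how a carefully chosen multicurve lifts through the surgery dictionary, exploiting the highly constrained combinatorics of Penner's filling collections. When $\phi^t$ is totally periodic, I would use the decomposition of the flow along invariant tori into Seifert fibered pieces, build genus one sections on each piece using the fibration structure, and then glue them using horizontal Goodman surgery along the separating tori.

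The hard part, and the reason \Cref{conj:genusonesections} remains a conjecture rather than a theorem, is the general case. There is no reason for an arbitrary pseudo-Anosov first return map to be Penner type, and no a priori mechanism to reduce an arbitrary transitive Anosov flow to a totally periodic one by surgery; Goodman surgery tends to generate genus, not destroy it, and the dependence of section genus on the underlying combinatorics is subtle. Realistically I would therefore aim only at the two partial results above, treating them as strong evidence for the conjecture, rather than attempt a frontal assault on the general statement.
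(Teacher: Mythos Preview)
The statement is a \emph{conjecture}, and the paper does not prove it; the paper proves only the two special cases (Penner type first return map, and totally periodic flows) as evidence. You correctly recognize this in your final paragraph, so there is no ``gap'' to diagnose in the usual sense: your proposal is not a proof and does not claim to be one. What remains is to compare your sketch of the two partial results against what the paper actually does.

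For the Penner case, your broad outline---layered veering triangulations plus a surgery dictionary translating combinatorial moves to horizontal Goodman surgeries---matches the paper's framework. But your proposed mechanism, ``tracking how a carefully chosen multicurve lifts through the surgery dictionary,'' misses the paper's key structural observation: the finite-order part $\sigma$ of the Penner word acts freely on the punctured surface and the quotient $S^\circ/\langle\sigma\rangle$ is already genus one. The cover is then encoded by a class in $H^1(T;\mathbb{Z}/N)$, and the paper walks this cocycle to zero one edge at a time, each step realized by a horizontal surgery. The almost veering branched surface for the suspension of $\sigma$ serves as a fixed reference frame in which these surgery curves live.

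For the totally periodic case, your plan---build genus one sections on each Seifert piece and glue across the tori---is \emph{not} what the paper does, and it is unclear it would work: there is no obvious genus one transverse surface inside a single piece, and controlling genus under gluing is delicate. The paper instead encodes the flow by combinatorial ``flow graphs'' on the base surfaces, shows via scalloped-torus surgery that the gluing maps are irrelevant up to almost equivalence, and then introduces cutting, gluing, and insertion moves on these graphs (each realized by horizontal Goodman surgery) that reduce any flow graph to a single minimal piece, which is then identified with a suspension Anosov flow. The argument is an induction on a complexity of the graph, not a direct construction of a section.
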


It is known that \Cref{conj:genusonesections} is true for geodesic flows of negatively curved orbifolds, by work of Dehornoy and Shannon.

\begin{thm}[Dehornoy-Shannon \cite{DS19}]
Let $\phi^t$ be the geodesic flow of a negatively curved orbifold. Then $\phi^t$ admits a genus one Birkhoff section.
\end{thm}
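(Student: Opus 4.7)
My strategy is to build the Birkhoff section as a surface assembled from the \emph{Birkhoff annuli} of a carefully chosen finite collection of closed geodesics on the base orbifold $\mathcal{O}$. Recall that to an oriented closed geodesic $\gamma \subset \mathcal{O}$ one associates the annulus $A_\gamma \subset T^1\mathcal{O}$ consisting of unit tangent vectors based at points of $\gamma$ with angle in $[0,\pi]$ to $\dot\gamma$; this annulus is transverse to the geodesic flow in its interior and tangent to it along its two boundary circles (both covering $\gamma$), making it a natural building block for a Birkhoff section.

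First I would reduce to the hyperbolic case: any two negatively curved metrics on a fixed orbifold yield orbit equivalent geodesic flows, so the existence of a genus one Birkhoff section depends only on the underlying topology, and I may assume $\mathcal{O} = \Gamma \backslash \mathbb{H}^2$. I would then choose a finite collection of closed geodesics $\gamma_1, \ldots, \gamma_k$ on $\mathcal{O}$ whose union $\Gamma_0$ is \emph{filling}, meaning that each component of $\mathcal{O} \setminus \Gamma_0$ is a polygonal disk containing at most one orbifold cone point. Forming $A = \bigcup_i A_{\gamma_i}$, smoothing the self-intersections along fiber circles over intersection points of $\Gamma_0$ using the canonical flow-compatible resolution, and dealing appropriately with the cone-point fibers yields an embedded surface $S$. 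The filling hypothesis ensures every geodesic meets $\Gamma_0$ in bounded time, so $S$ is indeed a Birkhoff section.

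The main and most delicate step is to arrange that $\chi(S) = 0$. A direct count expresses $\chi(S)$ as a linear combination of the orbifold Euler characteristic $\chi^{\mathrm{orb}}(\mathcal{O})$, the total self-intersection number of $\Gamma_0$, and a correction coming from the orders of the cone points. The plan is to start from a minimal filling configuration — for instance a single filling geodesic whose self-intersection number is dictated by $|\chi^{\mathrm{orb}}(\mathcal{O})|$ — and then apply local surgeries on $S$ corresponding either to Dehn twists along the $\gamma_i$ or to passing to small covers of $T^1\mathcal{O}$, each of which alters the genus in a controlled way. The hardest aspect is to verify that the target value $\chi(S) = 0$ is actually achievable across the entire range of orbifold signatures, particularly those with many low-order cone points for which the available integer combinations are the most constrained; I would expect a case-by-case analysis based on the signature of $\mathcal{O}$, together with an explicit identification of short filling geodesic systems adapted to each signature, to be unavoidable here.
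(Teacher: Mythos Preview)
The paper does not prove this theorem. It is quoted in the introduction as a known result of Dehornoy and Shannon, with a citation to \cite{DS19}, and no argument is given; the paper's own contributions concern Penner-type and totally periodic Anosov flows. So there is nothing in the paper to compare your proposal against.

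As for the proposal itself: the overall architecture---reduce to the hyperbolic metric by structural stability, take Birkhoff annuli over a filling system of closed geodesics, resolve the intersections, and compute the Euler characteristic---is indeed the framework in which Dehornoy and collaborators work. But the step you flag as ``the hardest aspect'' is not just hard, it is where all the content lies, and your plan for it does not hold together. Passing to finite covers of $T^1\mathcal{O}$ changes the $3$-manifold and the flow, so it cannot be used to produce a Birkhoff section for the original geodesic flow. ``Dehn twists along the $\gamma_i$'' is not a well-defined operation on the Birkhoff section in this context. What actually controls the genus is the precise choice of the filling geodesic system together with a choice of coorientations on the annuli, and getting $\chi(S)=0$ requires finding, for each orbifold signature, a specific configuration whose combinatorics balance exactly; this is the substance of \cite{DS19} and is not something one can expect to fall out of generic surgeries on an arbitrary starting configuration. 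Your sketch correctly identifies the shape of the problem but does not supply the mechanism that solves it.
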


The goal of this paper is to provide further evidence towards \Cref{conj:genusonesections} by demonstrating that two other classes of Anosov flows always admit genus one Birkhoff sections.

\begin{thm} \label{thm:pennergenusone}
Suppose $\phi^t$ is an Anosov flow with orientable stable and unstable foliations. If $\phi^t$ admits a Birkhoff section whose first return map is a Penner type pseudo-Anosov map, then $\phi^t$ admits a genus one Birkhoff section.
\end{thm}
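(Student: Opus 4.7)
The plan is to apply the author's horizontal Goodman surgery theorem to transfer a genus one Birkhoff section from a simpler auxiliary flow to $\phi^t$. The auxiliary flow will be a linear Anosov suspension on a torus bundle, which obviously admits a genus one Birkhoff section given by the fiber torus. The flow $\phi^t$ and the auxiliary flow will be connected by a sequence of horizontal Goodman surgeries, organised combinatorially at the level of veering triangulations using the correspondence between the two kinds of surgery promised in the abstract.

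To set this up, I would first associate to the Penner type first return map $f : S \to S$ the layered veering triangulation $V_f$ obtained from Agol--Gu\'eritaud's construction, on the mapping torus of $f$ with singular orbits removed. Since $S$ is a Birkhoff section, this coincides with the veering triangulation of the complement in the ambient $3$-manifold of the orbits along $\partial S$ together with the singular orbits of $\phi^t$. Because $f$ is Penner type, the combinatorial structure of $V_f$ can be read off explicitly from the intersection pattern of the two filling multicurves $a, b$ defining $f$; in particular, the tetrahedra of $V_f$ are indexed by the points of $a \cap b$, with the combinatorics of identifications dictated by which Penner twists $f$ is a product of.

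Next, I would describe an explicit sequence of horizontal surgeries on $V_f$ that removes tetrahedra one at a time, each move corresponding to undoing one instance of a Penner twist, and terminating at the simplest Penner triangulation $V_0$: that arising from a pair of simple closed curves filling the once-punctured torus. The associated pseudo-Anosov map is a hyperbolic element of $\mathrm{SL}_2 \mathbb{Z}$, and its suspension, after appropriate Dehn filling, is an Anosov flow on a torus bundle with the fiber torus as a genus one Birkhoff section. By the flow-versus-triangulation surgery correspondence, each combinatorial move lifts to a horizontal Goodman surgery between successive intermediate flows. Applying the author's theorem iteratively in reverse then lifts the genus one Birkhoff section of $V_0$ through the sequence to a genus one Birkhoff section of $\phi^t$.

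The principal obstacle is this middle step: translating Penner's combinatorics into a concrete sequence of horizontal surgeries on the veering triangulation, and verifying that the intermediate triangulations continue to correspond to transitive pseudo-Anosov flows with orientable stable and unstable foliations, so that the Goodman surgery theorem applies at each stage. A secondary issue is checking that each inverse Goodman surgery genuinely produces a Birkhoff section of the surgered flow rather than only a partial section, and that the genus remains one throughout --- the reverse surgery can in principle introduce extra boundary components, so one must control that these sit on periodic orbits and do not alter the Euler characteristic in the wrong direction.
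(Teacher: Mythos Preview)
Your proposal has a genuine gap at the ``middle step'' you yourself flag as the principal obstacle, and the gap is more fundamental than you seem to anticipate.

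Undoing Penner twists one at a time via horizontal surgery along curves on the fiber changes the monodromy but \emph{not the fiber surface}. If the Birkhoff section has high genus to begin with, no sequence of such moves will reach a once-punctured torus. Moreover, you cannot undo all the twists: once you remove the last twist along some $\sigma$-orbit of $\alpha$ or $\beta$, the monodromy ceases to be pseudo-Anosov, so there is no veering triangulation to carry forward and the surgery correspondence breaks. The paper notes this explicitly: one cannot reduce to the case $k=0$ because the suspension of $\sigma$ alone is not pseudo-Anosov.

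The paper's actual argument supplies the missing idea. First one reduces the exponents so that the monodromy is $\sigma\tau_\alpha\tau_\beta^{-1}$; this part is close to what you sketch. The key step is then the observation (using the Anosov-type and orientability hypotheses) that $S^\circ/\langle\sigma\rangle$ is a genus one surface $T$, so the fiber $S^\circ$ is a finite cyclic cover of $T$ classified by some $[z]\in H^1(T;\mathbb{Z}/N)$. The paper then changes the cocycle $z$ one edge at a time, using a ``concurrent'' horizontal surgery in which curves for two adjacent cocycles are surgered simultaneously on the almost veering branched surface $B_\sigma$ (which serves as a common reference frame even though it is not itself veering). When $z$ reaches $0$ the cover trivialises, the fiber becomes a disjoint union of punctured tori, and almost equivalence to a suspension Anosov flow follows. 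Your proposal contains no analogue of this cover-changing mechanism, and without it there is no way to reduce the genus.

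A secondary point: the paper does not carry a single Birkhoff section through the chain of surgeries and track its genus. It establishes almost equivalence to a suspension Anosov flow and then invokes the equivalence between admitting a genus one Birkhoff section and being almost equivalent to such a suspension. Your worry about extra boundary components and Euler characteristic is therefore aimed at a difficulty that does not arise in the actual argument.
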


\begin{thm} \label{thm:totallyperiodicgenusone}
Every transitive totally periodic Anosov flow with orientable stable and unstable foliations admits a genus one Birkhoff section.
\end{thm}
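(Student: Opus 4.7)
The plan is to reduce \Cref{thm:totallyperiodicgenusone} to \Cref{thm:pennergenusone} by producing, for any transitive totally periodic Anosov flow $\phi^t$, an explicit Birkhoff section whose first return map is of Penner type. The starting point is the structural decomposition coming from total periodicity: the ambient manifold $M$ is cut by a finite family of embedded tori $T_1, \dots, T_k$, each a union of Birkhoff annuli, into pieces $P_1, \dots, P_n$ on which every orbit is periodic. Each such piece is Seifert fibered by the flow lines and admits cross-sectional disks transverse to $\phi^t$ meeting $\partial P_i$ in arcs parallel to the boundaries of the Birkhoff annuli.

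I would first glue together appropriate numbers of parallel cross-sectional disks in each piece to form a global surface $S \subset M$. The combinatorics of the gluing are dictated by how the Birkhoff annuli of adjacent pieces sit along each torus $T_j$, and by choosing the multiplicity of cross-sections in each piece carefully one can ensure the boundary matchings are coherent. After the usual modifications near the Birkhoff annuli to make $S$ tangent to $\phi^t$ along the periodic boundary orbits, $S$ becomes a genuine Birkhoff section of $\phi^t$.

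The core step is to analyze the first return map $f : S \to S$. Inside each piece $P_i$, $f$ acts by a rotation along the Seifert fibration and is isotopic to the identity. The nontrivial behaviour comes from orbits crossing a torus $T_j$: each such crossing contributes a Dehn twist on $S$ about a curve parallel to the corresponding Birkhoff annulus. These twist curves split into two families, coming respectively from the stable and unstable Birkhoff annuli, with the sign of the twist determined by the flow direction across the torus. I would argue that these two families form a filling pair of multicurves on $S$ and that the twists within each family all carry the same sign, exhibiting $f$ as a positive Penner word. Then \Cref{thm:pennergenusone} concludes.

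The main obstacle is verifying the filling property of the two multicurves. This amounts to showing that the union of the stable and unstable Birkhoff annuli, intersected with $S$, cuts $S$ into disks. I expect this to follow from the transitivity of $\phi^t$ and the density of its weak stable and unstable foliations, combined with a careful combinatorial analysis of how Birkhoff annuli from different pieces glue along each $T_j$; the multiplicities chosen in the construction of $S$ may need to be increased to guarantee filling. A secondary concern is controlling the signs of the Dehn twists, which requires a consistent coorientation of $S$ compatible with both the flow and the Birkhoff annuli on every $T_j$; this should be manageable given the hypothesis that the stable and unstable foliations of $\phi^t$ are orientable.
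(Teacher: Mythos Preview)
Your proposal rests on a misreading of ``totally periodic.'' A totally periodic Anosov flow is one where, in each Seifert fibered piece $P_i$ of the JSJ decomposition, a regular Seifert fibre is merely \emph{homotopic} to some closed orbit; it is \emph{not} the case that every orbit in $P_i$ is periodic, nor that the orbits of $\phi^t$ are the Seifert fibres. In fact the restriction of an Anosov flow to a Seifert piece can never be a circle fibration, since Anosov flows have no cross-sections locally foliated by closed orbits. What \emph{is} true (Barbot--Fenley) is that each $P_i$ deformation retracts along orbits onto a $2$-complex $\pi_i^{-1}(X_i)$ built from finitely many vertical periodic orbits and Birkhoff annuli between them; generic orbits enter $P_i$ through one boundary torus and exit through another after a finite time. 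Consequently your ``cross-sectional disks transverse to $\phi^t$'' in each piece do not exist in the form you describe, and the first-return analysis (``inside each piece $P_i$, $f$ acts by a rotation along the Seifert fibration'') does not get off the ground.

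The paper proceeds by an entirely different route and does not invoke \Cref{thm:pennergenusone} at all. It encodes a totally periodic Anosov flow by its \emph{total flow graph} $\Phi$ (a combinatorial object recording the base surfaces, spine graphs, and how the JSJ tori are glued), and then shows that a repertoire of moves on $\Phi$ --- cutting along certain cycles, gluing edges, inserting small pieces --- can each be realised by horizontal Goodman surgeries on the flow, hence preserve the almost equivalence class by \Cref{thm:horsuralmostequiv}. An inductive argument using these moves reduces any strongly connected $\Phi$ to a single minimal piece, which is then identified directly with a flow obtained by horizontal surgery on a suspension Anosov flow. Almost equivalence to a suspension flow gives the genus one Birkhoff section via \Cref{prop:genusonesection=sus}.
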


Here, a \textbf{Penner type pseudo-Anosov map} is a pseudo-Anosov map of the form $\sigma \tau^{n_1}_\alpha \tau^{-m_1}_\beta \dots \tau^{n_k}_\alpha \tau^{-m_k}_\beta$, where $\alpha$ and $\beta$ are a filling pair of multicurves, $\sigma$ is a homeomorphism preserving $\alpha$ and $\beta$, each $n_s$ and $m_s$ is a tuple of non-negative integers, and $\tau^{n_s}_\alpha$ and $\tau^{-m_s}_\beta$ denote suitable powers of Dehn twists along components of $\alpha$ and $\beta$, respectively. We refer to \Cref{sec:pennerpa} for more details. These maps were introduced by Penner in \cite{Pen88}, and are well-studied in the literature, see for example \cite{Lei04} and \cite{SS15}.

Meanwhile, a \textbf{totally periodic Anosov flow} is an Anosov flow on a graph manifold such that the regular fiber of each Seifert fibered piece is homotopic to a closed orbit of the flow. These were introduced by Barbot and Fenley in \cite{BF15}, and occupy one end of the spectrum in the classification of (pseudo-)Anosov flows on graph manifolds. We refer to \cite{BF13}, \cite{BF15}, and \cite{BF21} for more details on this classification.

The proofs of \Cref{thm:pennergenusone} and \Cref{thm:totallyperiodicgenusone} utilize results from \cite{Tsa24} concerning the horizontal Goodman surgery operation on Anosov flows. For the rest of this introduction, we will recall these results and provide outlines of proofs for \Cref{thm:pennergenusone} and \Cref{thm:totallyperiodicgenusone}. Finally, we will say a few words about the history and our statement of \Cref{conj:genusonesections}, then we will provide an outline of this paper.

\subsection{Almost equivalence and horizontal Goodman surgery}

We say that two flows are \textbf{almost equivalent} if they are orbit equivalent after drilling out finitely many closed orbits. It is straightforward to check that almost equivalence is an equivalence relation.

If an Anosov flow $\phi^t$ is almost equivalent to the suspension flow of an Anosov map on the torus, then we can transfer a fiber surface of the latter into a genus one Birkhoff section of $\phi^t$. Our method for showing \Cref{thm:pennergenusone} and \Cref{thm:totallyperiodicgenusone} is thus to demonstrate a chain of almost equivalences relating a given Anosov flow to such a suspension flow.

The tool we use to generate these almost equivalences is the \textbf{horizontal Goodman surgery operation}. This operation takes in an Anosov flow $\phi^t$, a positive/negative horizontal surgery curve $c$ (\Cref{defn:horsurcurve}), and a positive/negative integer $n$, respectively, and outputs an Anosov flow $\phi^t_{\frac{1}{n}}(c)$ obtained by thickening $c$ into a transverse annulus $A$, cutting along $A$ then regluing by a Dehn twist map with coefficient $n$.

This operation is a generalization of work of Goodman in \cite{Goo83}, and has been folklore knowledge among experts. A careful exposition of this operation has recently been given in \cite{Tsa24}, where the following result concerning the relation between horizontal Goodman surgery and almost equivalence is also shown.

\begin{thm}[{\cite[Theorem 1.3]{Tsa24}}] \label{thm:horsuralmostequiv}
Let $\phi^t$ be a transitive Anosov flow on a closed oriented $3$-manifold $M$. Let $c$ be a positive/negative horizontal surgery curve for $\phi^t$. Then for every positive/negative integer $n$, respectively, the flow $\phi^t_{\frac{1}{n}}(c)$ is almost equivalent to $\phi^t$.
\end{thm}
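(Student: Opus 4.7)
The plan is to build an explicit orbit equivalence between $\phi^t$ and $\phi^t_{\frac{1}{n}}(c)$ on the complement of the closed orbits forming the boundary of the transverse annulus used in the surgery, which by definition of almost equivalence will establish the result. Let $A$ be a positively transverse annulus thickening $c$, with $\partial A \subset \gamma_1 \cup \gamma_2$ for closed orbits $\gamma_1, \gamma_2$ of $\phi^t$; these are also closed orbits of $\phi^t_{\frac{1}{n}}(c)$. Set $M^\circ = M \smallsetminus (\gamma_1 \cup \gamma_2)$.

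First, I would cut $M^\circ$ along the interior of $A$ to obtain a manifold $N$ carrying the restricted flow, where $A$ becomes two boundary annuli $A_+, A_-$, each transverse to the flow. By the definition of horizontal Goodman surgery, $\phi^t|_{M^\circ}$ is recovered by gluing $A_+$ to $A_-$ via the identity, while $\phi^t_{\frac{1}{n}}(c)|_{M^\circ}$ is recovered by gluing via the $n$-th power $T_c^n$ of the Dehn twist along the core of $A$. The task then reduces to producing a self-homeomorphism of $N$ that conjugates these two gluings while sending flow orbits to flow orbits.

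The construction proceeds by picking a one-sided flow collar of $A_+$ in $N$ and interpolating: one applies $T_c^n$ at $A_+$ and fades it along the flow direction to the identity at the far end of the collar, leaving everything outside the collar fixed. Since $T_c^n$ is isotopic to the identity relative to $\partial A$, this interpolation is well-defined and has compact support away from the cusps created by drilling $\gamma_1, \gamma_2$. After descending through the cut-and-paste, the resulting homeomorphism supplies the desired orbit equivalence between the two restricted flows, possibly after a mild time reparametrization along orbits entering $A_+$.

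The main obstacle is ensuring that the interpolated map is honestly flow-line preserving, especially close to $\partial A_\pm$, where the collar becomes thin as orbits approach the drilled ends. This is precisely why the drilling is necessary: without removing $\gamma_1, \gamma_2$, one cannot realize $T_c^n$ by an ambient isotopy compatible with the flow near those orbits, so the surgery genuinely alters the flow up to orbit equivalence on $M$ itself, while on $M^\circ$ the extra room provided by the cusps accommodates the twist and yields the almost equivalence.
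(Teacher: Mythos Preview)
This theorem is quoted from \cite{Tsa24} and is not proved in the present paper, so there is no in-paper proof to compare against. Nonetheless, your proposal has genuine gaps.

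First, the setup does not match the construction. In horizontal Goodman surgery (see \Cref{defn:surann} and \Cref{constr:sur}), the surgery annulus $A$ is \emph{everywhere} positively transverse to the flow; its boundary circles are leaves of the foliation $\mathcal{H}$, i.e.\ positive/negative curves, not closed orbits. Your premise ``$\partial A \subset \gamma_1 \cup \gamma_2$ for closed orbits $\gamma_1,\gamma_2$'' is therefore not available, and there are no canonical orbits to drill.

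Second, and more seriously, the interpolation step does not produce an orbit-preserving map. In a flow collar $A_+ \times [0,\epsilon]$ with flow $\partial_t$, the map $(x,t)\mapsto(\psi_t(x),t)$ with $\psi_0=T_c^n$ and $\psi_\epsilon=\mathrm{id}$ sends the orbit segment $\{x\}\times[0,\epsilon]$ to the curve $t\mapsto(\psi_t(x),t)$. This is \emph{not} a reparametrized orbit segment once $\psi_t(x)$ moves: orbit segments in the product collar project to single points of $A_+$, whereas your image curve projects to a nonconstant path. No time reparametrization repairs this. Were the argument valid, it would show that cutting along any transverse annulus and regluing by any map isotopic to the identity yields an orbit-equivalent flow after drilling $\partial A$, which would make first-return dynamics on a section an orbit-equivalence invariant only up to arbitrary postcomposition --- plainly false.

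A further red flag is that your argument never uses the steadiness of $Tc$ or the sign matching between $c$ and $n$; these hypotheses are essential in \cite{Tsa24}, where the proof proceeds by relating horizontal Goodman surgery to a sequence of Goodman--Fried surgeries along genuine closed orbits (cf.\ \Cref{prop:gfsurgery}), each of which manifestly preserves the restricted flow on the orbit complement.
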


\Cref{thm:horsuralmostequiv} allows us to show \Cref{thm:pennergenusone} and \Cref{thm:totallyperiodicgenusone} by demonstrating a sequence of horizontal Goodman surgeries relating a given Anosov flow to an Anosov suspension flow. 
The strategy for finding this sequence of surgeries differs for the two theorems.

\subsection{Sketch of proof of \Cref{thm:pennergenusone}} \label{subsec:intropennergenusoneproof}

For \Cref{thm:pennergenusone}, we locate a suitable sequence of horizontal Goodman surgeries using the tool of veering triangulations. 
A \textbf{veering triangulation} is an ideal triangulation of a $3$-manifold along with certain combinatorial data (\Cref{defn:vt}). Recent work of Agol-Gueritaud, Schleimer-Segerman, and Landry-Minsky-Taylor shows that there is a correspondence between veering triangulations and pseudo-Anosov flows.

In \cite{Tsa22a}, we introduced a \textbf{horizontal surgery operation} on veering triangulations and in \cite{Tsa24} we conjectured that the horizontal Goodman surgery operation on pseudo-Anosov flows corresponds to this horizontal surgery operation on veering triangulations. In this paper, we verify this conjecture for layered veering triangulations.

\begin{thm} \label{thm:layeredvthsur}
Let $M$ be a closed oriented $3$-manifold and let $\mathcal{C}$ be a finite collection of curves in $M$. Let $\Delta$ be a layered veering triangulation on $M \backslash \mathcal{C}$. Let $\phi^t$ be the pseudo-Anosov flow on $M$ corresponding to $\Delta$. Then for every positive/negative horizontal surgery curve $c$ on the veering branched surface $B$ dual to $\Delta$, there is an isotopic positive/negative horizontal surgery curve $c'$ of the flow $\phi^t$. Moreover, for every positive/negative integer $n$, the flow $\phi^t_{\frac{1}{n}}(c')$ corresponds to the veering triangulation $\Delta_{\frac{1}{n}}(c)$ on the $3$-manifold $M_{\frac{1}{n}}(c)$. 
\end{thm}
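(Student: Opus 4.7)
The plan is to exploit the layered structure to reduce both surgery operations to a common local model on the fiber surface. In the layered setting, the veering triangulation $\Delta$ arises from the Agol construction applied to some pseudo-Anosov homeomorphism $f \colon S \to S$, and the corresponding flow $\phi^t$ is orbit equivalent to the suspension flow of $f$. Through the Agol-Gueritaud, Schleimer-Segerman, and Landry-Minsky-Taylor correspondence, the dual branched surface $B$ is carried by (or itself carries) the weak stable/unstable foliation structure of $\phi^t$, providing the bridge between the combinatorial data of $\Delta$ and the dynamical data of $\phi^t$.

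The first step is to translate the veering horizontal surgery curve $c$ on $B$ into a flow-theoretic horizontal surgery curve $c'$. Since a horizontal surgery curve on $B$ is defined combinatorially in terms of branch loci, I would use the correspondence to isotope $c$ within $M$ to a curve $c'$ sitting in the weak-stable/unstable configuration required by \Cref{defn:horsurcurve}, so that $c'$ qualifies as a horizontal surgery curve for $\phi^t$. Sign conventions should be traceable through the correspondence, so that a positive (resp.\ negative) $c$ produces a positive (resp.\ negative) $c'$.

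The second and main step is to verify that horizontal Goodman surgery on $\phi^t$ along $c'$ produces the same flow as the one corresponding to $\Delta_{\frac{1}{n}}(c)$. Working in a tubular neighborhood of $c$, I would project $c'$ to a multicurve $\gamma$ on the fiber $S$. Horizontal Goodman surgery with coefficient $n$ then modifies the monodromy $f$ by composing with $T_\gamma^n$, and the new flow is the suspension of $f \circ T_\gamma^n$. On the veering side, horizontal surgery inserts a prescribed sequence of diagonal flips into the layered structure near $c$; the claim to verify is that, via the Agol construction, this inserted flip sequence realizes precisely the Dehn twist $T_\gamma^n$. Once this local identification is established, both sides yield the layered veering triangulation dual to the suspension of $f \circ T_\gamma^n$, and the correspondence follows from the uniqueness of the Agol triangulation associated to a pseudo-Anosov.

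The main technical obstacle I anticipate is this last matching of combinatorics, especially when $c$ crosses many tetrahedra of $\Delta$ so that the inserted flip sequence is long and its global effect on the monodromy is not immediately transparent. Care is also needed to match orientation, sign, and coefficient conventions on the two sides; in particular, one must check that the ``$\frac{1}{n}$'' parameter in the veering surgery and the Dehn twist power in the Goodman surgery are normalized compatibly. A subsidiary issue is justifying that the isotopy from $c$ to $c'$ can be chosen to respect the positive/negative distinction and to stay within the regime where the local model applies.
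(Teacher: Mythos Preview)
Your approach has a genuine gap at the second step. You assume that a horizontal surgery curve $c'$ for the flow can be projected to a simple multicurve $\gamma$ on the fiber $S$, so that horizontal Goodman surgery becomes ``compose the monodromy with $T_\gamma^n$'' and the result is again a suspension flow with a layered veering triangulation. But a horizontal surgery curve on $B$ is defined purely by local conditions on the branch locus (\Cref{defn:vbshsurcurve}); its dual annulus $A$ is merely carried by the $2$-skeleton of $\Delta$ and need not be isotopic into any fiber. Correspondingly, the surgered triangulation $\Delta_{\frac{1}{n}}(c)$ is in general \emph{not} layered --- the paper explicitly uses this fact later (see the remark at the end of \Cref{sec:pennergenusoneproof} that ``$\widehat{B}_i$ is not'' layered, and that \Cref{thm:layeredvthsur} only requires layeredness \emph{before} surgery). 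So the picture ``both sides give the Agol triangulation of $f \circ T_\gamma^n$'' breaks down, and with it the uniqueness argument you invoke at the end.

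The paper takes a different route that avoids any reduction to Dehn twists on the fiber. It proves a more general statement, \Cref{thm:hsurcorr}, valid whenever $\Delta$ is in \emph{steady position} with respect to $\phi^t$ (\Cref{defn:steadypos}): the union of tangent lines of the red edges is positive and steady, and likewise for blue. In the layered case one can realize $\Delta$ via Gu\'eritaud's construction so that edges project to Euclidean straight lines in the orbit space; steadiness then follows from the nesting of edge rectangles (\Cref{prop:layeredvtsteady}). Given steady position, the curve $c'$ is built concretely by taking one boundary cycle of the (embedded) dual annulus $A$, pushing its red edges slightly into $A$, and smoothing --- steadiness of the red-edge tangent bundle is exactly what makes the resulting curve a horizontal surgery curve for $\phi^t$ (\Cref{lemma:pushpiecewisehsurcurve}). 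The surgery is then performed with a carefully chosen surgery annulus $(A',\mathcal{H},\mathcal{K})$, and one checks directly, via the cone fields $C^{cs}, C^{cu}$ from \cite{Tsa24}, that the layered solid torus inserted by the combinatorial surgery (\Cref{prop:vthsur}) can be placed in transverse position with respect to the surgered flow. The layered hypothesis is used only to establish steady position, not to interpret the surgery itself.
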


We will deduce \Cref{thm:layeredvthsur} from the more general \Cref{thm:hsurcorr}.

The utility of \Cref{thm:layeredvthsur} towards showing \Cref{thm:pennergenusone} comes from the fact the veering triangulation that corresponds to the suspension flow of a Penner type pseudo-Anosov map $\sigma \tau^{n_1}_\alpha \tau^{-m_1}_\beta \dots \tau^{n_k}_\alpha \tau^{-m_k}_\beta$ admits a simple description. This makes it possible for us to specify a sequence of horizontal surgery operations that relates this triangulation to a triangulation corresponding to an Anosov suspension flow. 

In more detail, we first perform surgery operations to reduce to the case when $k=1$. This uses the fact that composing the monodromy of a suspension flow by a Dehn twist can be effected by performing Dehn surgery along a curve lying on a fiber surface. Here it is instructive to note that we cannot reduce to the case $k=0$ with the same reasoning, essentially because the suspension flow of $\sigma$ is not pseudo-Anosov. What is possible however, is that one can use a generalization of veering triangulations, known as \textbf{almost veering branched surfaces}, to combinatorially encode the suspension flow of $\sigma$. 

We then observe that $\sigma \tau^{n_1}_\alpha \tau^{-m_1}_\beta \dots \tau^{n_k}_\alpha \tau^{-m_k}_\beta$ being the first return map of an Anosov flow with orientable stable and unstable foliations implies that the quotient of the Birkhoff section by $\sigma$ is some genus one surface $T$. The quotient is determined by some cohomology class in $H^1(T; \mathbb{Z}/N)$, represented by some cocycle on $T$. Conversely, such a cocycle determines a Penner type pseudo-Anosov map by lifting the images of $\alpha$ and $\beta$ in $T$ to the corresponding cover. For the zero cocycle, this map is an Anosov map.

We apply further surgery operations to change the data of this cocycle, one edge at a time, until we reach the zero cocycle. Here the almost veering branched surface that encodes the suspension flow of $\sigma$ is used as a reference frame to locate the surgery curves. 

Using our correspondence result \Cref{thm:layeredvthsur}, this sequence of surgery operations on veering triangulations translates to a sequence of horizontal Goodman surgeries on flows, showing \Cref{thm:pennergenusone}.

We note that we do not actually need the full force of the correspondence theorem between veering triangulations and pseudo-Anosov flows. What is needed is the statement that every veering triangulation corresponds to a unique pseudo-Anosov flow. See \Cref{thm:uniqueflow} for the precise statement. Perhaps surprisingly, this statement does not follow from known results of the correspondence theory. We thus have to devote \Cref{sec:uniqueflow} to its proof. 

\Cref{thm:uniqueflow} is communicated to us by Michael Landry and Samuel Taylor. We thank them for allowing us to reproduce their ideas in this paper.

\subsection{Sketch of proof of \Cref{thm:totallyperiodicgenusone}} \label{subsec:intrototallyperiodicgenusoneproof}

For \Cref{thm:totallyperiodicgenusone}, we use the fact, shown in \cite{BF15}, that totally periodic Anosov flows can be combinatorially encoded by their spine graphs and choice of gluing maps between the boundary tori of the Seifert fibered pieces. 

Here the \textbf{spine graph} on the base surface of a Seifert fibered piece is the projection of a union of Birkhoff annuli which encapsulates the dynamics of the flow in that Seifert fibered piece. In this paper, it will be more convenient for us to work with the \textbf{flow graph}, which is a dual to the spine graph. We refer to \Cref{sec:totallyperiodicdefn} for more precise definitions.

First of all, we observe that we can arbitrarily modify the gluing maps between the boundary tori by performing horizontal Goodman surgery along curves lying on the tori. By \Cref{thm:horsuralmostequiv}, this means that if we only care about the almost equivalence class of the flow, we can discard the data of the gluing maps and only remember the flow graphs.

The rest of the proof of \Cref{thm:totallyperiodicgenusone} involves showing that we can modify the flow graphs in various ways by performing horizontal Goodman surgery along other curves. For example, by performing surgery along a curve lying on the torus over a curve $c$ on a base surface $S$, we can cut $S$ and the flow graph on $S$ along $c$.

Aside from this cutting move, we also describe a gluing move and an insertion move in \Cref{sec:basicmoves}.
The insertion move is based on an observation appearing in unpublished work of Sergio Fenley, Jessica Purcell, and Mario Shannon. We thank them for sharing their work with us.

Using these moves, we show that we can modify the flow graphs to be of a simple form. This translates to a chain of horizontal Goodman surgeries relating the original flow to a particularly simple totally periodic Anosov flow, which we then relate to a suspension Anosov flow by yet more horizontal Goodman surgeries.

\subsection{History of \Cref{conj:genusonesections}}

Strictly speaking, our attribution of \Cref{conj:genusonesections} to Fried and Ghys is inaccurate. In \cite{Fri83}, Fried asks whether every transitive Anosov flow admits a genus one Birkhoff section. Note that Fried does not assume that the Anosov flow has orientable stable and unstable foliations, although presumably he wishes for the induced stable and unstable foliations on the Birkhoff section to be orientable, for otherwise one loses the special description of the first return map as explained above. There is also no indication from \cite{Fri83} as to whether Fried expects the answer to his question to be `yes' or `no'. 

Meanwhile, in a few talks given around the 2000s, Ghys proposed a conjecture concerning the almost equivalence of Anosov flows. The contents of these talks are unfortunately unavailable to us, but in conversations with Pierre Dehornoy, we understand what Ghys conjectured to be that any two transitive Anosov flows are \textbf{almost commensurable}, i.e. they are commensurable after drilling out finitely many closed orbits. At least a priori, almost commensurability is much weaker than almost equivalence. In particular, note that the stable and unstable foliations of any Anosov flow can be made orientable up to lifting to a finite cover.

\Cref{conj:genusonesections} is thus an amalgation of ideas proposed by Fried and Ghys, even though to our knowledge, neither of them have explicitly made such a statement.

\subsection{Outline of paper}

This paper is divided into 3 parts. Part 1 includes \Cref{sec:prelim} and \Cref{sec:flowhsur}, and contains background information about Anosov flows, Birkhoff sections, and horizontal Goodman surgery.

Part 2 of the paper is aimed at proving \Cref{thm:pennergenusone}. In \Cref{sec:pennerpa}, we define and recall some facts about Penner type pseudo-Anosov maps. In \Cref{sec:vt}, we recall some material on veering triangulations and veering branched surfaces. In \Cref{sec:uniqueflow}, we prove \Cref{thm:uniqueflow}. In \Cref{sec:hsurcorr}, we prove \Cref{thm:layeredvthsur}. In \Cref{sec:pennergenusoneproof}, we assemble the ingredients to prove \Cref{thm:pennergenusone}.

Part 3 of the paper is aimed at proving \Cref{thm:totallyperiodicgenusone}. In \Cref{sec:totallyperiodicdefn}, we recall the definition of totally periodic Anosov flows and explain how these can be combinatorially encoded by flow graphs. In \Cref{sec:basicmoves}, we introduce moves which one can perform on flow graphs, so that the corresponding totally periodic Anosov flows remain in the same almost equivalence class. In \Cref{sec:totallyperiodicgenusoneproof}, we apply these moves to show \Cref{thm:totallyperiodicgenusone}.

We have organized the contents of the paper in this way so that Parts 2 and 3 would be completely independent. 

\subsection*{Acknowledgements}

We would like to thank Michael Landry and Samuel Taylor for allowing us to reproduce \Cref{thm:uniqueflow} in this paper. We would like to thank Sergio Fenley, Jessica Purcell, and Mario Shannon for sharing their unpublished work with us. We would also like to thank Pierre Dehornoy for helpful conversations. Part of this project was completed while the author is based at CIRGET. We would like to thank the center for its support.

\vspace{0.5cm}
\begin{large}
\begin{center}
\textbf{Part 1. Background}
\end{center}
\end{large}

\section{Preliminaries} \label{sec:prelim}

In this section, we review some basic definitions and facts regarding Anosov flows.

\subsection{Anosov flows} \label{subsec:anosovflowdefn}

We start with the definition of an Anosov flow.

\begin{defn} \label{defn:Aflow}
An \textbf{Anosov flow} on a closed oriented $3$-manifold $M$ is a smooth flow $\phi^t$ for which there is a Riemannian metric $g$ and a continuous splitting of the tangent bundle into three $\phi^t$-invariant line bundles $TM=E^s \oplus T\phi \oplus E^u$ such that 
$$||d\phi^t(v)||_g < C \lambda^{-t} ||v||_g$$
for every $v \in E^s, t>0$, and 
$$||d\phi^t(v)||_g < C \lambda^t ||v||_g$$
for every $v \in E^u, t<0$, for some $C, \lambda>1$.
\end{defn}

It is a basic fact (see for example \cite{HPS77}) that the plane fields $T\phi \oplus E^s$ and $T\phi \oplus E^u$ integrate uniquely to 2-dimensional foliations $\Lambda^s$ and $\Lambda^u$. We refer to these as the \textbf{stable} and \textbf{unstable foliations} and denote them by $\Lambda^s$ and $\Lambda^u$ respectively.

We sketch a local picture of $M$ with $\Lambda^s$ and $\Lambda^u$ in \Cref{fig:anosovlocal} left. In this paper, we adopt the convention that the flow goes vertically upwards, and we color the leaves of $\Lambda^s$ and $\Lambda^u$ by green and purple respectively.

\begin{defn} \label{defn:orbitspace}
Let $\phi^t$ be an Anosov flow on a closed oriented $3$-manifold $M$. Let $\widetilde{\phi}^t$ be the lifted flow to the universal cover $\widetilde{M}$. It is shown in \cite[Proposition 4.1]{FM01} that the space of orbits $\mathcal{O}$ of $\widetilde{\phi}^t$, with the quotient topology, is homeomorphic to $\mathbb{R}^2$. The lifted stable/unstable foliations $\widetilde{\Lambda^{s/u}}$ induce 1-dimensional foliations $\mathcal{O}^{s/u}$ on $\mathcal{O}$. We refer to the space $\mathcal{O}$ with the foliations $\mathcal{O}^{s/u}$ as the \textbf{orbit space} of $\phi^t$.
\end{defn}

In this paper, we adopt the convention of considering the leaves of $\mathcal{O}^s$ as vertical lines and the leaves of $\mathcal{O}^u$ as horizontal lines. We illustrate the local picture of the orbit space of an Anosov flow in \Cref{fig:anosovlocal} right.

\begin{figure}
    \centering
    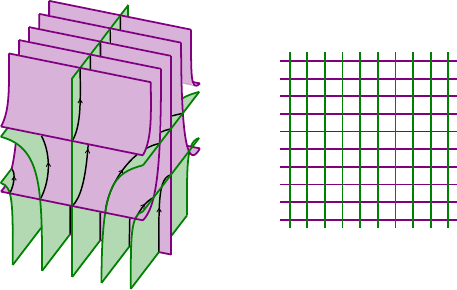
    \caption{Left: A local picture of an Anosov flow. Right: A local picture of its orbit space.}
    \label{fig:anosovlocal}
\end{figure}

In this paper, we will mainly focus on Anosov flows whose stable and unstable foliations are orientable. This is equivalent to the condition that the line bundles $E^s$ and $E^u$ are orientable. Since we assume that the $3$-manifold $M$ is oriented, this is in turn equivalent to the condition that one of $E^s$ and $E^u$ is orientable. In such a setting we will orient $E^s$ and $E^u$ in a way so that $E^s \oplus T\phi \oplus E^u$ agrees with the orientation on $M$.

Next, we recall the definitions of orbit equivalence and almost equivalence.

\begin{defn} \label{defn:orbitequiv}
Let $\phi^t_i$ be a continuous flow on a $3$-manifold $N_i$ for $i=1,2$. We say that $\phi^t_1$ and $\phi^t_2$ are \textbf{orbit equivalent}, and write $\phi^t_1 \cong \phi^t_2$, if there exists a homeomorphism $h:N_1 \to N_2$ that sends the orbits of $\phi^t_1$ to those of $\phi^t_2$ in an orientation preserving way (but not necessarily preserving their parametrizations).
\end{defn}

We will consider two Anosov flows as the same if they are orbit equivalent.

\begin{defn} \label{defn:almostequiv}
Let $\phi^t_i$ be an Anosov flow on a closed oriented $3$-manifold $M_i$ for $i=1,2$. We say that $\phi^t_1$ and $\phi^t_2$ are \textbf{almost equivalent}, and write $\phi^t_1 \sim \phi^t_2$, if there exists a finite collection of closed orbits $\mathcal{C}_i$ of $\phi^t_i$ and an orbit equivalence between $\phi^t_1$ restricted to $M_1 \backslash \mathcal{C}_1$ and $\phi^t_2$ restricted to $M_2 \backslash \mathcal{C}_2$.
\end{defn}

Note that if $\phi^t_1$ and $\phi^t_2$ are almost equivalent, then $\phi^t_1$ has orientable stable and unstable foliations if and only if $\phi^t_2$ has orientable stable and unstable foliations.

Finally, we review the construction of suspension Anosov flows.

\begin{constr} \label{constr:susflow}
Let $A \in \mathrm{SL}_2 \mathbb{Z}$ be a matrix such that $|\mathrm{tr} A| > 2$. An elementary computation shows that $A$ has two real eigenvalues $\lambda$ and $\lambda^{-1}$, where $|\lambda|>1$, with associated eigenvectors $e_{\lambda}$ and $e_{\lambda^{-1}}$ respectively. 
$A$ induces a homeomorphism on the torus $T^2=\mathbb{R}^2/\mathbb{Z}^2$, which we denote as $A$ as well.

Recall that the \textbf{mapping torus} $T_f$ of a surface homeomorphism $f:S \to S$ is constructed by taking $S \times [0,1]_t$ and gluing $S \times \{1\}$ to $S \times \{0\}$ by $f$. The \textbf{suspension flow} $\phi^t_f$ on $T_f$ is the flow induced by the vector field $\partial_t$.

In this setting, $\phi^t_A$ is an Anosov flow. Its stable and unstable line bundles lie along the directions of $e_{\lambda^{-1}}$ and $e_{\lambda}$ respectively. $\phi^t_A$ has orientable stable and unstable foliations if and only if $\lambda>1$, which occurs exactly when $\mathrm{tr} A > 2$

We say that an Anosov flow $\phi^t$ is a \textbf{suspension Anosov flow} if $\phi^t$ is orbit equivalent to $\phi^t_A$ for some $A \in \mathrm{SL}_2 \mathbb{Z}$.
\end{constr}

\subsection{Birkhoff sections} \label{subsec:birkhoffsection}

In this subsection, we recall the definition of Birkhoff sections and explain their relation with pseudo-Anosov maps. Most of the material in this subsection can be found in \cite{Fri83}.

\begin{defn} \label{defn:birkhoffsection}
Let $\phi^t$ be an Anosov flow on a closed 3-manifold $M$. A \textbf{Birkhoff section} is a cooriented compact surface with boundary $S$ such that:
\begin{itemize}
    \item $S$ is embedded along its interior $S^\circ$, where it is positively transverse to the orbits of $\phi^t$.
    \item $S$ is immersed along its boundary $\partial S$, where it lies along a union of closed orbits of $\phi^t$.
    \item Every orbit of $\phi^t$ intersects $S$ in finite forward and finite backward time, that is, for every $x \in M$, there exists $t_1, t_2 > 0$ such that $\phi^{t_1}(x) \in S$ and $\phi^{-t_2}(x) \in S$.
\end{itemize}
\end{defn}

We have the following basic existence result.

\begin{thm}[Fried \cite{Fri83}] \label{thm:friedbirkhoffsectionexist}
Every transitive Anosov flow admits a Birkhoff section.
\end{thm}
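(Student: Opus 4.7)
My plan is to proceed via Markov partitions, following Fried's original approach. By classical work of Bowen and Ratner, any transitive Anosov flow $\phi^t$ on a closed $3$-manifold admits a \textbf{Markov partition}: a finite collection $\{R_1, \dots, R_n\}$ of rectangles transverse to the flow, embedded on their interiors, whose boundaries are unions of arcs tangent to the stable foliation $\Lambda^s$ and the unstable foliation $\Lambda^u$, and whose first return map satisfies the usual Markov property (unstable sides map across unstable sides, and stable sides into stable sides, under the forward return map). The union $R = \bigcup R_i$ is already a cooriented immersed surface whose interior is positively transverse to $\phi^t$, and by compactness together with transitivity the return time to $R$ is uniformly bounded. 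So every orbit not trapped in the boundary hits $R$ forward and backward in bounded time.

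Two defects prevent $R$ from being a Birkhoff section directly: (a) the boundary arcs lie in $\Lambda^s \cup \Lambda^u$ rather than on closed orbits of $\phi^t$, and (b) the stable and unstable leaves through these arcs contain orbits that miss the interior of $R$ forever in one time direction. The next step of the proof is to fix defect (a) by a boundary-capping construction. Because there are only finitely many boundary corners of the $R_i$, the Markov property forces the iterated images of these corners under the return map to be eventually periodic; this produces a finite list of closed orbits $\gamma_1, \dots, \gamma_k$ of $\phi^t$ to which the boundary arcs asymptote. I would then attach thin \textbf{helicoidal ribbons} along these arcs: near each $\gamma_j$, one uses hyperbolicity to flow a short stable arc $\alpha$ through $\gamma_j$ forward until it is carried by $\phi^t$ onto an unstable arc $\beta$ through $\gamma_j$, sweeping out a surface transverse to the flow in its interior and having $\gamma_j$ as one boundary component. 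Glued onto $R$, these ribbons close up the boundary arcs into simple closed curves supported on $\bigcup \gamma_j$.

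Calling the resulting cooriented compact surface $S$, its interior is positively transverse to $\phi^t$ by construction, and its boundary is immersed along a union of closed orbits, so the first two conditions of \Cref{defn:birkhoffsection} hold. For condition (b), the only orbits that previously avoided the interior of $R$ are those lying in the finitely many stable/unstable leaves through the boundary corners, which are precisely the orbits now incorporated into $\partial S \cup \bigcup \gamma_j$ and into the ribbon interiors; every other orbit inherits the uniformly bounded forward and backward return time from the Markov partition. Hence every orbit of $\phi^t$ intersects $S$ in finite forward and backward time, and $S$ is a Birkhoff section.

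The step I expect to be the main obstacle is the ribbon attachment around each $\gamma_j$. The combinatorics at a single periodic orbit may involve several incoming and outgoing boundary arcs from different $R_i$, approaching $\gamma_j$ on different sides and at different rates, so one must choose the ribbons simultaneously and compatibly so that the resulting surface is immersed rather than branched and remains cooriented across the join. Controlling this requires a careful local bookkeeping in a tubular neighborhood of $\gamma_j$ using the hyperbolic structure, but because everything is finite and dictated by the Markov combinatorics, this is a manageable verification rather than a conceptual difficulty.
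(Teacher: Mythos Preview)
The paper does not supply its own proof of this theorem; it simply cites it as a classical result of Fried \cite{Fri83}. Your sketch follows Fried's original Markov-partition-plus-helicoidal-ribbons argument and is correct in outline, so there is nothing to compare against here.
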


Here recall that an Anosov flow is \textbf{transitive} if the set of its closed orbits is dense.

The utility of Birkhoff sections is that it allows one to represent $3$-dimensional flows using $2$-dimensional maps. More precisely, suppose $\phi^t$ is an Anosov flow and suppose $S$ is a Birkhoff section to $\phi^t$. The last item in \Cref{defn:birkhoffsection} allows us to define the first return map $f$ on the interior $S^\circ$ of $S$: For $x \in S^\circ$, $f(x)$ is the point where $\phi^{(0,\infty)}(x)$ first intersects $S^\circ$. We abuse notation slightly and refer to $f$ as the \textbf{first return map} of the Birkhoff section $S$. Our next task is to characterize these first return maps. 

\begin{defn} \label{defn:fppAmap}
Let $S^\circ$ be a finite-type surface. A homeomorphism $f:S^\circ \to S^\circ$ is a \textbf{fully-punctured pseudo-Anosov map} if there exists a transverse pair of measured foliations $\ell^s,\ell^u$ such that
\begin{itemize}
    \item $f$ contracts the measure of $\ell^s$ by $\lambda^{-1}$ and expands the measure of $\ell^u$ by $\lambda$, for some $\lambda >1$, and
    \item around every puncture of $S^\circ$, the pair of foliations $(\ell^s, \ell^u)$ is conjugate to the pull back of the foliation of $\mathbb{C} \backslash \{0\}$ by vertical and horizontal lines under $z \mapsto z^{\frac{n}{2}}$, for some $n \geq 1$. 
\end{itemize}

We call the foliations $\ell^s$ and $\ell^u$ the \textbf{stable} and \textbf{unstable foliations} of $f$ respectively.
See \cite{FLP79} for more details about pseudo-Anosov maps.
\end{defn}

It is a standard fact that the first return map of a Birkhoff section $S$ to an Anosov flow $\phi^t$ is a fully-punctured pseudo-Anosov map $f$. 
The stable/unstable foliation of $f$ is obtained by taking the intersection of the stable/unstable foliation $\Lambda^{s/u}$ of $\phi^t$ with $S^\circ$, respectively.

Observe that $f$ acts on the set of punctures of $S^\circ$. 
The closed orbits in $\partial S$ are in one-to-one correspondence with orbits of this action.
Similarly, $f$ acts on the set of leaves of $\ell^{s/u}$ that are incident to the punctures. 
The half-leaves of $\Lambda^{s/u}$ that are incident to the closed orbits in $\partial S$ are in one-to-one correspondence with orbits of this action, respectively.

This shows that our first return maps satisfy the following additional condition.

\begin{defn} \label{defn:Anosovtype}
We say that a fully-punctured pseudo-Anosov map $f$ is of \textbf{Anosov-type} if there are at most two orbits of leaves of $\ell^s$ incident to each puncture.

Note that equivalently, we could replace $\ell^s$ by $\ell^u$ in this definition.
\end{defn}

Conversely, given an Anosov-type fully-punctured pseudo-Anosov map $f:S^\circ \to S^\circ$, consider the suspension flow $\phi^t_f$ on the mapping torus $T_f$. Note that $T_f$ is the interior of a compact $3$-manifold $\overline{T}_f$ with torus boundary components. We can extend $\phi^t_f$ to a flow $\overline{\phi}^t_f$ on $\overline{T}_f$ whose closed orbits on each boundary torus are in one-to-one correspondence with the orbits of leaves of $\ell^s$ and $\ell^u$ incident to the corresponding punctures. We can then construct an Anosov flow $\phi^t$ on a closed $3$-manifold by collapsing each boundary torus along a slope that intersects the set of closed orbits in exactly $4$ points. The image of $S^\circ$ becomes the interior of a Birkhoff section $S$ and the first return map of $S$ is given by $f$. We provide an illustration of this construction in \Cref{fig:pamaptoflow}.

\begin{figure}
    \centering
    \resizebox{!}{4.5cm}{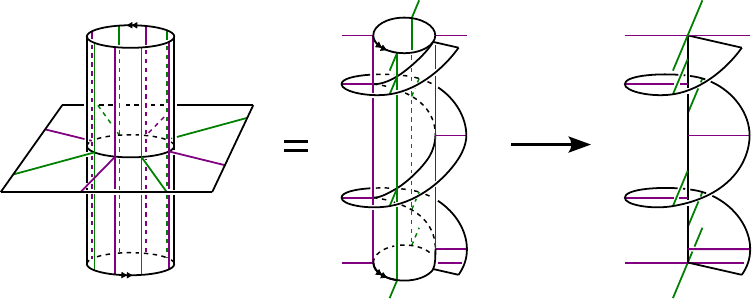}
    \caption{Given an Anosov-type fully-punctured pseudo-Anosov map $f:S^\circ \to S^\circ$, we can construct an Anosov flow by collapsing the extension $\overline{\phi}^t_f$ of the suspension flow along boundary tori. The image of $S^\circ$ becomes the interior of a Birkhoff section.}
    \label{fig:pamaptoflow}
\end{figure}

Note that the Anosov flow $\phi^t$ here is not uniquely determined, since there are many possible choices of slopes to collapse the boundary tori along. 
What is true, however, is that $\phi^t$ is determined up to almost equivalence.

We also observe that the flow $\phi^t$ has orientable stable and unstable foliations if and only if the first return map $f$ has orientable stable and unstable foliations and $f$ preserves these orientations. In this case there are exactly two orbits of leaves of $\ell^s$ incident to each puncture.

\begin{rmk} \label{rmk:pAflow}
If one does not have the Anosov-type assumption on $f$, then one might not be able to find a slope that intersects the set of closed orbits in exactly $4$ points to collapse each boundary torus along. We can generalize the construction by instead collapsing along slopes that intersect the set of closed orbits in $\geq 4$ points.  Then we would end up with what is known as a pseudo-Anosov flow $\phi^t$ on a closed $3$-manifold. 

In general, a \textbf{pseudo-Anosov flow} is essentially an Anosov flow with finitely many singular orbits, where the flow exhibits dynamics of the form shown in \Cref{fig:palocal}. Since these more general flows do not play a role in showing the main theorems of this paper, we will omit their precise definition, and instead refer the reader to \cite{Mos96}. Some of the material in this paper will be developed in the more general setting of pseudo-Anosov flows however. This is done in anticipation for future applications. The reader who is only interested in the main theorems of this paper can restrict the setting to Anosov flows in those sections.

\begin{figure}
    \centering
    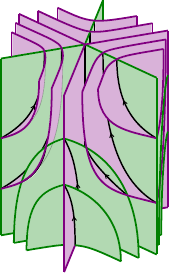
    \caption{A local picture of a pseudo-Anosov flow near a singular orbit.}
    \label{fig:palocal}
\end{figure}
\end{rmk}

Finally, we record the following proposition which illustrates the connection between Birkhoff sections and almost equivalence. 

\begin{prop} \label{prop:genusonesection=sus}
An Anosov flow with orientable stable and unstable foliations admits a genus one Birkhoff section if and only if it is almost equivalent to a suspension Anosov flow.
\end{prop}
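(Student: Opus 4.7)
The plan is to handle the two implications separately, leveraging the correspondence between Birkhoff sections and fully-punctured pseudo-Anosov first return maps developed earlier in this subsection, together with \Cref{constr:susflow}.

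For the forward direction, suppose $\phi^t$ admits a genus one Birkhoff section $S$. By the discussion following \Cref{defn:birkhoffsection}, the first return map $f : S^\circ \to S^\circ$ is a fully-punctured pseudo-Anosov map of Anosov type whose invariant foliations $\ell^s, \ell^u$ are orientable and orientation-preserved by $f$. The key computation is the Euler-Poincaré formula on the closed torus obtained by filling in the punctures of $S^\circ$: writing $k_i$ for the prong numbers at any interior singularities and $m_j$ for those at the punctures, orientability forces $k_i, m_j$ to be even, the pseudo-Anosov condition forces $k_i \geq 4$, and $m_j \geq 2$, while
\[
\sum_i \left(\tfrac{k_i}{2} - 1\right) + \sum_j \left(\tfrac{m_j}{2} - 1\right) = -\chi(T^2) = 0.
\]
All summands are nonnegative, so there are no interior singularities and every $m_j = 2$. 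Hence $\ell^s, \ell^u$ extend smoothly across all punctures and $f$ extends to an Anosov homeomorphism $\bar f$ of $T^2$, conjugate to some matrix $A \in \mathrm{SL}_2 \mathbb{Z}$; orientation preservation forces $\mathrm{tr}(A) > 2$. Since $S^\circ$ is an honest cross-section of $\phi^t$ restricted to $M \setminus \partial S$, this restricted flow is orbit equivalent to the suspension $\phi^t_f$, which in turn equals $\phi^t_A$ restricted to the complement of the finitely many closed orbits through the filled-in punctures. Composing these orbit equivalences yields $\phi^t \sim \phi^t_A$.

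For the backward direction, suppose $\phi^t \sim \phi^t_A$ for some $A \in \mathrm{SL}_2 \mathbb{Z}$ with $\mathrm{tr}(A) > 2$, and fix an orbit equivalence $h : M \setminus \mathcal{C}_1 \to T_A \setminus \mathcal{C}_A$. The fiber torus $T \subset T_A$ is a genus one cross-section of $\phi^t_A$, and $S^\circ_A := T \setminus (T \cap \mathcal{C}_A)$ is a punctured torus topological cross-section of $\phi^t_A|_{T_A \setminus \mathcal{C}_A}$. Its preimage $S^\circ := h^{-1}(S^\circ_A)$ is then a punctured torus topological cross-section of $\phi^t|_{M \setminus \mathcal{C}_1}$. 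The remaining step is the standard closing-up procedure: using the Anosov local product structure near each closed orbit $\gamma \in \mathcal{C}_1$, each end of $S^\circ$ spirals into a single orbit of $\mathcal{C}_1$ at a definite slope, so the closure $S$ of $S^\circ$ in $M$ is a compact genus one surface immersed along a union of closed orbits of $\phi^t$, which is the desired Birkhoff section.

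I expect the main obstacle to be precisely this closing-up step. One must argue, after possibly isotoping $S^\circ$ to restore smooth transversality near each end, that $S^\circ$ meets the boundary torus of a small tubular neighborhood of each $\gamma \in \mathcal{C}_1$ in a single essential curve of definite slope, so that no extra ends or badly-immersed boundary develop upon closing up in $M$. This is essentially the inverse of the drilling operation implicit in \Cref{defn:almostequiv} and should follow from the tubular product model near an Anosov closed orbit, but it is the only step of genuine analytic content and is where the \emph{Anosov} (rather than merely topological) hypothesis on $\phi^t$ is used.
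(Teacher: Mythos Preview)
Your proposal is correct and follows essentially the same approach as the paper: the forward direction uses the Euler--Poincar\'e/Poincar\'e--Hopf argument to see that every puncture is $2$-pronged and hence $f$ extends to a linear Anosov map on the torus, and the backward direction transfers the punctured fiber torus across the orbit equivalence and closes it up. The paper is in fact even more terse than you about the closing-up step, simply writing ``take the closure to get some genus one Birkhoff section,'' so your explicit acknowledgment of this as the point needing care is appropriate rather than a gap.
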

\begin{proof}
Let $\phi^t$ be an Anosov flow with orientable stable and unstable foliations. Suppose $\phi^t$ admits a genus one Birkhoff section $S$. Then the stable and unstable foliations of the first return map are orientable. By a Poincare-Hopf argument, each puncture of $S^\circ$ must meet exactly two leaves of $\ell^s$ and exactly two leaves of $\ell^u$. Thus $f$ extends to a pseudo-Anosov map $\overline{f}$ defined on the torus $\overline{S}$ obtained by filling in the punctures. It is a classical fact that the map $\overline{f}$ must be that induced by an element of $A \in \mathrm{SL}_2 \mathbb{R}$. Thus $\phi^t$ in the complement of $\partial S$ is orbit equivalent to $\phi^t_A$ restricted to the suspension of $S^\circ \subset \overline{S}$, showing that these two flows are almost equivalent.

Conversely, suppose an Anosov flow $\phi^t$ defined on a $3$-manifold $M$ is almost equivalent to a suspension Anosov flow $\phi^t_A$, say, $\phi^t$ in the complement of $\mathcal{C}$ is orbit equivalent to $\phi^t_A$ in the complement of $\mathcal{C}_A$. Then one can transfer the surface $T^2 \backslash \mathcal{C}_A$ from $T_A$ to $M$ and take the closure to get some genus one Birkhoff section with boundary components lying along a subset of $\mathcal{C}$.
\end{proof}

\subsection{Goodman-Fried surgery} \label{subsec:gfsur}

Yet another perspective to almost equivalence is via Goodman-Fried surgery. 
Here we use the term `Goodman-Fried surgery' to refer to the surgery operation introduced by Goodman in \cite{Goo83} and that introduced by Fried in \cite{Fri83}, the two of which then showed to be equivalent by Shannon in his thesis \cite{Sha20}, at least for transitive flows. We will not go into the details of the surgery operation; we refer the reader to \cite{Sha20} for this. We simply state the essential features of the operation in the following proposition.

\begin{prop} \label{prop:gfsurgery}
Let $\phi^t$ be a transitive pseudo-Anosov flow on a closed $3$-manifold $M$. Let $\gamma$ be a closed orbit of $\phi^t$. The half leaves of the stable foliation that contain $\gamma$ determines a collection of curves on the boundary of a tubular neighborhood $N(\gamma)$ of $\gamma$. We refer to this as the \textbf{degeneracy locus} and denote it by $d$. 

For every slope $s$ on $\partial N(\gamma)$ which intersects $d$ in $n \geq 2$ points, there exists a pseudo-Anosov flow $\widehat{\phi}^t$ defined on a closed $3$-manifold $\widehat{M}$ satisfying:
\begin{itemize}
    \item $\widehat{\phi}^t$ admits a $n$-pronged closed orbit $\widehat{\gamma}$ such that the restriction of $\phi^t$ to $M \backslash \gamma$ is orbit equivalent to $\widehat{M} \backslash \widehat{\gamma}$, where
    \item under such an orbit equivalence, the meridian determined by $\widehat{M}$ on $\partial N(\gamma)$ is the slope $s$, and
    \item the orbit equivalence can be chosen to be tame near $\gamma$ and $\widehat{\gamma}$, in the sense that the image of every local section $D$ in $M$ near $\gamma$ can be extended into an immersed surface with boundary $\overline{D}$ in $\widehat{M} \backslash \widehat{\gamma}$. 
\end{itemize}
\end{prop}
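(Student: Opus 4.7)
The plan is to construct $\widehat{\phi}^t$ directly by a Goodman-style surgery: blow up $\gamma$ to a torus boundary, cut off the resulting collar, and re-glue a standard pseudo-Anosov model on a solid torus whose core is an $n$-pronged closed orbit.

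First I would blow up the orbit $\gamma$, replacing $N(\gamma)$ by a collar $T^2 \times [0,1)$ on which the flow extends continuously to a periodic flow on the virtual boundary torus $T^2 \times \{0\}$. The half-leaves of $\Lambda^s$ meeting $\gamma$ sweep out the degeneracy locus $d$ on this torus, which is a union of simple closed curves in one isotopy class: two parallel copies if $\gamma$ is regular, and $k$ parallel copies if $\gamma$ is a $k$-pronged singular orbit. The slope of $d$ is determined by how $\Lambda^s$ wraps around $\gamma$.

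Next, given a slope $s$ with geometric intersection number $n$ with $d$, I would build a local pseudo-Anosov model flow on a solid torus $V_n$ whose core orbit $\widehat{\gamma}$ is $n$-pronged and whose induced dynamics on $\partial V_n$ match those on $T^2 \times \{0\}$ under the identification sending the meridian of $V_n$ to $s$. A concrete model is the suspension of a pseudo-Anosov disk map with a single $n$-pronged fixed point, for example the $n/2$-fold branched cover of a linear hyperbolic model, rescaled in the suspension direction so that the return time to a chosen longitude matches. Gluing $V_n$ to $M \setminus \mathrm{int}\, N(\gamma)$ via this identification produces $\widehat{M}$ and $\widehat{\phi}^t$.

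Of the three listed properties, the first two are immediate from the construction: the orbit equivalence on $M \setminus \gamma \cong \widehat{M} \setminus \widehat{\gamma}$ is the identity on the blown-up complement, and the meridian equals $s$ by choice of gluing. Tameness near $\gamma$ and $\widehat{\gamma}$ follows because the blow-up is smooth and the model flow on $V_n$ has product charts away from the core, so any local section pushes across the gluing torus to an immersed surface with boundary. The main obstacle, and the reason I would ultimately invoke Shannon's thesis rather than claim to do the full verification here, is checking that $\widehat{\phi}^t$ is genuinely pseudo-Anosov: one must show that the hyperbolic splittings from $\phi^t$ on the outside and from the model on $V_n$ assemble into a globally continuous $\widehat{\phi}^t$-invariant splitting across the gluing torus, and that the resulting flow is still transitive with the expected expansion constants. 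Because the blown-up flow degenerates along the virtual boundary, this matching is the delicate step and is exactly where Goodman's perturbation argument (or, equivalently, Fried's cross-section surgery) does the real work; the equivalence of those two perspectives is the content of \cite{Sha20}, which I would cite rather than reprove.
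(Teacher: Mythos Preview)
Your sketch is essentially correct and aligns with how the paper handles this proposition: the paper does not prove it at all, stating only that ``we will not go into the details of the surgery operation; we refer the reader to \cite{Sha20} for this'' and then recording the essential features as a black box. Your outline of the blow-up plus model-solid-torus gluing is a faithful description of the Goodman construction, and you correctly identify that the genuine content---matching the hyperbolic splittings across the gluing torus---is the delicate step handled in \cite{Sha20}, which is exactly the reference the paper defers to.
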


\section{Horizontal Goodman surgery} \label{sec:flowhsur}

In this section, we review the material in \cite{Tsa24}. We first recall the definition of horizontal surgery curves, then we review the horizontal Goodman surgery operation, and finally we state the main theorem in \cite{Tsa24}.

\subsection{Horizontal surgery curves} \label{subsec:flowhsurcurve}

Let $\phi^t$ be an Anosov flow on a closed oriented $3$-manifold $M$. 
Fix a Riemannian metric $g$ on $M$. Let $e^{s/u}$ be unit length vectors in $E^{s/u}|_x$, respectively, such that $(e^s, \dot{\phi}, e^u)$ determines a positive basis of $TM|_x$. Suppose $\ell$ is a line in $TM|_x$ spanned by a vector $ae^s+b\dot{\phi}+ce^u$. If $a \neq 0$ and $c \neq 0$, we define the \textbf{slope} of $\ell$ (with respect to $g$) to be $\frac{a}{c}$.

We say that $\ell$ is \textbf{positive/negative} if the slope of $\ell$ is positive/negative, respectively. Notice that while the slope of $\ell$ depends on the choice of the Riemannian metric $g$, whether $\ell$ is positive/negative depends only on $\ell$.

Suppose that $L$ is a 1-dimensional sub-bundle of $TM$ defined over a set $K \subset M \backslash \sing(\phi^t)$. We say that $L$ is \textbf{positive/negative} if $L|_x$ is positive/negative at every $x \in K$, respectively.

\begin{defn} \label{defn:posnegcurve}
Let $c$ be a smooth embedded $1$-manifold (possibly with boundary) in $M$. We say that $c$ is \textbf{positive/negative} if $Tc$ is positive/negative. 
\end{defn}

In this paper, we will illustrate positive curves in red and negative curves in blue. See \Cref{fig:posnegcurve} for a local example of a positive and a negative curve projected onto the orbit space.

\begin{figure}
    \centering
    \resizebox{!}{3cm}{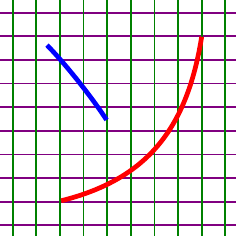}
    \caption{A local example of a positive curve (in red) and a negative curve (blue), projected along the flow direction onto a plane.}
    \label{fig:posnegcurve}
\end{figure}

\begin{defn} \label{defn:steady}
Suppose that $L$ is a positive 1-dimensional sub-bundle of $TM$ defined over a set $K \subset M \backslash \sing(\phi^t)$. We say that $L$ is \textbf{steady} if for every $x,y \in K, t>0$ such that $y=\phi^t(x)$, we have $\slope(L|_y) > \slope(d\phi^t(L|_x))$. 

Similarly, suppose that $L$ is a negative 1-dimensional sub-bundle of $TM$ defined over a set $K \subset M \backslash \sing(\phi^t)$. We say that $L$ is \textbf{steady} if for every $x,y \in K, t>0$ such that $y=\phi^t(x)$, we have $\slope(L|_y) < \slope(d\phi^t(L|_x))$. 
\end{defn}

As noted in \cite[Section 3.1]{Tsa24}, $L$ being steady is independent of the choice of the Riemannian metric.

\begin{defn} \label{defn:horsurcurve}
Let $c$ be a positive/negative smooth embedded curve in $M \backslash \sing(\phi^t)$. We say that $c$ is a \textbf{positive/negative horizontal surgery curve}, respectively, if $Tc$ is steady.
\end{defn}

We import the following terminology from \cite{Tsa24}: Suppose $c_1$ and $c_2$ are smooth embedded $1$-manifolds (possibly with boundary) in $M$. We say that $(x,y,t) \in c_1 \times c_2 \times (0,\infty)$ is a \textbf{time $t$ crossing of $c_2$ over $c_1$} if $y=\phi^t(x)$. If $c_1=c_2=c$ we abbreviate this to a \textbf{time $t$ crossing of $c$}.

If $c$ is a horizontal surgery curve, then for every crossing $(x,y,t)$ of $c$, we can take a small neighborhood of the orbit segment from $x$ to $y$ and project it along the flow direction onto a small disc. The projected segments of $c$ intersect transversely since $\slope(Tc|_y) \neq \slope(d\phi^t(Tc|_x))$. The data that $y$ lies above $x$ gives this transverse intersection point the same data as a crossing in a knot diagram.

From this perspective, the steadiness condition on $Tc$ can be reformulated as requiring that every crossing be of the form in \Cref{fig:steadycurve} left or right, depending on whether $c$ is positive or negative, respectively. 

\begin{figure}
    \centering
    \resizebox{!}{3cm}{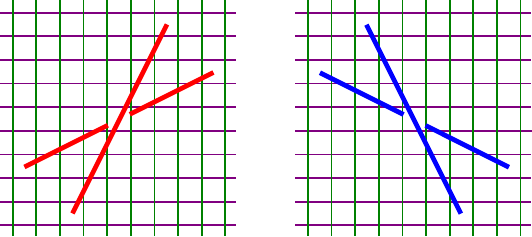}
    \caption{Left/right: Local form of a positive/negative surgery curve, respectively.}
    \label{fig:steadycurve}
\end{figure}

\subsection{Horizontal Goodman surgery and almost equivalence} \label{subsec:flowhsuralmostequiv}

In this subsection, we give a brief outline of the horizontal Goodman surgery operation. There are certain technical definitions in the construction which we will not state here, but will instead refer the reader to \cite{Tsa24}, since they will not play a role in this paper.

\begin{defn} \label{defn:surann}
A \textbf{positive/negative surgery annulus} is an embedded oriented annulus $A \subset M$ that is positively transverse to the flow, along with:
\begin{itemize}
    \item a foliation by positive/negative curves $\mathcal{H}$ where $T\mathcal{H}$ is steady, and
    \item a foliation by negative/positive non-separating arcs $\mathcal{K}$ where $T\mathcal{K}$ is steady,
\end{itemize} 
respectively.
\end{defn}

\begin{constr} \label{constr:sur}
Let $\phi^t$ be an Anosov flow on a closed oriented $3$-manifold $M$. Let $c$ be a positive/negative horizontal surgery curve. Let $n$ be a positive/negative integer, respectively.

\cite[Proposition 4.2]{Tsa24} states that there exists a surgery annulus $(A, \mathcal{H}, \mathcal{K})$ where $c$ is one of the leaves of $\mathcal{H}$ in the interior of $A$.

Let $M \cut A$ be the space obtained by cutting $M$ along $A$. There are two copies of $A$ on $M \cut A$ --- one on the positive side of $A$ and the other on the negative side of $A$. We denote these by $A_+$ and $A_-$ respectively. 

Meanwhile, note that the foliations $\mathcal{H}$ and $\mathcal{K}$ induce a parametrization $A \cong S^1 \times I$. We consider homeomorphisms $\sigma:A \to A$ of the form $\sigma(h,k)=(h+\rho(k),k)$, where $\rho(k)$ is a non-increasing/non-decreasing function, respectively, such that $\rho(0)-\rho(1)=n$.

\cite[Theorem 4.5]{Tsa24} states that for appropriate choices of $\rho$, if we glue $A_-$ to $A_+$ via $\sigma$, then the resulting flow $\phi^t_\sigma(A, \mathcal{H}, \mathcal{K}, \alpha)$ is Anosov.

Moreover, \cite[Theorem 4.12]{Tsa24} states that the orbit equivalence class of $\phi^t_\sigma(A, \mathcal{H}, \mathcal{K}, \alpha)$ only depends on $c$ and $n$. Hence we will write $\phi^t_\sigma(A, \mathcal{H}, \mathcal{K}, \alpha)$ as $\phi^t_{\frac{1}{n}}(c)$. Note that this flow is defined on the $3$-manifold $M_{\frac{1}{n}}(c)$ obtained by performing Dehn surgery on $M$ along $c$ with coefficient $\frac{1}{n}$.
\end{constr}

The key property of horizontal Goodman surgery that we use in this paper is the following theorem in \cite{Tsa24}.

\begin{thm:horsuralmostequiv}[{\cite[Theorem 1.3]{Tsa24}}]
Let $\phi^t$ be a transitive Anosov flow on a closed oriented $3$-manifold $M$. Let $c$ be a positive/negative horizontal surgery curve for $\phi^t$. Then for every positive/negative integer $n$, respectively, the flow $\phi^t_{\frac{1}{n}}(c)$ is almost equivalent to $\phi^t$.
\end{thm:horsuralmostequiv}

\vspace{0.5cm}
\begin{large}
\begin{center}
\textbf{Part 2. Penner type Anosov flows}
\end{center}
\end{large}

\section{Penner type pseudo-Anosov maps} \label{sec:pennerpa}

In this section we review the definition of a Penner type pseudo-Anosov map. We then specialize to the case where the map is of Anosov type, has orientable stable and unstable foliations and preserves those orientations, and provide a description of the mapping tori of such maps in terms of Dehn surgeries.

\subsection{Definition of Penner type maps} \label{subsec:pennerpadefn}

Let $S$ be a closed surface. Let $\alpha = \bigsqcup_{i=1}^p \alpha_i$ and $\beta = \bigsqcup_{j=1}^q \beta_j$ be two multicurves on $S$ such that $\alpha \cup \beta$ \textbf{fills} $S$, i.e. $\alpha$ intersects $\beta$ transversely and every complementary region of $\alpha \cup \beta$ is a disc with $2n \geq 4$ corners.

Let $S^\circ$ be the punctured surface obtained by removing one point from every complementary region of $\alpha \cup \beta$. We illustrate one example of $(S^\circ, \alpha, \beta)$ in \Cref{fig:pennercurves} left. Here we color $\alpha$ in dark red and $\beta$ in dark blue.

\begin{figure}
    \centering
    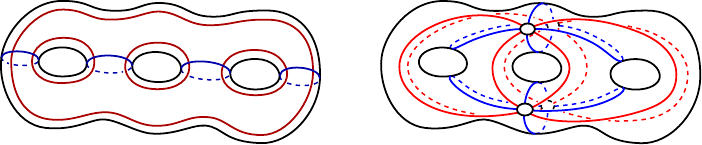
    \caption{Left: One example of $(S^\circ, \alpha, \beta)$. Right: The corresponding dual ideal quadrangulation.}
    \label{fig:pennercurves}
\end{figure}

Note that the set $\alpha \cup \beta$ has a natural structure of a $4$-valent fat graph, i.e. a $4$-valent graph with the data of a cyclic ordering of half-edges around each vertex. 

We can construct the \textbf{dual ideal quadrangulation} of $\alpha \cup \beta$ by placing an edge $f_e$ across every edge $e$ of $\alpha \cup \beta$, connecting the two punctures of $S^\circ$ lying in the two complementary regions of $\alpha \cup \beta$ that abut $e$. See \Cref{fig:pennercurves} right. 

We say that the edge $f_e$ is \textbf{positive} or \textbf{negative} if $e$ lies along $\beta$ or $\alpha$ respectively. The reason for this terminology will be apparent in \Cref{thm:pennerpA}. We color positive/negative curves in red/blue respectively, as is consistent with our convention.

For each $p$-tuple of integers $n=(n_1,...,n_p)$ we denote by $\tau^n_\alpha$ the product $\prod_{i=1}^p \tau^{n_i}_{\alpha_i}$, where each $\tau_{\alpha_i}$ is a (positive) Dehn twist around $\alpha_i$. Notice that since the $\alpha_i$ are disjoint, the order of the product does not matter here. For each $q$-tuple of integers $m=(m_1,...,m_q)$ we define $\tau^m_\beta$ similarly.

We say that a map $f:S^\circ \to S^\circ$ is of \textbf{Penner type} if it is isotopic to some map of the form 
$$\sigma \tau^{n_1}_\alpha \tau^{-m_1}_\beta \dots \tau^{n_k}_\alpha \tau^{-m_k}_\beta$$
where $\sigma$ is a homeomorphism of $S^\circ$ preserving $\alpha$ and $\beta$, and each $n_s$ and $m_s$ are tuples of non-negative integers.

Here $\sigma$ may not preserve the individual components of $\alpha$ and $\beta$, but rather it permutes them in general. We define the \textbf{$\sigma$-orbits} in $\alpha$ or $\beta$ to be the orbits of their components under this permutation. We say that a $p$-tuple of integers $n=(n_1,...,n_p)$ is \textbf{positive on a $\sigma$-orbit $\mathfrak{a}$ of $\alpha$} if $\displaystyle \sum_{\alpha_i \in \mathfrak{a}} n_i > 0$. We define a $q$-tuple of integers $m$ to be \textbf{positive on a $\sigma$-orbit of $\beta$} similarly.

In \cite{Pen88}, Penner showed the following theorem.

\begin{thm} \label{thm:pennerpA}
Suppose $n_1,...,n_k$ are $p$-tuples of non-negative integers such that $\sum_s n_s$ is positive on every $\sigma$-orbit of $\alpha$, and $m_1,...,m_k$ are $q$-tuples of non-negative integers such that $\sum_s m_s$ is positive on every $\sigma$-orbit of $\beta$.
Then 
$$\sigma \tau^{n_1}_\alpha \tau^{-m_1}_\beta \dots \tau^{n_k}_\alpha \tau^{-m_k}_\beta$$
is isotopic to a fully-punctured pseudo-Anosov map $f$, respectively.

Moreover, up to isotopy, the positive/negative edges of the dual ideal quadrangulation are positive/negative with respect to the suspension flow of $f$.
\end{thm}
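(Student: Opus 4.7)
The plan is to reproduce Penner's original argument \cite{Pen88} for the pseudo-Anosov conclusion, then add a local orientation analysis in the suspension flow for the ``moreover'' statement. First, I would endow $\alpha \cup \beta$ with Penner's generalized train track structure, fixing a consistent smoothing at each $4$-valent vertex. Each of $\tau^n_\alpha$ and $\tau^{-m}_\beta$ acts on the non-negative cone of transverse measures on this track by a non-negative integer matrix of the form $I + N \cdot (\text{intersection matrix})$, so the entire composition acts by a non-negative integer matrix $M$. The hypothesis that $\sum_s n_s$ is positive on every $\sigma$-orbit of $\alpha$ (and symmetrically for $m_s$ and $\beta$), combined with the way $\sigma$ permutes components, is precisely what ensures that some power of $M$ has strictly positive entries, i.e.\ that $M$ is primitive. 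Perron--Frobenius then produces a dominant eigenvalue $\lambda > 1$ with positive eigenvector, from which we build the unstable measured foliation $\ell^u$; the same argument applied to $f^{-1}$ yields $\ell^s$. Full puncturedness comes from noting that each $2n$-cornered complementary disk of $\alpha \cup \beta$ contributes an $n$-pronged singularity for both foliations, and every such region has been punctured in forming $S^\circ$.

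For the moreover claim, I work in the mapping torus $T_f$, where the stable/unstable foliations $\Lambda^{s/u}$ of $\phi^t_f$ are the $\mathbb{R}$-bundles over $\ell^{s/u}$ in each fiber. An edge $f_e$ of the dual quadrangulation lies on the fiber $S^\circ$ and meets the flow transversely, so its tangent vectors have no $\dot{\phi}$-component and lie in $E^s \oplus E^u$. Within the isotopy class of $f_e$ I would pick a representative whose tangent direction at its crossing with $e$ is perpendicular to $e$, so that near this crossing $f_e$ runs along $\alpha$ when $e \subset \beta$, and along $\beta$ when $e \subset \alpha$. Penner's smoothing convention, determined by the fact that $\tau_\alpha$ is a \emph{positive} twist and $\tau_\beta^{-1}$ is a \emph{negative} twist, identifies the $\alpha$-direction with one of $E^{s/u}$ and the $\beta$-direction with the other at each point of $\alpha \cup \beta$. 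Chasing the orientation convention of \Cref{subsec:anosovflowdefn} through this identification, the tangent to $f_e$ across $e \subset \beta$ lies in the quadrant of $E^s \oplus E^u$ where the $e^s$- and $e^u$-components have the same sign (positive slope), while across $e \subset \alpha$ it lies in the opposite quadrant (negative slope), as required by \Cref{defn:posnegcurve}.

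The main obstacle is the orientation bookkeeping in the moreover clause: one must carefully match Penner's combinatorial smoothing of $4$-valent vertices with the orientation conventions for the flow's stable and unstable subbundles. In particular, one needs that the asymptotic train track direction along $\alpha$ coincides (up to sign) with $E^u$, and along $\beta$ with $E^s$, at every point of $\alpha \cup \beta$. This is a global consistency statement rather than a purely local one, since the Perron--Frobenius eigenvector is determined by the entire matrix $M$ and hence by the interaction of all the twists. Once this identification is made precise, the positive/negative classification of edges follows from an immediate local-model computation at each crossing.
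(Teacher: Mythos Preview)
The paper does not supply its own proof of this theorem: it is attributed to Penner \cite{Pen88}, and Remark~\ref{rmk:pennerpa} simply observes that the case $\sigma \neq \mathrm{id}$ follows either by adapting Penner's argument directly or by applying Penner's version to a suitable power of the map. Your outline of the train-track/Perron--Frobenius argument is a faithful reconstruction of Penner's proof, and your observation that the $\sigma$-orbit positivity condition is exactly what guarantees primitivity of the transition matrix is correct.

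For the ``moreover'' clause, your plan is on the right track but the heuristic ``$\alpha$-direction $\leftrightarrow E^u$, $\beta$-direction $\leftrightarrow E^s$'' is not quite the right formulation. Both $\alpha$ and $\beta$ are branches of the \emph{same} smoothed track carrying $\ell^u$, so leaves of $\ell^u$ run along both; there is no global dichotomy of the kind you describe. What actually determines the sign of a dual edge $f_e$ is the smoothing at the two endpoints of $e$, which is forced by the requirement that the track carry its image under each $\tau_{\alpha_i}$ and $\tau_{\beta_j}^{-1}$. Concretely, once the Perron--Frobenius eigenvector fixes $\ell^u$ as the foliation carried by the track and $\ell^s$ as the tie foliation, the local picture of $(\ell^s,\ell^u)$ inside a single quadrilateral is completely determined by that smoothing (this is the content of Figure~\ref{fig:pennerquad}), and the sign of each side is then read off from that picture. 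Also note that at the exact crossing point of $f_e$ with $e$ your chosen representative is tangent to the tie direction, hence purely in $E^s$, so the slope is undefined there; the sign check must be done at a generic interior point, or equivalently by inspecting the quadrilateral picture as a whole. This is the refinement needed to close the gap you correctly flagged as the ``main obstacle.''
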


\begin{rmk} \label{rmk:pennerpa}
Strictly speaking, Penner only showed \Cref{thm:pennerpA} in the case when $\sigma$ is identity. However, his proof can be easily adapted to our more general setting. Alternatively, one can apply his version of \Cref{thm:pennerpA} to a suitable power of our map.
\end{rmk}

\begin{rmk} \label{rmk:DU15}
It is known that not every (fully-punctured) pseudo-Anosov map is of Penner type. This can be deduced from \cite[Theorem 11.2]{Lei04}; see also \cite{Lie17}. In fact, from \cite{SS15}, it is even known that not every (fully-punctured) pseudo-Anosov map has a power that is of Penner type.

However, it is known that every hyperelliptic (fully-punctured) pseudo-Anosov map admits a very similar decomposition in terms of quadrangulations and diagonal changes. We refer to \cite{DU15} for details on this. Our terminology of quadrangulations is also taken from there.
\end{rmk}

The second statement in \Cref{thm:pennerpA} implies that the stable and unstable foliations in each quadrilateral must be of the form shown in \Cref{fig:pennerquad}. 
In particular, a leaf of the stable foliation $\ell^s$ that is incident to a puncture must emerge out of an ideal vertex of a quadrilateral that meets a negative side on its left and a positive side on its right. 
Symmetrically, a leaf of the unstable foliation $\ell^u$ that is incident to a puncture must emerge out of an ideal vertex of a quadrilateral that meets a positive side on its left and a negative side on its right.

\begin{figure}
    \centering
    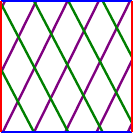
    \caption{The stable and unstable foliations in a quadrilateral.}
    \label{fig:pennerquad}
\end{figure}

\subsection{Penner type mapping tori} \label{subsec:pennermaptori}

Consider a Penner type map $\sigma \tau^{n_1}_\alpha \tau^{-m_1}_\beta \dots \tau^{n_k}_\alpha \tau^{-m_k}_\beta$ as in the previous subsection.

It will be convenient for us to think of $\sigma$ as an automorphism of $\alpha \cup \beta$, or equivalently as an automorphism of the dual ideal quadrangulation. In particular, since we only care about isotopy classes of maps, we can assume that $\sigma$ is of finite order. This allows us to state the following observation.

\begin{lemma} \label{lemma:anosovtypequotientsurface}
Suppose $\sigma \tau^{n_1}_\alpha \tau^{-m_1}_\beta \dots \tau^{n_k}_\alpha \tau^{-m_k}_\beta$ is isotopic to an Anosov type fully-punctured pseudo-Anosov map $f$ with orientable stable and unstable foliations and that $f$ preserves these orientations, then the finite cyclic group $\langle \sigma \rangle$ acts freely on $S^\circ$ and the quotient surface $S^\circ / \langle \sigma \rangle$ is of genus one.
\end{lemma}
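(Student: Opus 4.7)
The plan is to combine orientability of the foliations with a homological argument on the torus. First, I would observe that orientability of $\ell^s$ forces each puncture of $S^\circ$ to be a $2$-pronged (regular) singularity, since at a $k$-pronged singularity the local direction of the foliation rotates by $\pi$ as one loops around, which is compatible with a consistent orientation only when $k=2$. By the explicit local model in \Cref{fig:pennerquad} and the second half of \Cref{thm:pennerpA}, the number of stable prongs at the puncture inside a complementary region of $\alpha \cup \beta$ equals half the number of sides, so every complementary region must be a quadrilateral. Writing $V, E, F$ for the vertices, edges, and complementary regions of the $4$-valent graph $\alpha \cup \beta \subset S$, we get $E = 2V$ and $4F = 2E$, hence $F = V$ and $\chi(S) = V - E + F = 0$; as $S$ is oriented, it is a torus.

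Next I would fill in the punctures: since each is a regular point of $\ell^s$ and $\ell^u$, the map $f$ extends to an Anosov diffeomorphism $\bar f$ of $T^2$, whose mapping class is a matrix $A = \bar f_* \in \mathrm{SL}_2 \mathbb{Z}$ with $\mathrm{tr}(A) > 2$ (the condition for the eigenline foliations to be preserved with their orientations).

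Third, I would pin down $\sigma_* \in \mathrm{SL}_2 \mathbb{Z}$. On $T^2$ any two disjoint essential simple closed curves are parallel, so $[\alpha]$ and $[\beta]$ each lie on a single primitive line in $H_1(T^2)$, and since $\alpha \cup \beta$ fills $S$ these two lines are independent. Because $\sigma$ preserves $\alpha$ and $\beta$ setwise, $\sigma_*$ is diagonal with $\pm 1$ entries in the basis of primitive classes; combined with $\det \sigma_* = 1$, this gives $\sigma_* \in \{I, -I\}$. A direct computation shows
\[(\tau^n_\alpha)_* = \begin{pmatrix} 1 & nI \\ 0 & 1 \end{pmatrix}, \quad (\tau^{-m}_\beta)_* = \begin{pmatrix} 1 & 0 \\ mI & 1 \end{pmatrix}\]
in this basis (with compatible orientations chosen so that the algebraic intersection $I > 0$). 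Hence $h_* = (\tau^{n_1}_\alpha \tau^{-m_1}_\beta \cdots \tau^{n_k}_\alpha \tau^{-m_k}_\beta)_*$ is a product of non-negative unipotent matrices, and the Penner positivity hypothesis on the $n_s, m_s$ forces $\mathrm{tr}(h_*) > 2$. If $\sigma_* = -I$, then $\bar f_* = -h_*$ would have trace $< -2$, contradicting $\mathrm{tr}(A) > 2$. Hence $\sigma_* = I$, so $\sigma$ is isotopic to the identity on $T^2$.

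Finally, a finite order homeomorphism of $T^2$ isotopic to the identity is conjugate to a translation, which is either trivial or acts freely on all of $T^2$. In either case $\langle \sigma \rangle$ acts freely on $S^\circ$, and $T^2/\langle \sigma \rangle$ is again a torus. Then $S^\circ/\langle \sigma \rangle$ is this quotient torus with the images of the punctures removed, so it is of genus one. The main obstacle is the third step: extracting $\sigma_* = I$ from the trace computation, which packages the compatibility between the finite-order part $\sigma$ and the orientation-preservation hypothesis on $f$ without having to directly classify fixed points of $\sigma$ on $S$.
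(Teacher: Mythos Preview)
Your first step contains a genuine error that brings down the rest of the argument. You claim that orientability of $\ell^s$ forces every puncture of $S^\circ$ to be $2$-pronged. This is false: the monodromy of the line field around a $k$-pronged singularity is rotation by $(2-k)\pi$, not by $\pi$, so orientability is equivalent to $k$ being \emph{even}, not to $k=2$. A complementary $2n$-gon of $\alpha\cup\beta$ gives an $n$-pronged puncture, so orientability only forces $n$ even; it does not force $n=2$. Consequently your Euler-characteristic count does not show $\chi(S)=0$, and in fact $S$ need not be a torus. Concretely, whenever the cyclic cover $S^\circ \to S^\circ/\langle\sigma\rangle$ has nontrivial monodromy around some puncture of the quotient, the corresponding punctures of $S^\circ$ sit in complementary regions with more than four sides, and $S$ has higher genus. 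So steps 2--4, which all rely on $S=T^2$, are built on a false premise.

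You also seem to conflate ``Anosov type'' with ``$2$-pronged.'' The Anosov-type condition says there are at most two $f$-\emph{orbits} of stable prongs at each puncture, and since the Dehn twists act trivially on prongs this is the same as two $\sigma$-orbits; it does not bound the number of prongs themselves. This is exactly why the paper works with the quotient rather than with $S$: in $S^\circ/\langle\sigma\rangle$ the $\sigma$-orbits of prongs become individual prongs, so Anosov type plus orientability gives exactly two stable prongs at each puncture \emph{of the quotient}, and then Poincar\'e--Hopf gives genus one. Freeness is handled separately and directly, by observing that a nontrivial $\sigma^i$ fixing a point would have to rotate a dual quadrilateral by $\pi$ or swap two adjacent quadrilaterals, either of which reverses the orientation of $\ell^s$ or $\ell^u$ and so is ruled out by the hypothesis that $f$ preserves these orientations.
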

\begin{proof}
If some $\sigma^i$ fixes a point in the interior of a quadrilateral, then it preserves the quadrilateral. Since $f$ preserves the orientations on its stable and unstable foliations, $\sigma^i$ must fix the quadrilateral (as opposed to rotating it by $\pi$). But since $S^\circ$ is connected, $\sigma^i$ must then fix every quadrilateral, hence is the identity.

If $\sigma^i$ fixes a point on an edge, then either $\sigma^i$ fixes the two incident quadrilateral or it swaps those two quadrilaterals by rotating by $\pi$ around the edge. The latter case cannot happen since $f$ preserves the orientations on its stable and unstable foliations. In the former case, $\sigma^i$ fixes every quadrilateral, hence is the identity.

This shows that $\langle \sigma \rangle$ acts freely on $S^\circ$. To show the second part of the statement, recall that every puncture of $S^\circ / \langle \sigma \rangle$ must be incident to exactly two leaves of the stable foliation. In other words, every puncture of $S^\circ / \langle \sigma \rangle$ must be incident to exactly four quadrilaterals. From a Poincare-Hopf argument, we deduce that the surface is of genus one.
\end{proof}

For the rest of this subsection, we work in the setting of \Cref{lemma:anosovtypequotientsurface}.
We write $T$ for $S^\circ / \langle \sigma \rangle$.
Observe that $\alpha$ and $\beta$ descend to multicurves $\alpha_T$ and $\beta_T$ on $T$ respectively, so that $\alpha_T \cup \beta_T$ fills $T$. We pick some choice of orientations on these curves and consider their union as the $1$-skeleton of $T$.
Similarly, the dual ideal quadrangulation of $\alpha \cup \beta$ on $S^\circ$ descends to the dual ideal quadrangulation of $\alpha_T \cup \beta_T$ on $T$.

The mapping torus $T_{\sigma}$ can be identified with $T \times S^1$. We explain one way of seeing this identification explicitly. 

The cover $S^\circ \to T$ corresponds to a homomorphism $\pi_1(T) \to \mathbb{Z}/N$, which can be interpreted as an element $[z] \in H^1(T; \mathbb{Z}/N)$. We pick a representative cocycle $z$.

Take the collection of quadrilaterals
$$\left\{q \times \left\{ \frac{w}{N} \right\} \mid \text{$q$ is a quadrilateral in $T$}, w=0,...,N-1 \right\}$$
in $T \times S^1$. For every edge $e$ in the $1$-skeleton of $T$, say oriented from quadrilateral $q_1$ to quadrilateral $q_2$, we connect each $q_1 \times \{\frac{w}{N}\}$ to $q_2 \times \{\frac{w+z(e)}{N}\}$ across $f_e \times S^1$, where $f_e$ is the edge of the quadrangulation on $T$ dual to $e$. See \Cref{fig:quadcochain} for a local example.

\begin{figure}
    \centering
    \fontsize{10pt}{10pt}\selectfont
\begingroup%
  \makeatletter%
  \providecommand\color[2][]{%
    \errmessage{(Inkscape) Color is used for the text in Inkscape, but the package 'color.sty' is not loaded}%
    \renewcommand\color[2][]{}%
  }%
  \providecommand\transparent[1]{%
    \errmessage{(Inkscape) Transparency is used (non-zero) for the text in Inkscape, but the package 'transparent.sty' is not loaded}%
    \renewcommand\transparent[1]{}%
  }%
  \providecommand\rotatebox[2]{#2}%
  \newcommand*\fsize{\dimexpr\f@size pt\relax}%
  \newcommand*\lineheight[1]{\fontsize{\fsize}{#1\fsize}\selectfont}%
  \ifx\svgwidth\undefined%
    \setlength{\unitlength}{188.12709298bp}%
    \ifx\svgscale\undefined%
      \relax%
    \else%
      \setlength{\unitlength}{\unitlength * \real{\svgscale}}%
    \fi%
  \else%
    \setlength{\unitlength}{\svgwidth}%
  \fi%
  \global\let\svgwidth\undefined%
  \global\let\svgscale\undefined%
  \makeatother%
  \begin{picture}(1,0.84839042)%
    \lineheight{1}%
    \setlength\tabcolsep{0pt}%
    \put(0,0){\includegraphics[width=\unitlength,page=1]{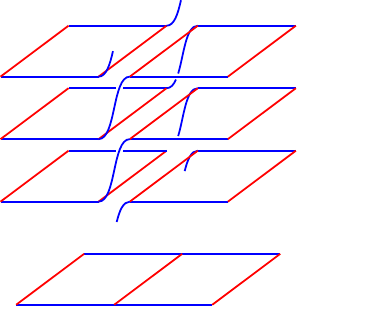}}%
    \put(0.83264328,0.52570409){\color[rgb]{0,0,0}\makebox(0,0)[lt]{\lineheight{1.25}\smash{\begin{tabular}[t]{l}$T \times S^1$\end{tabular}}}}%
    \put(0.86964547,0.11541266){\color[rgb]{0,0,0}\makebox(0,0)[lt]{\lineheight{1.25}\smash{\begin{tabular}[t]{l}$T$\end{tabular}}}}%
    \put(0.50248404,0.10058463){\color[rgb]{0,0,0}\makebox(0,0)[lt]{\lineheight{1.25}\smash{\begin{tabular}[t]{l}$q_2$\end{tabular}}}}%
    \put(0.19944857,0.1005898){\color[rgb]{0,0,0}\makebox(0,0)[lt]{\lineheight{1.25}\smash{\begin{tabular}[t]{l}$q_1$\end{tabular}}}}%
    \put(0.33482787,0.15872108){\color[rgb]{0,0,0}\makebox(0,0)[lt]{\lineheight{1.25}\smash{\begin{tabular}[t]{l}$e$\end{tabular}}}}%
    \put(0.21022377,0.00427733){\color[rgb]{0,0,0}\makebox(0,0)[lt]{\lineheight{1.25}\smash{\begin{tabular}[t]{l}$z(e)=1$\end{tabular}}}}%
    \put(0,0){\includegraphics[width=\unitlength,page=2]{quadcochain.pdf}}%
  \end{picture}%
\endgroup%

    \caption{Constructing a copy of $S^\circ$ in $T \times S^1$ by connecting up quadrilaterals across edges of the quadrangulation.}
    \label{fig:quadcochain}
\end{figure}

Then the resulting surface can be identified with $S^\circ$, and the map sending each $q \times \{\frac{w}{N}\}$ to $q \times \{\frac{w+1}{N}\}$ can be identified with $\sigma$. 

We now discuss how to add in the terms $\tau^{n_s}_\alpha$ and $\tau^{-m_s}_\beta$. Consider the mapping torus $T_{\sigma}$ with the fibers $S^\circ \times \{t\}$. For each $s=1,...,k$, we perform Dehn surgery along $\alpha_i \times \{\frac{3s-2}{3k}\}$ with coefficient $\frac{1}{(n_s)_i}$ and Dehn surgery along $\beta_j \times \{\frac{3s-1}{3k}\}$ with coefficient $-\frac{1}{(m_s)_j}$. This transforms the $3$-manifold from the mapping torus of $\sigma$ to the mapping torus of $\sigma \tau^{n_1}_\alpha \tau^{-m_1}_\beta \dots \tau^{n_k}_\alpha \tau^{-m_k}_\beta$.

The curves $\alpha_i \times \{\frac{3s-2}{3k}\}$ that we do positive Dehn surgery along can be described as curves in $T \times S^1$ in the following way:
Take the collection of arcs 
$$\left\{ a \times \left\{ \frac{3kw+3s-2}{3kN} \right\} \mid \text{$a$ connects the negative sides of quadrilateral $q$}, w=0,...,N-1 \right\}$$
in $T \times S^1$. For every edge $e$ in the $1$-skeleton of $T$ lying along the image of $\alpha$, say oriented from $q_1$ to quadrilateral $q_2$, we connect each $a_1 \times \{\frac{3kw+3s-2}{3kN}\}$ to $a_2 \times \{\frac{3k(w+z(e))+3s-2}{3kN}\}$ across $f_e \times S^1$.
The curves $\beta_j \times \{\frac{3s-1}{3k}\}$ admits a symmetric description.

This construction can be carried out in the reverse direction in the following sense: For $[z] \in H^1(T; \mathbb{Z}/N)$, we let $S^\circ_{[z]}$ be the $N$-sheeted cover of $T$ corresponding to $[z]$. The curves $\alpha_T$ and $\beta_T$ pull back to curves $\alpha_{[z]}$ and $\beta_{[z]}$ on $S^\circ_{[z]}$.

By picking a cocycle $z$ representing $[z]$, we can construct a copy of $S^\circ_{[z]}$ in $T \times S^1$ by connecting up the quadrilaterals $q \times \{\frac{w}{N}\}$ as above. Let $\sigma_{[z]}$ be the first return map on $S^\circ_{[z]}$ induced by the suspension flow on $T \times S^1$. Then $\sigma_{[z]}$ is a finite order homeomorphism on $S^\circ_{[z]}$ such that $S^\circ_{[z]}/ \langle \sigma_{[z]} \rangle = T$. Here we caution that $S^\circ_{[z]}$ may be disconnected.

For choices of $n_s$ and $m_s$ satisfying the assumption of \Cref{thm:pennerpA}, $\sigma_{[z]} \tau^{n_1}_{\alpha_{[z]}} \tau^{-m_1}_{\beta_{[z]}} \dots \tau^{n_k}_{\alpha_{[z]}} \tau^{-m_k}_{\beta_{[z]}}$ is isotopic to an Anosov type fully-punctured pseudo-Anosov map $f_{[z]}$ with orientable stable and unstable foliations, and $f_{[z]}$ preserves these orientations.

We record the discussion in this subsection as the following proposition.

\begin{prop} \label{prop:torusbundletopennermaptori}
Suppose $\sigma \tau^{n_1}_\alpha \tau^{-m_1}_\beta \dots \tau^{n_k}_\alpha \tau^{-m_k}_\beta$ is isotopic to an Anosov type fully-punctured pseudo-Anosov map $f$ with orientable stable and unstable foliations and that $f$ preserves these orientations.
Then the cover $S^\circ \to T = S^\circ/\langle \sigma \rangle$ is determined by some $[z] \in H^1(T; \mathbb{Z}/N)$.
There exists disjoint curves $\alpha_i \times \{\frac{3s-2}{3k}\}$ and $\beta_j \times \{\frac{3s-1}{3k}\}$ on $T \times S^1$, such that performing $\frac{1}{(n_s)_i}$ horizontal surgery on each $\alpha_i \times \{\frac{3s-2}{3k}\}$ and performing $-\frac{1}{(m_s)_j}$ horizontal surgery on each $\beta_j \times \{\frac{3s-1}{3k}\}$ gives $T_f$.

Conversely, given $[z] \in H^1(T; \mathbb{Z}/N)$, there exists a map $\sigma_{[z]} \tau^{n_1}_{\alpha_{[z]}} \tau^{-m_1}_{\beta_{[z]}} \dots \tau^{n_k}_{\alpha_{[z]}} \tau^{-m_k}_{\beta_{[z]}}$, defined on the covering surface $S^\circ_{[z]}$ determined by $[z]$, which is isotopic to an Anosov type fully-punctured pseudo-Anosov map $f_{[z]}$ with orientable stable and unstable foliations, where $f_{[z]}$ preserves these orientations.
\end{prop}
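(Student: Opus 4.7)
The plan is to unpack the preceding constructive discussion into a rigorous verification of both directions. For the forward direction, I would first invoke \Cref{lemma:anosovtypequotientsurface} to conclude that $\langle \sigma \rangle$ is a finite cyclic group of some order $N$ acting freely on $S^\circ$ with quotient the torus $T$. Since $T$ is aspherical, free regular cyclic covers of $T$ of order $N$ are classified by surjective homomorphisms $\pi_1(T) \to \mathbb{Z}/N$, equivalently by classes $[z] \in H^1(T; \mathbb{Z}/N)$; the given cover $S^\circ \to T$ determines such a class.

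To produce the surgery description, I would fix a cocycle representative $z$ of $[z]$ and realize $S^\circ$ as an embedded surface inside $T \times S^1$ via the stacked quadrilateral construction described above, namely by gluing copies $q \times \{w/N\}$ across $f_e \times S^1$ with offset $z(e)$. The cocycle condition ensures global consistency, and the vertical shift by $1/N$ realizes $\sigma$, so that the mapping torus $T_\sigma$ is identified with $T \times S^1$ and $S^\circ$ with a fiber. The curves $\alpha_i \times \{(3s-2)/(3k)\}$ and $\beta_j \times \{(3s-1)/(3k)\}$ lie on distinct horizontal slices and are therefore pairwise disjoint, and each lies in a single $S^\circ$-fiber. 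Performing the listed Dehn surgeries precomposes the monodromy with the prescribed Dehn twists, and ordering them along the $S^1$-factor reconstitutes the full composition $\sigma \tau^{n_1}_\alpha \tau^{-m_1}_\beta \cdots \tau^{n_k}_\alpha \tau^{-m_k}_\beta$ as the monodromy; by \Cref{thm:pennerpA} the resulting manifold is $T_f$.

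For the converse, given $[z] \in H^1(T; \mathbb{Z}/N)$ I would take $S^\circ_{[z]} \to T$ to be the corresponding regular cyclic cover, $\sigma_{[z]}$ a generator of its deck group, and $\alpha_{[z]}, \beta_{[z]}$ the preimages of $\alpha_T, \beta_T$. For any tuples $n_s, m_s$ positive on each $\sigma_{[z]}$-orbit, \Cref{thm:pennerpA} then produces a fully-punctured pseudo-Anosov map $f_{[z]}$. The Anosov-type condition together with the orientability and preservation of the stable and unstable foliations follow from the local picture near each puncture: it is the $\sigma_{[z]}$-equivariant pullback of the four-quadrilateral picture on $T$, whose foliation orientations are forced as in \Cref{fig:pennerquad}. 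The step I expect to require the most care is the verification, in the forward direction, that a $\tfrac{1}{(n_s)_i}$-surgery along $\alpha_i \times \{(3s-2)/(3k)\}$, with the framing induced by the $S^\circ$-fiber, genuinely implements the Dehn twist $\tau^{(n_s)_i}_{\alpha_i}$ on the monodromy, and symmetrically for the $\beta_j$-surgeries; this is standard fibered-surgery bookkeeping but must be done carefully to confirm that surgery slopes correspond to twist exponents with the correct sign conventions.
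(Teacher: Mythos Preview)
Your proposal is correct and follows essentially the same route as the paper: the proposition is explicitly introduced there as a record of the preceding discussion, and your outline reproduces that discussion faithfully (invoke \Cref{lemma:anosovtypequotientsurface}, classify the cover by $[z]\in H^1(T;\mathbb{Z}/N)$, build $S^\circ$ inside $T\times S^1$ via the stacked-quadrilateral construction, and use the standard fibered Dehn surgery/Dehn twist correspondence; for the converse, pull back $\alpha_T,\beta_T$ to the cover and apply \Cref{thm:pennerpA}). One small point: in the converse the paper allows arbitrary $[z]$ and explicitly notes that $S^\circ_{[z]}$ may be disconnected, so you should not insist on a surjective homomorphism there.
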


\section{Veering triangulations} \label{sec:vt}

In this section, we review some definitions and constructions in the subject of veering triangulations.

\subsection{Definition of veering triangulations} \label{subsec:vtdefn}

An \textbf{ideal tetrahedron} is a tetrahedon with its 4 vertices removed. The removed vertices are called the \textbf{ideal vertices}. 
An \textbf{ideal triangulation} of a $3$-manifold $M$ is a decomposition of $M$ into finitely many ideal tetrahedra glued along pairs of faces.

A \textbf{taut structure} on an ideal triangulation is a labelling of the dihedral angles by $0$ or $\pi$, such that 
\begin{itemize}
    \item Each tetrahedron has exactly two dihedral angles labelled $\pi$, and they are opposite to each other.
    \item The angle sum around each edge in the triangulation is $2\pi$.
\end{itemize}

A \textbf{transverse taut structure} is a taut structure along with a coorientation on each face, such that for any edge labelled $0$ in a tetrahedron, exactly one of the faces adjacent to it is cooriented inwards.

A \textbf{transverse taut ideal triangulation} is an ideal triangulation with a transverse taut structure.

\begin{defn} \label{defn:vt}
A \textbf{veering triangulation} is a transverse taut ideal triangulation with a coloring of the edges by red or blue, so that going counterclockwise around the four $0$-labelled edges, starting from an endpoint of a $\pi$-labelled edge, the edges are colored red, blue, red, blue, in that order.
See \Cref{fig:veertet}.
\end{defn}

\begin{figure} 
    \centering
    \fontsize{10pt}{10pt}\selectfont
\begingroup%
  \makeatletter%
  \providecommand\color[2][]{%
    \errmessage{(Inkscape) Color is used for the text in Inkscape, but the package 'color.sty' is not loaded}%
    \renewcommand\color[2][]{}%
  }%
  \providecommand\transparent[1]{%
    \errmessage{(Inkscape) Transparency is used (non-zero) for the text in Inkscape, but the package 'transparent.sty' is not loaded}%
    \renewcommand\transparent[1]{}%
  }%
  \providecommand\rotatebox[2]{#2}%
  \newcommand*\fsize{\dimexpr\f@size pt\relax}%
  \newcommand*\lineheight[1]{\fontsize{\fsize}{#1\fsize}\selectfont}%
  \ifx\svgwidth\undefined%
    \setlength{\unitlength}{119.33702123bp}%
    \ifx\svgscale\undefined%
      \relax%
    \else%
      \setlength{\unitlength}{\unitlength * \real{\svgscale}}%
    \fi%
  \else%
    \setlength{\unitlength}{\svgwidth}%
  \fi%
  \global\let\svgwidth\undefined%
  \global\let\svgscale\undefined%
  \makeatother%
  \begin{picture}(1,0.71259849)%
    \lineheight{1}%
    \setlength\tabcolsep{0pt}%
    \put(0,0){\includegraphics[width=\unitlength,page=1]{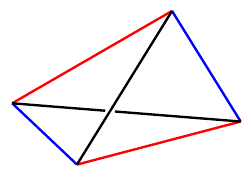}}%
    \put(0.2817407,0.49923989){\color[rgb]{1,0,0}\makebox(0,0)[lt]{\lineheight{1.25}\smash{\begin{tabular}[t]{l}0\end{tabular}}}}%
    \put(0.86849956,0.46395339){\color[rgb]{0,0,1}\makebox(0,0)[lt]{\lineheight{1.25}\smash{\begin{tabular}[t]{l}0\end{tabular}}}}%
    \put(0.62996822,0.03820694){\color[rgb]{1,0,0}\makebox(0,0)[lt]{\lineheight{1.25}\smash{\begin{tabular}[t]{l}0\end{tabular}}}}%
    \put(0.11462107,0.08220049){\color[rgb]{0,0,1}\makebox(0,0)[lt]{\lineheight{1.25}\smash{\begin{tabular}[t]{l}0\end{tabular}}}}%
    \put(0,0){\includegraphics[width=\unitlength,page=2]{veertet.pdf}}%
    \put(0.43330668,0.3564665){\color[rgb]{0,0,0}\makebox(0,0)[lt]{\lineheight{1.25}\smash{\begin{tabular}[t]{l}$\pi$\end{tabular}}}}%
    \put(0.50144876,0.19259692){\color[rgb]{0,0,0}\makebox(0,0)[lt]{\lineheight{1.25}\smash{\begin{tabular}[t]{l}$\pi$\end{tabular}}}}%
  \end{picture}%
\endgroup%

    \caption{A tetrahedron in a transverse veering triangulation. There are no restrictions on the colors of the top and bottom edges.} 
    \label{fig:veertet}
\end{figure}

The edge colorings impose strong combinatorial restrictions on a veering triangulation. We state two instances of this which will come into play in \Cref{sec:uniqueflow}.

\begin{prop} \label{prop:vtsideedge}
Let $e$ be an edge of a veering triangulation $\Delta$. Then $e$ is the top edge of exactly one tetrahedron, the bottom edge of exactly one tetrahedron, and the side edge of at least one tetrahedron on either side of $e$.
\end{prop}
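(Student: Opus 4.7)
The plan is to handle the first two claims via the angle-sum of the transverse taut structure, and the third via a contradiction argument using the veering alternation condition at a shared face.

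For the first two claims, I will use the fact that around any edge $e$ the dihedral angles of the adjacent tetrahedra sum to $2\pi$, and each tetrahedron contributes either $0$ (if $e$ is a $0$-labelled side edge) or $\pi$ (if $e$ is a $\pi$-labelled top or bottom edge). Hence exactly two tetrahedra have $e$ as a $\pi$-edge. To separate these two cases, I will examine the two faces adjacent to $e$ inside such a tet: if $e$ is the top $\pi$-edge both adjacent faces are cooriented out of the tet, while if $e$ is the bottom $\pi$-edge both are cooriented into the tet. On the link circle of $e$, these produce a ``source'' and a ``sink'' respectively, whereas each side tetrahedron contributes a ``flow-through'' pattern with one coorientation pointing each way. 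A cyclic count around the link circle shows that the number of sources equals the number of sinks; combined with the angle-sum count of $2$ this gives exactly one source and one sink, proving the first two statements.

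For the third claim I argue by contradiction. Suppose $t_{\top}$ and $t_{\bot}$ are cyclically adjacent around $e$, so that they share a face $F$ containing $e$. Let $v_1,v_2$ be the endpoints of $e$ and let $v$ be the third vertex of $F$. Then $v$ is a bottom vertex of $t_{\top}$ (an endpoint of its bottom $\pi$-edge) but a top vertex of $t_{\bot}$. The two non-$e$ edges of $F$, call them $a_1=v_1v$ and $a_2=v_2v$, are side edges of \emph{both} $t_{\top}$ and $t_{\bot}$, and in each tetrahedron they are cyclically adjacent in the equator (since they share $v$). The veering alternation condition in each tetrahedron forces $a_1$ and $a_2$ to have opposite colors. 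The key point is that the ambient orientation induces the CCW equator convention in each of $t_{\top}$ and $t_{\bot}$ in a way that interacts differently with $v$'s role as bottom versus top vertex: tracking the CCW ordering at $v$ against the veering rule in both tets yields opposite color assignments for the pair $(a_1,a_2)$, which is impossible since the colors of $a_1,a_2$ are global data on edges of $\Delta$. This contradiction rules out adjacency of $t_{\top}$ and $t_{\bot}$, so there must be at least one side tetrahedron on each of the two complementary arcs of the link circle.

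The main obstacle is the orientation bookkeeping in the third claim: I need to set up the CCW equator conventions in $t_{\top}$ and $t_{\bot}$ consistently with the ambient orientation, and then carefully track how the flip in $v$'s top/bottom role makes the two induced color constraints on $(a_1,a_2)$ incompatible. Once this is done cleanly the contradiction is immediate from the veering alternation condition.
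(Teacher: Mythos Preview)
The paper does not supply its own proof here; it simply refers the reader to \cite[Section~2]{FG13}. Your direct argument is correct. The angle-sum together with the source/sink coorientation count on the edge link handles the first two claims in the standard way. For the third, the contradiction you outline does go through: the shared face $F$ carries a single coorientation-induced orientation, and reading the veering alternation from the free vertex $v$ around $\partial F$ yields opposite color patterns for the pair $(a_1,a_2)$ depending on whether $v$ is a bottom vertex (as in $t_{\top}$, where $F$ is a top face) or a top vertex (as in $t_{\bot}$, where $F$ is a bottom face). The orientation bookkeeping you flag as the main obstacle resolves cleanly once you note that the coorientation on $F$ agrees from both sides while the top/bottom role of $v$ flips; concretely, an orientation-compatible identification of the two models forces the endpoints of $e$ to be swapped, which is exactly what makes the two induced color constraints on $(a_1,a_2)$ incompatible.
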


To state the second proposition, we have to set up some notation:
Let $v$ be an ideal vertex of $\Delta$. We write $\partial_v \Delta$ for the link of $v$. This is a triangulated surface, where the vertices/edges/faces of $\partial_v \Delta$ correspond to vertices of edges/faces/tetrahedra of $\Delta$ at $v$ respectively. In particular, each vertex of $\partial \Delta$ inherits the color of the corresponding edge of $\Delta$, and each edge of $\partial \Delta$ inherits the coorientation of the corresponding face of $\Delta$.

\begin{prop} \label{prop:vtboundarytriang}
Let $\Delta$ be a veering triangulation and let $v$ be an ideal vertex of $\Delta$. There exist edge paths $\{l_i\}$ in $\partial_v \Delta$ such that:
\begin{itemize}
    \item The vertices along $l_{2i}$ are all colored blue, while the vertices along $l_{2i+1}$ are all colored red.
    \item The faces between $l_{2i}$ and $l_{2i+1}$ form a stack of upward pointing triangles, while the faces between $l_{2i}$ and $l_{2i+1}$ form a stack of downward pointing triangles.
\end{itemize}
See \Cref{fig:vtboundarytriang}.
\end{prop}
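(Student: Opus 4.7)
The plan is to analyze the link structure of $\partial_v \Delta$ locally at each vertex and then assemble these local pictures into the global ladderpole structure. Recall that a vertex $w$ of $\partial_v \Delta$ corresponds to an edge $e$ of $\Delta$ incident to $v$, each edge of $\partial_v \Delta$ at $w$ corresponds to a face of $\Delta$ containing $e$, and each triangle at $w$ corresponds to a tetrahedron of $\Delta$ containing $e$ and $v$. By \Cref{prop:vtsideedge}, the tetrahedra around $e$ consist of exactly one $T_{\mathrm{top}}$ in which $e$ is the top edge, exactly one $T_{\mathrm{bot}}$ in which $e$ is the bottom edge, and the remaining tetrahedra in which $e$ is a side edge.

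The transverse taut condition tells me that at $T_{\mathrm{top}}$ and $T_{\mathrm{bot}}$ the two faces containing $e$ have matching coorientations, while at each side-edge tetrahedron the two faces containing $e$ have opposite coorientations. Translating this to $\partial_v \Delta$, as we cycle around $w$ the induced orientations on consecutive edges agree across the triangles of $T_{\mathrm{top}}$ and $T_{\mathrm{bot}}$ and flip across every other triangle at $w$. I would conclude that the edges at $w$ partition into two maximal arcs, a top fan containing the two edges at $T_{\mathrm{top}}$'s triangle and a bottom fan containing the two edges at $T_{\mathrm{bot}}$'s triangle. In particular the total number of tetrahedra around $e$ is even.

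Next I would extract color information. The red-blue-red-blue alternation of side-edge colors around the axis of each tetrahedron implies that the two side edges of a tetrahedron incident to any given vertex carry opposite colors. Applying this at each side-edge tetrahedron around $e$, and combining with the constraints coming from the top edge of $T_{\mathrm{top}}$ and bottom edge of $T_{\mathrm{bot}}$ (which are themselves vertices of $\partial_v \Delta$ adjacent to $w$), I would show that the vertices of $\partial_v \Delta$ adjacent to $w$ along one boundary arc of the top fan are all of one color while those along the other boundary arc are all of the opposite color. This monochromaticity of each fan boundary, at every vertex $w$ simultaneously, is the key structural input.

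Finally I would define the paths $l_i$ by starting at an arbitrary vertex and inductively traversing the edge to the neighbor along the chosen side of the top fan at each step. By the previous paragraph each such path is monochromatic, and compactness of $\partial_v \Delta$ together with the local regularity ensures the paths close up and collectively separate $\partial_v \Delta$ into strips. The strip of triangles between consecutive $l_i$ and $l_{i+1}$ is obtained by pivoting through top (resp.\ bottom) fans from one path to the other; the coorientation analysis above shows that these triangles are either all upward-pointing or all downward-pointing, with the type alternating as $i$ increases, giving the claimed ladder structure. The main obstacle I expect is the bookkeeping at each vertex: one needs to verify that the chosen boundary of the top fan at one vertex is compatible with the chosen boundary at the next vertex, so that the paths $l_i$ are genuinely well-defined and closed. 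A careful check that two consecutive tetrahedra in a fan share their common face with the expected color and coorientation data should resolve this.
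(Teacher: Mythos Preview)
The paper does not give its own proof of this proposition; immediately after stating it (and \Cref{prop:vtsideedge}) the paper writes ``We refer the reader to \cite[Section 2]{FG13} for proofs.'' So there is no in-paper argument to compare against.

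Your outline is essentially the standard argument one finds in that reference: use \Cref{prop:vtsideedge} to see that around each vertex $w$ of $\partial_v\Delta$ the triangles split into a stack coming from the top/bottom tetrahedra and the side tetrahedra on each side; read off from the transverse taut structure that the coorientations organize these into an ``upper'' and ``lower'' fan at $w$; use the veering colouring (the red--blue alternation of side edges at each ideal vertex of a tetrahedron) to show that the outer vertices on one side of each fan are monochromatic; and then propagate from vertex to vertex to obtain the ladderpole curves $l_i$. The overall strategy is sound and matches the literature.

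A couple of places where your write-up would need tightening: your sentence ``the induced orientations on consecutive edges agree across the triangles of $T_{\mathrm{top}}$ and $T_{\mathrm{bot}}$ and flip across every other triangle'' is the right idea but you should be explicit that you are tracking whether each face is a top face or a bottom face of the tetrahedron on a fixed side (equivalently, whether the coorientation points into or out of that tetrahedron), not the absolute coorientation; and your phrase ``one boundary arc of the top fan'' should be replaced by a precise statement about which adjacent vertices of $w$ lie to the left versus the right in the fan. The compatibility check you flag at the end --- that the fan boundaries at consecutive vertices line up --- is exactly the point where the veering colouring is used, and it does go through once you unwind the definition carefully.
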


\begin{figure}
    \centering
    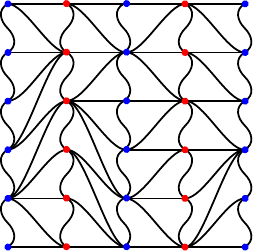
    \caption{The boundary triangulation at a vertex of a veering triangulation.}
    \label{fig:vtboundarytriang}
\end{figure}

We refer to the paths $\{l_{2i}\}$ as the \textbf{blue ladderpole curves} and the paths $\{l_{2i+1}\}$ as the \textbf{red ladderpoole curves} at $v$. We refer to the homotopy class of the union of curves $\bigcup l_{2i}$ as the \textbf{ladderpole class} of $\Delta$ at $v$.

We refer the reader to \cite[Section 2]{FG13} for proofs of \Cref{prop:vtsideedge} and \Cref{prop:vtboundarytriang}.

\subsection{(Almost) veering branched surfaces} \label{subsec:vbsdefn}

In this subsection, we recall the definition of veering branched surfaces. This is, in some sense, the dual notion to veering triangulations. One utility of veering branched surfaces over veering triangulations, however, is that the former generalizes to almost veering branched surfaces. This generalization will play a role in the proof of \Cref{thm:pennergenusone}.

\begin{defn} \label{defn:branchsurf}
Let $M$ be a 3-manifold. A \textbf{branched surface} $B$ in $M$ is an embedded finite $2$-complex smoothly modelled after one of the pictures in \Cref{fig:bslocal}.

\begin{figure}
    \centering
    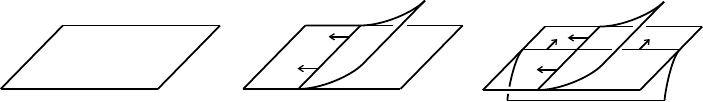
    \caption{The local models for branched surfaces. The arrows indicate the maw coorientation of the branch locus.}
    \label{fig:bslocal}
\end{figure}

The set of points where $B$ is locally of the form of \Cref{fig:bslocal} middle or right is called the \textbf{branch locus} of $B$ and is denoted by $\brloc(B)$. The points where $B$ is locally of the form of \Cref{fig:bslocal} right are called the \textbf{triple points} of $B$. The complementary regions of $\brloc(B)$ in $B$ are called the \textbf{sectors} of $B$. 

The branch locus $\brloc(B)$ is a union of smoothly embedded circles, which we refer to as the \textbf{branch loops}. Each branch loop has a canonical coorientation on $B$, which we call the \textbf{maw coorientation}, given locally by the direction from the side with more sectors to the side with less sectors. See the arrows in \Cref{fig:bslocal}.

The sectors of $B$ are surfaces with boundary, with corners at where the boundary locally switches from lying along one branch loop to another. 
\end{defn}

Let $C$ be a complementary region of a branched surface $B$. The boundary of $C$ is smooth away from a finite collection of \textbf{cusp circles}, where $\partial C$ is attached onto the branch loops of $B$. We say that $C$ is a \textbf{cusped torus shell} if $C$ is homeomorphic to a $T^2 \times [0,\infty)$ and the collection of cusp circles on its boundary is nonempty.

\begin{defn} \label{defn:vbs}
A branched surface $B$ in a $3$-manifold $M$ along with a choice of orientations on its branch loops is \textbf{veering} if:
\begin{enumerate}[label=(\roman*)]
    \item Each sector of $B$ is a disc with four corners.
    \item Each component of $M \cut B$ is a cusped torus shell.
    \item At each triple point, the orientation of each branch loop induces the opposite of the maw coorientation on the other branch loop.
\end{enumerate}

A branched surface $B$ in a $3$-manifold $M$ along with a choice of orientations on its branch loops is \textbf{almost veering} if:
\begin{enumerate}[label=(\roman*)]
    \item Each sector of $B$ is a disc with four corners, or an annulus or a Mobius band without corners.
    \item Each component of $M \cut B$ is a cusped torus shell.
    \item At each triple point, the orientation of each branch loop induces the opposite of the maw coorientation on the other branch loop.
\end{enumerate}
\end{defn}

\begin{rmk}
\Cref{defn:vbs} is modified from \cite[Definitions 3.1 and 3.5]{Tsa22a} in two ways. Firstly, we have removed the possibility for components of $M \cut B$ to be cusped solid tori. This is because such components will not come up in this paper, and in doing so we can simplify certain statements in this paper, such as \Cref{prop:vtdualvbs} below.

Secondly, we have reversed the orientation condition at each triple point. This accounts for the fact that \cite{Tsa22a} uses the `flow goes downwards' convention while this paper uses the `flow goes upwards' convention.
\end{rmk}

\begin{prop} \label{prop:vtdualvbs}
Let $M$ be an oriented $3$-manifold. For every veering triangulation $\Delta$ on $M$, the dual $2$-complex $B$ to $\Delta$ can be given the structure of a veering branched surface.

Conversely, for every veering branched surface $B$ on $M$, the dual ideal triangulation $\Delta$ to $M$ can be given the structure of a veering triangulation.
\end{prop}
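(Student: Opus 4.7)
The plan is to realize the duality between $\Delta$ and $B$ in the usual way: a dual vertex (which will become a triple point of $B$) inside each tetrahedron, a branch-locus arc transverse to each face, and a sector around each edge $e$ whose boundary cyclically traverses the dual vertices of the tetrahedra incident to $e$. The smooth branched-surface structure comes from the face coorientations of the transverse taut structure: at each dual arc, the two faces of $\Delta$ fuse smoothly on the side the coorientations point toward and cusp on the other. Each branch loop then runs parallel to a top or bottom $\pi$-edge of a single tetrahedron, and inherits an orientation from this edge together with the orientation of $M$.

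For the forward direction (VT $\to$ VBS), I would check the three axioms of \Cref{defn:vbs} in turn. Axiom (i), the quadrilateral condition, follows from \Cref{prop:vtsideedge}: each edge $e$ is the top of exactly one tetrahedron and the bottom of exactly one tetrahedron, contributing two corners of the sector dual to $e$, and a side edge of at least one tetrahedron on either side, contributing the two remaining corners where the upper stack transitions to the lower stack on each side of $e$. Axiom (ii) follows from \Cref{prop:vtboundarytriang}: the complementary region at each ideal vertex $v$ is a regular neighborhood of $v$, which is a torus shell whose cusp circles are exactly the nonempty collection of red and blue ladderpole curves on $\partial_v\Delta$. Axiom (iii) is a local statement at a single tetrahedron and is a direct transcription of the red-blue-red-blue alternation in \Cref{defn:vt} once one fixes sign conventions. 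The reverse direction runs this dictionary backwards: tetrahedra are declared dual to triple points, the four corners of each sector are used to label two edges of each tetrahedron as $\pi$ and the other four as $0$, the maw coorientations yield the transverse face coorientations, and orienting branch loops coherently with the $\pi$-edges recovers the red-blue coloring from axiom (iii). The angle-sum-$2\pi$ condition around each edge holds because the tetrahedra incident to the edge split into two stacks separated by the four sector corners.

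The main obstacle I expect is precisely setting up the sign conventions so that axiom (iii) of \Cref{defn:vbs} and the alternating red-blue-red-blue condition in \Cref{defn:vt} become literally the same local statement at a triple point. Once conventions relating the orientation of $M$, the orientation of each $\pi$-edge, the maw coorientation of the corresponding branch loop, and the red/blue color are fixed and checked to be consistent at every triple point of $B$, everything else---the quadrilateral sectors, the torus-shell complementary regions, and the duality of cell structures---reduces to \Cref{prop:vtsideedge}, \Cref{prop:vtboundarytriang}, and routine cell-complex bookkeeping.
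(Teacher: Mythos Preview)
Your proposal is correct and takes essentially the same approach as the paper. The paper's own argument is only a sketch: it points to a single figure encoding the local dictionary at a tetrahedron/triple point and cites \cite{Tsa22a} for details, while you spell out how each axiom of \Cref{defn:vbs} is verified and correctly isolate the sign-convention check for axiom~(iii) as the crux. One small imprecision worth flagging: your accounting of the four corners of the sector dual to an edge $e$ (two from the tetrahedra where $e$ is a $\pi$-edge, two from ``transitions'') is not quite the right bookkeeping. Working out the branch-loop pairing at a triple point shows that whether a given side-edge incidence contributes a corner depends on the veering colors, so the quadrilateral shape is where the veering (not just transverse taut) hypothesis genuinely enters, via \Cref{prop:vtsideedge}. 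This is exactly the content of the paper's figure and its ``fins spiral left/right'' mnemonic; once you draw that local picture carefully your outline goes through.
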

\begin{proof}[Sketch of proof]
This is shown in \cite[Propositions 2.5 and 3.2]{Tsa22a}. For completeness, we provide a sketch of proof.

The heart of the proof is \Cref{fig:vtdualvbs}. Given a veering triangulation $\Delta$, we can define a branched surface structure on the dual $2$-complex to $\Delta$ according to the figure. Conversely, given a veering branched surface $B$, we can define a veering structure on the dual ideal triangulation to $B$ according to the figure.

\begin{figure}
    \centering
    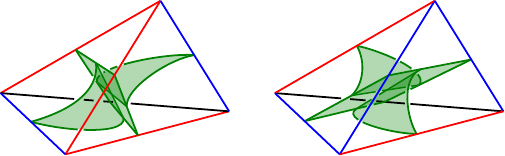
    \caption{Key for duality between veering triangulations and veering branched surfaces.}
    \label{fig:vtdualvbs}
\end{figure}

A useful mnemonic for \Cref{fig:vtdualvbs} is that an edge is b\textbf{L}ue or \textbf{R}ed if the `fins' at the bottom of the dual sector spiral in a \textbf{L}eft- or \textbf{R}ight-handed manner, respectively.
\end{proof}

In the context of \Cref{prop:vtdualvbs}, we say that $\Delta$ is the \textbf{dual veering triangulation} to $B$, and conversely, $B$ is the \textbf{dual veering branched surface} to $\Delta$.

\subsection{Horizontal surgery} \label{subsec:vbshsurdefn}

In this subsection, we recall the horizontal surgery operation on almost veering branched surfaces.

We first define the types of curves we perform the surgery operation along. To do so we have to set up some terminology.

Let $B$ be a branched surface in an oriented 3-manifold $M$. Let $c \subset B$ be a smoothly embedded curve which avoids the triple points of $B$.
$c$ is said to be \textbf{orientation preserving} if the tangent planes of $B$ along $c$ can be oriented in a coherent way.

Let $N$ be a small tubular neighborhood of $c$ in $M$. When $c$ is orientation preserving, for $N$ small enough, $N \cap B$ is an annulus $A$ with sectors attached along disjoint arcs, with each arc corresponding to a point of intersection between $c$ and $\brloc(B)$. We call $A$ a \textbf{smooth neighborhood} of $c$ in $B$. We refer to the two components of $N \cut A$ as the \textbf{regular half-neighborhoods} of $\alpha$ in $M$.

\begin{defn} \label{defn:vbshsurcurve}
Let $B$ be an almost veering branched surface in an oriented 3-manifold $M$. Let $c \subset B$ be a smoothly embedded curve which avoids the triple points of $B$. Suppose $c$ is orientation preserving. Let $N$ be a tubular neighborhood of $c$ in $M$, let $A$ be a smooth neighborhood of $\alpha$ in $B$, and let $N_1$ and $N_2$ be the regular half-neighborhoods of $c$ in $M$.
We say that $c$ is a \textbf{horizontal surgery curve} if:
\begin{enumerate} 
    \item The arcs in $\brloc(N \cap B)$ are all oriented from the same boundary component of $A$ to the other.
    \item The arcs in each $\brloc(N_i  \cap B)$ are cooriented in the same direction, but the two directions for the two regular half-neighborhoods are opposite to each other. 
\end{enumerate}

Recall that $M$ is oriented. Observe that for each $i$, 
$$(\text{coorientation of arcs in $\brloc(N_i \cap B)$}, \text{orientation of arcs in $\brloc(N_i \cap B)$})$$
determines an orientation of $A$. If the induced coorientation on $A$ points into $N_i$, we say that $c$ is \textbf{positive}. If the induced coorientation on $A$ points out of $N_i$, we say that $c$ is \textbf{negative}. 

See \Cref{fig:vbshsur} top for an illustration of a positive and a negative horizontal surgery curve.
\end{defn}

\begin{figure}
    \centering
    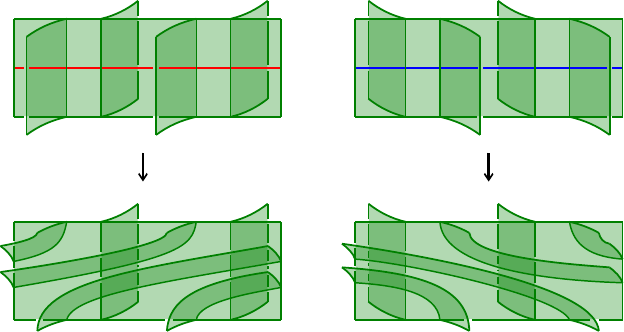
    \caption{Left/Right: Performing horizontal surgery on a veering branched surface along a positive/negative horizontal surgery curve, respectively.}
    \label{fig:vbshsur}
\end{figure}

\begin{constr}
Suppose $c$ is a positive/negative horizontal surgery curve as in \Cref{defn:vbshsurcurve}. 
Fix one of the regular half-neighborhoods $N_i$.
Let $\mu \subset \partial N_i$ be a meridian curve. We orient $c$ so that $(\mu,c)$ is a positive basis of $H_1(\partial N_i; \mathbb{R})$.

For each positive/negative integer $n$, respectively, we cut $N_i$ out of $M$, and glue it back with a map that is identity on $\partial N_i \backslash A$ and sends the meridian to a curve of isotopy class $\mu+nc$, such that the arcs in the branch locus of $N_i \cap B$ intersect those in $N_{i+1} \cap B$ minimally. See \Cref{fig:vbshsur}. This gives us a branched surface $B_{\frac{1}{n}}(c)$ in the 3-manifold $M_{\frac{1}{n}}(c)$ obtained by doing $\frac{1}{n}$ surgery along $c$ on $M$ (with respect to the basis we chose above). We call this operation a \textbf{$\frac{1}{n}$ horizontal surgery on $B$ along $c$}.

We draw attention to the fact that there are two possible ways to perform this surgery: One can cut and reglue either $N_1$ or $N_2$. If one only performs a single surgery operation, or more generally, if one performs surgery along disjoint horizontal surgery curves, then the isotopy class of the resulting branched surface is independent of these choices. However, this point will come up in the proof of \Cref{thm:pennergenusone}, when we have to perform surgery along intersecting horizontal surgery curves.
\end{constr}

\begin{rmk}
Our notation here differs slightly from \cite{Tsa22a}. In \cite{Tsa22a} we describe the surgery coefficient using an orientation on $N$ derived from the data of $c$. However, since we deal with oriented $3$-manifolds in this paper, it is more natural to compute the surgery coefficient with respect to the ambient orientation instead, which accounts for the difference in sign in the case when $c$ is positive.
\end{rmk}

\begin{prop}[{\cite[Proposition 4.4]{Tsa22a}}] \label{prop:vbshsur}
Let $c$ be a positive/negative horizontal surgery curve on an almost veering branched surface $B$. Let $n$ be a positive/negative integer, respectively. Then the branched surface $B_{\frac{1}{n}}(c)$ is almost veering.

Furthermore, if $B$ is veering then $B_{\frac{1}{n}}(c)$ is veering as well.
\end{prop}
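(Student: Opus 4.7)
The plan is to verify conditions (i)--(iii) of \Cref{defn:vbs} for the modified branched surface $B_{\frac{1}{n}}(c)$. Since the surgery operation is supported in a tubular neighborhood $N$ of $c$, the branched surface $B_{\frac{1}{n}}(c)$ coincides with $B$ outside $N$. Consequently, every sector, triple point, and complementary region disjoint from $N$ satisfies the conditions automatically, and only the local data inside and along $\partial N$ needs to be analyzed, in cases depending on the signs of $c$ and $n$.

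The main obstacle, in my view, is verifying condition (iii) at the newly created triple points. The regluing produces exactly $|n|$ new transverse intersections between the arcs of $\brloc(N_1 \cap B)$ and the image of $\brloc(N_2 \cap B)$ on the annulus $A$, one for each unit of the Dehn twist coefficient. By hypothesis, the arcs in each $\brloc(N_i \cap B)$ are cooriented consistently with opposite signs across the two regular half-neighborhoods, and oriented coherently from one boundary component of $A$ to the other. The sign of $n$ prescribes the direction of the twist used to reglue $\partial N$, which in turn prescribes how each pair of arcs crosses. A direct case check shows that the definitions of positive/negative horizontal surgery curve and positive/negative surgery coefficient are set up so that at each new triple point, the orientation of one branch loop is opposite to the maw coorientation of the other, as required.

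Once condition (iii) is established, conditions (i) and (ii) follow from tracing the local picture depicted in \Cref{fig:vbshsur}. For condition (i), each sector of $B_{\frac{1}{n}}(c)$ meeting $N$ decomposes as a union of pieces of sectors of $B$ glued across the new branch arcs, and a local inspection shows that these pieces still assemble into discs with four corners. In particular, no new annular or Mobius sectors are produced out of disc sectors, so if $B$ is veering then $B_{\frac{1}{n}}(c)$ is veering as well. For condition (ii), the surgery re-identifies the two sides of $A$ by a Dehn twist along a curve isotopic to $c$, which is compatible with the cusp circle structure on each complementary region meeting $N$. Each such region is reassembled as a cusped torus shell, with the number of cusp circles increased by exactly $|n|$, corresponding to the new triple points.
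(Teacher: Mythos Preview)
The paper does not prove this proposition; it is quoted as \cite[Proposition~4.4]{Tsa22a} without argument. Your overall strategy---verify conditions (i)--(iii) of \Cref{defn:vbs} by analyzing the modification inside the tubular neighborhood $N$---is the natural one and is presumably what the cited proof does.

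Two of your specific claims are wrong, however, and they indicate that the local picture has not been carefully traced. First, the number of new triple points is not $|n|$: if there are $p$ branch arcs attached from the $N_1$ side and $q$ from the $N_2$ side, then after the twist each $N_1$-arc meets each $N_2$-arc in $|n|$ points, giving $|n|pq$ new triple points in total. Second, the assertion that the cusp-circle count of an affected complementary region ``increases by exactly $|n|$, corresponding to the new triple points'' conflates triple points with cusp circles. Cusp circles are closed curves in $\brloc(B)$ along which the boundary of a complementary region is pinched; triple points are isolated crossings of branch loops. In fact the regluing map is the identity on $\partial N_i \setminus A$, so each branch arc on $A$ keeps the same endpoints on $\partial A$ and closes up into the same branch loop as before---the set of cusp circles on each complementary region is unchanged by the surgery, not enlarged.

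Neither miscount is actually used downstream, so they do not by themselves break the argument. But the real content of the proof lies in what you call ``local inspection'' for (i) and (ii): tracing how the strips of $A$ between adjacent arcs are rearranged into the cells cut out by the twisted and straight arcs, and checking that every resulting sector (after extending beyond $A$) is still a disc with four corners, and that every complementary region remains a cusped torus shell. This combinatorial bookkeeping is where the work is, and you have only asserted the outcome rather than carried it out.
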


In the setting of the second statement of \Cref{prop:vbshsur}, if $\Delta$ is the veering triangulation dual to $B$, we will say that the veering triangulation dual to $B_{\frac{1}{n}}(c)$ is obtained by \textbf{$\frac{1}{n}$ horizontal surgery on $\Delta$ along $c$} and denote it as $\Delta_{\frac{1}{n}}(c)$.

We will need some rough understanding of how the triangulation is modified from $\Delta$ to $\Delta_{\frac{1}{n}}(c)$. Notice that the sequence of edges of $\brloc(B)$ that $c$ passes through dualizes to a sequence of faces that connect up to an annulus $A$ (with punctures on its boundary) carried by the $2$-skeleton of $\Delta$. We say that $A$ is the \textbf{dual annulus} to $c$. We illustrate one possibility for the dual annulus to the positive surgery curve in \Cref{fig:vbshsur} top left in \Cref{fig:dualannulus}.

\begin{figure}
    \centering
    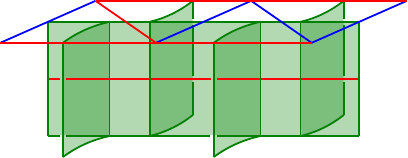
    \caption{One possibility for the dual annulus to the positive surgery curve in \Cref{fig:vbshsur} top left.}
    \label{fig:dualannulus}
\end{figure}

Without loss of generality suppose that $c$ is positive. Using \Cref{fig:vtdualvbs}, one can check that the boundary of $A$ always consists of red edges of $\Delta$. On the other hand, the edges in the interior of $A$ can be of either color. If any red edges come up however, the local structure of $\Delta$ is uniquely determined in the following sense.

\begin{lemma} \label{lemma:vbshsurdualannuluswrongcolor}
Let $c$ be a positive horizontal surgery curve of a veering branched surface $B$. Let $A$ be the dual annulus of $c$. If there is a red edge $e$ in the interior of $A$, then the two faces in $A$ containing $e$ is the pair of bottom faces of a tetrahedron that has a blue top edge.
\end{lemma}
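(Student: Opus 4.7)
The strategy is a local combinatorial analysis at the edge $e$, using the color-to-chirality dictionary of \Cref{fig:vtdualvbs} and the defining properties of a positive horizontal surgery curve from \Cref{defn:vbshsurcurve}. First I set up the dual picture. By \Cref{prop:vtsideedge} and the fact that every sector of $B$ has exactly four corners, the edge $e$ is incident to four tetrahedra: $t_{\top}$ and $t_{\mathrm{bot}}$, with $e$ as top and bottom edge respectively, together with two side tetrahedra $t_{\mathrm{side}_{\pm}}$, arranged cyclically as $t_{\top}, t_{\mathrm{side}_+}, t_{\mathrm{bot}}, t_{\mathrm{side}_-}$ around $e$. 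Their dual triple points form the four corners of the sector $S$ of $B$ dual to $e$, and the four faces of $\Delta$ at $e$ correspond to the four boundary arcs of $\partial S$. In particular, the two lower faces $f_3, f_4$ of $t_{\mathrm{bot}}$ are the two boundary arcs of $S$ adjacent to the corner dual to $t_{\mathrm{bot}}$. The curve $c$ crosses $S$ by entering through one boundary arc and exiting through another, and the two faces of $A$ meeting at $e$ are precisely the faces dual to these two arcs.

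The key step is to show that these two arcs are $f_3$ and $f_4$. The positivity of $c$, via \Cref{defn:vbshsurcurve}, prescribes a specific orientation pattern on the branch-locus arcs met in a smooth neighborhood of $c$: they are all oriented across that neighborhood in a common direction, and their coorientations point into one fixed regular half-neighborhood. On the other hand, because $e$ is red, \Cref{fig:vtdualvbs} tells us that the four branch loops on $\partial S$ spiral right-handedly around $S$. Enumerating the candidate pairs of boundary arcs of $S$ through which $c$ could enter and exit, one checks that only the pair $\{f_3, f_4\}$ is simultaneously compatible with the positive-curve pattern and the right-handed spiral; the remaining pairs either reverse a required coorientation at one of the two crossings or violate condition (1) of \Cref{defn:vbshsurcurve}.

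Finally, to show that the top edge $e'$ of $t_{\mathrm{bot}}$ is blue, suppose for contradiction that $e'$ is red. Then \Cref{fig:vtdualvbs} together with condition (iii) of \Cref{defn:vbs} — at each triple point the orientation of each branch loop induces the opposite maw coorientation on the other — forces a unique local picture of $B$ at the triple point dual to $t_{\mathrm{bot}}$, in which the two branch arcs dual to $f_3$ and $f_4$ acquire coorientations pointing the same way across $S$. This contradicts condition (2) of \Cref{defn:vbshsurcurve}, which requires the branch arcs in the two regular half-neighborhoods of $c$ to be cooriented in opposite directions across $c$. Hence $e'$ is blue. The principal obstacle is the combinatorial case analysis in the second paragraph: once the dictionary of \Cref{fig:vtdualvbs} has been faithfully translated into orientation data on $B$, the remainder reduces to a routine enumeration of local configurations.
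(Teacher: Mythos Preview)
Your overall approach---local analysis at the sector $S$ dual to $e$ using the color/chirality dictionary of \Cref{fig:vtdualvbs} and the surgery-curve axioms---is the same as the paper's, but your first paragraph contains a genuine error that undermines paragraph~2. You infer from ``every sector has four corners'' together with \Cref{prop:vtsideedge} that $e$ lies in exactly four tetrahedra and that $\partial S$ consists of four boundary arcs. This conflates corners with sides: the sector $S$, being the $2$-cell dual to $e$, is a $k$-gon with one boundary branch-arc for \emph{each} face of $\Delta$ containing $e$, and $k$ can be any integer $\geq 4$ (\Cref{prop:vtsideedge} gives only a lower bound on the number of side tetrahedra). The four \emph{corners} are the vertices of $S$ at which $\partial S$ switches branch loops, but between consecutive corners $\partial S$ may pass through several further triple points without switching. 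Your enumeration of ``candidate pairs of boundary arcs'' in paragraph~2 is therefore incomplete, and the case analysis does not establish that $c$ crosses precisely at the two arcs adjacent to the top corner.

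The paper's proof sidesteps this entirely: rather than enumerate, it observes directly that for the segment $s$ the maw coorientation at \emph{both} endpoints points into $s$ (this is read off from the positivity of $c$ against \Cref{fig:vtdualvbs}), and that the adjacent segments of $c$ therefore lie in blue sectors. The first fact forces the two arcs $c$ crosses to be those adjacent at the top corner of $S$---dual to $t_{\mathrm{bot}}$---independently of $k$; the second pins down the chirality at that triple point (the ``\Cref{fig:vtdualvbs} right'' form), giving a blue top edge. Your paragraph~3 attempts this last step by contradiction with condition~(2) of \Cref{defn:vbshsurcurve}, but the inference is not justified as written: condition~(2) relates the maw direction to which half-neighborhood $N_i$ contains the fin, and you have not determined the half-neighborhoods for the fins at $f_3$ and $f_4$, so no contradiction follows.
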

\begin{proof}
Each edge in the interior of $A$ corresponds to a segment of $c$ between two adjacent intersection points between $c$ and $\brloc(B)$. For the segment $s$ of $c$ corresponding to $e$, the maw coorientation must point into $s$ at both endpoints. Meanwhile, the segments adjacent to $s$ correspond to blue edges. 

Consider the top corner $v$ of the sector which $s$ lies on.
The two faces in $A$ containing $e$ must be the pair of bottom faces of the tetrahedron dual to $v$. Moreover, $v$ must be of the form of \Cref{fig:vtdualvbs} right, hence the top edge of the tetrahedron must be blue.
\end{proof}

We can isotope $A$ by moving each pair of faces as in \Cref{lemma:vbshsurdualannuluswrongcolor} across the tetrahedron so that they now lie on the pair of top faces. This ensures that every edge in the interior of $A$ is blue. 

This corresponds to isotoping the horizontal surgery curve $c$ across a triple point of $B$. It is straightforward to check that the branched surfaces obtained by performing horizontal surgery on the original and isotoped curve are isotopic. See \Cref{fig:vbshsurcurvemovecorrectcolor} for an example. Hence we can assume that all the interior edges in $A$ are blue.

\begin{figure}
    \centering
    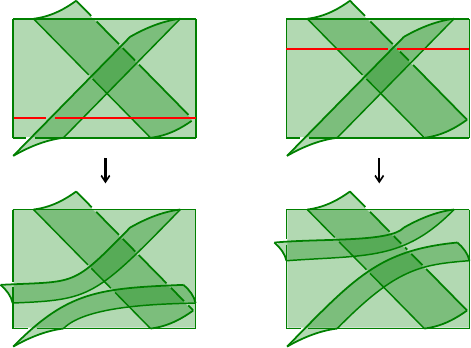
    \caption{Isotoping the horizontal surgery curve so that all interior edges of the dual annulus are blue.}
    \label{fig:vbshsurcurvemovecorrectcolor}
\end{figure}

With this arranged, we make the following technical observation, which will play a role in the proof of \Cref{thm:hsurcorr}.

\begin{lemma} \label{lemma:vthsurtopbottomfaces}
Let $c$ be a positive horizontal surgery curve of a veering branched surface $B$. Let $A$ be the dual annulus of $c$. Suppose that $A$ only has blue interior edges. Let $v$ be a vertex of $A$. Suppose there is more than one interior edges of $A$ with an endpoint at $v$. We order these edges $e_1,...,e_n$ from left to right, looking into $A$. 

Then for every $i=1,...,n-1$, the face between $e_i$ and $e_{i+1}$ is a bottom face of the tetrahedron that has $e_i$ as its bottom edge. Symmetrically, for every $i=2,...,n$, the face between $e_{i-1}$ and $e_i$ is a top face of the tetrahedron that has $e_i$ as its top edge.
\end{lemma}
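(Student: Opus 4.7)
The plan is to localize at the ideal vertex $v$ and translate the question into the vertex link $\partial_v \Delta$, where the ladder structure of \Cref{prop:vtboundarytriang} is available. Under the usual dictionary, each edge of $\Delta$ at $v$ is a vertex of $\partial_v \Delta$ (inheriting its color), each face of $\Delta$ at $v$ is an edge (inheriting its coorientation), and each tetrahedron at $v$ is a triangle. The cross-section of the annulus $A$ near $v$ thus becomes a $1$-subcomplex of $\partial_v \Delta$ whose vertices are the $e_i$ and whose edges are the $F_i$. Since the interior edges of $A$ are all blue, the $e_i$ are blue vertices, and since any edge of $\partial_v \Delta$ joining two blue vertices must lie along a blue ladderpole, the vertices $e_1, \ldots, e_n$ appear consecutively along a single blue ladderpole, with the $F_i$ as the corresponding ladderpole link edges.

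Next, I would show that the ``left to right looking into $A$'' ordering corresponds to traversing this blue ladderpole monotonically in a direction pinned down by the positivity of $c$. Using the coloring and coorientation conventions encoded in \Cref{fig:vtdualvbs} (and exploited in \Cref{lemma:vbshsurdualannuluswrongcolor}), I expect to see that $c$ crossing each interior blue edge in the correct maw direction forces the ladderpole to be traversed in a definite direction with respect to the taut coorientation, independent of $i$.

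The heart of the argument is then to identify, within $\partial_v \Delta$, the two tetrahedra singled out by \Cref{prop:vtsideedge} for each $e_i$: the unique tetrahedron with $e_i$ as its top edge and the unique tetrahedron with $e_i$ as its bottom edge. Using \Cref{fig:veertet} and \Cref{fig:vtdualvbs}, I would show that at the blue vertex $e_i$ on the blue ladderpole, the triangle representing the tetrahedron with $e_i$ as bottom edge has the link edge $F_i$ as one of its sides (where $F_i$ corresponds to the ``upward'' direction at $e_i$ along the ladderpole), while the triangle representing the tetrahedron with $e_{i+1}$ as top edge lies on the same link edge $F_i$ from the opposite side. Granting this, $F_i$ is simultaneously a bottom face of the tetrahedron with $e_i$ as bottom edge and a top face of the tetrahedron with $e_{i+1}$ as top edge, which gives both claims of the lemma at once.

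The main obstacle will be the bookkeeping in this last step: precisely determining, using only the veering coloring and coorientation data, which link triangle at a blue ladderpole vertex corresponds to the ``bottom-edge tetrahedron'' and which to the ``top-edge tetrahedron'', and checking that the direction of ladderpole traversal forced by the positivity of $c$ aligns with these assignments consistently across all $i$. The asymmetric roles of $e_i$ and $e_{i+1}$ in the two statements of the lemma trace directly back to this asymmetric direction of traversal.
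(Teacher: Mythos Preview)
The paper does not actually supply a proof of this lemma: it is stated as a ``technical observation'' and the exposition immediately continues with ``We can now proceed with the surgery.'' So there is no proof in the paper to compare against directly.

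Your vertex-link approach is a valid route and is internally consistent. The claim that any link edge between two blue vertices lies on a blue ladderpole is correct (this is exactly the content of \Cref{prop:vtboundarytriang}), and the identification of the two triangles adjacent to a blue ladderpole edge with the ``bottom-edge tetrahedron'' of one endpoint and the ``top-edge tetrahedron'' of the other is also correct: at a blue vertex $e_i$, the triangle coming from the tetrahedron with $e_i$ as bottom edge has $e_i$ and the two side edges of that tetrahedron as its three link vertices, and the blue side edge gives the adjacent ladderpole vertex. The orientation check you flag as the ``main obstacle'' is genuine but routine once one fixes conventions.

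That said, the paper's own style (as in the proof of \Cref{lemma:vbshsurdualannuluswrongcolor} immediately preceding) would argue on the dual branched surface rather than on the vertex link: each interior edge $e_i$ corresponds to a segment of $c$ lying in a sector of $B$, and the faces $F_i$ correspond to the branch-locus edges $c$ crosses. The condition that all interior edges are blue, together with the maw-coorientation pattern of a positive horizontal surgery curve, pins down the local picture at each sector corner via \Cref{fig:vtdualvbs}, from which one reads off which triple point (hence which tetrahedron) sits above and below each segment. Your approach trades this branched-surface bookkeeping for ladderpole bookkeeping; neither is shorter, but the branched-surface version stays closer to the objects already in play in the surrounding section.
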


We can now proceed with the surgery. We can interpret the surgery on $B$ as cutting out $N \cap B$, where $N$ is a tubular neighborhood of $c$, and gluing in a different branched surface $N \cap B_{\frac{1}{n}}(c)$. From this perspective, \Cref{fig:vbshsur} can be taken to illustrate an example of the cut branched surface $N \cap B$ and corresponding reglued branched surface $N \cap B_{\frac{1}{n}}(c)$.

The corresponding operation on $\Delta$ is that we cut along $A$, obtaining a complex with two embedded copies of $A$ --- $A_+$ on the positive side and $A_-$ on the negative side, then glue in an (ideally) triangulated solid torus $T$. We illustrate this in \Cref{fig:vthsur}.

\begin{figure}
    \centering
    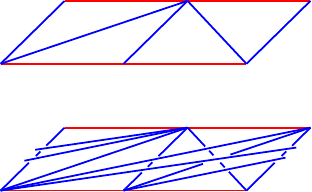
    \caption{Horizontal surgery on the level of veering triangulations involves cutting along the dual annulus $A$ and gluing in an (ideally) triangulated solid torus.}
    \label{fig:vthsur}
\end{figure}

We denote the subsurface of $\partial T$ that is identified with $A_\pm$ by $\partial_\pm T$, respectively, and call the edges in $\partial_+ T \cap \partial_- T$ the \textbf{side} edges of $T$.

\begin{defn} \label{defn:layered}
We say that an ideal triangulation of a $3$-manifold $M$ with finitely many tetrahedra is \textbf{layered} if it can built by placing tetrahedra one at a time, with certain bottom faces of each new tetrahedron placed on top of certain top faces of existing tetrahedra, and finally, if $M$ has no boundary, gluing the remaining top faces to the remaining bottom faces once all the tetrahedra are placed. 
\end{defn}

We claim that the triangulation on $T$ is layered.
This can be seen by observing that $\brloc(N \cap B_{\frac{1}{n}}(c))$ is a union of arcs lying on a smooth neighborhood of $c$, and each tetrahedra in $T$ is dual to an intersection of these arcs. Thus we can build $T$ by inductively placing a tetrahedron corresponding to a lowermost intersection point.

We record this discussion as the following proposition.

\begin{prop} \label{prop:vthsur}
Let $c$ be a positive/negative horizontal surgery curve on a veering branched surface $B$. Let $n$ be a positive/negative integer, respectively. Let $A$ be the dual annulus to $c$. Up to isotoping $c$ we can assume that all the edges in the interior of $A$ are blue/red, respectively. 
Then $\Delta_{\frac{1}{n}}(c)$ can be obtained from $\Delta$ by cutting along the dual annulus $A$ and inserting a layered (ideally) triangulated solid torus $T$.
\end{prop}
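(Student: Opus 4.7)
The plan is to combine the preceding discussion into a structured argument, treating the positive case explicitly (the negative case is entirely symmetric). First I would observe that after the preparatory isotopy discussed just before the statement, all interior edges of the dual annulus $A$ are blue; this reduction is exactly what \Cref{lemma:vbshsurdualannuluswrongcolor} delivers, once we note that each offending red interior edge $e$ lies in a pair of bottom faces of a tetrahedron with blue top edge, so we may push the corresponding pair of faces of $A$ across that tetrahedron onto its top faces, and verify (via a direct local check, as illustrated in \Cref{fig:vbshsurcurvemovecorrectcolor}) that this isotopy of $c$ across a triple point of $B$ does not change the isotopy class of the surgered branched surface $B_{\frac{1}{n}}(c)$. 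After finitely many such moves, every interior edge of $A$ is blue.

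Next, I would dualize the surgery construction on $B$. By \Cref{defn:vbshsurcurve} and the construction following it, $B_{\frac{1}{n}}(c)$ agrees with $B$ outside of a tubular neighborhood $N$ of $c$, while inside $N$ one excises the piece $N \cap B$ and glues in a new branched piece $N \cap B_{\frac{1}{n}}(c)$. On the dual side, $N \cap B$ dualizes precisely to the collection of tetrahedra incident to the faces of $A$, so cutting $B$ along this neighborhood corresponds, on $\Delta$, to cutting along $A$ and producing the two parallel copies $A_+$ and $A_-$. The new branched piece $N \cap B_{\frac{1}{n}}(c)$ similarly dualizes to an ideally triangulated piece $T$ with $\partial T = A_+ \cup_\partial A_-$, giving the desired cut-and-insert description of $\Delta_{\frac{1}{n}}(c)$.

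The heart of the proof is showing that this inserted piece $T$ is a layered ideally triangulated solid torus in the sense of \Cref{defn:layered}. Here I would argue as follows: after the color-correcting isotopy, the branch locus $\brloc(N \cap B_{\frac{1}{n}}(c))$ inside $N$ is a finite collection of arcs lying on a smooth annular neighborhood of $c$ in $N \cap B_{\frac{1}{n}}(c)$, with the arcs from one side crossing those from the other side transversely (as visible from \Cref{fig:vbshsur}). Each transverse intersection of these arcs is dual to a single tetrahedron of $T$, and ordering the intersection points from bottom to top along $c$ gives a stacking order on the tetrahedra of $T$. Using \Cref{lemma:vthsurtopbottomfaces} on the outer boundary $A_- \cup A_+$ to identify which faces are top and which are bottom, one checks that taking the tetrahedra in this bottom-to-top order satisfies the layered condition: each newly placed tetrahedron has certain bottom faces agreeing with top faces of the tetrahedra already placed (or of $A_-$ at the start), and the final top faces agree with $A_+$. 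Finally, because $T$ is bounded by two parallel annuli glued along their boundaries, it is homeomorphic to a solid torus, completing the proof.

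The step I expect to be the main technical obstacle is the verification of the layered structure; one must unravel carefully the dualization between a smooth neighborhood of $c$ equipped with two transverse families of branch arcs and a stack of tetrahedra, and confirm that \Cref{lemma:vthsurtopbottomfaces} really forces the top/bottom face pattern required by \Cref{defn:layered}. Everything else is essentially a matter of keeping track of what the surgery construction of \Cref{constr:sur} does locally on the dual triangulation.
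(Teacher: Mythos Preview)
Your proposal is correct and follows essentially the same approach as the paper: the color-correcting isotopy via \Cref{lemma:vbshsurdualannuluswrongcolor}, the dualization of the cut-and-reglue to a cut along $A$ with an inserted solid torus $T$, and the layered structure of $T$ read off from the bottom-to-top ordering of intersection points of the branch arcs in a smooth neighborhood of $c$. One minor remark: the paper does not actually invoke \Cref{lemma:vthsurtopbottomfaces} here---that lemma is stated at this point but is reserved for the proof of \Cref{thm:hsurcorr}; the layeredness of $T$ follows directly from the arc-crossing picture without it.
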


\subsection{Penner type mapping tori revisited} \label{subsec:vbspennermaptori}

Equipped with the language of (almost) veering branched surfaces, we revisit the material in \Cref{subsec:pennermaptori}. Recall the setting: We have a pair of multicurves $(\alpha,\beta)$ on a surface $S^\circ$ which induces a dual ideal quadrangulation. We suppose that $\sigma \tau^{n_1}_\alpha \tau^{-m_1}_\beta \dots \tau^{n_k}_\alpha \tau^{-m_k}_\beta$ is isotopic to an Anosov type fully-punctured pseudo-Anosov map $f$ with orientable stable and unstable foliations and that $f$ preserves these orientations. Then \Cref{lemma:anosovtypequotientsurface} states that $S^\circ$ covers a genus one surface $T$.

The quadrangulation on $S^\circ$ descends down to a quadrangulation on $T$. We define a train track $\tau$ on $T$ by applying \Cref{fig:quadtt} to each quadrilateral, i.e. we place a train track with two cusps facing the two vertices that meets a negative side on its left and a positive side on its right.

\begin{figure}
    \centering
    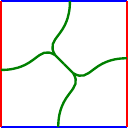
    \caption{Inside each quadrilateral, we place a train track as above.}
    \label{fig:quadtt}
\end{figure}

Then $\tau \times S^1$ becomes a branched surface $B_\sigma$ in $T \times S^1$. All the sectors of $B_\sigma$ are annuli, hence if we orient its branch loops upwards, i.e. in the direction of the suspension flow, then it is easy to check that $B_\sigma$ becomes an almost veering branched surface.

Meanwhile, $\tau$ pulls back to a train track $\tau_S$ on $S^\circ$. We can also obtain $\tau_S$ by intersecting the copy of $S^\circ$ constructed in $T \times S^1$ in \Cref{subsec:pennermaptori} with $B_\sigma$. In other words, under our identification of $T \times S^1$ with $T_\sigma$, $B_\sigma$ can be identified with the suspension of $\tau_S$.

Notice that we can consider $\alpha$ and $\beta$ as subsets of $\tau_S$. This way, we can consider $\alpha \times \{t\}, \beta \times \{t\} \subset T_\sigma$ to be curves lying on $B_\sigma$, for each $t$. Observe that these curves are positive and negative horizontal surgery curves respectively. See \Cref{fig:quadvbs} for a local illustration.

\begin{figure}
    \centering
    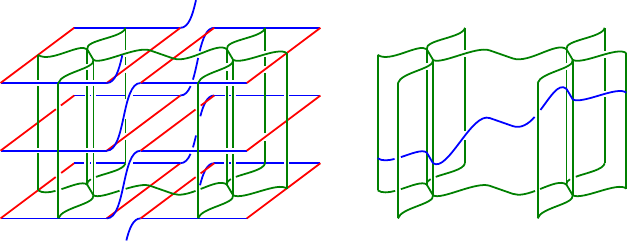
    \caption{Left: The branched surface $B_\sigma$. Right: For each $t$, $\beta \times \{t\}$ is a negative horizontal surgery curve lying on $B_\sigma$.}
    \label{fig:quadvbs}
\end{figure}

In particular, when we perform positive/negative Dehn surgeries along the curves $\alpha_i \times \{\frac{3s-2}{3k}\}$/$\beta_j \times \{\frac{3s-1}{3k}\}$, respectively, to obtain the mapping torus of $f$ from that of $\sigma$, we can perform positive/negative horizontal surgery of $B_\sigma$ along these curves. The end result is an almost veering branched surface $B_f$ on the mapping torus $T_f$.

We claim that $B_f$ is in fact veering. For otherwise one can construct a properly embedded annulus or Mobius band that is dual to an annulus or Mobius band sector, contradicting the fact that the mapping torus of a pseudo-Anosov map is a hyperbolic $3$-manifold. 

For future purposes, we also note that the dual veering triangulation $\Delta_f$ to $B_f$ is layered (recall \Cref{defn:layered}). This can be shown by observing that $B_\sigma$ is transverse to the suspension foliation of $T_\sigma$ by the fibers $S^\circ \times \{t\}$, with the branch loops intersecting the leaves positively. We can perform horizontal surgery on $B_\sigma$ by cutting out a neighborhood $N$ foliated as $A \times I$ and regluing. This transforms the suspension foliation on $T_\sigma$ into the suspension foliation on $T_f$. As a result, we conclude that the surgered branched surface $B_f$ is transverse to the latter, with branch loops intersecting the leaves positively.

The dual veering triangulation to $B_f$ can be constructed by sweeping across leaves of the suspension foliation. Whenever we sweep across a triple point, we add a tetrahedron. As we return to the starting leaf, we glue the remaining top faces of the tetrahedra to the remaining bottom faces.

We record the discussion in this subsection as the following proposition.

\begin{prop} \label{prop:torusbundletovbs}
Suppose $\sigma \tau^{n_1}_\alpha \tau^{-m_1}_\beta \dots \tau^{n_k}_\alpha \tau^{-m_k}_\beta$ is isotopic to an Anosov type fully-punctured pseudo-Anosov map $f$ with orientable stable and unstable foliations and that $f$ preserves these orientations. Then there is an almost veering branched surface $B_\sigma$ on $T_\sigma$. There exists positive horizontal surgery curves $\alpha_i \times \{\frac{3s-2}{3k}\}$ and negative horizontal surgery curves $\beta_j \times \{\frac{3s-1}{3k}\}$ on $B_\sigma$, such that performing $\frac{1}{(n_s)_i}$ horizontal surgery on each $\alpha_i \times \{\frac{3s-2}{3k}\}$ and performing $-\frac{1}{(m_s)_j}$ horizontal surgery on each $\beta_j \times \{\frac{3s-1}{3k}\}$ gives a veering branched surface $B_f$ on $T_f$.
Moreover the veering triangulation dual to $B_f$ is layered.
\end{prop}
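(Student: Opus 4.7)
The plan is to organize the discussion immediately preceding the statement into a four-step proof, following the construction of $B_\sigma$, verification that it is almost veering, identification of the surgery curves and what surgery does, and finally the layered conclusion.

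First I would define the almost veering branched surface $B_\sigma$ explicitly as $\tau \times S^1 \subset T \times S^1 = T_\sigma$, using the identification from \Cref{subsec:pennermaptori} and the train track $\tau$ prescribed by \Cref{fig:quadtt} (the two cusps of the train track inside each quadrilateral point to the vertices that meet a negative side on the left and a positive side on the right). Orient every branch loop upwards along the $S^1$ factor. To verify that $B_\sigma$ is almost veering in the sense of \Cref{defn:vbs}, one checks: every sector has the form $(\text{branch of }\tau) \times S^1$ and is therefore an annulus (condition (i)); every complementary region in $T_\sigma$ is of the form $(\text{complementary region of }\tau\text{ in }T) \times S^1$, which is a cusped torus shell since $\tau$ cusps around each puncture of $T$ once per incident quadrangulation corner (condition (ii)); and condition (iii) is vacuous because $B_\sigma$ contains no triple points (the branch locus is a disjoint union of loops of the form $(\text{switch of }\tau)\times S^1$).

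Next I would locate the surgery curves. Since $\alpha_T,\beta_T$ lie in the $1$-skeleton of the quadrangulation on $T$, they are carried by the train track $\tau$, so for every $t \in S^1$ the curves $\alpha_i \times \{t\}$ and $\beta_j \times \{t\}$ lie on $B_\sigma$. Inspecting \Cref{fig:quadtt} across a single quadrilateral, each intersection of $\alpha_T$ with $\tau$'s switch set crosses the branch locus with the arcs oriented consistently and with the two half-neighborhoods cooriented in opposite ways, and similarly for $\beta_T$; the sign of the induced coorientation (with respect to the ambient orientation on $T \times S^1$) comes out positive for $\alpha_i\times\{t\}$ and negative for $\beta_j\times\{t\}$. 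This shows the curves are horizontal surgery curves of the indicated sign. Then by \Cref{prop:torusbundletopennermaptori}, performing $\frac{1}{(n_s)_i}$ and $-\frac{1}{(m_s)_j}$ Dehn surgeries along $\alpha_i \times \{\frac{3s-2}{3k}\}$ and $\beta_j \times \{\frac{3s-1}{3k}\}$ produces $T_f$ as the underlying $3$-manifold, while \Cref{prop:vbshsur} ensures that the same operation, performed as horizontal surgeries on $B_\sigma$, yields an almost veering branched surface $B_f$ on $T_f$.

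The main obstacle I expect is verifying that $B_f$ is actually veering (and not merely almost veering): this rules out annulus and Mobius band sectors. My plan here is the argument indicated in the discussion before the statement. Suppose $B_f$ had an annulus or Mobius band sector $A$. Then $A$ would be a two-sided or one-sided surface embedded in the hyperbolic $3$-manifold $T_f$ (hyperbolic because $f$ is pseudo-Anosov); passing to the double cover in the Mobius band case, one gets a properly embedded essential annulus in a hyperbolic manifold, which is a contradiction. In the two-sided case the annulus is essential because its boundary curves, being cusp circles on $\partial B_f$, run nontrivially along the cusp cross-sections that come from the cusped torus shell structure. So every sector of $B_f$ must be a disc with four corners, and $B_f$ is veering.

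Finally, for layeredness of the dual veering triangulation $\Delta_f$, I would observe that $B_\sigma = \tau \times S^1$ is transverse to the horizontal foliation of $T_\sigma$ by fibers $T \times \{t\}$, with branch loops intersecting the fibers positively. Horizontal surgery is performed inside tubular neighborhoods of the surgery curves, which can be taken to lie between consecutive fibers; the suspension foliation on $T_\sigma$ transforms into the suspension foliation on $T_f$, and $B_f$ inherits transversality with branch loops intersecting leaves positively. Then $\Delta_f$ can be produced by sweeping through the suspension foliation, adding one tetrahedron each time a triple point of $B_f$ is crossed and gluing the leftover top faces to the bottom faces upon returning to the starting leaf; this is exactly a layered construction in the sense of \Cref{defn:layered}. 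This completes the proof.
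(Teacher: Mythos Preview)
Your proposal follows the paper's discussion essentially line for line, and Steps 1, 2, and 4 are correct. In particular, your observation that condition (iii) of \Cref{defn:vbs} is vacuous for $B_\sigma$ because $\tau \times S^1$ has no triple points is a nice way to dispatch that check.

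There is one real error, in Step 3. You assert that an annulus or M\"obius band sector $A$ of $B_f$ is itself a properly embedded surface in $T_f$, with boundary curves that ``run nontrivially along the cusp cross-sections.'' This is not right: the boundary of a sector lies on the branch locus of $B_f$, which is a compact $1$-complex sitting in the \emph{interior} of $T_f$, not at the ends. So $A$ is a compact surface with boundary in the interior of the manifold; it is not properly embedded, and its mere existence does not contradict hyperbolicity. (You seem to be conflating the cusp circles on the boundary of the complementary cusped torus shells with the ends of the manifold $T_f$; the former attach onto the branch locus, which is far from the punctures.)

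The paper's fix is to pass to a surface \emph{dual} to $A$ rather than $A$ itself. Along the interior of $A$ the branched surface $B_f$ is smooth, so the normal directions along the core curve of $A$ point into the adjacent complementary regions; each such region is a cusped torus shell $T^2 \times [0,\infty)$ whose end goes out to a puncture of $T_f$. Sweeping a short normal arc around the core of $A$ and extending each end to infinity inside its torus shell produces a properly embedded annulus or M\"obius band in $T_f$, and it is this dual surface that is incompatible with the hyperbolicity of the pseudo-Anosov mapping torus. Once you make this replacement, the rest of your argument goes through unchanged.
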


\section{Correspondence between flows and triangulations} \label{sec:uniqueflow}

The main goal of this section is to prove the following theorem.

\begin{thm}[Landry-Taylor] \label{thm:uniqueflow}
Let $M$ be a closed oriented $3$-manifold and let $\mathcal{C}$ be a finite collection of curves in $M$. Let $\Delta$ be a veering triangulation on $M \backslash \mathcal{C}$. Then there is at most one pseudo-Anosov flow $\phi^t$ on $M$ such that $\Delta$ can be put in transverse position with respect to $\phi^t$, up to orbit equivalence by a map isotopic to identity.

Moreover, such a flow $\phi^t$ is transitive and the degeneracy locus of $\phi^t$ at each element of $\mathcal{C}$ is given by the ladderpole class of $\Delta$ at the corresponding ideal vertex.
\end{thm}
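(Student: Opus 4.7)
The strategy is to combine Goodman-Fried surgery (\Cref{prop:gfsurgery}) with ideas from the Landry-Minsky-Taylor correspondence between veering triangulations and pseudo-Anosov flows on cusped manifolds. Let $\phi^t$ be a pseudo-Anosov flow on $M$ making $\Delta$ transverse, and let $B$ be the veering branched surface dual to $\Delta$; then $B$ is positively transverse to $\phi^t$ in $M \backslash \mathcal{C}$. Applying Goodman-Fried blow-up along each curve in $\mathcal{C}$ produces a pseudo-Anosov flow $\widetilde{\phi}^t$ on the drilled manifold to which $B$ remains transverse, and $\phi^t$ is recovered from $\widetilde{\phi}^t$ by Dehn filling along the meridians of $\mathcal{C}$, a datum of the ambient manifold $M$ and not of the flow.

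I would first identify the degeneracy locus of $\phi^t$ at each $c \in \mathcal{C}$ with the ladderpole class of $\Delta$ at the corresponding ideal vertex $v$. Near $c$, the branched surface $B$ forms a cusped torus shell spiraling around $c$, and its intersection pattern with $\partial N(c)$ is dictated by the link triangulation $\partial_v \Delta$ of \Cref{prop:vtboundarytriang}. Under the duality of \Cref{fig:vtdualvbs} (``\textbf{L}eft-handed fins on b\textbf{L}ue edges''), the blue ladderpole curves correspond to the boundary traces of left-handed spiraling sectors of $B$. Comparing with the local pseudo-Anosov model (\Cref{fig:palocal}), these traces are precisely where the stable half-leaves meet $\partial N(c)$, i.e., the degeneracy locus.

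For the uniqueness statement, given two pseudo-Anosov flows $\phi^t_1, \phi^t_2$ on $M$ both making $\Delta$ transverse, their blow-ups $\widetilde{\phi}^t_1, \widetilde{\phi}^t_2$ on $M \backslash \mathcal{C}$ both admit $B$ as a transverse branched surface carrying the same veering-combinatorial data. Building on techniques of Landry and Taylor, one constructs an orbit equivalence $\widetilde{h}: (M \backslash \mathcal{C}, \widetilde{\phi}^t_1) \to (M \backslash \mathcal{C}, \widetilde{\phi}^t_2)$ isotopic to the identity by matching up how orbits of the two flows puncture sectors and cross branch loops of $B$. Because the degeneracy locus (and hence the Goodman-Fried filling slope needed to recover $M$) is in both cases the ladderpole class, the tameness clause of \Cref{prop:gfsurgery} allows $\widetilde{h}$ to extend across $\mathcal{C}$ to an orbit equivalence $h: (M, \phi^t_1) \to (M, \phi^t_2)$ isotopic to the identity. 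Transitivity of $\phi^t$ is inherited from the combinatorial transitivity of the veering dynamics on $B$, which passes to $\widetilde{\phi}^t$ and hence to $\phi^t$.

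The main obstacle is the construction of the orbit equivalence on the drilled manifold: the issue is not merely that two pseudo-Anosov flows sharing the transverse veering branched surface $B$ should be orbit equivalent (a statement in the spirit of the correspondence theory, but, as flagged in the introduction, not a direct consequence of it), but that the equivalence can be chosen isotopic to the identity, which requires tracking how each individual orbit is threaded through the sectors of $B$. A secondary subtlety is the careful orientation bookkeeping needed in the degeneracy locus computation, where the blue/red coloring of edges of $\Delta$, the handedness of the spiraling sectors of $B$, and the stable/unstable directions of the flow must all be reconciled through the duality of \Cref{fig:vtdualvbs} in the cusp neighborhood.
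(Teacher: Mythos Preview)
Your proposal has a genuine gap at its most critical step. You write that one ``constructs an orbit equivalence $\widetilde{h}$ \dots\ by matching up how orbits of the two flows puncture sectors and cross branch loops of $B$,'' but you provide no mechanism for doing this, and you yourself flag it as the main obstacle. This is not a detail to be filled in later: it is the entire content of the theorem, and as the introduction warns, it does not follow from the existing correspondence theory. A proof that reduces to ``apply techniques of Landry and Taylor'' without saying which techniques, or how, is not a proof.

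The paper's argument is structurally different and avoids any direct construction of an orbit equivalence. The key inputs are:
\begin{itemize}
\item \Cref{lemma:transclosedorbits}: transverse position forces the set of free homotopy classes of closed orbits of $\phi^t$ in $M\backslash\mathcal{C}$ to coincide with the set of directed cycles in $\brloc(B)$, a purely combinatorial invariant of $\Delta$. Hence any two flows $\phi^t_1,\phi^t_2$ in transverse position share the same closed-orbit spectrum.
\item \Cref{lemma:translozenges}: transverse position forces the flow to have no lozenges relative to $\mathcal{C}$.
\end{itemize}
One then invokes the periodic-orbit rigidity theorem of Barthelm\'e--Frankel--Mann \cite{BFM23}: two transitive pseudo-Anosov flows with the same free homotopy classes of closed orbits and no tree of scalloped regions are orbit equivalent by a map isotopic to the identity. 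When scalloped regions may be present, one performs Goodman--Fried surgery along $\mathcal{C}$ not to drill (as you propose) but to refill into a different closed manifold $\widehat{M}$ in which each element of $\mathcal{C}$ becomes a singular orbit with a distinct prong number; the no-lozenges-rel-$\mathcal{C}$ condition upgrades to no lozenges at all, \cite{BFM23} applies on $\widehat{M}$, and the distinct prong numbers force the resulting orbit equivalence to preserve $\mathcal{C}$ setwise so that it can be pulled back to $M$.

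Your degeneracy-locus sketch is also softer than what the paper does. Rather than reading off spiraling from the branched-surface picture, the paper (inside the proof of \Cref{lemma:transrect}) lifts a blue ladderpole curve to a sequence of faces around a vertex $v$ over $\widetilde\gamma$, and uses the resulting $[\gamma]^k$-invariance of the cone spanned by the tangents of the adjacent red and blue side edges to trap a half-leaf of $\mathcal{O}^s$; this is what pins the degeneracy locus to the ladderpole class.
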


\Cref{thm:uniqueflow} is communicated to us by Michael Landry and Samuel Taylor. We reproduce their proof in \Cref{subsec:uniqueflowproof}. Before that, in \Cref{subsec:transrect} we define transverse position and discuss how it allows one to translate the combinatorics of veering triangulations into the dynamics of pseudo-Anosov flows.

In the setting of \Cref{thm:uniqueflow}, we will say that $\phi^t$ is the flow \textbf{corresponding to $\Delta$ on $M$}. See also \Cref{rmk:generalcorr} below.
Note that here it is important to specify the closed $3$-manifold $M$, since $\Delta$ only determines $M$ up to Dehn surgery along $\mathcal{C}$. 

If we only care about flows up to almost equivalence, however, then the following corollary states that we can get by without paying attention to the choice of $M$.

\begin{cor} \label{cor:uniqueflow}
Let $\Delta$ be a veering triangulation. For $i=1,2$, suppose $\phi^t_i$ is the pseudo-Anosov flow corresponding to $\Delta$ on a closed oriented $3$-manifold $M_i$. Then $\phi^t_1$ and $\phi^t_2$ are almost equivalent.
\end{cor}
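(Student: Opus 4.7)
The plan is to realize the two flows $\phi_1^t$ and $\phi_2^t$ as restrictions of pseudo-Anosov flows living on a \emph{common} closed $3$-manifold, to which \Cref{thm:uniqueflow} directly applies, and then transfer the resulting orbit equivalence back to $M_1$ and $M_2$.

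Let $N$ be the cusped $3$-manifold on which $\Delta$ is defined. The transverse position of $\Delta$ with respect to $\phi_i^t$, together with the ``moreover'' clause of \Cref{thm:uniqueflow}, furnishes a finite collection $\mathcal{C}_i$ of closed orbits of $\phi_i^t$ in bijection with the ideal vertices of $\Delta$, with $M_i \setminus \mathcal{C}_i \cong N$, such that $\phi_i^t|_N$ is a pseudo-Anosov flow on $N$ whose degeneracy locus at each cusp is the ladderpole class of $\Delta$. Since the ladderpole class is a combinatorial invariant of $\Delta$, this peripheral data coincides for $i=1,2$.

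Now I would choose a system of slopes $\{s_\gamma\}$ on the boundary tori of $N$, each intersecting the ladderpole class at the corresponding cusp in at least four points, and apply Goodman-Fried surgery (\Cref{prop:gfsurgery}) to each $\phi_i^t|_N$ along these slopes. The output is a pseudo-Anosov flow $\widehat{\phi}_i^t$ on one and the same closed $3$-manifold $\widehat{M}$. Because $\Delta$ is an ideal triangulation containing no cells in a neighborhood of any surgered cusp, the tameness clause of \Cref{prop:gfsurgery} allows the transverse position of $\Delta$ with respect to $\phi_i^t|_N$ to extend to a transverse position with respect to $\widehat{\phi}_i^t$. Invoking \Cref{thm:uniqueflow} on $\widehat{M}$ then yields an orbit equivalence $\widehat{\phi}_1^t \cong \widehat{\phi}_2^t$ isotopic to the identity. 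Restricting to the complement of the new closed orbits at the Dehn filling cores and using the tame identification with $\phi_i^t|_N$ gives an orbit equivalence $\phi_1^t|_{M_1 \setminus \mathcal{C}_1} \cong \phi_2^t|_{M_2 \setminus \mathcal{C}_2}$, establishing $\phi_1^t \sim \phi_2^t$.

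The main obstacle is the verification that $\Delta$ is indeed in transverse position with respect to each $\widehat{\phi}_i^t$. Since the flow is unchanged away from a tubular neighborhood of each surgered cusp, the content is local at each new closed orbit: one must check that the transverse structure of $\Delta$ there, encoded by the ladderpole class, is compatible with the singular orbit produced by the Goodman-Fried surgery along $s_\gamma$. This should follow by unpacking the definition of transverse position from \Cref{subsec:transrect} and using that $s_\gamma$ meets the ladderpole class sufficiently often, together with the extension of local sections of $\Delta$ across the new core provided by \Cref{prop:gfsurgery}.
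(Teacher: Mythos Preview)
Your approach is correct and follows the same basic mechanism as the paper: use Goodman--Fried surgery to place the two flows on a common closed manifold, verify that transverse position of $\Delta$ persists via the tameness clause of \Cref{prop:gfsurgery}, and then invoke the uniqueness in \Cref{thm:uniqueflow}. The paper's argument is slightly more economical: rather than surgering both flows into a third manifold $\widehat{M}$, it simply surgers $\phi_1^t$ along $\mathcal{C}_1$ using the meridional slopes of $M_2$ (which intersect the degeneracy locus in $\geq 2$ points, since they already do so for $\phi_2^t$ and the degeneracy loci agree via the ladderpole class). This lands $\widehat{\phi}_1^t$ directly on $M_2$, where \Cref{thm:uniqueflow} compares it to $\phi_2^t$. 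One surgery instead of two, and no need to introduce $\widehat{M}$.

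One small wrinkle in your write-up: your final sentence asserts an orbit equivalence $\phi_1^t|_{M_1\setminus\mathcal{C}_1} \cong \phi_2^t|_{M_2\setminus\mathcal{C}_2}$ by restricting the orbit equivalence $\widehat{\phi}_1^t \cong \widehat{\phi}_2^t$ to the complement of the surgery cores. This implicitly assumes the orbit equivalence on $\widehat{M}$ carries the core collection $\widehat{\mathcal{C}}$ to itself, which is not automatic from ``isotopic to identity'' alone. You can guarantee it by choosing the slopes so that each core orbit has a distinct number of prongs (as the paper does inside the proof of \Cref{thm:uniqueflow}), or more simply sidestep the issue entirely: Goodman--Fried surgery is an almost equivalence, so $\phi_1^t \sim \widehat{\phi}_1^t \cong \widehat{\phi}_2^t \sim \phi_2^t$ by transitivity, with no need to track the cores. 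The paper's one-surgery route avoids this bookkeeping altogether.
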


We demonstrate the proof of \Cref{cor:uniqueflow} at the end of \Cref{subsec:uniqueflowproof}.

We remark that we will only use \Cref{thm:uniqueflow} and \Cref{cor:uniqueflow} in the case when $\phi^t$ is Anosov. We choose to state and prove \Cref{thm:uniqueflow} in the more general context of pseudo-Anosov flows in anticipation of future applications.

\begin{rmk} \label{rmk:generalcorr}
Combining work of Schleimer-Segerman and Landry-Minsky-Taylor, it is already known that under the assumptions of \Cref{thm:uniqueflow}, as long as the meridians of $M$ intersect the ladderpole class of $\Delta$ at each vertex in $\geq 2$ points, there is at least one pseudo-Anosov flow $\phi^t$ on $M$ such that $\Delta$ is in transverse position with respect to $\phi^t$. \Cref{thm:uniqueflow} thus completes this picture by implying that there is exactly one such pseudo-Anosov flow, up to orbit equivalence by a map isotopic to identity. We refer to \cite[Chapter 2]{Tsa23a} for an exposition, and refer to \cite{SS20}, \cite{SS19}, \cite{SS21}, \cite{SS23}, \cite{SSpart5}, and \cite{LMT23} for proofs.
\end{rmk}

\subsection{Transverse position and rectangles} \label{subsec:transrect}

\begin{defn} \label{defn:transpos}
Let $M$ be a closed oriented $3$-manifold and let $\mathcal{C}$ be a finite collection of curves in $M$. Let $\phi^t$ be a pseudo-Anosov flow on $M$ such that $\mathcal{C}$ is a collection of orbits of $\phi^t$. Let $\Delta$ be a veering triangulation on $M \backslash \mathcal{C}$.

We say that $\Delta$ is \textbf{in transverse position with respect to $\phi^t$} if:
\begin{itemize}
    \item each face $f$ of $\Delta$ is positively transverse to the orbits of $\phi^t$, and
    \item each edge $e$ of $\Delta$ extends to a smoothly embedded interval $\overline{e}$ that is transverse to the stable and unstable foliations of $\phi^t$.
\end{itemize}

The second condition requires some elaboration when $\phi^t$ is a pseudo-Anosov flow, since in this case the stable and unstable foliations of $\phi^t$ contain singularities. Here we mean that $\overline{e}$ lies away from the singular orbits in its interior $e$, and at its endpoints its tangent line does not lie along a leaf of the stable or unstable foliation.
\end{defn}

For the rest of this subsection, we work in the setting of \Cref{defn:transpos}.

First observe that $\mathcal{C}$ lifts to a collection of orbits of $\widetilde{\phi}^t$ in the universal cover $\widetilde{M}$, thus determines a collection of points $\widetilde{\mathcal{C}}$ in the orbit space $\mathcal{O}$. We make the following definition.

\begin{defn} \label{defn:rectangles}
A \textbf{rectangle} $R$ in $\mathcal{O}$ is a rectangle $[0,1]^2$ embedded in $\mathcal{O}$ such that the restrictions of $\mathcal{O}^s$ and $\mathcal{O}^u$ foliate the rectangle as a product, i.e. conjugate to the foliations of $[0,1]^2$ by vertical and horizontal lines, and such that no element of $\widetilde{\mathcal{C}}$ lies in the interior of $R$.

Let $R_1$ and $R_2$ be rectangles in $\mathcal{O}$. $R_2$ is said to be \textbf{taller} than $R_1$ if every leaf of $\mathcal{O}^u$ that intersects $R_1$ intersects $R_2$. $R_1$ is said to be \textbf{wider} than $R_2$ if every leaf of $\mathcal{O}^s$ that intersects $R_2$ intersects $R_1$. 
We write $R_1 < R_2$ if $R_2$ is taller than $R_1$ and $R_1$ is wider than $R_2$.

An \textbf{edge rectangle} in $\mathcal{O}$ is a rectangle with two opposite corners on $\widetilde{\mathcal{C}}$. A \textbf{face rectangle} in $\mathcal{O}$ is a rectangle with one corner on $\widetilde{\mathcal{C}}$ and the two opposite sides to the corner containing elements of $\widetilde{\mathcal{C}}$ in their interior. A \textbf{tetrahedron rectangle} in $\mathcal{O}$ is a rectangle all of whose sides contain elements of $\widetilde{\mathcal{C}}$ in their interior. See \Cref{fig:rectangles}.

\begin{figure}
    \centering
    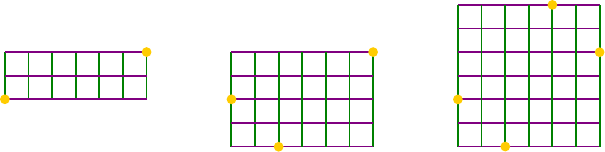
    \caption{From left to right: an edge rectangle, a face rectangle, and a tetrahedron rectangle. Yellow dots denote elements of $\widetilde{\mathcal{C}}$.}
    \label{fig:rectangles}
\end{figure}
\end{defn}

Meanwhile, $\Delta$ lifts to a triangulation on $\widetilde{M} \backslash \widetilde{\mathcal{C}}$.

\begin{lemma} \label{lemma:transrect}
For every tetrahedron $t$ of $\widetilde{\Delta}$, the projection of $t$ in the orbit space $\mathcal{O}$ lies within a tetrahedron rectangle $R_t$. See \Cref{fig:transtetrect}. In particular, for every face $f$ of $\widetilde{\Delta}$, the projection of $f$ in the orbit space $\mathcal{O}$ lies within a face rectangle $R_f$, and for every edge $e$ of $\widetilde{\Delta}$, the projection of $e$ in the orbit space $\mathcal{O}$ lies within an edge rectangle $R_e$.
\end{lemma}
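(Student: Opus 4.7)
The plan is to prove the three claims in the stated order, from edges to faces to tetrahedra, with each step feeding the next.

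For an edge $e$ of $\widetilde{\Delta}$, the extended smooth interval $\overline{e}$ is transverse to $\widetilde{\Lambda^s}$ and $\widetilde{\Lambda^u}$, with endpoints lying on orbits of $\widetilde{\phi}^t$ that project to distinct points $p, q \in \widetilde{\mathcal{C}}$. Its projection to $\mathcal{O}$ is therefore an arc from $p$ to $q$ transverse to both $\mathcal{O}^s$ and $\mathcal{O}^u$. Locally parametrizing $\mathcal{O}$ so that $\mathcal{O}^s$ and $\mathcal{O}^u$ appear as vertical and horizontal product foliations, transversality to both forces the projected arc to be strictly monotone in each coordinate, so it lies inside the closed rectangle $R_e$ whose opposite corners are $p, q$ and whose sides run along the stable and unstable leaves through these corners.

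For a face $f$, positive transversality to $\widetilde{\phi}^t$ makes the projection to $\mathcal{O}$ a local embedding, and each of its three boundary edges projects into an edge rectangle by the previous step. By the combinatorics of a face in a veering triangulation (each face contains the top edge of the tetrahedron below it and the bottom edge of the tetrahedron above it, sharing a common vertex), the three ideal vertices of $f$ project to points arranged so that one lies at a corner of a single rectangle $R_f$ while the other two lie in the interiors of its two opposite sides, making $R_f$ a face rectangle. For a tetrahedron $t$, the four boundary faces project into face rectangles fitting together inside a single rectangle $R_t$, with the top and bottom edges of $t$ projecting to arcs whose four endpoints supply points of $\widetilde{\mathcal{C}}$ in the interiors of the four sides of $R_t$ (the top edge endpoints on one pair of opposite sides, the bottom edge endpoints on the other); thus $R_t$ is a tetrahedron rectangle containing the projection of $t$.

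The main obstacle is to verify that the rectangles so constructed are valid in the sense of \Cref{defn:rectangles}: the restricted foliations must form a genuine product, and no element of $\widetilde{\mathcal{C}}$ may lie in any interior. Since singularities of $\widetilde{\Lambda^{s/u}}$ correspond to singular orbits of $\phi^t$ (which must lie in $\mathcal{C}$ for the triangulation to be in transverse position), the product condition reduces to the emptiness condition. The latter, for $R_e$, will follow from the ladderpole arrangement of \Cref{prop:vtboundarytriang}: a hypothetical $\widetilde{\mathcal{C}}$ point strictly inside $R_e$ would correspond to an ideal vertex whose stable and unstable leaves separate $p$ from $q$ in a manner inconsistent with the local combinatorics of edges at the link of $p$ (or $q$). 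Once established for edges, the validity of $R_f$ and $R_t$ follows from that of their constituent edge rectangles combined with the veering combinatorics at the relevant vertices.
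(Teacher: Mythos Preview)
Your bottom-up order (edges $\to$ faces $\to$ tetrahedra) reverses the paper's, and the first step has a real gap. The claim that an arc in $\mathcal{O}$ transverse to both $\mathcal{O}^s$ and $\mathcal{O}^u$ lies in a rectangle with its endpoints at opposite corners is false in general: the foliations are not a global product, and perfect fits (pairs $\ell^s,\ell^u$ that do not intersect though nearby leaves do) can obstruct it. For instance, in the orbit space of a geodesic flow one can write down such an arc from $p$ to $q$ with $\ell^s_p\cap\ell^u_q=\emptyset$, so no rectangle with corners $p,q$ exists. Your phrase ``locally parametrizing $\mathcal{O}$'' is precisely the issue---the conclusion you want is global. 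A second, related gap: even granting each edge an edge rectangle, transversality alone does not tell you \emph{which} diagonal the edge spans, and without knowing that red edges are positive and blue edges negative you cannot assemble the three edge rectangles of a face into the face-rectangle configuration you assert.

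Both gaps are filled by the dynamical argument you have not supplied. The paper uses the ladderpole structure (\Cref{prop:vtboundarytriang}) at a vertex $v$ of a tetrahedron lying over $\widetilde\gamma\in\widetilde{\mathcal{C}}$ to produce a cycle of faces around $v$ witnessing that the deck transformation $[\gamma]^k$ sends the tangent cone between the two adjacent side edges into itself. This forces that cone to contain a half-leaf of $\mathcal{O}^s$ (respectively $\mathcal{O}^u$, at the other type of vertex), and a Poincare--Hopf count then pins down the sign of every edge. Only after this does the paper build $R_e$, not from the single edge but by covering the two triangles on either side of its projection with projections of the tetrahedra having a neighbouring edge as a side edge; that covering simultaneously yields existence of the rectangle and absence of interior $\widetilde{\mathcal{C}}$-points.
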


\begin{figure}
    \centering
    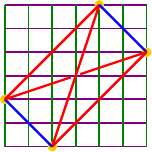
    \caption{If $\Delta$ is in transverse position with respect to $\phi^t$, then for every tetrahedron $t$ of $\widetilde{\Delta}$, the projection of $t$ in the orbit space $\mathcal{O}$ lies within a tetrahedron rectangle $R_t$.}
    \label{fig:transtetrect}
\end{figure}

\begin{proof}
Let $v$ be a vertex of $t$ that meets a blue side edge $e_L$ on its left and a red side edge $e_R$ on its right. Notice that $v$ must lie on a point $\widetilde{\gamma}$ on $\widetilde{\mathcal{C}}$. We claim that the projection of $t$ meets a half leaf of $\mathcal{O}^s$ emerging from $\widetilde{\gamma}$.

To see this, recall \Cref{prop:vtboundarytriang}. Suppose $\widetilde{\gamma}$ covers an orbit $\gamma$ of $\phi^t$. Then \Cref{prop:vtboundarytriang} implies that for some $k > 0$, there exists a sequence of faces $f_1,...,f_N$ such that
\begin{itemize}
    \item $e_L$ lies on the left of the vertex of $f_1$ at $v$,
    \item the edge that lies on the right of the vertex of $f_i$ at $v$ equals the edge that lies on the left of the vertex of $f_{i+1}$ at $v$, for each $i=1,...,N-1$, and
    \item $[\gamma]^k \cdot e_L$ lies on the right of the vertex of $f_N$ at $v$.
\end{itemize}
Namely, this sequence of faces can be obtained by following the lift of a blue ladderpole curve.

Symmetrically, there exists a sequence of faces $f'_1,...,f'_N$ such that
\begin{itemize}
    \item $e_R$ lies on the right of the vertex of $f'_1$ at $v$,
    \item the edge that lies on the left of the vertex of $f_i$ at $v$ equals the edge that lies on the right of the vertex of $f_{i+1}$ at $v$, for each $i=1,...,N-1$, and
    \item $[\gamma]^k \cdot e_R$ lies on the left of the vertex of $f'_N$ at $v$.
\end{itemize}

This implies that $[\gamma]^k$ sends the cone at $\widetilde{\gamma}$ spanned by the tangents of the projections of $\overline{e_L}$ and $\overline{e_R}$ into itself, which implies the claim. See \Cref{fig:transstablesector}.

\begin{figure}
    \centering
    \fontsize{6pt}{6pt}\selectfont
\begingroup%
  \makeatletter%
  \providecommand\color[2][]{%
    \errmessage{(Inkscape) Color is used for the text in Inkscape, but the package 'color.sty' is not loaded}%
    \renewcommand\color[2][]{}%
  }%
  \providecommand\transparent[1]{%
    \errmessage{(Inkscape) Transparency is used (non-zero) for the text in Inkscape, but the package 'transparent.sty' is not loaded}%
    \renewcommand\transparent[1]{}%
  }%
  \providecommand\rotatebox[2]{#2}%
  \newcommand*\fsize{\dimexpr\f@size pt\relax}%
  \newcommand*\lineheight[1]{\fontsize{\fsize}{#1\fsize}\selectfont}%
  \ifx\svgwidth\undefined%
    \setlength{\unitlength}{99.80609804bp}%
    \ifx\svgscale\undefined%
      \relax%
    \else%
      \setlength{\unitlength}{\unitlength * \real{\svgscale}}%
    \fi%
  \else%
    \setlength{\unitlength}{\svgwidth}%
  \fi%
  \global\let\svgwidth\undefined%
  \global\let\svgscale\undefined%
  \makeatother%
  \begin{picture}(1,0.77832867)%
    \lineheight{1}%
    \setlength\tabcolsep{0pt}%
    \put(0,0){\includegraphics[width=\unitlength,page=1]{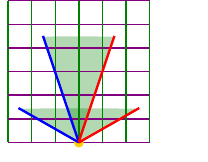}}%
    \put(0.02685237,0.29211139){\color[rgb]{0,0,1}\makebox(0,0)[lt]{\lineheight{1.25}\smash{\begin{tabular}[t]{l}$e_L$\end{tabular}}}}%
    \put(0.65824899,0.29211898){\color[rgb]{1,0,0}\makebox(0,0)[lt]{\lineheight{1.25}\smash{\begin{tabular}[t]{l}$e_R$\end{tabular}}}}%
    \put(0.48006641,0.63535347){\color[rgb]{1,0,0}\makebox(0,0)[lt]{\lineheight{1.25}\smash{\begin{tabular}[t]{l}$[\gamma]^k \cdot e_R$\end{tabular}}}}%
    \put(-0.00303324,0.63535152){\color[rgb]{0,0,1}\makebox(0,0)[lt]{\lineheight{1.25}\smash{\begin{tabular}[t]{l}$[\gamma]^k \cdot e_L$\end{tabular}}}}%
    \put(0.34554702,0.0082777){\color[rgb]{1,0.8,0}\makebox(0,0)[lt]{\lineheight{1.25}\smash{\begin{tabular}[t]{l}$\widetilde{\gamma}$\end{tabular}}}}%
  \end{picture}%
\endgroup%

    \caption{Arguing that the projection of $t$ meets a half leaf of $\mathcal{O}^s$ emerging from every vertex that meets a blue side edge $e_L$ on its left and a red side edge $e_R$ on its right.}
    \label{fig:transstablesector}
\end{figure}

Symmetrically, the projection of $t$ meets a half leaf of $\mathcal{O}^u$ emerging from every vertex that meets a red side edge on its left and a blue side edge on its right. By a Poincare-Hopf argument, the projection of $t$ meets exactly one half leaf of $\mathcal{O}^s$ or $\mathcal{O}^u$ emerging from each of its vertex. In particular, the red side edges of $t$ are positive while the blue side edges of $t$ are negative. By \Cref{prop:vtsideedge}, every edge is the side edge of some tetrahedron, so every red edge is positive while every blue edge is negative.

Observe that it remains to show that the projection of each edge $e$ spans an edge rectangle. Without loss of generality we show this when $e$ is red. Let $t$ be the tetrahedron that has $e$ as a bottom edge. Let $e'$ be a blue side edge of $t$. The union of projections of $t'$ where $t'$ ranges over tetrahedra that have $e'$ as a side edge contains a triangle with one side on the projection of $e$ and the other two sides on leaves of $\mathcal{O}^s$ and $\mathcal{O}^u$ respectively. See \Cref{fig:transcoveredgerect}. By performing this argument on the other blue side edge of $t$, one can extract an edge rectangle $R_e$ containing $e$. (See \cite[Lemma 4.14]{Tsa22b} for a quantified version of this argument.)
\end{proof}

\begin{figure}
    \centering
    \fontsize{10pt}{10pt}\selectfont
\begingroup%
  \makeatletter%
  \providecommand\color[2][]{%
    \errmessage{(Inkscape) Color is used for the text in Inkscape, but the package 'color.sty' is not loaded}%
    \renewcommand\color[2][]{}%
  }%
  \providecommand\transparent[1]{%
    \errmessage{(Inkscape) Transparency is used (non-zero) for the text in Inkscape, but the package 'transparent.sty' is not loaded}%
    \renewcommand\transparent[1]{}%
  }%
  \providecommand\rotatebox[2]{#2}%
  \newcommand*\fsize{\dimexpr\f@size pt\relax}%
  \newcommand*\lineheight[1]{\fontsize{\fsize}{#1\fsize}\selectfont}%
  \ifx\svgwidth\undefined%
    \setlength{\unitlength}{110.48709058bp}%
    \ifx\svgscale\undefined%
      \relax%
    \else%
      \setlength{\unitlength}{\unitlength * \real{\svgscale}}%
    \fi%
  \else%
    \setlength{\unitlength}{\svgwidth}%
  \fi%
  \global\let\svgwidth\undefined%
  \global\let\svgscale\undefined%
  \makeatother%
  \begin{picture}(1,0.87454355)%
    \lineheight{1}%
    \setlength\tabcolsep{0pt}%
    \put(0,0){\includegraphics[width=\unitlength,page=1]{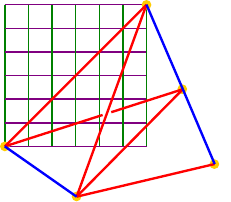}}%
    \put(0.24423111,0.56645749){\color[rgb]{1,0,0}\makebox(0,0)[lt]{\lineheight{1.25}\smash{\begin{tabular}[t]{l}$e$\end{tabular}}}}%
    \put(0.05419499,0.06691654){\color[rgb]{0,0,1}\makebox(0,0)[lt]{\lineheight{1.25}\smash{\begin{tabular}[t]{l}$e'$\end{tabular}}}}%
    \put(0.75293264,0.6835321){\color[rgb]{0,0,0}\makebox(0,0)[lt]{\lineheight{1.25}\smash{\begin{tabular}[t]{l}$t'$\end{tabular}}}}%
    \put(0.89873489,0.33437096){\color[rgb]{0,0,0}\makebox(0,0)[lt]{\lineheight{1.25}\smash{\begin{tabular}[t]{l}$t'$\end{tabular}}}}%
    \put(0,0){\includegraphics[width=\unitlength,page=2]{transcoveredgerect.pdf}}%
  \end{picture}%
\endgroup%

    \caption{Arguing that the projection of each edge $e$ spans an edge rectangle.}
    \label{fig:transcoveredgerect}
\end{figure}

Note that \Cref{lemma:transrect} in particular implies the statement in \Cref{thm:uniqueflow} about degeneracy loci and ladderpole classes.

\begin{lemma} \label{lemma:facerectmarkov}
Suppose there is an orbit segment $\widetilde{\gamma}$ in $\widetilde{\phi}^t$ going from a point on a face $f$ to a point on a face $f'$.
Then $R_f < R_{f'}$.
\end{lemma}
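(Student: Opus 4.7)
The plan is to use transitivity of the relation $<$ on rectangles to reduce to the case where $f$ is a bottom face and $f'$ is a top face of a single tetrahedron, and then verify the inequality by direct inspection of the tetrahedron rectangle.

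The relation $<$ is transitive, as is immediate from the definitions of ``taller'' and ``wider.'' For the reduction, I would parametrize $\widetilde{\gamma}$ from $x \in f$ to $x' \in f'$; after a small perturbation of its endpoints within the open interiors of $f, f'$ (permissible since the statement is robust under slightly enlarging $R_f$ and $R_{f'}$), I may assume $\widetilde{\gamma}$ crosses faces of $\widetilde{\Delta}$ only in their interiors and transversely. This yields a sequence $f = f_0, f_1, \ldots, f_n = f'$ of traversed faces, and since the transverse taut structure coorients each face upward, each consecutive pair $f_i, f_{i+1}$ must sit as a bottom face and a top face of a single tetrahedron $t_i$. By transitivity, it suffices to prove the lemma in the base case $n = 1$.

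In the base case, use \Cref{lemma:transrect} to fix a tetrahedron rectangle $R_t$ containing $\pi(t)$, so that $\pi(f), \pi(f') \subset R_t$. The four vertices of $t$ project to four points, one in the interior of each side of $R_t$. The argument in the proof of \Cref{lemma:transrect} identifies, via the veering colors of the side edges around each vertex, which vertices emit stable half-leaves into $\pi(t)$ (forcing them onto a horizontal side of $R_t$) and which emit unstable half-leaves (forcing them onto a vertical side). Coupled with the requirement that the four side edges of $t$ form the simple closed boundary of $\pi(t)$, this forces the two top vertices of $t$ to sit on one pair of opposite sides of $R_t$ and the two bottom vertices to sit on the other pair. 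Taking $R_f$ to be the bounding rectangle within $R_t$ of the three vertices of $f$, one finds that $R_f$, containing both bottom vertices of $t$ on two opposite sides of $R_t$, spans the full extent of $R_t$ in the corresponding direction while spanning only partially in the transverse direction; symmetrically $R_{f'}$, containing both top vertices of $t$, spans fully in the transverse direction and only partially in the other. Hence $R_f$ is ``wide and short'' in $R_t$ while $R_{f'}$ is ``tall and narrow,'' and both defining inequalities of $R_f < R_{f'}$ read off directly.

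The hardest step is this base case, specifically pinning down the arrangement of the four vertices of $t$ on $\partial R_t$ so that the bounding rectangles of the faces end up with the requisite shapes. I anticipate a brief case analysis based on the colors of the top and bottom edges of $t$, with all cases being handled uniformly by the natural symmetries of a veering tetrahedron.
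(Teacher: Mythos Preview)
Your proposal is correct and follows essentially the same approach as the paper: both reduce via transitivity to the single-tetrahedron case using the sequence of faces traversed by $\widetilde{\gamma}$, and both settle that base case by appealing to the structure established in \Cref{lemma:transrect}. The paper's version is terser---it simply cites that red edges are positive and blue edges are negative (from the proof of \Cref{lemma:transrect}) and invokes the veering edge-coloring of \Cref{defn:vt} to conclude $R_{f_{i-1}} < R_{f_i}$ directly---whereas you unpack the same content geometrically via the placement of vertices on $\partial R_t$. One small remark: the case analysis you anticipate on the colors of the top and bottom edges is unnecessary, since only the side-edge colors (which are fixed by the veering condition) govern which vertices land on horizontal versus vertical sides of $R_t$.
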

\begin{proof}
Since the faces of $\widetilde{\Delta}$ are positively transverse to the orbits of $\widetilde{\phi}^t$, there exists a sequence of faces $f=f_0,...,f_N=f'$ passed through by $\widetilde{\gamma}$ such that $f_{i-1}$ is a bottom face and $f_i$ is a top face of some tetrahedron $t_i$, for each $i$. As reasoned in the proof of \Cref{lemma:transrect}, the red edges of $\widetilde{\Delta}$ are positive while the blue edges of $\widetilde{\Delta}$ are negative.
From \Cref{defn:vt}, it follows that for each $i$, $R_{f_{i-1}} < R_{f_i}$.
This implies the statement of the lemma.
\end{proof}

\begin{lemma} \label{lemma:transclosedorbits}
The collection of homotopy classes of closed orbits of $\phi^t$ in $M \backslash \mathcal{C}$ is exactly the collection of homotopy classes of directed cycles in $\brloc(B)$, where $B$ is the dual veering branched surface to $\Delta$.
\end{lemma}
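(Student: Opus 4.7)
The plan is to verify both inclusions of the claimed equality of sets of free homotopy classes in $\pi_1(M \setminus \mathcal{C})$.

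For the forward direction, let $\gamma$ be a closed orbit of $\phi^t$ in $M \setminus \mathcal{C}$. By the first condition of \Cref{defn:transpos}, $\gamma$ is positively transverse to the faces of $\Delta$ and hence meets the $2$-skeleton of $\Delta$ in a finite cyclically ordered sequence of points on faces $f_1, \ldots, f_N = f_0$. Consecutive faces $f_{i-1}, f_i$ are the bottom and top face of a common tetrahedron $t_i$, namely the one containing the orbit arc of $\gamma$ between them. Under the duality of \Cref{prop:vtdualvbs}, each $t_i$ dualizes to a triple point $p_i$ of $B$ and each $f_i$ dualizes to a directed branch loop edge $e_i$ from $p_i$ to $p_{i+1}$ (with orientation matching the flow-induced coorientation on $f_i$). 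The concatenation $c = e_1 e_2 \cdots e_N$ is a directed cycle in $\brloc(B)$. Within each simply connected tetrahedron $t_i$, the orbit arc of $\gamma$ and the concatenation of two branch loop half-edges through $p_i$ both have endpoints on $f_{i-1}$ and $f_i$; sliding the endpoints within these faces and then using the triviality of $\pi_1(t_i)$ yields a piecewise isotopy which assembles into a homotopy from $\gamma$ to $c$ in $M \setminus \mathcal{C}$.

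For the reverse direction, let $c$ be a directed cycle in $\brloc(B)$ with triple points $p_1, \ldots, p_N$ and directed edges $e_i$ from $p_i$ to $p_{i+1}$. Dually, this is a cyclic sequence of tetrahedra $t_i$ and faces $f_i$, where $f_i$ is a top face of $t_i$ and a bottom face of $t_{i+1}$. Lift $c$ to a path $\widetilde c$ in $\widetilde M$, with deck translation $g \in \pi_1(M \setminus \mathcal{C})$ satisfying $g \cdot \widetilde c(0) = \widetilde c(1)$; in particular $\widetilde f_N = g \cdot \widetilde f_0$ for the lifted face sequence. For each $i$, a forward orbit segment of $\widetilde \phi^t$ in $\widetilde t_{i+1}$ connects $\widetilde f_i$ to $\widetilde f_{i+1}$, so \Cref{lemma:facerectmarkov} yields
\[
R_{\widetilde f_0} < R_{\widetilde f_1} < \cdots < R_{\widetilde f_N} = g \cdot R_{\widetilde f_0}
\]
in the orbit space $\mathcal{O}$. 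Iterating under the $g$-action produces a doubly infinite nested chain of face rectangles. The standard pseudo-Anosov contraction argument---taking nested intersections of closures along $\mathcal{O}^u$ leaves as $n \to +\infty$ and along $\mathcal{O}^s$ leaves as $n \to -\infty$---produces a unique $g$-fixed point in $\mathcal{O}$, whose orbit in $\widetilde M$ descends to a closed orbit of $\phi^t$ freely homotopic to $c$. Since this fixed point lies in the interior of each face rectangle while $\widetilde{\mathcal{C}}$ meets each face rectangle only along its boundary, the constructed orbit lies in $M \setminus \mathcal{C}$.

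The main obstacle is establishing the contraction in the reverse direction: one must verify that each $R_{\widetilde f_i} < R_{\widetilde f_{i+1}}$ is strict in both the taller and wider senses, so that the iterates $g^n R_{\widetilde f_0}$ genuinely collapse to a single point rather than to an $\mathcal{O}^s$- or $\mathcal{O}^u$-leaf. This strictness can be extracted from the rectangle structure exhibited in the proof of \Cref{lemma:transrect}, since the corners of each tetrahedron rectangle lie on $\widetilde{\mathcal{C}}$ and separate the contained face rectangles strictly from the tetrahedron rectangle's boundary in both directions.
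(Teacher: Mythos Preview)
Your forward direction matches the paper's proof essentially verbatim, and the setup of your reverse direction (lifting the cycle, obtaining the chain $R_{\widetilde f_0} < \cdots < R_{\widetilde f_N} = g \cdot R_{\widetilde f_0}$, and looking at the $g$-invariant intersection) is exactly what the paper does.

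The gap is in your uniqueness argument for the fixed point. You assert that \emph{strictness} of each inequality $R_{\widetilde f_i} < R_{\widetilde f_{i+1}}$ forces the bi-infinite intersection $\bigcap_{n \in \mathbb{Z}} g^n R_{\widetilde f_0}$ to collapse to a point. This is not true in general: a strictly nested sequence of intervals can still have a non-degenerate intersection, and there is no transverse-measure or metric structure on $\mathcal{O}$ available here to upgrade strictness to genuine contraction. What you can conclude is only that the intersection is a (possibly degenerate) $g$-invariant rectangle $R$. The paper closes this gap by a different, dynamical argument: if $R$ were non-degenerate, then its corners would be periodic under $g$, hence would project to distinct closed orbits of $\phi^t$ lying on a common stable (or unstable) leaf---impossible for an Anosov flow, since each stable leaf contains at most one closed orbit. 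You should replace your strictness paragraph with this observation; the appeal to \Cref{lemma:transrect} does not provide what you need.
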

\begin{proof}
Given a closed orbit $\gamma$ of $\phi^t$ in $M \backslash \mathcal{C}$, there exists a cyclic sequence of faces $(f_i)$ passed through by $\gamma$ such that $f_{i-1}$ is a bottom face and $f_i$ is a top face of some tetrahedron $t_i$, for each $i$. This sequence dualizes to a cyclic sequence of edges in $\brloc(B)$ that connect up to give a directed cycle $c$ passing through the same sequence of faces, thus is homotopic to $\gamma$.

Conversely, given a directed cycle $c$ of $\brloc(B)$, we have a cyclic sequence of dual faces $(f_i)$. We lift this to a sequence of faces $(\widetilde{f}_i)$ of $\widetilde{\Delta}$. Reasoning as in \Cref{lemma:facerectmarkov}, we see that $R_{\widetilde{f}_{i-1}} < R_{\widetilde{f}_i}$ for each $i$, hence $\bigcap R_{\widetilde{f}_i}$ is a nonempty $[c]$-invariant subset of $\mathcal{O}$. This subset must consist of exactly one orbit of $\widetilde{\phi}^t$ which covers a closed orbit of $\phi^t$ homotopic to $c$, otherwise each corner of the subset determines such an orbit of $\widetilde{\phi}^t$, and there would be some leaf of the stable or unstable foliation of $\phi^t$ that contains more than one closed orbit. 
\end{proof}

We derive two more consequences on the dynamics of $\phi^t$ using similar ideas.

\begin{lemma} \label{lemma:transtransitive}
The flow $\phi^t$ is transitive.
\end{lemma}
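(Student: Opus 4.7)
The plan is to combine the Markov structure given by \Cref{lemma:facerectmarkov} with the dictionary between closed orbits and directed cycles from \Cref{lemma:transclosedorbits} to show that every $\phi^t$-orbit is accumulated by closed orbits, which gives density of closed orbits and hence transitivity.

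First I would fix $x \in M$ not on $\mathcal{C}$, lift to $\widetilde{x} \in \widetilde{M}$ with orbit-space image $\widetilde{x}^* \in \mathcal{O}$, and record the bi-infinite sequence $(\widetilde{f}_k)_{k \in \mathbb{Z}}$ of lifted faces traversed by the $\widetilde{\phi}^t$-orbit of $\widetilde{x}$. By \Cref{lemma:facerectmarkov}, the face rectangles form a strictly nested chain
\[
\cdots < R_{\widetilde{f}_{-1}} < R_{\widetilde{f}_0} < R_{\widetilde{f}_1} < \cdots,
\]
and the pseudo-Anosov hyperbolicity of $\phi^t$ forces $R_{\widetilde{f}_k}$ to shrink in the $\mathcal{O}^u$-direction as $k \to +\infty$ and in the $\mathcal{O}^s$-direction as $k \to -\infty$. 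Consequently, the intersections $R_{\widetilde{f}_{-n}} \cap R_{\widetilde{f}_n}$ shrink around $\widetilde{x}^*$ as $n \to \infty$.

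Next, set $f_k = \pi(\widetilde{f}_k)$. Assuming that $\brloc(B)$, viewed as a directed graph with vertices the tetrahedra of $\Delta$ and edges the faces (oriented from the bottom face to the top face of each tetrahedron), is strongly connected, I would choose for each $n$ a directed path $p_n$ from $f_n$ back to $f_{-n}$. Concatenating $p_n$ with the segment $f_{-n} \to f_{-n+1} \to \cdots \to f_n$ produces a directed cycle $c_n$ in $\brloc(B)$, which by \Cref{lemma:transclosedorbits} is freely homotopic to a closed orbit $\gamma_n$ of $\phi^t$. The deck transformation $g_n \in \pi_1(M)$ realizing this cycle acts hyperbolically on $\mathcal{O}$ (expanding $\mathcal{O}^s$, contracting $\mathcal{O}^u$) and maps some lift of $R_{\widetilde{f}_{-n}}$ onto a strictly taller, narrower lift; a standard fixed-point argument places its unique saddle fixed point $y_n$ in $R_{\widetilde{f}_{-n}} \cap R_{\widetilde{f}_n}$ (for appropriately chosen lifts), and $y_n$ is the orbit-space image of $\gamma_n$. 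The squeezing observation then forces $y_n \to \widetilde{x}^*$, so $\gamma_n$ passes arbitrarily close to $x$, proving density.

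The main obstacle is establishing strong connectivity of $\brloc(B)$. I expect this to follow from the connectedness of $M \backslash \mathcal{C}$ combined with the rich local structure of veering tetrahedra: by \Cref{prop:vtsideedge}, every edge of $\Delta$ is a side edge of at least one tetrahedron on either side, so the fan of tetrahedra around each edge supplies short directed paths between the various faces meeting that edge. Chaining such local fans along the connected dual $1$-skeleton of $\Delta$ should produce a directed path between any two faces, but formalizing this combinatorial chase through the veering coloring data in \Cref{fig:vtdualvbs} is the nontrivial step.
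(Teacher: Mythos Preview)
Your overall strategy --- produce closed orbits whose orbit-space images lie in shrinking rectangles around a given point --- matches the paper's. The divergence is in how you close up the face sequence $f_{-n} \to \cdots \to f_n$ into a cycle. You invoke strong connectivity of $\brloc(B)$ to find a return path $p_n$ from $f_n$ to $f_{-n}$, and you correctly flag this as the main obstacle; your sketched argument for it via \Cref{prop:vtsideedge} and fan-chaining is not carried out, so as written the proof has a gap at exactly the point you identify.

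The paper sidesteps this entirely with the pigeonhole principle: since $\Delta$ has only finitely many faces, the downstairs sequence $(f_k)$ must repeat some face infinitely often in both time directions, so one may pass to a subsequence $(f_i)$ all covering the \emph{same} face of $\Delta$. Then the segment of the orbit from $f_{-k}$ to $f_k$ already descends to a cyclic sequence of faces in $M$ with no connectivity argument needed, and the rest is your squeezing argument verbatim. Your route is salvageable --- strong connectivity of $\brloc(B)$ does hold for veering triangulations --- but it imports a nontrivial combinatorial fact that the one-line pigeonhole observation makes unnecessary.
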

\begin{proof}
Let $U$ be a small neighborhood in $M$. Lift $U$ to a small neighborhood $\widetilde{U}$ in $\widetilde{M}$. Let $\gamma$ be an orbit of $\widetilde{\phi}^t$ that passes through $\widetilde{U}$. Since $\Delta$ only has finitely many faces, there exists a sequence of faces $(f_i)$ passed through by $\gamma$, all covering the same face of $\Delta$, such that there is an orbit segment going from a point on $f_{i-1}$ to a point on $f_i$.

Consider the projection of $\widetilde{U}$ on $\mathcal{O}$. The intersection of the face rectangles $R_{f_i}$ is the point corresponding to $\gamma$, which lies in the projection of $\widetilde{U}$. This implies that for large $k$, $R_{f_{-k}}$ and $R_{f_k}$ intersect in a rectangle $R$ contained in the projection of $\widetilde{U}$. 

Meanwhile, one can obtain a cyclic sequence of faces of $\Delta$ by taking the images of faces passed through by the segment of $\gamma$ from $f_{-k}$ to $f_k$. This cyclic sequence lifts to a sequence of faces $(f'_i)$ of $\widetilde{\Delta}$ where $f_i=f'_i$ for $-k \leq i \leq k$. The intersection of the face rectangles $R_{f'_i}$ determines a point $\gamma'$ corresponding to an orbit of $\widetilde{\phi}^t$ that covers a closed orbit of $\phi^t$. But $\gamma'$ lies in the rectangle $R$ constructed in the previous paragraph, hence lies in the projection of $\widetilde{U}$. Hence we have produced a closed orbit of $\phi^t$ that passes through $U$.
\end{proof}

\Cref{lemma:transtransitive} in particular implies the transitivity statement in \Cref{thm:uniqueflow}.

To state the last proposition of this subsection, we need a definition.

\begin{defn} \label{defn:lozenge}
Let $\phi^t$ be a pseudo-Anosov flow on a closed 3-manifold $M$, and let $\mathcal{C}$ be a finite collection of closed orbits of $\phi^t$.

A \textbf{lozenge} is a rectangle-with-two-opposite-ideal-vertices $[0,1]^2 \backslash \{(0,0), (1,1)\}$ properly embedded in $\mathcal{O}$ such that the restrictions of $\mathcal{O}^s$ and $\mathcal{O}^u$ to the rectangle foliate it as a product, i.e. conjugate to the foliations of $[0,1]^2 \backslash \{(0,0), (1,1)\}$ by vertical and horizontal lines. See \Cref{fig:lozenge} left.

\begin{figure}
    \centering
    \fontsize{10pt}{10pt}\selectfont
    \resizebox{!}{2.5cm}{
\begingroup%
  \makeatletter%
  \providecommand\color[2][]{%
    \errmessage{(Inkscape) Color is used for the text in Inkscape, but the package 'color.sty' is not loaded}%
    \renewcommand\color[2][]{}%
  }%
  \providecommand\transparent[1]{%
    \errmessage{(Inkscape) Transparency is used (non-zero) for the text in Inkscape, but the package 'transparent.sty' is not loaded}%
    \renewcommand\transparent[1]{}%
  }%
  \providecommand\rotatebox[2]{#2}%
  \newcommand*\fsize{\dimexpr\f@size pt\relax}%
  \newcommand*\lineheight[1]{\fontsize{\fsize}{#1\fsize}\selectfont}%
  \ifx\svgwidth\undefined%
    \setlength{\unitlength}{248.94721084bp}%
    \ifx\svgscale\undefined%
      \relax%
    \else%
      \setlength{\unitlength}{\unitlength * \real{\svgscale}}%
    \fi%
  \else%
    \setlength{\unitlength}{\svgwidth}%
  \fi%
  \global\let\svgwidth\undefined%
  \global\let\svgscale\undefined%
  \makeatother%
  \begin{picture}(1,0.30913379)%
    \lineheight{1}%
    \setlength\tabcolsep{0pt}%
    \put(0,0){\includegraphics[width=\unitlength,page=1]{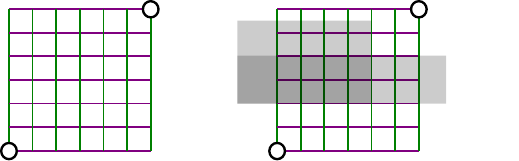}}%
    \put(0.87640147,0.14556634){\color[rgb]{0,0,0}\makebox(0,0)[lt]{\lineheight{1.25}\smash{\begin{tabular}[t]{l}$R_{f_{k-1}}$\end{tabular}}}}%
    \put(0.42157153,0.29260618){\color[rgb]{0,0,0}\makebox(0,0)[lt]{\lineheight{1.25}\smash{\begin{tabular}[t]{l}$R_{f_k}$\end{tabular}}}}%
    \put(0,0){\includegraphics[width=\unitlength,page=2]{lozenge.pdf}}%
  \end{picture}%
\endgroup%
}
    \caption{Left: A lozenge. Right: Via comparison with a sequence of face rectangles, we see that any lozenge must contain a point of $\widetilde{\mathcal{C}}$ in its interior.}
    \label{fig:lozenge}
\end{figure}

We say that $\phi^t$ \textbf{has no lozenges relative to $\mathcal{C}$} if every lozenge in $\mathcal{O}$ contains a point of $\widetilde{\mathcal{C}}$ in its interior.
\end{defn}

\begin{lemma} \label{lemma:translozenges}
The flow $\phi^t$ has no lozenges relative to $\mathcal{C}$.
\end{lemma}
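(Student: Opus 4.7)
The plan is to argue by contradiction: suppose $L$ is a lozenge in $\mathcal{O}$ whose interior contains no point of $\widetilde{\mathcal{C}}$. Let $p$ and $q$ denote its two ideal corners. I would exhibit a face rectangle $R_f$ of $\widetilde{\Delta}$ that is contained in the interior of $L$. By Definition~\ref{defn:rectangles}, $R_f$ carries a point of $\widetilde{\mathcal{C}}$ at one corner and a further two points of $\widetilde{\mathcal{C}}$ in the interiors of its two opposite sides; if $R_f$ sits in the interior of $L$, all three of these $\widetilde{\mathcal{C}}$-points would lie in the interior of $L$, producing the desired contradiction.

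Proper embeddedness of $L$ forces the ideal corners $p, q$ to correspond to escape to infinity in $\mathcal{O}$, so in particular the four boundary half-leaves of $L$ are half-infinite in their respective directions. Pick an interior point $x \in L$ and let $\tilde\gamma$ be the orbit of $\widetilde{\phi}^t$ through $x$. By Lemma~\ref{lemma:facerectmarkov}, $\tilde\gamma$ determines a bi-infinite chain of face rectangles $\cdots < R_{f_{-1}} < R_{f_0} < R_{f_1} < \cdots$ all containing $x$. As $i \to +\infty$, the $\mathcal{O}^u$-extent of $R_{f_i}$ at $s(x)$ shrinks to $\{u(x)\}$ while its $\mathcal{O}^s$-extent at $u(x)$ grows unboundedly; the roles of $\mathcal{O}^s$ and $\mathcal{O}^u$ swap as $i \to -\infty$.

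The first intermediate step is to note that the $\mathcal{O}^s$- and $\mathcal{O}^u$-extents of $L$ at $x$ are strictly positive, so for some $i^+ > 0$ the $\mathcal{O}^u$-extent of $R_{f_{i^+}}$ at $s(x)$ is strictly contained in the $\mathcal{O}^u$-extent of $L$ at $s(x)$, and symmetrically for some $i^- < 0$ the $\mathcal{O}^s$-extent of $R_{f_{i^-}}$ at $u(x)$ is strictly contained in the $\mathcal{O}^s$-extent of $L$ at $u(x)$. The intersection $R = R_{f_{i^+}} \cap R_{f_{i^-}}$ is then a product sub-rectangle of $\mathcal{O}$ containing $x$, and both its $\mathcal{O}^u$- and $\mathcal{O}^s$-extents fit strictly inside those of $L$, so $R \subset \mathrm{int}(L)$.

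To finish, I would decompose $R$ into pieces coming from the Markov partition by face rectangles. The tetrahedra $t_{i^-+1},\dots,t_{i^+}$ along $\tilde\gamma$ (with $t_i$ having bottom face $f_{i-1}$ and top face $f_i$) have tetrahedron rectangles $R_{t_i}$ that together cover $R$ along $\tilde\gamma$, and each $R_{t_i}$ is subdivided into four face rectangles by its two diagonals, following the structure in Figure~\ref{fig:vtdualvbs} and Proposition~\ref{prop:vtsideedge}. By choosing $i^\pm$ so that the extents of $R$ sit well inside $L$ with some margin, one of these constituent face rectangles is forced to lie inside $R \subset \mathrm{int}(L)$, completing the argument. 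The main obstacle is precisely this final combinatorial step: one must verify that the Markov-refinement of $R$ by faces of $\widetilde{\Delta}$ really does produce a whole face rectangle inside $R$, rather than only partial ones straddling $\partial R$. This amounts to pushing $i^+$ and $i^-$ far enough in opposite directions so that the face rectangles in the chain become small compared with the finite $\mathcal{O}^s$- and $\mathcal{O}^u$-extents of $L$ at $x$, which is possible by the Markov growth dynamics established in Lemma~\ref{lemma:facerectmarkov}.
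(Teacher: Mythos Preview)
Your argument has a genuine gap in the final step, and the proposed fix does not work. You construct the small rectangle $R = R_{f_{i^+}} \cap R_{f_{i^-}} \subset \mathrm{int}(L)$ correctly, but then you assert that some face rectangle of $\widetilde{\Delta}$ lies inside $R$. This is the problem. For every $i$ with $i^- \le i \le i^+$ we have $R_{f_{i^-}} < R_{f_i} < R_{f_{i^+}}$, which means $R_{f_i}$ is at least as tall as $R_{f_{i^-}}$ and at least as wide as $R_{f_{i^+}}$; hence each $R_{f_i}$ \emph{contains} $R$, not the other way around. The face rectangles along a single orbit never become small in both the stable and unstable directions simultaneously, so your claim that ``the face rectangles in the chain become small compared with the finite extents of $L$'' is simply false. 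Passing to the four face rectangles of each intermediate tetrahedron $t_i$ does not help either: those are quadrants of $R_{t_i}$, and $R_{t_i}$ is itself wider than $R_{f_{i^+}}$ and taller than $R_{f_{i^-}}$. More fundamentally, any face rectangle has a point of $\widetilde{\mathcal{C}}$ on its boundary, so asking for a face rectangle inside $R$ already presupposes a $\widetilde{\mathcal{C}}$-point near $R \subset L$, which is exactly what you are trying to establish.

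The paper's argument sidesteps this entirely. Rather than trying to fit a face rectangle inside $L$, it extends the relation ``$<$'' to allow comparison with the lozenge $L$, observes that $R_{f_i} < L$ for $i \ll 0$ and $L < R_{f_i}$ for $i \gg 0$, and locates the transition index $k$ with $R_{f_{k-1}} < L$ but $R_{f_k} \not< L$. Since $R_{f_{k-1}}$ and $R_{f_k}$ are both sub-rectangles of the single tetrahedron rectangle $R_{t_k}$, this forces one of the $\widetilde{\mathcal{C}}$-marked sides of $R_{t_k}$ to lie in the interior of $L$. The key idea you are missing is this transition-point argument within a single tetrahedron rectangle.
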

\begin{proof}
Let $L$ be a lozenge.
Pick a point $\gamma \in L$. There exists a sequence of faces $(f_i)$ passed through by $\gamma$ such that $f_{i-1}$ is a bottom face and $f_i$ is a top face of some tetrahedron $t_i$, for each $i$.

We can extend the definition of taller and wider in \Cref{defn:rectangles} to include comparisons with lozenges. 
With this understood, note that $R_{f_i} < L$ for small $i$ and $L < R_{f_i}$ for large $i$. Hence there is a number $k$ such that $R_{f_{k-1}} < L$ and $R_{f_k} \not< L$. 
But recall that $R_{f_{k-1}}$ and $R_{f_k}$ are sub-rectangles of the tetrahedron rectangle $R_{t_k}$. This forces $L$ to contain a side of $R_{f_k}$ thus an element of $\widetilde{\mathcal{C}}$ in its interior. See \Cref{fig:lozenge}.
\end{proof}

\subsection{Proof of \Cref{thm:uniqueflow}} \label{subsec:uniqueflowproof}

Suppose $\phi^t_1$ and $\phi^t_2$ are pseudo-Anosov flows that satisfy the hypothesis of \Cref{thm:uniqueflow}. \Cref{lemma:transclosedorbits} implies that the collection of homotopy classes of orbits of $\phi^t_1$ in $M \backslash \mathcal{C}$ equals that of $\phi^t_2$, which implies that the collection of homotopy classes of orbits of $\phi^t_1$ in $M$ equals that of $\phi^t_2$.

If $\phi^t_1$ or $\phi^t_2$ has no tree of scalloped regions, then we are done by \cite[Theorem 1.1]{BFM23}. We note that this case suffices for our applications in this paper. In the general case, we need to work harder.
As mentioned below \Cref{lemma:transrect}, the degeneracy loci of both $\phi^t_1$ and $\phi^t_2$ agrees with the ladderpole class of $\Delta$ at each $\gamma \in \mathcal{C}$, hence they agree with each other.

We can thus perform Goodman-Fried surgery on $\phi^t_1$ and $\phi^t_2$ along $\mathcal{C}$ to obtain pseudo-Anosov flows $\widehat{\phi}^t_1$ and $\widehat{\phi}^t_2$ on a closed $3$-manifold $\widehat{M}$ that have singular orbits along $\mathcal{C}$, each of which having a different number of prongs. 
Since each $\widehat{\phi}^t_i$ restricted to $\widehat{M} \backslash \mathcal{C}$ is orbit equivalent to $\phi^t_i$ restricted to $M \backslash \mathcal{C}$, the collection of homotopy classes of orbits of $\widehat{\phi}^t_1$ in $\widehat{M}$ equals that of $\widehat{\phi}^t_2$.
This also implies that the universal cover of the punctured orbit space $\mathcal{O}_i \backslash \widetilde{\mathcal{C}}$ for $\phi^t_i$ can be identified with that of $\widehat{\mathcal{O}}_i \backslash \widetilde{\mathcal{C}}$ for $\widehat{\phi}^t_i$. Since $\phi^t_1$ and $\phi^t_2$ have no lozenges relative to $\mathcal{C}$, $\widehat{\phi}^t_1$ and $\widehat{\phi}^t_2$ have no lozenges, hence in particular have no tree of scalloped regions. See \cite[Proposition 2.7]{Tsa22b} for more details on this argument.

The case tackled above thus shows that $\widehat{\phi}^t_1$ and $\widehat{\phi}^t_2$ are orbit equivalent via a map that is isotopic to identity. 
Furthermore, this orbit equivalence must send elements of $\mathcal{C}$ to themselves because of the condition on the number of prongs. Now perform Goodman-Fried surgery to recover $\phi^t_1$ and $\phi^t_2$ respectively. The orbit equivalence between $\widehat{\phi}^t_1$ and $\widehat{\phi}^t_2$ induces an orbit equivalence between $\phi^t_1$ and $\phi^t_2$.

\begin{proof}[Proof of \Cref{cor:uniqueflow}]
For each $i=1,2$, there exists a finite collection of closed orbits $\mathcal{C}_i$ such that $\Delta$ is a triangulation on $M_i \backslash \mathcal{C}_i$. In particular $M_1 \backslash \mathcal{C}_1 \cong M_2 \backslash \mathcal{C}_2$. We let $s_2$ denote the collection of meridians determined by $M_2$ on the boundary of tubular neighborhoods of vertices of $\Delta$. 

Since $\phi^t_2$ is a pseudo-Anosov flow, $s_2$ intersects the degeneracy locus of $\phi^t_2$ in $\geq 2$ points at each element of $\mathcal{C}_2$. By the second statement in \Cref{thm:uniqueflow}, this is equivalent to the statement that $s_2$ intersects the ladderpole class of $\Delta$ in $\geq 2$ points at each vertex. Applying the second statement in \Cref{thm:uniqueflow} again, this is in turn equivalent to $s_2$ intersecting the degeneracy locus of $\phi^t_1$ in $\geq 2$ points at each element of $\mathcal{C}_1$.

Thus we can apply Goodman-Fried surgery on $\phi^t_1$ along $\mathcal{C}_1$ to get a flow $\widehat{\phi}^t_1$ on $M_2$, such that $\phi^t_1$ restricted to $M_1 \backslash \mathcal{C}_1$ is orbit equivalent to $\widehat{\phi}^t_1$ restricted to $M_2 \backslash \mathcal{C}_2$. In particular, since $\Delta$ is in transverse position with respect to $\phi^t_1$, $\Delta$ is in transverse position with respect to $\widehat{\phi}^t_1$. Note that here we used the third point in \Cref{prop:gfsurgery} to ensure that the edges of $\Delta$ can be extended into intervals. 

By the uniqueness in \Cref{thm:uniqueflow}, $\widehat{\phi}^t_1$ is orbit equivalent to $\phi^t_2$. Thus $\phi^t_1$ is almost equivalent to $\phi^t_2$.
\end{proof}

\section{Correspondence of horizontal surgeries} \label{sec:hsurcorr}

The main goal of this section is to prove the following theorem.

\begin{thm} \label{thm:hsurcorr}
Let $M$ be a closed oriented $3$-manifold and let $\mathcal{C}$ be a finite collection of curves in $M$. Let $\phi^t$ be a pseudo-Anosov flow on $M$ such that $\mathcal{C}$ is a collection of orbits of $\phi^t$. Let $\Delta$ be a veering triangulation on $M \backslash \mathcal{C}$. Suppose $\Delta$ is in steady position with respect to $\phi^t$.

Then for every positive/negative horizontal surgery curve $c$ on the veering branched surface $B$ dual to $\Delta$, there is an isotopic positive/negative horizontal surgery curve $c'$ of the flow $\phi^t$. Moreover, for every positive/negative integer $n$, the veering triangulation $\Delta_{\frac{1}{n}}(c)$ can be placed in transverse position with respect to the flow $\phi^t_{\frac{1}{n}}(c')$.
\end{thm}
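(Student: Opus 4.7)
The plan is to use the dual annulus of $c$ to build a surgery annulus for $\phi^t$, then verify that the veering cut-and-insert of \Cref{prop:vthsur} matches the flow cut-and-reglue of \Cref{constr:sur} up to transverse position. Without loss of generality, suppose $c$ is positive.

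First I would construct $c'$ together with a surgery annulus. By \Cref{prop:vthsur} and the discussion preceding it, after isotoping $c$ across triple points of $B$, its dual annulus $A$ is a union of faces of $\Delta$ whose boundary edges are all red and whose interior edges are all blue. Under the steady position hypothesis, red edges of $\Delta$ carry steady positive tangent lines and blue edges carry steady negative tangent lines, so $A$ is an immersed annulus transverse to $\phi^t$ which is foliated in one direction by positive steady curves parallel to its red boundary circles, and in the transverse direction by negative steady arcs extending the blue interior edges across $A$. Taking $c'$ to be a smooth core curve of $A$ parallel to the red boundary gives an isotopic copy of $c$ that is a positive horizontal surgery curve of $\phi^t$ in the sense of \Cref{defn:horsurcurve}. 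A small smoothing of corners turns $A$ together with these two foliations into an honest surgery annulus $(A', \mathcal{H}, \mathcal{K})$ in the sense of \Cref{defn:surann} containing $c'$ as a leaf of $\mathcal{H}$, so by \Cref{constr:sur} the flow $\phi^t_{\frac{1}{n}}(c')$ is defined on $M_{\frac{1}{n}}(c')$.

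The main step is to place $\Delta_{\frac{1}{n}}(c)$ in transverse position with respect to $\phi^t_{\frac{1}{n}}(c')$. Away from a neighborhood $N'$ of $A'$, the flow and the triangulation are unchanged, so all faces and edges of $\Delta$ disjoint from $A$ remain in transverse (indeed steady) position with respect to $\phi^t_{\frac{1}{n}}(c')$. The main technical obstacle is verifying the transverse position of the layered triangulated solid torus $T$ inserted by \Cref{prop:vthsur} inside the Dehn-filled region $N'$. I would argue this inductively over the layered structure: each tetrahedron of $T$ is dual to an intersection point of the branch arcs in $N \cap B_{\frac{1}{n}}(c)$, and by our identification of $A'$ with $A$ together with the shape of the shear $\sigma$ supplied by \Cref{constr:sur}, each such intersection point corresponds to a crossing of $\mathcal{H}$-leaves produced by the cut-and-reglue. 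At each such crossing, the two intersecting $\mathcal{H}$-leaves together with the transverse $\mathcal{K}$-arcs span a tetrahedron rectangle in the orbit space of $\phi^t_{\frac{1}{n}}(c')$, into which the corresponding tetrahedron of $T$ can be placed transversely. \Cref{lemma:vthsurtopbottomfaces} ensures that face adjacencies at vertices of the dual annulus are compatible with stacking the tetrahedra one at a time, and the steadiness and sign of the red and blue side edges of each new tetrahedron follow by the argument used in the proof of \Cref{lemma:transrect}. The negative case is symmetric, with the roles of red and blue, and of $\mathcal{H}$ and $\mathcal{K}$, interchanged.
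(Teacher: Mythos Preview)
Your overall strategy matches the paper's, but there are two genuine gaps that prevent the argument from going through as written.

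\textbf{First gap: the dual annulus need not be embedded, and its boundary need not be steady.} You assume that $A$, as a union of faces of $\Delta$, is an annulus whose red boundary circle automatically gives a positive horizontal surgery curve. Neither is true in general. The same face of $\Delta$ can appear several times in $A$, so $A$ is only immersed; the paper first isotopes its faces along orbits to make it embedded. More seriously, after this the boundary of $A$ is a cyclic concatenation of \emph{translates} of red edges, and distinct $e_i$ may be translates of the same edge of $\Delta$. The steadiness hypothesis only says that $\bigcup_{\text{red edges } e} Te$ is steady as a line field over the union of red edges; it says nothing about crossings between two translates of the same edge, where the two tangent lines have equal slope and the steadiness inequality fails. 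Thus a smooth core curve ``parallel to the red boundary'' is not automatically a horizontal surgery curve. The paper handles this by pushing each $e_i$ slightly into $A$ to an interval $\overline{e}_i$, choosing the endpoints carefully so that within each ``translate neighborhood'' the projected crossings of the $\overline{e}_i$ are all negative in the braid sense; only then does one obtain a piecewise smooth horizontal surgery curve $c''$, which is finally smoothed to $c'$. This is the content of \Cref{lemma:pushpiecewisehsurcurve} and its surrounding construction, and it is not bypassed by smoothing corners.

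\textbf{Second gap: positioning of the inserted solid torus.} Your inductive placement of the tetrahedra of $T$ appeals to ``tetrahedron rectangles in the orbit space of $\phi^t_{\frac{1}{n}}(c')$'', but you have not established what the stable and unstable foliations of the surgered flow look like near $A'$. The paper does this by recalling the cone fields $C^{cu}$, $C^{cs}$ from \cite{Tsa24} that pin down $T\overline{\phi}\oplus\overline{E}^{s/u}$, and then uses the steadiness of $\mathcal{H}$ and $\mathcal{K}$ against the red and blue edges (\Cref{lemma:hsurcorrsurann}) to check that every red edge remains positive and every edge in $E_0^\pm\cup E_1$ remains negative after surgery. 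There is also a nontrivial combinatorial step you omit: after cutting, the blue edges $E_0^-$ and $E_0^+$ on the two sides of $A'$ no longer match under the regluing map $\sigma$, and they form bigons on $A$ which must be resolved (using \Cref{lemma:vthsurtopbottomfaces}) before one can insert the layered solid torus with faces positively transverse to the surgered flow. Without these pieces, the induction over layers does not get off the ground.
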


Like \Cref{thm:uniqueflow}, we will only use \Cref{thm:hsurcorr} in the case when $\phi^t$ in Anosov. We state and prove \Cref{thm:hsurcorr} in the more general context of pseudo-Anosov flows in anticipation for future applications.

Since the proof of \Cref{thm:hsurcorr} involves technical details that do not play a role in the rest of the paper, the reader is advised to skim through it on the first reading.

Here is how this section is organized: In \Cref{subsec:steadyposdefn}, we define the notion of steady position. We then show that when $\Delta$ is layered, we can always arrange for steady position. This implies that \Cref{thm:layeredvthsur} follows from \Cref{thm:hsurcorr}. 

We explain the proof of \Cref{thm:hsurcorr} over \Cref{subsec:hsurcorrproofcurve} and \Cref{subsec:hsurcorrproofsur}. We show the existence of the horizontal surgery curve $c'$ in \Cref{subsec:hsurcorrproofcurve} and show that $\Delta_{\frac{1}{n}}(c)$ can be placed in transverse position with respect to $\phi^t_{\frac{1}{n}}(c')$ in \Cref{subsec:hsurcorrproofsur}.

\subsection{Steady position} \label{subsec:steadyposdefn}

\begin{defn} \label{defn:steadypos}
Let $M$ be a closed oriented $3$-manifold and let $\mathcal{C}$ be a finite collection of curves in $M$. Let $\phi^t$ be a pseudo-Anosov flow on $M$ such that $\mathcal{C}$ is a collection of orbits of $\phi^t$. Let $\Delta$ be a veering triangulation on $M \backslash \mathcal{C}$.

We say that $\Delta$ is \textbf{in steady position with respect to $\phi^t$} if:
\begin{itemize}
    \item $\Delta$ is in transverse position with respect to $\phi^t$,
    \item $\bigcup_{\text{red edges $e$}} Te$ is positive and steady, and
    \item $\bigcup_{\text{blue edges $e$}} Te$ is negative and steady.
\end{itemize}
\end{defn}

We conjecture that if $\phi^t$ is the flow corresponding to $\Delta$ on $M$, then it is always possible to isotope $\Delta$ so that it is in steady position with respect to $\phi^t$. Currently, we only know how to show this when $\Delta$ is layered. We explain the proof of this (\Cref{prop:layeredvtsteady}) in the rest of the subsection. 

We first show that the flows that correspond to layered veering triangulations are suspension flows, in the following sense.

\begin{prop} \label{prop:layeredvtfacts}
Let $M$ be a closed oriented $3$-manifold and let $\mathcal{C}$ be a finite collection of curves in $M$. Let $\phi^t$ be a pseudo-Anosov flow on $M$ such that $\mathcal{C}$ is a collection of orbits of $\phi^t$. 
Suppose $\phi^t$ restricted to $M \backslash \mathcal{C}$ is the suspension flow of some fully-punctured pseudo-Anosov map $f:S^\circ \to S^\circ$. Then $\phi^t$ corresponds to a unique layered veering triangulation of $M \backslash \mathcal{C}$, up to isotopy.

Conversely, if $\phi^t$ corresponds to a layered veering triangulation of $M \backslash \mathcal{C}$, then $\phi^t$ restricted to $M \backslash \mathcal{C}$ is the suspension flow of some fully-punctured pseudo-Anosov map. 
\end{prop}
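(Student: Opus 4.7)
The plan is to prove the two directions separately, with the forward direction leaning on the classical Agol--Gueritaud construction and the converse following from the meaning of layering.

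For the forward direction, given the fully-punctured pseudo-Anosov map $f \colon S^\circ \to S^\circ$, I would invoke the Agol--Gueritaud construction: the unstable measured foliation of $f$ is carried by a maximal train track $\tau$, and iterating $f$ produces a splitting sequence on $\tau$ that is periodic modulo the action of $f$. Each maximal splitting corresponds to an ideal tetrahedron whose dihedral angles, face coorientations, and edge coloring are read off from the combinatorics of the splitting (in particular, the color of an edge is determined by the left/right-handedness of the corresponding diagonal exchange). Stacking these tetrahedra between consecutive fiber surfaces in $S^\circ \times [0,1]$ and gluing the top to the bottom via $f$ yields a layered veering triangulation $\Delta_f$ of the mapping torus $T_f \cong M \backslash \mathcal{C}$. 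By construction each tetrahedron sits in a flow box for the suspension flow, so $\Delta_f$ is in transverse position with $\phi^t$ restricted to $M \backslash \mathcal{C}$, and extending transversality over $\mathcal{C}$ is straightforward from the ladderpole structure at each ideal vertex (\Cref{prop:vtboundarytriang}). For the uniqueness clause, I would apply the converse direction below to a second candidate $\Delta'$ to produce a cross-section $S'$ to $\phi^t|_{M \backslash \mathcal{C}}$ with first return map conjugate to $f$; the Agol--Gueritaud construction applied to $(S', f')$ recovers $\Delta'$, and since $S^\circ$ and $S'$ lie in the same fibered face of the Thurston norm ball of $M \backslash \mathcal{C}$, Landry's extension of the construction to arbitrary cross-sections of a fixed suspension flow gives that $\Delta'$ is isotopic to $\Delta_f$.

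For the converse direction, suppose $\Delta$ is a layered veering triangulation of $M \backslash \mathcal{C}$ in transverse position with $\phi^t$. Enumerate the tetrahedra $t_1,\dots,t_N$ in layering order; at each intermediate stage $k$ the boundary between the already-placed tetrahedra $t_1 \cup \dots \cup t_k$ and the yet-to-be-placed tetrahedra is a surface $S_k$ assembled from triangular faces of $\Delta$. By transverse position each $S_k$ is cooriented and positively transverse to $\phi^t$, and the final face identifications of the layering glue $S_N$ to $S_0$ by an orientation-preserving homeomorphism. Hence $S_0$ is a cross-section to $\phi^t$ restricted to $M \backslash \mathcal{C}$; its first return map has invariant transverse measured foliations given by the restrictions of $\Lambda^s$ and $\Lambda^u$, and its punctures correspond to the elements of $\mathcal{C}$, so it is a fully-punctured pseudo-Anosov map whose suspension is $\phi^t|_{M \backslash \mathcal{C}}$.

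The hard step will be the uniqueness clause in the forward direction. Existence via Agol--Gueritaud is standard and the converse is essentially unpacking the definition of layering, but showing that the layered veering triangulation associated to a suspension flow depends only on the flow and not on the choice of fiber within its fibered face requires either invoking Landry's extension of Agol--Gueritaud, or alternatively giving a direct argument that identifies the tetrahedra of a second candidate triangulation $\Delta'$ with those of $\Delta_f$ using the tetrahedron rectangles of \Cref{lemma:transrect} in the orbit space of $\phi^t$.
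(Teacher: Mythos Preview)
Your existence argument and the converse direction match the paper's approach: both cite the Agol construction for existence, and both extract a fiber surface from the layering for the converse.

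The divergence is in the uniqueness clause, and your argument there has a gap. You write that ``the Agol--Gu\'eritaud construction applied to $(S', f')$ recovers $\Delta'$,'' but this is precisely what needs to be shown. A layered veering triangulation $\Delta'$ with fiber $S'$ certainly encodes \emph{some} $f'$-periodic splitting sequence of train tracks on $S'$ (this is what layering means on the dual branched surface side), but there is no a priori reason this sequence coincides with the canonical maximal splitting sequence that Agol--Gu\'eritaud produces. Invoking Landry's fibered-face invariance does not help here: that result tells you the Agol--Gu\'eritaud triangulation is independent of which fiber you choose, not that every layered veering triangulation over a given fiber is the Agol--Gu\'eritaud one.

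The paper closes this gap by passing to the dual veering branched surface and citing Hamenst\"adt's result that any two periodic splitting sequences carrying the same measured foliation eventually agree (as used in the proof of \cite[Proposition~4.2]{Ago11}). This forces the branched surface dual to $\Delta'$, and hence $\Delta'$ itself, to be isotopic to the one coming from Agol's construction. Your alternative suggestion of matching tetrahedron rectangles via \Cref{lemma:transrect} is a plausible replacement, but would need to be carried out: you would have to show that the set of tetrahedron rectangles in the orbit space is intrinsic to $(\phi^t, \mathcal{C})$ and that any layered veering triangulation in transverse position realizes exactly this set.
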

\begin{proof}
For the first statement, the existence of such a layered veering triangulation is the main result of \cite{Ago11}.
For uniqueness, if a layered veering triangulation $\Delta$ can be placed in transverse position with respect to $\phi^t$, then the dual veering branched surface $B$ to $\Delta$ can be obtained by suspending a $f$-periodic splitting sequence of train tracks. As explained in the proof of \cite[Proposition 4.2]{Ago11}, one can use the results of Hamenst\"adt in \cite{Ham09} to show that such a branched surface $B$ is unique up to isotopy. This implies that $\Delta$ is unique up to isotopy.

For the second statement, the $2$-skeleton of a layered veering triangulation $\Delta$ of $M \backslash \mathcal{C}$ carries the fibers of a fibration over $S^1$. If $\phi^t$ corresponds to $\Delta$, then the orbits of $\phi^t$ are positively transverse to each fiber surface, hence is the suspension flow of some fully-punctured pseudo-Anosov map.
\end{proof}

\begin{prop} \label{prop:layeredvtsteady}
Let $M$ be a closed oriented $3$-manifold and let $\mathcal{C}$ be a finite collection of curves in $M$. Let $\Delta$ be a veering triangulation on $M \backslash \mathcal{C}$. Suppose $\Delta$ is layered and suppose $\phi^t$ is the pseudo-Anosov flow corresponding to $\Delta$, then $\Delta$ can be placed in steady position with respect to $\phi^t$.
\end{prop}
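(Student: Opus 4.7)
The approach is to use the suspension structure from \Cref{prop:layeredvtfacts} and Agol's construction to exhibit an explicit geometric realization of $\Delta$, then verify steadiness using the metric-independence of the condition together with the Anosov contraction/expansion.

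By \Cref{prop:layeredvtfacts}, $\phi^t$ restricted to $M \setminus \mathcal{C}$ is orbit equivalent to the suspension flow $\phi^t_f$ of some fully-punctured pseudo-Anosov $f:S^\circ \to S^\circ$, and by the uniqueness part of that proposition, $\Delta$ is, up to isotopy, the layered veering triangulation produced by Agol's construction from a periodic splitting sequence $\tau_0 \to \tau_1 \to \cdots \to \tau_N = f(\tau_0)$ of a maximal train track on $S^\circ$. Accordingly, the dual veering branched surface $B$ is the suspension of this splitting sequence, embedded in $T_f \cong M \setminus \mathcal{C}$ positively transverse to the fibers $S^\circ \times \{t\}$ with every branch loop oriented along the flow direction.

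Equip $S^\circ$ with the singular flat metric induced by the measured stable/unstable foliations $\ell^s, \ell^u$ of $f$, so that in local $(u, v)$-coordinates $\partial_u$ spans $\ell^u = E^u$, $\partial_v$ spans $\ell^s = E^s$, and $f(u, v) = (\lambda u, \lambda^{-1} v)$. My plan is to realize each edge of $\Delta$ as a straight-line segment in local $(u, v, t)$-coordinates: each top/bottom ($\pi$-labeled) edge of a tetrahedron as the diagonal of its dual edge rectangle inside a single fiber (oriented NE-SW if red and NW-SE if blue), and each side ($0$-labeled) edge as a linear interpolation in $(u, v, t)$ between corners of the top and bottom rectangles. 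This makes every red edge have tangent with $\partial_u, \partial_v$ components of the same sign (so positive slope) at every point, and every blue edge have tangent with opposite-sign components (so negative slope). Since the $(u, v)$-projections are linear in $t$ with distinct endpoints, no edge has self-crossings. Transverse position is then immediate: each face is a triangle in a sector of $B$ (positively transverse to the flow), and each edge extends to a transverse interval by a short continuation into the cusp neighborhood of $\mathcal{C}$ at each of its two ideal endpoints.

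For steadiness, exploit the metric-independence of the condition (noted below \Cref{defn:steady}) to work in the Lyapunov-adapted invariant metric $g = \lambda^{2t} du^2 + \lambda^{-2t} dv^2 + dt^2$ on $M \setminus \mathcal{C}$. In this metric, an edge at fiber $t_0$ with tangent $\cos\theta \partial_u + \sin\theta \partial_v$ has slope $\tan\theta \cdot \lambda^{-2t_0}$, and the pushforward $d\phi^\tau$ scales slopes by exactly $\lambda^{-2\tau}$. Hence for a crossing $(x, y, \tau)$ between two red edges, with $x$ at fiber $t_1$ on edge $c_1$ and $y$ at fiber $t_1 + \tau$ on edge $c_2$, the steady inequality $\slope(Tc|_y) > \slope(d\phi^\tau(Tc|_x))$ simplifies (after cancelling the common $\lambda^{-2(t_1+\tau)}$ factor) to the intrinsic-angle inequality $\tan\theta_{c_2} > \tan\theta_{c_1}$; the analogous reduction holds for blue edges.

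The main obstacle is thus verifying this intrinsic-angle inequality at every crossing. This is where the specific geometry of Agol's construction enters: each tetrahedron of $\Delta$ corresponds to splitting a rectangle of the current train track into two narrower rectangles, and the edge introduced at a later time along a common flow line is the diagonal of a rectangle that has been split off (possibly after iteration of $f$) from an ancestor rectangle containing the diagonal of the earlier edge. Because the splitting operation strictly increases the steepness of the resulting rectangles' diagonals relative to the unstable direction, one obtains $\tan\theta_{c_2} > \tan\theta_{c_1}$. Making this precise requires a careful case analysis of the four local possibilities (red/blue top and bottom edges of each tetrahedron), traced through the correspondence in \Cref{fig:vtdualvbs} between veering triangulations and veering branched surfaces. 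This combinatorial verification constitutes the bulk of a full proof.
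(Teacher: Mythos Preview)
Your overall strategy---use the suspension structure from \Cref{prop:layeredvtfacts} and realize edges as straight segments in flat coordinates---is the right one and matches the paper's. But there are two genuine gaps.

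First, your realization scheme is not well-defined. By \Cref{prop:vtsideedge}, every edge of $\Delta$ is simultaneously the top ($\pi$-)edge of one tetrahedron, the bottom ($\pi$-)edge of another, and a side ($0$-)edge of several more. Your prescription places it ``inside a single fiber'' when viewed as a $\pi$-edge but as a ``linear interpolation in $(u,v,t)$'' when viewed as a $0$-edge; these are incompatible descriptions of the same $1$-cell. The paper sidesteps this by working in the punctured orbit space $\mathcal{O}\setminus\widetilde{\mathcal{C}}$, which inherits a Euclidean structure from the transverse measures of $f$, and invoking Gu\'eritaud's construction, which produces a layered veering triangulation $\Delta'$ in transverse position whose edges project to straight lines spanning their edge rectangles. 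The uniqueness statement in \Cref{prop:layeredvtfacts} then identifies $\Delta'$ with $\Delta$ up to isotopy.

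Second, and more seriously, you do not actually prove the steadiness inequality. Your reduction to an intrinsic slope comparison $\tan\theta_{c_2}>\tan\theta_{c_1}$ is correct, but you then only assert that ``the splitting operation strictly increases the steepness'' and defer everything to an unspecified case analysis. The paper's argument here is short and requires no cases: given a crossing $(x,y,t)$ of red edges, lift the orbit segment from $x$ to $y$ to $\widetilde{M}$ and record the sequence of faces $f_0,\dots,f_N$ it passes through. Exactly as in \Cref{lemma:facerectmarkov}, this gives a chain
\[
R_{\widetilde{e_1}} < R_{f_0} < \cdots < R_{f_N} < R_{\widetilde{e_2}}
\]
in the taller/wider order on rectangles in $\mathcal{O}$. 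Since each $\widetilde{e_i}$ projects to the straight diagonal of its edge rectangle $R_{\widetilde{e_i}}$, and $R_{\widetilde{e_2}}$ is strictly taller while $R_{\widetilde{e_1}}$ is strictly wider, elementary rectangle geometry forces the two diagonals to meet at exactly one point, at which the diagonal of $R_{\widetilde{e_2}}$ has strictly greater slope. That is the entire steadiness check; the rectangle-ordering machinery already set up in \Cref{subsec:transrect} does all the work your promised case analysis would have to do.
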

\begin{proof}
Consider the punctured orbit space $\mathcal{O} \backslash \widetilde{\mathcal{C}}$. The second statement of \Cref{prop:layeredvtfacts} states that the restriction of $\phi^t$ to $M \backslash \mathcal{C}$ is the suspension flow of some fully-punctured pseudo-Anosov map $f:S^\circ \to S^\circ$. This implies that the punctured orbit space can be identified with a cover of $S^\circ$. In particular the transverse measures of the stable and unstable foliations of $f$ can be lifted to this cover then transferred to $\mathcal{O} \backslash \widetilde{\mathcal{C}}$, giving the latter a Euclidean structure. 

In \cite{Gue16}, Gu\'eritaud constructs a layered veering triangulation $\Delta'$ on $M \backslash \mathcal{C}$ such that
\begin{itemize}
    \item $\Delta'$ is in transverse position with respect to $\phi^t$, and
    \item for every edge $e$ of $\widetilde{\Delta'}$, the projection of $e$ to $\mathcal{O} \backslash \widetilde{\mathcal{C}}$ is a straight line with respect to the Euclidean structure. 
\end{itemize}
From the first statement of \Cref{prop:layeredvtfacts}, the first property implies that $\Delta=\Delta'$ up to isotopy. It remains to show that $\Delta'$ is in steady position with respect to $\phi^t$.

We have already seen from the proof of \Cref{lemma:transrect} that $\Delta$ being in transverse position with respect to $\phi^t$ implies that every red edge is positive while every blue edge is negative.

Now suppose there is a crossing $(x,y,t)$ of $e_2$ over $e_1$, where $e_1$ and $e_2$ are red edges. We lift the orbit segment from $x$ to $y$ to $\widetilde{M}$, so that it begins at a point $\widetilde{x}$ on a red edge $\widetilde{e_1}$ and ends at a point $\widetilde{y}$ on a red edge $\widetilde{e_2}$. There exists a sequence of faces $f_0,...,f_N$ passed through by this lifted orbit segment, such that 
\begin{itemize}
    \item $f_0$ is a bottom face of a tetrahedron that has $\widetilde{e_1}$ as its bottom edge,
    \item $f_{i-1}$ is a bottom face and $f_i$ is a top face of some tetrahedron $t_i$, for each $i$, and
    \item $f_N$ is a top face of a tetrahedron that has $\widetilde{e_2}$ as its top edge.
\end{itemize}
Then we have the chain of inequalities $R_{\widetilde{e_1}} < R_{f_0} < \dots < R_{f_N} < R_{\widetilde{e_2}}$.
Together with the fact that the projection of $\widetilde{e_i}$ is a straight line spanning $R_{\widetilde{e_i}}$ for each $i$, we see that the projection of $\widetilde{e_2}$ must intersect that of $\widetilde{e_1}$ at exactly one point, at which the tangent line of $\widetilde{e_2}$ has greater slope than that of $\widetilde{e_1}$. This implies the steady condition for the crossing $(x,y,t)$ and shows that $\bigcup_{\text{red edges $e$}} Te$ is steady.

The symmetric argument shows that $\bigcup_{\text{blue edges $e$}} Te$ is steady.
\end{proof}

Once we prove \Cref{thm:hsurcorr}, it combines with \Cref{prop:layeredvtsteady} to give \Cref{thm:layeredvthsur}.

\subsection{Constructing the horizontal surgery curve} \label{subsec:hsurcorrproofcurve}

We initiate the proof of \Cref{thm:hsurcorr}. We will prove the theorem in the case when $c$ is positive. The case when $c$ is negative is symmetric. 

Our task in this subsection is to construct the horizontal surgery curve $c'$. 

Let $A$ be the dual annulus to $c$. By \Cref{prop:vthsur}, we can assume that the edges in the boundary of $A$ are all red while the edges in the interior of $A$ are all blue. 
Notice that the same face of $\Delta$ might appear multiple times in $A$. In particular, $A$ is in general not embedded at this point.

We can make $A$ embedded by isotoping its faces slightly along orbits of $\phi^t$. See \Cref{fig:hsurcorrannemb} middle. 
After this modification, the boundary of $A$ consists of translates of red edges, i.e. images of red edges isotoped slightly along orbits of $\phi^t$. We choose one boundary component of $A$ and denote the cyclic sequence of (ideal) vertices on that component by $(v_i)$. We let $e_i$ be the edge that goes from $v_i$ to $v_{i+1}$. Up to relabelling $v_i$ we can assume that this orientation on $e_i$ is the one induced from the orientation on $A$ (which is in turn induced from the flow direction and the orientation on $M$).

\begin{figure}
    \centering
    \fontsize{8pt}{8pt}\selectfont
\begingroup%
  \makeatletter%
  \providecommand\color[2][]{%
    \errmessage{(Inkscape) Color is used for the text in Inkscape, but the package 'color.sty' is not loaded}%
    \renewcommand\color[2][]{}%
  }%
  \providecommand\transparent[1]{%
    \errmessage{(Inkscape) Transparency is used (non-zero) for the text in Inkscape, but the package 'transparent.sty' is not loaded}%
    \renewcommand\transparent[1]{}%
  }%
  \providecommand\rotatebox[2]{#2}%
  \newcommand*\fsize{\dimexpr\f@size pt\relax}%
  \newcommand*\lineheight[1]{\fontsize{\fsize}{#1\fsize}\selectfont}%
  \ifx\svgwidth\undefined%
    \setlength{\unitlength}{422.11284968bp}%
    \ifx\svgscale\undefined%
      \relax%
    \else%
      \setlength{\unitlength}{\unitlength * \real{\svgscale}}%
    \fi%
  \else%
    \setlength{\unitlength}{\svgwidth}%
  \fi%
  \global\let\svgwidth\undefined%
  \global\let\svgscale\undefined%
  \makeatother%
  \begin{picture}(1,0.11423831)%
    \lineheight{1}%
    \setlength\tabcolsep{0pt}%
    \put(0,0){\includegraphics[width=\unitlength,page=1]{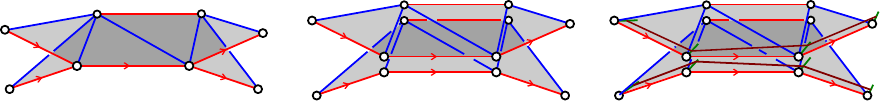}}%
    \put(0.89259287,0.06889377){\color[rgb]{0.50196078,0,0}\makebox(0,0)[lt]{\lineheight{1.25}\smash{\begin{tabular}[t]{l}$c''$\end{tabular}}}}%
  \end{picture}%
\endgroup%

    \caption{An outline for constructing the horizontal surgery curve $c'$: First we make the dual annulus $A$ embedded, then we push the edges in one boundary component of $A$ inwards.}
    \label{fig:hsurcorrannemb}
\end{figure}

The idea is to push each $e_i$ into $A$ slightly and take its closure to get an interval $\overline{e}_i$, then connect these up into a piecewise smooth curve $c''$. See \Cref{fig:hsurcorrannemb} right. We use the steadiness of $\bigcup_{\text{red edges $e$}} Te$ to arrange for $c''$ to be a piecewise smooth horizontal surgery curve. Here, one difficulty is that $\bigcup_i Te_i$ is not steady, since there might be elements that are translates of the same edge of $\Delta$. To overcome this, we have to choose $\overline{e}_i$ carefully. Once we have $c''$, we can approximate it using a smooth horizontal surgery curve $c'$.

Here we have to recall the notion of piecewise smooth horizontal surgery curves from \cite{Tsa24}.

\begin{defn} \label{defn:piecewisehorsurcurve}
A \textbf{piecewise smooth curve} is an embedded curve $c$ that is a concatenation of finitely many smooth embedded paths. The points at which the paths are joint together are called the \textbf{turns}. At each turn, the two tangent lines determined by the two paths may or may not agree.

A piecewise smooth curve is \textbf{positive/negative} if it is a concatenation of positive/negative paths respectively.

Fix a Riemannian metric. A positive/negative piecewise smooth curve $c$ is a \textbf{piecewise smooth horizontal surgery curve} if at every crossing $(x,y,t)$ of $c$, the slope of $Tc|_y$ is greater than that of $d\phi^t(Tc|_x)$ on either side. More precisely, fix an orientation on $E^s|_x$ and transfer it to an orientation on $E^s|_y$ using $d\phi^t$. If $p^L_x$ and $p^R_x$, and $p^L_y$ and $p^R_y$ are the paths that constitute $c$ on the left and right of $x$, and $y$ respectively, then $\slope(T{p^L_y}|_y) > \slope(d\phi^t(Tp^L_x|_x))$ and $\slope(T{p^R_y}|_y) > \slope(d\phi^t(Tp^R_x|_x))$.
\end{defn}

Here are the details for constructing $c''$. For each $i$, let $s_i$ be a short segment on $A$ emerging from $v_i$ and lying along a leaf of the stable foliation. For each $i$, we wish to construct an interval $\overline{e}_i$ connecting a point $z_i$ on $s_i$ to a point $z_{i+1}$ on $s_{i+1}$. If each $z_i$ is close enough to $v_i$, we can take each $\overline{e}_i$ to be $C^1$-close to $e_i$. See \Cref{fig:hsurcorrtranslatebraid}.

There is one property that we wish for the intervals $\overline{e}_i$ to satisfy: 
Let $e$ be a red edge of $\Delta$. Let $N \cong [0,1] \times [0,1] \times [0,1]$ be a neighborhood where $e \cong (0,1) \times \{0\} \times \{\frac{1}{2}\}$ and the intervals $\{x\} \times \{y\} \times [0,1]$ are orbit segments. Then the translates of $e$ among $(e_i)$ lie along the plane $[0,1] \times \{0\} \times [0,1]$. We call such an $N$ a \textbf{translate neighborhood}. Let $\epsilon_N$ be the collection $\{e_i \mid \text{$A$ meets the interior of $N$ near $e_i$}\}$. That is, $\epsilon_N$ is the collection of elements among $(e_i)$ for which we wish to push into $N$.
 
The condition we wish to impose is that upon projecting the intervals $\overline{e}_i$ that lie within each such $N$ to $[0,1] \times [0,1]$ in the first two coordinates, we only have negative crossings.
See \Cref{fig:hsurcorrtranslatebraid}.

\begin{figure}
    \centering
    \fontsize{10pt}{10pt}\selectfont
    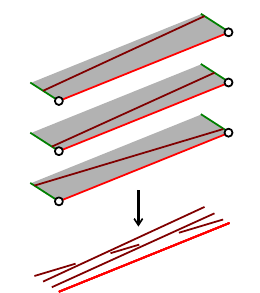
    \caption{We choose $\overline{e}_i$ so that for each $N$, the projection of the $\overline{e}_i$ that lie within $N$ only contains negative crossings.}
    \label{fig:hsurcorrtranslatebraid}
\end{figure}

One way to arrange for this is to fix some $e_i$ that is the bottom element of some $\epsilon_N$, say this is $e_1$. We choose $\overline{e}_i$ for $i=2,3,...$ in order by arranging for $z_{i+1}$ to be furthest away from $v_{i+1}$ compared to the $z_2,...,z_i$ that lie on translates of $s_{i+1}$. This ensures that no crossings are produced before we choose $\overline{e}_1$, and by that point we can choose $\overline{e}_1$ to only produce negative crossings with $\overline{e}_2,\overline{e}_3,...$.

Note that it is still true that by choosing each $z_i$ close enough to $v_i$, we can take each $\overline{e}_i$ to be $C^1$-close to $e_i$. 
Furthermore, up to a small perturbation of the points $z_i$ along $s_i$, we can assume that the orbits of $\phi^t$ passing through $z_i$ do not coincide. 

Let $c''$ be the union of the $\overline{e}_i$.

\begin{lemma} \label{lemma:pushpiecewisehsurcurve}
By choosing each $z_i$ close enough to $v_i$, $\bigcup_i T\overline{e}_i \cup \bigcup_{\text{red edges $e$}} Te$ is steady.
In particular $c''$ is a piecewise smooth horizontal surgery curve. 
\end{lemma}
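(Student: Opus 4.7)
The plan is to verify the steady condition at every crossing in $\bigcup_i T\overline{e}_i \cup \bigcup_{\text{red edges }e} Te$, controlling the new crossings by making each $\overline{e}_i$ arbitrarily $C^1$-close to $e_i$ as $z_i \to v_i$. Crossings internal to $\bigcup_{\text{red edges }e} Te$ are immediately handled by the steady position hypothesis on $\Delta$. The remaining crossings all involve at least one pushed arc, and the first step is to split them into two categories: (A) crossings between $\overline{e}_i$ and a red edge of $\Delta$, or between $\overline{e}_i$ and $\overline{e}_j$ that arise from translates of \emph{distinct} red edges of $\Delta$; and (B) crossings between $\overline{e}_i$ and $\overline{e}_j$ for which both $e_i, e_j$ lie in $\epsilon_N$ for a single translate neighborhood $N$.

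For category (A), continuity should suffice. Each $\overline{e}_i$ is $C^1$-close to the orbit-isotoped edge $e_i$, which is itself a short orbit-translate of an actual red edge $e'$ of $\Delta$. Flowing back along the orbit identifies the crossing in question with a crossing of $e'$ against another distinct red edge of $\Delta$; by the steady hypothesis, the resulting slope inequality is strict, and strictness survives a small $C^1$-perturbation of the tangent lines. Since there are only finitely many edges involved and, after perturbation, only finitely many new crossings to control, a uniform choice of the $z_i$'s will work.

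The hard part will be category (B). The limiting crossings between two translates $e_i, e_j$ of the same edge $e'$ satisfy $\slope(Te_j|_y) = \slope(d\phi^t(Te_i|_x))$ (equality, not strict inequality), so continuity alone cannot close the gap. The key observation is the local product structure of the translate neighborhood $N$: inside $N$, orbits are straight lines in the third coordinate, so $d\phi^t$ acts as the identity on tangent lines in the planar projection to $[0,1]^2$. The required inequality therefore collapses to a purely two-dimensional comparison between $\slope(T\overline{e}_j|_y)$ and $\slope(T\overline{e}_i|_x)$ at the projected crossing point in $[0,1]^2$. The ``only negative crossings'' arrangement built into the construction of the $\overline{e}_i$ is exactly the statement that every such planar crossing has the correct sign; checking that this sign convention matches the steady inequality of \Cref{defn:steady} is a direct (if delicate) inspection of the local picture, using the orientation of $A$ induced from the flow and the ambient orientation of $M$.

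Once steadiness is established, the second conclusion follows quickly. Restricting to the sub-bundle $\bigcup_i T\overline{e}_i$ gives steadiness on each smooth piece of $c''$; each $\overline{e}_i$ is a smooth embedded positive path, since $e_i$ is positive by steady position and $\overline{e}_i$ is $C^1$-close; and embeddedness of the full concatenation $c''$ follows from embeddedness of $A$ together with the smallness of the perturbation, so $c''$ fits the definition of a piecewise smooth curve. Combining these features with the verified steady inequality, applied on both sides of every crossing, yields the piecewise smooth horizontal surgery curve condition of \Cref{defn:piecewisehorsurcurve}.
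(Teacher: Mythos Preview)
Your decomposition into categories (A) and (B) captures two of the three regimes in the paper's proof, and your treatment of (B) is essentially correct. But there is a genuine gap in your handling of (A), and it is exactly the case that occupies most of the paper's argument.

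The continuity argument in (A) presupposes that every crossing involving a pushed arc $\overline{e}_i$ limits, as $z_i \to v_i$, to a genuine crossing between two distinct red edges of $\Delta$, where the steady inequality is strict and therefore stable under perturbation. This fails near the ideal vertices. Suppose $v_i$ and $v_j$ lie on the same orbit $\gamma \in \mathcal{C}$ (possibly $v_i = v_j$). The original edges $e_{i-1}, e_i, e_{j-1}, e_j$ all share the ideal endpoint $\gamma$; there is no crossing between them there, only a common vertex. After pushing to $\overline{e}_{i-1} \cup \overline{e}_i$ and $\overline{e}_{j-1} \cup \overline{e}_j$, crossings appear in every neighborhood of $\gamma$ that do not converge to any crossing of red edges of $\Delta$, so your limiting argument has nothing to limit to. The same happens for crossings of $\overline{e}_{i-1} \cup \overline{e}_i$ against an unpushed red edge $e$ sharing the vertex $v_i$. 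The paper treats these separately: it fixes tubular neighborhoods of $\mathcal{C}$ and, inside them, compares slopes directly using the ladderpole ordering coming from transverse position (this is the four-row case analysis in \Cref{fig:hsurcorrpushsteady}). Your proposal omits this entirely.

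A second, related issue is your claim that there are only finitely many crossings to control. The pushed arcs accumulate on $\mathcal{C}$, so the crossing set is generally infinite. The paper handles this by fixing an instantaneous metric and a time cutoff $T_1$ with $\lambda^{-2T_1} < \frac{\min\slope}{\max\slope}$, so that all time $\geq T_1$ crossings satisfy the steady inequality automatically; only then is the remaining crossing set compact enough for a perturbation argument (invoking \cite[Claim 3.5]{Tsa24}) away from $\mathcal{C}$. Without this cutoff, your uniform choice of the $z_i$ is not justified.
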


We recall the notion of an instantaneous metric from \cite{Tsa24} for the proof of \Cref{lemma:pushpiecewisehsurcurve}. This is a useful tool for working with the steadiness condition.

\begin{defn} \label{defn:instantmetric}
Let $\phi^t$ be an Anosov flow. An instantaneous metric is a Riemannian metric $g$ such that
$$||d\phi^t(v)||_g < \lambda^{-t} ||v||_g$$
for every $v \in E^s, t>0$, and 
$$||d\phi^t(v)||_g < \lambda^t ||v||_g$$
for every $v \in E^u, t<0$, for some $\lambda>1$.

When $\phi^t$ is a pseudo-Anosov flow, we only require $g$ to be defined away from $\sing(\phi^t)$. We refer to \cite[Lemma 2.4]{Tsa24} for details.
\end{defn}

\begin{proof}[Proof of \Cref{lemma:pushpiecewisehsurcurve}]
Fix an instantaneous metric. Pick a number $T_1$ so that $\lambda^{-2T_1}< \frac{\min \slope \bigcup_{\text{red edges $e$}} Te}{\max \slope \bigcup_{\text{red edges $e$}} Te}$. We also pick a number $T_0>0$ smaller than the first return time of $\bigcup_{\text{red edges $e$}} e$. By making the isotopy we perform to make $A$ embedded small enough, and by making each $z_i$ close enough to $v_i$, we can ensure that each $\overline{e}_i$ is close to a red edge of $\Delta$ in the $C^1$-topology. In particular, we have $\lambda^{-2T_1}< \frac{\min \slope \bigcup_i T\overline{e}_i}{\max \slope \bigcup_i T\overline{e}_i}$ and that every crossing of $\bigcup_i \overline{e}_i$ not lying within some translate neighborhood has time $>T_0$. 

The inequalities imply that it suffices to check the steadiness condition for time $<T_1$ crossings.
Meanwhile, by \cite[Claim 3.5]{Tsa24}, away from fixed neighborhoods of $\mathcal{C}$, every time $[T_0,T_1]$ crossing of $\bigcup_i \overline{e}_i \cup \bigcup_{\text{red edges $e$}} e$ is close to a crossing of $\bigcup_{\text{red edges $e$}} e$, so the steadiness condition is satisfied for the time $[T_0,T_1]$ crossings lying away from the fixed neighborhoods of $\mathcal{C}$.

The crossings that lie within some translate neighborhood satisfy the steadiness condition by construction. These include all the time $<T_0$ crossings that lie away from the fixed neighborhoods of $\mathcal{C}$.

Finally, we consider crossings within a fixed neighborhood $\eta$ of $\gamma \in \mathcal{C}$. For simplicity let us suppose that $\gamma$ is not a singular orbit. Suppose we have vertices $v_i$ and $v_j$ lying on $\gamma$. We let $A_i$ and $A_j$ be the components of $A \cap \eta$ containing $v_i$ and $v_j$ respectively. If there is a crossing of $\overline{e}_{i-1} \cup \overline{e}_i$ over $\overline{e}_{j-1} \cup \overline{e}_j$ contained within $\eta$, we lift the crossing to $\widetilde{M}$. Then the lift of $A_i$ must lie above the lift of $A_j$. Suppose that $A_i$ and $A_j$ lie on the same side of $\gamma$. Then as we push $e_{i-1} \cup e_i$ and $e_{j-1} \cup e_j$ into $A$, the crossing must be of $\overline{e}_{i-1}$ over $\overline{e}_{j-1}$ or of $\overline{e}_i$ over $\overline{e}_j$. See \Cref{fig:hsurcorrpushsteady} top row.

Suppose the crossing is of $\overline{e}_{i-1}$ over $\overline{e}_{j-1}$. If $\overline{e}_{i-1}$ and $\overline{e}_{j-1}$ are translates of the same edge $e$ of $\widetilde{\Delta}$, then the crossing is within translate neighborhood and satisfies the steadiness condition as addressed above. Otherwise $\overline{e}_{i-1}$ and $\overline{e}_{j-1}$ are translates of different edges of $\widetilde{\Delta}$. In this case the slope of $e_{i-1}$ at $v_i$ is larger than the slope of $e_{j-1}$ at $v_j$, since the former lies above the latter in the same ladderpole curve (recall \Cref{prop:vtboundarytriang}) and $\Delta$ is in transverse position with respect to $\phi^t$. As we push $e_{i-1}$ and $e_{j-1}$ into $A$, the crossing will satisfy the steadiness condition. See \Cref{fig:hsurcorrpushsteady} first row, first case. 

\begin{figure}
    \centering
    \fontsize{8pt}{8pt}\selectfont
    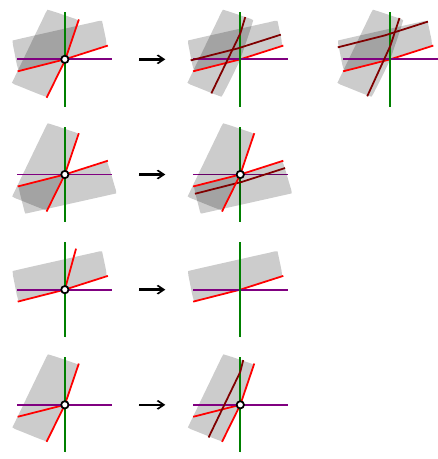
    \caption{Checking for the steadiness condition for crossings within fixed neighborhoods of $\mathcal{C}$.}
    \label{fig:hsurcorrpushsteady}
\end{figure}

The cases when the crossing is of $\overline{e}_i$ over $\overline{e}_j$, and when $A_i$ and $A_j$ lie on different sides of $\gamma$, can be checked similarly and are illustrated in \Cref{fig:hsurcorrpushsteady} first row, second case and \Cref{fig:hsurcorrpushsteady} second row, respectively. 

Now suppose there is a crossing of a red edge $e$ of $\Delta$ over $\overline{e}_{i-1} \cup \overline{e}_i$. We lift the crossing to $\widetilde{M}$. Then the lift of $e$ must lie above the lift of $A_i$. As we push $e_{i-1} \cup e_i$ into $A$, the crossing must be of $e$ over $\overline{e}_i$. Suppose the latter is the case. If $e_i$ is a translate of the lift of $e$, then the crossing arises within a translate neighborhood and satisfies the steadiness condition as addressed above. Otherwise the slope of $e$ is larger than the slope of $e_i$, since $\Delta$ is in transverse position. As we push $e_i$ into $A$, the crossing will satisfy the steadiness condition. See \Cref{fig:hsurcorrpushsteady} third row.

Finally, the case when there is a crossing of $\overline{e}_{i-1} \cup \overline{e}_i$ over a red edge $e$ of $\Delta$ can be checked similarly. See \Cref{fig:hsurcorrpushsteady} fourth row.

When $\gamma$ is singular, similar arguments hold with slightly more bookkeeping.
\end{proof}

\begin{defn}
We say that a sequence of smooth curves $(c_n)$ \textbf{converges towards a piecewise smooth curve $c$} if for every $\epsilon>0$, there exists $N$ so that for every $n \geq N$,
\begin{itemize}
    \item $c_n$ is $\epsilon$-close to $c$ outside of the $\epsilon$-neighborhoods of the turns of $c$ in the $C^1$-topology, and
    \item $\slope(Tc_n|_x)$ stays bounded within the $\epsilon$-neighborhood of $\slope(Tc|_v)$ for $x$ in the $\epsilon$-neighborhood of each turn $v$ of $c$.
\end{itemize}
\end{defn}

Observe that the turns of $c''$ lie along the points $z_i$, which lie on distinct orbits, hence $c''$ is generic in the sense of \cite[Section 5.2]{Tsa24}. Hence \cite[Proposition 5.3]{Tsa24} allows us to approximate $c''$ by a smooth horizontal surgery curve $c'$ lying on $A$. For $c'$ close enough to $c''$, \Cref{lemma:pushpiecewisehsurcurve} holds with $c''$ replaced by $c'$, that is:

\begin{lemma} \label{lemma:hsurcorrcurve}
$Tc' \cup \bigcup_{\text{red edges $e$}} Te$ is steady.
\end{lemma}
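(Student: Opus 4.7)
The plan is to transfer the steadiness of $\bigcup_i T\overline{e}_i \cup \bigcup_{\text{red edges } e} Te$ established in Lemma \ref{lemma:pushpiecewisehsurcurve} to the smooth approximation $c'$. The underlying principle is that steadiness is an open strict inequality at each crossing, so it persists under small $C^1$-perturbations away from the turns of $c''$; near the turns, the convergence condition is tailored precisely so that the one-sided slope inequalities of Definition \ref{defn:piecewisehorsurcurve} continue to apply.

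Concretely, I would first fix an instantaneous metric and choose thresholds $T_0 < T_1$ exactly as in the proof of Lemma \ref{lemma:pushpiecewisehsurcurve}, so that the slope-ratio bound $\lambda^{-2T_1} < \frac{\min \slope}{\max \slope}$ rules out any failure of steadiness for crossings of time $> T_1$, and $T_0$ is smaller than the first return time of the red 1-skeleton. Because $c'$ converges to $c''$, the tangent slopes of $Tc'$ stay in a small neighborhood of those of $Tc''$ both in the $C^1$-sense away from the turns and in the slope-bounded sense near each turn $z_i$. For $\epsilon$ small enough, the same slope bounds hold for $Tc' \cup \bigcup_{\text{red edges } e} Te$, so it suffices to verify steadiness for crossings of time $\leq T_1$. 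On this compact time window, the finitely many crossings of $c'' \cup \bigcup_{\text{red edges } e} Te$ satisfy their one-sided steadiness inequalities with a uniform slackness $\delta > 0$, extracted by compactness.

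Next I would argue that the time $\leq T_1$ crossings of $c' \cup \bigcup_{\text{red edges } e} Te$ are in natural bijection with those of $c''$ once $c'$ is close enough. Away from the turns this follows from the $C^1$-closeness together with continuity of $d\phi^t$ on $t \in [0, T_1]$, which makes the slope comparison at each crossing vary continuously. Near a turn $z_i$ of $c''$, the second bullet of the convergence condition keeps $\slope(Tc'|_x)$ within $\epsilon$ of the closed interval bounded by the two one-sided slopes $\slope(Tp^L_{z_i})$ and $\slope(Tp^R_{z_i})$ of $c''$ at $z_i$. The main obstacle is this turn case, since the smooth curve $c'$ through $z_i$ has only one tangent line whereas $c''$ has two; but the strict steadiness inequality at a crossing cuts out an open half-line of admissible slopes, and both one-sided slopes of $Tc''$ lie in this half-line with slackness at least $\delta$, so any slope in their convex hull also lies in it provided $\epsilon < \delta$. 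Choosing $\epsilon$ this small in \cite[Proposition 5.3]{Tsa24} finishes the argument and yields the lemma.
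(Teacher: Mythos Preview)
Your proposal is correct and follows the same approach as the paper. The paper's own argument is terse: it simply notes that the turns $z_i$ lie on distinct orbits (making $c''$ generic in the sense of \cite[Section 5.2]{Tsa24}), invokes \cite[Proposition 5.3]{Tsa24} to produce the smooth approximation $c'$, and then asserts that for $c'$ close enough to $c''$ the conclusion of \Cref{lemma:pushpiecewisehsurcurve} carries over. Your write-up is essentially an unpacking of why that last assertion holds, using the same threshold-and-compactness mechanism as in the proof of \Cref{lemma:pushpiecewisehsurcurve} together with the two clauses of the convergence definition.
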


\subsection{Carrying out the surgery} \label{subsec:hsurcorrproofsur}

The second part of the proof of \Cref{thm:hsurcorr} involves showing how to perform horizontal Goodman surgery on $c'$ so that $\Delta_{\frac{1}{n}}(c)$ is in transverse position with respect to $\phi^t_{\frac{1}{n}}(c')$.

We first have to construct a surgery annulus containing $c'$. Recall that such a surgery annulus has to contain a foliation $\mathcal{H}$ by closed curves and a foliation $\mathcal{K}$ by nonseparating arcs. The foliation $\mathcal{H}$ will be chosen by just taking curves lying close to $c'$. To construct the foliation $\mathcal{K}$, we will require a slight perturbation on $c'$.

Fix an instantaneous metric as in the previous subsection.
Pick a number $T_2$ so that $\lambda^{-2T_2}< \frac{\min \slope \bigcup_{\text{blue edges $e$}} Te}{\max \slope \bigcup_{\text{blue edges $e$}} Te}$.

Recall that we made $A$ embedded by isotoping along orbits. We can bring the blue interior edges along in the isotopy, so that $A$ admits a collection of translates of blue edges in its interior.
We let $E_0$ be the collection of these translated blue edges, and let $E_1$ be the collection of blue edges of $\Delta$ that do not lie in the interior of the initial dual annulus, i.e. those blue edges that are not translated.

Since $c'$ is positive while $E_0$ is negative, $c'$ intersects each edge in $E_0$ exactly one point.
Up to an arbitrarily small perturbation of $c'$, we can assume that the intersection points of $c'$ with the elements of $E_0$ lie along distinct orbits.

With this modification, we pick an sub-annulus $A'$ of $A$ that is transverse to the flow and admits a foliation $\mathcal{H}$ by closed curves including $c'$ as a leaf. Up to shrinking $A'$, $T\mathcal{H}$ can be made arbitrarily close to $Tc'$, hence we can assume that $T\mathcal{H} \cup \bigcup_{\text{red edges $e$}} Te$ is steady. 

Consider the set of points $\mathfrak{c}$ on $c'$ that lies on its intersection with $E_0$ or are involved in a time $< T_2$ crossing with $c'$ or $E_0 \cup E_1$. Since $c'$ is positive and $\bigcup_{\text{blue edge $e$}} Te$ is steady, this is a finite set. 

For each point $z \in \mathfrak{c}$, we pick a negative value $m_z$ between $\min \slope \bigcup_{\text{blue edges $e$}} Te$ and $\max \slope \bigcup_{\text{blue edges $e$}} Te$, such that:
\begin{itemize}
    \item If $z$ is the intersection of $c'$ with $e \in E_0$, then $m_z = \slope(Te|_z)$.
    \item For every time $< T_2$ crossing $(x,y,t)$ of $c'$, $m_x > m_y$.
    \item For every time $< T_2$ crossing $(x,y,t)$ of $c'$ over $e \in E_0 \cup E_1$, $\slope(d\phi^t(Te|_x)) > m_y$.
    \item For every time $< T_2$ crossing $(x,y,t)$ of $e \in E_0 \cup E_1$ over $c'$, $m_x > \slope(d\phi^{-t}(Te|_y))$.
\end{itemize}
That this is possible uses the fact that $\bigcup_{\text{blue edge $e$}} Te$ is steady.

We then define the foliation $\mathcal{K}$ by requiring that it contains the intersection of $A'$ with $E_0$ as leaves, and that the leaf passing through $z \in \mathfrak{c}$ has slope $m_z$ at $z$. Then up to shrinking $A'$, the time $< T_2$ crossings of $\mathcal{K}$ lie close to the crossings of $c'$, hence by construction $T\mathcal{K}$ is steady.

Every crossing $(x,y,t)$ of $\mathcal{K}$ over $E_0 \cup E_1$ is either of time $\geq T_2$, or lies close to a crossing of $c'$ over $E_0 \cup E_1$, hence $\slope(d\phi^t(\bigcup_{e \in E_0 \cup E_1} Te|_x)) < \slope(T\mathcal{K}|_y)$. Similarly, for every crossing $(x,y,t)$ of $\bigcup_{e \in E_0 \cup E_1} e$ over $\mathcal{K}$, we have $\slope(d\phi^t(T\mathcal{K}|_x)) < \slope(\bigcup_{e \in E_0 \cup E_1} Te|_y)$.

We record the crucial points of our progress so far as a lemma.

\begin{lemma} \label{lemma:hsurcorrsurann}
There exists a surgery annulus $(A',\mathcal{H},\mathcal{K})$ containing $c'$ such that
\begin{itemize}
    \item \begin{itemize}
        \item $T\mathcal{H}$ is steady.
        \item For every crossing $(x,y,t)$ of $\mathcal{H}$ over $\bigcup_{\text{red edges $e$}} e$, 
        $$\slope(d\phi^t(\bigcup_{\text{red edges $e$}} Te|_x)) < \slope(T\mathcal{H}|_y).$$
        \item For every crossing $(x,y,t)$ of $\bigcup_{\text{red edges $e$}} e$ over $\mathcal{H}$, 
        $$\slope(d\phi^t(T\mathcal{H}|_x)) < \slope(\bigcup_{\text{red edges $e$}} Te|_y).$$
    \end{itemize}
    \item \begin{itemize}
        \item $T\mathcal{K}$ is steady.
        \item For every crossing $(x,y,t)$ of $\mathcal{K}$ over $E_0 \cup E_1$, 
        $$\slope(d\phi^t(\bigcup_{e \in E_0 \cup E_1} Te|_x)) < \slope(T\mathcal{K}|_y).$$
        \item For every crossing $(x,y,t)$ of $\bigcup_{e \in E_0 \cup E_1} e$ over $\mathcal{K}$, 
        $$\slope(d\phi^t(T\mathcal{K}|_x)) < \slope(\bigcup_{e \in E_0 \cup E_1} Te|_y).$$
    \end{itemize}
\end{itemize}
\end{lemma}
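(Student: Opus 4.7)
The plan is to build $A'$, $\mathcal{H}$, and $\mathcal{K}$ in that order, leveraging the smooth horizontal surgery curve $c'$ already produced in \Cref{lemma:hsurcorrcurve}. The first step is to thicken $c'$ into a thin positive sub-annulus $A'$ of the dual annulus $A$, transverse to the flow, foliated by closed curves parallel to $c'$; this foliation is $\mathcal{H}$. Since $c'$ is smooth and positive and $A'$ can be shrunk freely, each tangent line of $\mathcal{H}$ is $C^1$-close to a tangent line of $c'$. Because $Tc' \cup \bigcup_{\text{red edges } e} Te$ is steady by \Cref{lemma:hsurcorrcurve}, and because steadiness at bounded crossing times is an open condition (with large-time crossings handled automatically via an instantaneous metric as in \Cref{defn:instantmetric}), all three slope inequalities for $\mathcal{H}$ against the red edges follow after possibly shrinking $A'$.

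The delicate step is $\mathcal{K}$, a foliation by negative nonseparating arcs that must be steady and interact correctly with the blue edges $E_0 \cup E_1$. The approach is to prescribe slopes at a finite constrained set, then interpolate. First, perturb $c'$ slightly within $A$ so that its intersections with $E_0$ lie on pairwise distinct orbits; this preserves the conclusion of \Cref{lemma:hsurcorrcurve}. Next, pick $T_2 > 0$ so that $\lambda^{-2T_2}$ is smaller than the ratio of the minimum and maximum slopes of $\bigcup_{\text{blue edges } e} Te$, so that only time $<T_2$ crossings need to be checked directly. Let $\mathfrak{c} \subset c'$ be the finite set consisting of intersections of $c'$ with $E_0$ together with all points of $c'$ involved in a time $<T_2$ crossing with $c'$ or with $E_0 \cup E_1$; finiteness uses that $c'$ is positive, $E_0 \cup E_1$ is negative, and $\bigcup_{\text{blue edges } e} Te$ is steady.

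At each $z \in \mathfrak{c}$, choose a negative slope $m_z$ lying strictly between $\min \slope \bigcup_{\text{blue edges } e} Te$ and $\max \slope \bigcup_{\text{blue edges } e} Te$, subject to the four conditions listed preceding the lemma: $m_z$ agrees with $\slope(Te|_z)$ on intersections with $E_0$; and $m_x$ strictly dominates the flowed slope $\slope(d\phi^t(\cdot))$ at the appropriate endpoint of each time $<T_2$ crossing with either $c'$, $E_0$, or $E_1$. The steadiness of $\bigcup_{\text{blue edges } e} Te$ forces the defining inequalities to be strict and compatible, so valid $m_z$ exist. Extend this slope assignment to a smooth line field on $A'$ tangent to $A' \cap E_0$ and integrate to obtain $\mathcal{K}$; after one further shrinking of $A'$, every time $<T_2$ crossing of $\mathcal{K}$ with itself or with $E_0 \cup E_1$ is $C^1$-close to a prescribed crossing at $\mathfrak{c}$, while time $\geq T_2$ crossings are controlled by the choice of $T_2$. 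The three inequalities for $\mathcal{K}$ then follow.

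The main obstacle is the simultaneous compatibility of the slope choices $\{m_z\}$: each $z$ may be constrained from several sides (as an $E_0$-intersection, by pairing with other points of $c'$, and by pairing with nearby edges of $E_0 \cup E_1$), and all constraints must be realized by a single negative slope in the prescribed window. This is exactly where the steadiness of $Tc'$ from \Cref{lemma:hsurcorrcurve} and the steadiness of $\bigcup_{\text{blue edges } e} Te$ coming from $\Delta$ being in steady position with respect to $\phi^t$ both enter, turning a potentially overdetermined system into an open, solvable one.
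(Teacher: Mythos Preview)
Your proposal is correct and follows essentially the same argument as the paper: thicken $c'$ to get $\mathcal{H}$ and use \Cref{lemma:hsurcorrcurve} plus openness of steadiness for the first block; then perturb $c'$, choose the time threshold $T_2$, define the finite constraint set $\mathfrak{c}$, assign slopes $m_z$ compatibly using the steadiness of $\bigcup_{\text{blue edges}} Te$, and integrate to get $\mathcal{K}$. The only cosmetic difference is phrasing the construction of $\mathcal{K}$ as integrating a line field rather than directly specifying leaves, which is equivalent.
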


We carry out the surgery operation using this surgery annulus: We cut $M$ along $A'$ to get $M \cut A'$, which has two copies of $A'$ on its boundary --- $A'_+$ on the positive side and $A'_-$ on the negative side. Notice that this also separates each edge $e$ in $E_0$ into two edges, $e_-$ and $e_+$, that split apart on $A'$. See \Cref{fig:hsurcorrinsertsolidtorus} second row. We denote the set of $e_\pm$ by $E^\pm_0$ respectively.

\begin{figure}
    \centering
    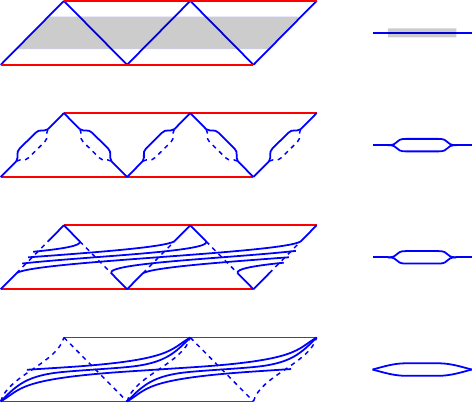
    \caption{Steps of the surgery operation. Left: A top view. Right: A cross-sectional view. We first cut along a subannulus $A' \subset A$. This separates $E_0$ into $E^-_0$ and $E^+_0$, which do not match up under the gluing map. We resolve bigons formed by edges in $E^-_0$ and $E^+_0$ and further split along the entire $A$. At this point we can insert the triangulated solid torus.}
    \label{fig:hsurcorrinsertsolidtorus}
\end{figure}

To complete the surgery, we glue $A'_-$ to $A'_+$ using a map $\sigma:A'_- \to A'_+$.
In \cite{Tsa24}, it is shown that for appropriate $\sigma$, the reglued flow $\overline{\phi}^t$ defined on the surgered flow $\overline{M}$ is pseudo-Anosov. The only part of this proof that matters to us is the way $T\overline{\phi} \oplus \overline{E}^s$ and $T\overline{\phi} \oplus \overline{E}^u$ are located.

For $T\overline{\phi} \oplus \overline{E}^u$, we defined a cone field $C^{cu}$ on $M \cut A'$ as follows:
\begin{itemize}
    \item If $y = x_+ \in A'_+$, then $C^{cu}|_y$ is the union of the two opposite quadrants bounded by $T\mathcal{H}|_x$ and $T\mathcal{K}|_x$ that meet $E^u$, direct sum with $T\phi|_x$.
    \item If $y = \phi^t(x_+)$ for $x_+ \in A'_+$ and $t>0$, then $C^{cu}|_y = d\phi^t(C^{cu}_{x_+})$.
    \item If $y \notin \phi^{[0,\infty)}(A'_+)$, then $C^{cu}|_y = E^u|_y \oplus T\phi|_y$.
\end{itemize}

By identifying $\overline{M}$ with the complement of the interior of $A'_-$ in $M \cut A'$, $C^{cu}$ descends to a discontinuous cone field $\overline{C}^{cu}$ on $\overline{M}$. We then showed that $d\overline{\phi}^t(\overline{C}^{cu}|_{\overline{\phi}^{-t}(x)}) \subset \overline{C}^{cu}|_x$ for every $t>0$, and $T\overline{\phi} \oplus \overline{E}^u = \bigcap_{t \in [0,\infty)} d\overline{\phi}^t(\overline{C}^{cu}|_{\overline{\phi}^{-t}(x)})$.

Symmetrically, for $T\overline{\phi} \oplus \overline{E}^s$, we defined a cone field $C^{cs}$ on $M \cut A'$ as follows:
\begin{itemize}
    \item If $y = x_- \in A'_-$, then $C^{cs}|_y$ is the union of the two opposite quadrants bounded by $T\mathcal{H}|_x$ and $T\mathcal{K}|_x$ that meet $E^s$, direct sum with $T\phi|_x$.
    \item If $y = \phi^t(x_-)$ for $x_- \in A'_-$ and $t<0$, then $C^{cs}|_y = d\phi^t(C^{cs}_{x_-})$.
    \item If $y \notin \phi^{(-\infty,0]}(A'_-)$, then $C^{cs}|_y = E^s|_y \oplus T\phi|_y$.
\end{itemize}

By identifying $\overline{M}$ with the complement of the interior of $A'_+$ in $M \cut A'$, $C^{cs}$ descends to a discontinuous cone field $\overline{C}^{cs}$ on $\overline{M}$. We have $d\overline{\phi}^t(\overline{C}^{cs}|_{\overline{\phi}^{-t}(x)}) \subset \overline{C}^{cs}|_x$ for every $t<0$, and $T\overline{\phi} \oplus \overline{E}^s = \bigcap_{t \in (-\infty,0]} d\overline{\phi}^t(\overline{C}^{cs}|_{\overline{\phi}^{-t}(x)})$.

In particular, note that $C^{cu}$ and $C^{cs}$ intersect in $T\phi$ at every point.

\begin{figure}
    \centering
    \fontsize{10pt}{10pt}\selectfont
\begingroup%
  \makeatletter%
  \providecommand\color[2][]{%
    \errmessage{(Inkscape) Color is used for the text in Inkscape, but the package 'color.sty' is not loaded}%
    \renewcommand\color[2][]{}%
  }%
  \providecommand\transparent[1]{%
    \errmessage{(Inkscape) Transparency is used (non-zero) for the text in Inkscape, but the package 'transparent.sty' is not loaded}%
    \renewcommand\transparent[1]{}%
  }%
  \providecommand\rotatebox[2]{#2}%
  \newcommand*\fsize{\dimexpr\f@size pt\relax}%
  \newcommand*\lineheight[1]{\fontsize{\fsize}{#1\fsize}\selectfont}%
  \ifx\svgwidth\undefined%
    \setlength{\unitlength}{130.56902788bp}%
    \ifx\svgscale\undefined%
      \relax%
    \else%
      \setlength{\unitlength}{\unitlength * \real{\svgscale}}%
    \fi%
  \else%
    \setlength{\unitlength}{\svgwidth}%
  \fi%
  \global\let\svgwidth\undefined%
  \global\let\svgscale\undefined%
  \makeatother%
  \begin{picture}(1,0.72277392)%
    \lineheight{1}%
    \setlength\tabcolsep{0pt}%
    \put(0,0){\includegraphics[width=\unitlength,page=1]{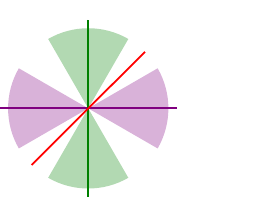}}%
    \put(0.30332063,0.67562458){\color[rgb]{0,0.50196078,0}\makebox(0,0)[lt]{\lineheight{1.25}\smash{\begin{tabular}[t]{l}$C^{cs}$\end{tabular}}}}%
    \put(0.68494297,0.31003881){\color[rgb]{0.50196078,0,0.50196078}\makebox(0,0)[lt]{\lineheight{1.25}\smash{\begin{tabular}[t]{l}$C^{cu}$\end{tabular}}}}%
    \put(0.55612783,0.53318963){\color[rgb]{1,0,0}\makebox(0,0)[lt]{\lineheight{1.25}\smash{\begin{tabular}[t]{l}$Te$\end{tabular}}}}%
  \end{picture}%
\endgroup%

    \caption{The form of $C^{cu}$ and $C^{cs}$ at a point on a red edge $e$.}
    \label{fig:hsurcorrcones}
\end{figure}

Now let $e$ be a red edge of $\Delta$ and let $z$ be a point on $e$.
The last two items in the first point of \Cref{lemma:hsurcorrsurann} imply that $Te|_z$ sits inbetween $C^{cu}$ and $C^{cs}$ as in \Cref{fig:hsurcorrcones}.
This implies that $Te|_z$ is positive with respect to the reglued flow.
Similarly, each edge in $E^-_0 \cup E^+_0 \cup E_1$ is negative with respect to the reglued flow.

Notice that the edges in $E^-_0$ and $E^+_0$ do \emph{not} match up under the gluing map $\sigma$. This corresponds to the fact that we have to insert a triangulated solid torus $T$ to obtain the triangulation $\Delta_{\frac{1}{n}}(c)$. The remaining task in the proof is to show that we can peel $E^-_0$ and $E^+_0$ apart in $\overline{M}$ then insert $T$ in a way so that its faces are positively transverse to the flow and its interior edges are negative.
See \Cref{fig:hsurcorrinsertsolidtorus} last two rows.

For each blue edge $e$ in $T$, we claim that there is a negative path on $A$ isotopic to $e$ rel endpoints.
For $e$ on $\partial_\pm T$, we can take the corresponding edge in $E^\pm_0$. Otherwise, suppose the endpoints of $e$ of $v_1$ and $v_2$. Note that there is an immersed triangular surface $T$ in $A$ that is bounded by an edge $e_-$ of $E^-_0$ with an endpoint on $v_1$, an edge $e_+$ of $E^+_0$ with an endpoint on $v_1$, and a segment of $c_2$ containing $v_2$. See \Cref{fig:hsurcorrsolidtorusedges}.
Since $e_\pm$ are negative while $c_2$ is positive (since it consists of red edges), there is a negative path from $v_1$ to $v_2$.

\begin{figure}
    \centering
    \fontsize{10pt}{10pt}\selectfont
\begingroup%
  \makeatletter%
  \providecommand\color[2][]{%
    \errmessage{(Inkscape) Color is used for the text in Inkscape, but the package 'color.sty' is not loaded}%
    \renewcommand\color[2][]{}%
  }%
  \providecommand\transparent[1]{%
    \errmessage{(Inkscape) Transparency is used (non-zero) for the text in Inkscape, but the package 'transparent.sty' is not loaded}%
    \renewcommand\transparent[1]{}%
  }%
  \providecommand\rotatebox[2]{#2}%
  \newcommand*\fsize{\dimexpr\f@size pt\relax}%
  \newcommand*\lineheight[1]{\fontsize{\fsize}{#1\fsize}\selectfont}%
  \ifx\svgwidth\undefined%
    \setlength{\unitlength}{101.9101637bp}%
    \ifx\svgscale\undefined%
      \relax%
    \else%
      \setlength{\unitlength}{\unitlength * \real{\svgscale}}%
    \fi%
  \else%
    \setlength{\unitlength}{\svgwidth}%
  \fi%
  \global\let\svgwidth\undefined%
  \global\let\svgscale\undefined%
  \makeatother%
  \begin{picture}(1,0.68346063)%
    \lineheight{1}%
    \setlength\tabcolsep{0pt}%
    \put(0,0){\includegraphics[width=\unitlength,page=1]{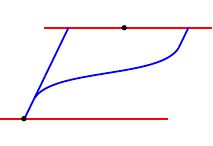}}%
    \put(0.57155176,0.60794979){\color[rgb]{0,0,0}\makebox(0,0)[lt]{\lineheight{1.25}\smash{\begin{tabular}[t]{l}$v_2$\end{tabular}}}}%
    \put(0.06974514,0.01729662){\color[rgb]{0,0,0}\makebox(0,0)[lt]{\lineheight{1.25}\smash{\begin{tabular}[t]{l}$v_1$\end{tabular}}}}%
    \put(0.07476652,0.33435012){\color[rgb]{0,0,1}\makebox(0,0)[lt]{\lineheight{1.25}\smash{\begin{tabular}[t]{l}$e_-$\end{tabular}}}}%
    \put(0.53137884,0.2628788){\color[rgb]{0,0,1}\makebox(0,0)[lt]{\lineheight{1.25}\smash{\begin{tabular}[t]{l}$e_+$\end{tabular}}}}%
  \end{picture}%
\endgroup%

    \caption{Arguing that for each edge $e$ in $T$, there is a negative path on $A$ isotopic to $e$ rel endpoints.}
    \label{fig:hsurcorrsolidtorusedges}
\end{figure}

We then want to arrange it so that each pair of such paths intersect efficiently in $A$, that is, they do not bound a bigon. We first arrange for this for the paths in $E^\pm_0$. At each vertex $v$ of $A$, we can label the edges in $E^\pm_0$ with an endpoint at $v$ as $e^\pm_1,...,e^\pm_n$ as in \Cref{lemma:vthsurtopbottomfaces}. 
After regluing by the surgery map, there are bigons formed between $e^+_i$ and $e^-_j$ for every $i \neq n$ and every $j \neq 1$. We push $e^+_{n-1}$ and $e^-_2$ across all such bigons. The resulting paths are still negative, and since, by \Cref{lemma:vthsurtopbottomfaces}, the face between $e^+_{n-1}$ and $e^+_n$ is a bottom face of the tetrahedron that has $e^+_{n-1}$ as the bottom edge, and the face between $e^-_1$ and $e^-_2$ is a top face of the tetrahedron that has $e^-_2$ as the top edge, all the faces remain positively transverse to the flow. See \Cref{fig:hsurcorrinsertsolidtorus} third to fourth row for an example in the simplest setting where $n=2$. We then push $e^+_{n-2}$ and $e^-_3$ across all remaining bigons, and so on.

To arrange for efficient intersection for the paths correpsonding to edges in the interior of $T$, we can simply push them across bigons. Since these are not edges of existing faces, we do not have to modify the existing triangulation accordingly.

We can now construct $T$. First we peel $E^-_0$ and $E^+_0$ apart on $A$ by isotoping them along orbits. Then using the fact that the triangulation on $T$ is layered (as recorded in \Cref{prop:vthsur}), we inductively define a map of each tetrahedron in $T$ into $\overline{M}$ by mapping the edges onto translates of their correpsonding paths constructed above. The fact that the paths intersect efficiently on $\overline{A}$ implies that one can arrange for the image of each face to be positively transverse to the flow. This builds $\Delta_{\frac{1}{n}}(c)$ in $\overline{M}$ in a way that places it in transverse position relative to $\overline{\phi}^t$. This concludes the proof of \Cref{thm:hsurcorr}.

\section{Proof of \Cref{thm:pennergenusone}} \label{sec:pennergenusoneproof}

\subsection{Simplifying the exponents of Dehn twists}

Suppose $\phi^t$ is an Anosov flow defined on an oriented closed $3$-manifold $M$ with orientable stable and unstable foliations. Suppose that $\phi^t$ admits a Birkhoff section whose first return map is a Penner type pseudo-Anosov map. Then there exists a collection $\mathcal{C}$ of closed orbits of $\phi^t$ such that $\phi^t$ restricted on $M \backslash \mathcal{C}$ is the suspension flow of a Penner type fully-punctured pseudo-Anosov map $\sigma \tau_{\alpha}^{n_1} \tau_{\beta}^{-m_1} \dots \tau_{\alpha}^{n_k} \tau _{\beta}^{-m_k}$ defined on some surface $S^\circ$.

Recall \Cref{prop:torusbundletovbs}: There exists an almost veering branched surface $B_\sigma$ on $T_\sigma$ and disjoint horizontal surgery curves $\alpha_i \times \{\frac{3s-2}{3k}\}$ and $\beta_j \times \{\frac{3s-1}{3k}\}$, so that given $p$-tuples $n_1,...,n_k$ of non-negative integers such that $\sum_s n_s$ is positive on every $\sigma$-orbit of $\alpha$, and $q$-tuples $m_1,...,m_k$ of non-negative integers such that $\sum_s m_s$ is positive on every $\sigma$-orbit of $\beta$, performing horizontal surgery along these curves gives a layered veering branched surface on the mapping torus of $\sigma \tau_{\alpha}^{n_1} \tau_{\beta}^{-m_1} \dots \tau_{\alpha}^{n_k} \tau _{\beta}^{-m_k}$. For notational convenience, we temporarily denote the map by $f_{n_s,m_s}$ and the veering branched surface as $B_{n_s,m_s}$.

For two choices of such tuples $n^{(1)}_1,m^{(1)}_1,...,n^{(1)}_{k^{(1)}},m^{(1)}_{k^{(1)}}$ and $n^{(2)}_1,m^{(2)}_1,...,n^{(2)}_{k^{(2)}},m^{(2)}_{k^{(2)}}$, we have a square of horizontal surgery operations
\begin{center}
\begin{tikzcd}
 & B_{\max\{n^{(1)}_s,n^{(2)}_s\},\max\{m^{(1)}_s,m^{(2)}_s\}} & \\
B_{n^{(1)}_s,m^{(1)}_s} \arrow[ru] & & B_{n^{(2)}_s,m^{(2)}_s} \arrow[lu] \\
 & B_\sigma \arrow[lu] \arrow[ru] &
\end{tikzcd}
\end{center}
Here, and in the rest of the proof, an arrow $B \to B'$ means that one performs horizontal surgery on $B$ along disjoint curves to obtain $B'$. Also, we take $n^{(i)}_s = 0$ and $m^{(i)}_s = 0$ for $s > k^{(i)}$ in order to make sense of the maximums.

Choosing $n^{(1)}_s$ and $m^{(1)}_s$ to be the tuples $n_s$ and $m_s$ coming from the starting Anosov flow $\phi^t$, and $n^{(2)}_s$ and $m^{(2)}_s$ to be the single $p$-tuple $(1,...,1)$ and the single $q$-tuple $(1,...,1)$ respectively, we can apply \Cref{thm:layeredvthsur} and \Cref{thm:horsuralmostequiv} to conclude that the flow $\phi^t_1$ corresponding to the veering triangulation dual to $B_{\max\{n^{(1)}_s,n^{(2)}_s\},\max\{m^{(1)}_s,m^{(2)}_s\}}$ is almost equivalent to $\phi^t$. The same reasoning shows that the flow $\phi^t_2$ corresponding to the veering triangulation dual to $B_{\max\{n^{(1)}_s,n^{(2)}_s\},\max\{m^{(1)}_s,m^{(2)}_s\}}$ is almost equivalent to the suspension flow $\phi^t_{\sigma \tau_{\alpha} \tau_{\beta}^{-1}}$. Meanwhile, \Cref{cor:uniqueflow} implies that $\phi^t_1$ and $\phi^t_2$ are almost equivalent. We conclude that $\phi^t$ is almost equivalent to $\phi^t_{\sigma \tau_{\alpha} \tau_{\beta}^{-1}}$.

Recall \Cref{lemma:anosovtypequotientsurface} and \Cref{prop:torusbundletopennermaptori}: $S^\circ \to S^\circ/\langle \sigma \rangle$ is a covering of $S^\circ$ over some genus one surface $T$ determined by some class $[z_0] \in H^1(T; \mathbb{Z}/N)$. There exists a sequence of cocycles $z_0,...,z_{N-1},z_N=0$ such that for each $i$, either
\begin{itemize}
    \item $z_{i-1}$ and $z_i$ differ only on a positive edge $e$ and $z_i(e) = z_{i-1}(e) + 1$, or
    \item $z_{i-1}$ and $z_i$ differ only on a negative edge $e$ and $z_i(e) = z_{i-1}(e) - 1$.
\end{itemize}
For each $i$, we let $f_i$ be the map $\sigma_{[z_i]} \tau_{\alpha_{[z_i]}} \tau^{-1}_{\beta_{[z_i]}}$ defined on the covering surface $S^\circ_{[z_i]}$ determined by $[z_i]$. In particular, we have already seen that the starting Anosov flow is almost equivalent to the suspension flow $\phi^t_{f_0}$. 

\subsection{Varying the cocycles}

The rest of the proof is devoted to showing that $\phi^t_{f_{i-1}}$ and $\phi^t_{f_i}$ are almost equivalent for each $i$. Once we show this, we can conclude that $\phi^t$ and $\phi^t_{f_N}$ are almost equivalent. Since $z_N=0$, $S_{z_N}$ is a disjoint union of genus one surfaces, thus $\phi^t_{f_N}$ is a suspension Anosov flow. Hence by \Cref{prop:genusonesection=sus}, $\phi^t$ admits a genus one Birkhoff section.

Without loss of generality, suppose that $z_{i-1}$ and $z_i$ differ only on a positive edge $e$ and $z_i(e) = z_{i-1}(e) + 1$. We identify $e$ with the branch of $\tau$ passing through the edge of the quadrangulation on $T$ that is dual to $e$. Then $e \times S^1$ is a sector of $B_\sigma$. 

Let $C_{i-1}$ and $C_i$ be the collections of surgery curves on $B_\sigma$ for which surgery along gives the veering branched surfaces $B_{f_{i-1}}$ and $B_{f_i}$ respectively. Note that the sets $\bigcup C_{i-1}$ and $\bigcup C_i$ differ only on $e \times S^1$. We let $D_{i-1}$ and $D_i$ be the subsets of $C_{i-1}$ and $C_i$ consisting of elements that pass through $e \times S^1$. Note that these must consist of positive horizontal surgery curves. We isotope the elements of $D_i$ so that they lie slightly above the elements of $D_{i-1}$ outside of $e \times S^1$ and so that they intersect the elements of $D_{i-1}$ transversely in a minimal number of points in $e \times S^1$. See \Cref{fig:concurrenthsur} bottom.

For each element $c$ in $C_{i-1} \cup C_i$, we choose a tubular neighborhood $N_c$ of $c$ in $T \times S^1$. 
Let $A_c$ be the smooth neighborhood of $c$ that is contained in $N_c$.
The orientation on $\alpha_T$ induces an orientation on $c$, allowing us to refer to the components of $N_c \cut A_c$ as the left or right regular half-neighborhood of $c$, which we denote by $N^L_c$ and $N^R_c$ respectively.

The key observation now is that we can obtain a veering branched surface $\widehat{B}_i$ by
\begin{itemize}
    \item for each positive or negative $c \in C_{i-1} \cap C_i$, cutting out $N^L_c$ and regluing it with a map that is identity on $\partial N^L_c \backslash A_c$ and sends the meridian to a curve of isotopy class $\mu+c$ or $\mu-c$, respectively, such that the arcs in $\brloc(N^L_c \cap (\tau \times S^1))$ intersect those in $\brloc(N^R_c \cap (\tau \times S^1))$ minimally;
    \item for each $c \in D_{i-1}$, cutting out $N^L_c$ and regluing it with a map that is identity on $\partial N^L_c \backslash A_c$ and sends the meridian to a curve of isotopy class $\mu+c$, such that the arcs in $\brloc(N^L_c \cap (\tau \times S^1))$ intersect those in $\brloc(N^R_c \cap (\tau \times S^1))$ minimally;
    \item for each $c \in D_i$, cutting out $N^R_c$ and regluing it with a map that is identity on $\partial N^R_c \backslash A_c$ and sends the meridian to a curve of isotopy class $\mu+c$, such that the arcs in $\brloc(N^R_c \cap (\tau \times S^1))$ intersect those in $\brloc(N^L_c \cap (\tau \times S^1))$ and those in $\brloc(N^L_{c'} \cap (\tau \times S^1))$ for $c' \in D_{i-1}$ minimally. 
\end{itemize}
See \Cref{fig:concurrenthsur}. Indeed, by only performing the first two types of cutting and regluing, we end up with the veering branched surface $B_{f_{i-1}}$ having $D_i$ as a collection of positive horizontal surgery curves. Hence by performing the final type of cutting and regluing, we are performing horizontal surgery on $B_{f_{i-1}}$ along $D_i$, which by \Cref{prop:vbshsur}, results in a veering branched surface $\widehat{B}_i$. By symmetry, we can also consider $\widehat{B}_i$ as obtained by horizontal surgery on $B_{f_i}$. In other words, we have the following square
\begin{center}
\begin{tikzcd}
 & \widehat{B}_i & \\
B_{f_{i-1}} \arrow[ru] & & B_{f_i} \arrow[lu] \\
 & B_\sigma \arrow[lu] \arrow[ru] & 
\end{tikzcd}
\end{center}

\begin{figure}
    \centering
    \fontsize{8pt}{8pt}\selectfont
\begingroup%
  \makeatletter%
  \providecommand\color[2][]{%
    \errmessage{(Inkscape) Color is used for the text in Inkscape, but the package 'color.sty' is not loaded}%
    \renewcommand\color[2][]{}%
  }%
  \providecommand\transparent[1]{%
    \errmessage{(Inkscape) Transparency is used (non-zero) for the text in Inkscape, but the package 'transparent.sty' is not loaded}%
    \renewcommand\transparent[1]{}%
  }%
  \providecommand\rotatebox[2]{#2}%
  \newcommand*\fsize{\dimexpr\f@size pt\relax}%
  \newcommand*\lineheight[1]{\fontsize{\fsize}{#1\fsize}\selectfont}%
  \ifx\svgwidth\undefined%
    \setlength{\unitlength}{253.44412255bp}%
    \ifx\svgscale\undefined%
      \relax%
    \else%
      \setlength{\unitlength}{\unitlength * \real{\svgscale}}%
    \fi%
  \else%
    \setlength{\unitlength}{\svgwidth}%
  \fi%
  \global\let\svgwidth\undefined%
  \global\let\svgscale\undefined%
  \makeatother%
  \begin{picture}(1,1.05607063)%
    \lineheight{1}%
    \setlength\tabcolsep{0pt}%
    \put(0,0){\includegraphics[width=\unitlength,page=1]{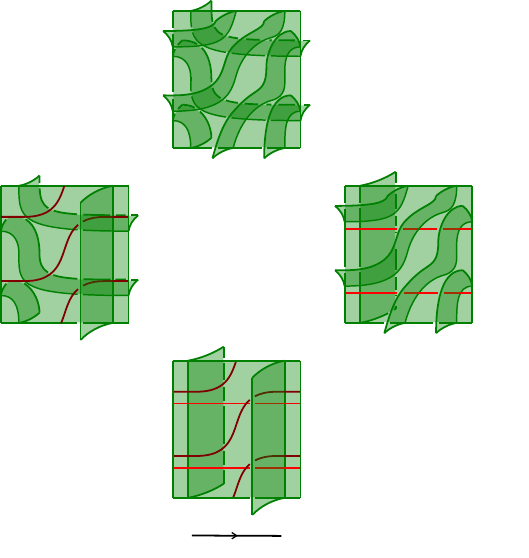}}%
    \put(0.27043019,0.57879233){\color[rgb]{0,0,0}\makebox(0,0)[lt]{\lineheight{1.25}\smash{\begin{tabular}[t]{l}$B_{f_{i-1}}$\end{tabular}}}}%
    \put(0.243447,0.2783329){\color[rgb]{1,0,0}\makebox(0,0)[lt]{\lineheight{1.25}\smash{\begin{tabular}[t]{l}$D_{i-1}$\end{tabular}}}}%
    \put(0.27304276,0.3163603){\color[rgb]{0.50196078,0,0}\makebox(0,0)[lt]{\lineheight{1.25}\smash{\begin{tabular}[t]{l}$D_i$\end{tabular}}}}%
    \put(0.43371345,0.00278199){\color[rgb]{0,0,0}\makebox(0,0)[lt]{\lineheight{1.25}\smash{\begin{tabular}[t]{l}$e$\end{tabular}}}}%
    \put(0.92022399,0.57879336){\color[rgb]{0,0,0}\makebox(0,0)[lt]{\lineheight{1.25}\smash{\begin{tabular}[t]{l}$B_{f_i}$\end{tabular}}}}%
    \put(0.59899943,0.91265146){\color[rgb]{0,0,0}\makebox(0,0)[lt]{\lineheight{1.25}\smash{\begin{tabular}[t]{l}$\widehat{B}_i$\end{tabular}}}}%
    \put(0,0){\includegraphics[width=\unitlength,page=2]{concurrenthsur.pdf}}%
  \end{picture}%
\endgroup%

    \caption{The veering branched surface $B_{f_{i-1}}$ and $B_{f_i}$ are related by horizontal surgeries via $\widehat{B}_i$.}
    \label{fig:concurrenthsur}
\end{figure}

\begin{rmk} \label{rmk:concurrenthsur}
We remark that the construction of $\widehat{B}_i$ actually works whenever $z_{i-1}$ and $z_i$ differ only on a positive edge $e$ and $z_i(e) = z_{i-1}(e) + k$ for some $k>0$. We choose to simplify to the case when $k=1$ for easier visualization. Note that the construction fails to give a veering branched surface if $k<0$, however. In that case one can modify the construction by cutting out the right regular half-neighborhoods of elements in $D_{i-1}$ and left regular half-neighborhoods of elements in $D_i$ instead.

Another remark is that this construction fits into the framework of what might be called concurrent horizontal surgery in \cite{Tsa22a}. To elaborate more: In \cite{Tsa22a} a `halved' variant of horizontal surgery is discussed, which is then generalized to a `concurrent halved' variant, where one performs halved horizontal surgery along multiple intersecting curves. It is straightforward to remove the `halved' aspect of concurrent halved horizontal surgery and develop simply a `concurrent' variant of horizontal surgery. We let the interested reader fill out the details.
\end{rmk}

Arguing as above, \Cref{thm:layeredvthsur} and \Cref{thm:horsuralmostequiv} imply that $\phi^t_{f_{i-1}}$ and $\phi^t_{f_i}$ differ by horizontal Goodman surgeries, hence are almost equivalent. One difference between this setting and the one in the previous subsection is that $B_{\max\{n^{(1)}_s,n^{(2)}_s\},\max\{m^{(1)}_s,m^{(2)}_s\}}$ is layered while $\widehat{B}_i$ is not. This is not an issue however, since \Cref{thm:layeredvthsur} only requires the veering branched surface \emph{before} the horizontal surgery to be layered. 

The following diagram summarizes the whole argument. 
\begin{center}
\begin{scriptsize}
\begin{tikzpicture}[x=0.75pt,y=0.75pt,yscale=-1,xscale=1]
\tikzstyle{arrow} = [thick,->,>=stealth]

\draw (0,80) node [anchor=west] {$\phi^t_{n^{(1)}_s,m^{(1)}_s}$};
\draw [arrow] (40,70) -- (60,50);
\draw (30,40) node [anchor=west] {$\phi^t_{\max\{n^{(1)}_s,n^{(2)}_s\},\max\{m^{(1)}_s,m^{(2)}_s\}}$};
\draw [arrow] (180,70) -- (160,50);
\draw (175,80) node [anchor=west] {$\phi^t_{n^{(2)}_s,m^{(2)}_s} = \phi^t_{f_0}$};

\draw [arrow] (260,70) -- (280,50) ;
\draw (280,40) node [anchor=west] {$\widetilde{\phi}^t_1$};
\draw [arrow] (320,70) -- (300,50) ;
\draw (315,80) node [anchor=west] {$\phi^t_{f_1}$};

\draw [arrow] (335,70) -- (355,50) ;
\draw (360,50) node [anchor=west] {$\dots$};
\draw [arrow] (405,70) -- (385,50) ;
\draw (400,80) node [anchor=west] {$\phi^t_{f_{N-1}}$};

\draw [arrow] (420,70) -- (440,50) ;
\draw (440,40) node [anchor=west] {$\widetilde{\phi}^t_N$};
\draw [arrow] (480,70) -- (460,50) ;
\draw (475,80) node [anchor=west] {$\phi^t_{f_N}$};

\draw (0,200) node [anchor=west] {$B_{n^{(1)}_s,m^{(1)}_s}$};
\draw [arrow] (40,190) -- (60,170);
\draw (30,160) node [anchor=west] {$B_{\max\{n^{(1)}_s,n^{(2)}_s\},\max\{m^{(1)}_s,m^{(2)}_s\}}$};
\draw [arrow] (180,190) -- (160,170);
\draw (175,200) node [anchor=west] {$B_{n^{(2)}_s,m^{(2)}_s} = B_{f_0}$};
\draw [arrow] (60,230) -- (40,210);
\draw [arrow] (160,230) -- (180,210);
\draw (85,240) node [anchor=west] {$B_\sigma$};

\draw [arrow] (260,190) -- (280,170);
\draw (280,160) node [anchor=west] {$\widetilde{B}_1$};
\draw [arrow] (320,190) -- (300,170);
\draw (315,200) node [anchor=west] {$B_{f_1}$};
\draw [arrow] (280,230) -- (260,210);
\draw [arrow] (305,230) -- (325,210);
\draw (275,240) node [anchor=west] {$B_\sigma$};

\draw [arrow] (335,190) -- (355,170) ;
\draw (360,170) node [anchor=west] {$\dots$};
\draw [arrow] (405,190) -- (385,170) ;
\draw (400,200) node [anchor=west] {$B_{f_{N-1}}$};
\draw [arrow] (360,230) -- (340,210);
\draw [arrow] (385,230) -- (405,210);
\draw (360,230) node [anchor=west] {$\dots$};

\draw [arrow] (420,190) -- (440,170) ;
\draw (440,160) node [anchor=west] {$\widetilde{B}_N$};
\draw [arrow] (480,190) -- (460,170) ;
\draw (475,200) node [anchor=west] {$B_{f_N}$};
\draw [arrow] (440,230) -- (420,210);
\draw [arrow] (460,230) -- (480,210);
\draw (430,240) node [anchor=west] {$B_\sigma$};

\draw (515,200) node [anchor=west] {Veering, layered};
\draw (515,240) node [anchor=west] {Almost veering};

\draw (240,180) -- (240,140);
\draw (240,180) -- (600,180);
\draw (10,220) -- (600,220);

\end{tikzpicture}
\end{scriptsize}
\end{center}

\vspace{0.5cm}
\begin{large}
\begin{center}
\textbf{Part 3. Totally periodic Anosov flows}
\end{center}
\end{large}

\section{Base surfaces and graphs} \label{sec:totallyperiodicdefn}

Totally periodic (pseudo-)Anosov flows are a class of (pseudo-)Anosov flows introduced by Barbot and Fenley in \cite{BF15}. We recall their definition below.

\begin{defn} \label{defn:totallyperiodic}
Let $M$ be a closed oriented graph manifold. Let $\{P_i\}$ be the Seifert fibered pieces of $M$. A (pseudo-)Anosov flow $\phi^t$ on $M$ is said to be \textbf{totally periodic} if for each $i$, a regular fiber of $P_i$ is homotopic to a closed orbit of $\phi^t$.
\end{defn}

A feature of totally periodic (pseudo-)Anosov flows is that their dynamical data can be recorded by certain combinatorial data, namely the base surfaces of the Seifert fibered pieces $P_i$, graphs sitting inside these base surfaces, and the gluing maps between the boundary components of the $P_i$. In \Cref{subsec:flowgraph} we recall this theory in the case when the flow is Anosov and has orientable stable and unstable foliations. 
We refer the reader to \cite{BF15} and \cite{BFM23} for proofs.

In \Cref{subsec:indgluingmap} we recall a corollary of \Cref{thm:horsuralmostequiv} that shows that in the context of showing \Cref{thm:totallyperiodicgenusone}, the data of the gluing maps is unimportant.

\subsection{Spines and graphs} \label{subsec:flowgraph}

Let $M$ be a closed oriented graph manifold. Let $\{P_i\}$ be the Seifert fibered pieces of $M$. Let $S_i$ be the base orbifold of $P_i$ and $\pi_i:P_i \to S_i$ be the projection map. We caution that even though $M$ is oriented, each $S_i$ may be nonorientable. We refer to the (isotopy classes of) identifications between $\bigsqcup_i \partial P_i$ that recover $M$ as the \textbf{gluing maps}.

Let $\phi^t$ be a totally periodic Anosov flow on $M$. Suppose that $\phi^t$ has orientable stable and unstable foliations. Recall that this is equivalent to the line bundle $E^s$ and $E^u$ being orientable. We fix a choice of orientation on these line bundles so that $TM \cong E^s \oplus T\phi \oplus E^u$ is orientation preserving.

\begin{defn}
A \textbf{Birkhoff annulus} is a cooriented annulus $A$ such that:
\begin{itemize}
    \item $A$ is embedded along its interior, where it is positively transverse to the orbits of $\phi^t$.
    \item $A$ is immersed along its boundary, where it lies along closed orbits of $\phi^t$.
\end{itemize}

We say that a Birkhoff annulus $A$ is \textbf{positive/negative} if there exists a positive/negative nonseparating arc on $A$, respectively. See \Cref{fig:totallyperiodicspine} top for an example of a negative Birkhoff annulus.
\end{defn}

\begin{figure}
    \centering
    \fontsize{6pt}{6pt}\selectfont
\begingroup%
  \makeatletter%
  \providecommand\color[2][]{%
    \errmessage{(Inkscape) Color is used for the text in Inkscape, but the package 'color.sty' is not loaded}%
    \renewcommand\color[2][]{}%
  }%
  \providecommand\transparent[1]{%
    \errmessage{(Inkscape) Transparency is used (non-zero) for the text in Inkscape, but the package 'transparent.sty' is not loaded}%
    \renewcommand\transparent[1]{}%
  }%
  \providecommand\rotatebox[2]{#2}%
  \newcommand*\fsize{\dimexpr\f@size pt\relax}%
  \newcommand*\lineheight[1]{\fontsize{\fsize}{#1\fsize}\selectfont}%
  \ifx\svgwidth\undefined%
    \setlength{\unitlength}{139.75286289bp}%
    \ifx\svgscale\undefined%
      \relax%
    \else%
      \setlength{\unitlength}{\unitlength * \real{\svgscale}}%
    \fi%
  \else%
    \setlength{\unitlength}{\svgwidth}%
  \fi%
  \global\let\svgwidth\undefined%
  \global\let\svgscale\undefined%
  \makeatother%
  \begin{picture}(1,1.18249802)%
    \lineheight{1}%
    \setlength\tabcolsep{0pt}%
    \put(0,0){\includegraphics[width=\unitlength,page=1]{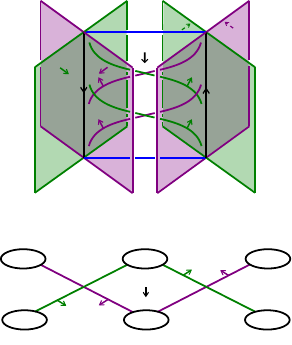}}%
    \put(0.05594547,0.34572955){\color[rgb]{0,0,0}\makebox(0,0)[lt]{\lineheight{1.25}\smash{\begin{tabular}[t]{l}$+$\end{tabular}}}}%
    \put(0.47677668,0.34571965){\color[rgb]{0,0,0}\makebox(0,0)[lt]{\lineheight{1.25}\smash{\begin{tabular}[t]{l}$-$\end{tabular}}}}%
    \put(0.89333434,0.34571965){\color[rgb]{0,0,0}\makebox(0,0)[lt]{\lineheight{1.25}\smash{\begin{tabular}[t]{l}$+$\end{tabular}}}}%
    \put(0.05593944,0.00757769){\color[rgb]{0,0,0}\makebox(0,0)[lt]{\lineheight{1.25}\smash{\begin{tabular}[t]{l}$-$\end{tabular}}}}%
    \put(0.4767708,0.00756779){\color[rgb]{0,0,0}\makebox(0,0)[lt]{\lineheight{1.25}\smash{\begin{tabular}[t]{l}$+$\end{tabular}}}}%
    \put(0.89332831,0.00756779){\color[rgb]{0,0,0}\makebox(0,0)[lt]{\lineheight{1.25}\smash{\begin{tabular}[t]{l}$-$\end{tabular}}}}%
    \put(0,0){\includegraphics[width=\unitlength,page=2]{totallyperiodicspine.pdf}}%
  \end{picture}%
\endgroup%

    \caption{For each edge $e$ of the spine graph $X_i$, $\pi_i^{-1}(e)$ is a Birkhoff annulus of $\phi^t$.}
    \label{fig:totallyperiodicspine}
\end{figure}

In \cite{BF15}, it is shown that up to isotoping $\phi^t$, there is a graph $X_i$ embedded in $S_i$ for each $i$, so that for each vertex $v$ of $X_i$, $\pi_i^{-1}(v)$ is a closed orbit of $\phi^t$, and for each edge $e$ of $X_i$, $\pi_i^{-1}(e)$ is a Birkhoff annulus of $\phi^t$. Moreover, $P_i$ deformation retracts onto the set $\pi_i^{-1}(X_i)$ via homotoping along orbits of $\phi^t$. See \Cref{fig:totallyperiodicspine}. Note that since we have assumed that $\phi^t$ is Anosov and has orientable stable and unstable foliations, each $X_i$ is $4$-valent. This in particular implies that each $S_i$ is a surface, i.e. $P_i$ has no singular fibers, thus $P_i \cong S_i \times S^1$.

We refer to the orbits of $\phi^t$ that lie over the vertices of $X_i$ as the \textbf{vertical orbits}.
Since $\phi^t$ has orientable stable and unstable leaves, the local stable and unstable leaves of each vertical orbit within $P_i$ are properly embedded annuli. Up to isotoping $\phi^t$, we can assume that each of these is of the form $\pi_i^{-1}(\alpha)$ where $\alpha$ is a properly embedded interval in $S_i$. 
The chosen coorientations on the stable and unstable leaves induce coorientations on these intervals. We let $F_i$ be the union of these cooriented intervals. See \Cref{fig:totallyperiodicgraphs} middle.

The edges of $X_i$ are naturally cooriented by the direction of the flow. 
We orient the edges of $X_i$ using the coorientation on the projections of the stable leaves in $F_i$.
We also say that an edge of $X_i$ is \textbf{positive/negative} if it is the projection of a positive/negative Birkhoff annulus, respectively. 
We refer to the $X_i$ with the data of these coorientations, orientations, and signs as the \textbf{spine graphs} of $\phi^t$. See \Cref{fig:totallyperiodicgraphs} top.

\begin{figure}
    \centering
    \fontsize{6pt}{6pt}\selectfont
    \resizebox{!}{7.5cm}{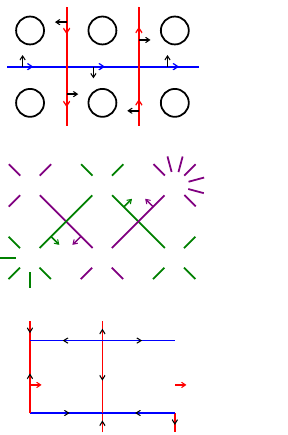}
    \caption{Top: The spine graph $X_i$ on $S_i$. Middle: The graph $F_i$ formed by projections of local stable and unstable leaves of the vertical orbits. Bottom: The flow graph $\Phi_i$ on $\overline{S}_i$. As usual, we color positive/negative edges in red/blue respectively. We indicate orientations and coorientations of edges that are induced by the flow in black, and those that are induced by the choice of orientations on the stable and unstable foliations in the same color as the edge.}
    \label{fig:totallyperiodicgraphs}
\end{figure}

Meanwhile, let $T$ be a boundary component of $P_i$. Observe that either $\phi^t$ points into $P_i$ at every point of $T$, or $\phi^t$ points out of $P_i$ at every point of $T$. We say that $T$ is an \textbf{inward} or \textbf{outward} boundary component of $P_i$ respectively.

An important observation is that in either case, $T$ is a scalloped transverse torus. We recall the definition of these below.

\begin{defn} \label{defn:scalloptorus}
Let $T$ be a transverse torus. Let $T^{s/u}$ be the induced stable/unstable foliations on $T$. We say that $T$ is a \textbf{scalloped transverse torus} if 
\begin{itemize}
    \item Each of $T^s$ and $T^u$ contains a number of closed leaves.
    \item Each closed leaf of $T^s$ and $T^u$ can be given an orientation, which we refer to as the \textbf{spiraling orientation}, so that
    \begin{itemize}
        \item each non-closed $T^{s/u}$-leaf spirals into a closed $T^{s/u}$-leaf in their spiraling orientations on each of its ends, and
        \item adjacent closed $T^{s/u}$-leaves have opposite spiraling orientations.
    \end{itemize}
    \item The closed $T^s$-leaves are not parallel to the closed $T^u$-leaves.
\end{itemize}
See \Cref{fig:scalloptorusfol} for an illustration of such a pair of foliations $(T^s, T^u)$.
\end{defn}

\begin{figure}
    \centering
    \resizebox{!}{3cm}{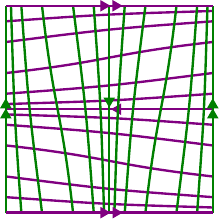}
    \caption{The form of the foliations $(T^s, T^u)$ on a scalloped transverse torus.}
    \label{fig:scalloptorusfol}
\end{figure}

\begin{lemma} \label{lemma:totallyperiodicboundarytorusscallop}
Let $T$ be a boundary component of some Seifert fibered piece. Then $T$ is a scalloped transverse torus.
\end{lemma}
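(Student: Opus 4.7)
The plan is to identify the closed leaves of $T^s$ and $T^u$ explicitly using the structure of $\phi^t$ near $T$, then verify the three conditions of \Cref{defn:scalloptorus}. Without loss of generality, assume $T$ is an inward boundary of $P_i$, and let $P_j$ denote the Seifert fibered piece on the other side of $T$, which has $T$ as an outward boundary.

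For the first condition, I would show that closed $T^s$-leaves on $T$ arise exactly as the intersections $T \cap \pi_i^{-1}(\alpha)$, where $\alpha$ is a properly embedded interval of $F_i$ with one endpoint at a vertex $v$ of $X_i$ and the other endpoint on the component of $\partial S_i$ corresponding to $T$. Such an intersection is a single regular fiber of the Seifert fibration of $P_i$ lying in $T$, and it is contained in $W^s(\gamma)$ where $\gamma = \pi_i^{-1}(v)$. Conversely, a closed leaf of $T^s$ must lie in the stable manifold of some closed orbit $\gamma'$ of $\phi^t$. Since $T$ is inward for $P_i$, every forward orbit emanating from a point of $T$ remains in $P_i$ and, by the deformation retraction of $P_i$ onto $\pi_i^{-1}(X_i)$ described in \Cref{subsec:flowgraph}, accumulates on a vertical orbit. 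Hence $\gamma'$ is a vertical orbit of $P_i$, and the closed leaf has the described form. That at least one such $\alpha$ exists follows from the fact that the union of stable half-annuli of vertical orbits of $P_i$ covers $P_i$ up to a set of measure zero and so must meet $T$. Symmetrically, closed leaves of $T^u$ are regular fibers of the Seifert fibration of $P_j$, arising from unstable half-annuli of vertical orbits of $P_j$ that reach $T$.

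For the second condition (spiraling), I would argue that every point $x \in T$ lies on a leaf of $T^s$ contained in $W^s(\gamma)$ for some vertical orbit $\gamma$ of $P_i$. If $\gamma$ admits a stable half-annulus $\pi_i^{-1}(\alpha)$ reaching $T$, then the leaf through $x$ either coincides with the closed leaf $c := T \cap \pi_i^{-1}(\alpha)$, or is a non-closed leaf that spirals onto $c$ under the forward flow inside $W^s(\gamma)$ near $T$. Orient each closed leaf by this incoming spiral direction. Two adjacent closed $T^s$-leaves must then have opposite spiraling orientations: passing from one to the next corresponds, in the combinatorial structure of $F_i$, to crossing between two stable half-annuli emanating from vertical orbits whose oriented $E^s$ directions align oppositely on $T$, owing to the chosen orientation of $E^s$ and the alternating side structure of $F_i$ around the edges of $X_i$. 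A dual argument applied in $P_j$ handles the $T^u$ leaves.

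For the third condition, the closed $T^s$-leaves are parallel to the regular fiber of the Seifert fibration of $P_i$, while the closed $T^u$-leaves are parallel to the regular fiber of the Seifert fibration of $P_j$. Non-parallelism of the two families then reduces to the statement that these two Seifert fiberings induce distinct slopes on $T$, which is the defining JSJ property: if the slopes agreed, $P_i$ and $P_j$ would amalgamate into a single Seifert fibered piece, contradicting the graph decomposition of $M$. The main obstacle will be the justification of the alternating spiraling orientations in the second step, which requires carefully tracking how the chosen orientation of $E^s$ behaves as one passes between adjacent stable half-annuli in $F_i$; the first and third conditions reduce more directly to the structural features of $\phi^t$ on the Seifert fibered pieces and to standard properties of the JSJ decomposition.
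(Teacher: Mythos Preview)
Your approach is a direct verification of the three conditions in \Cref{defn:scalloptorus}, whereas the paper simply cites \cite{BF15} and invokes the equivalence between its \Cref{defn:scalloptorus} and the notion of scalloped torus in \cite{BFM23} (via \cite{Tsa24}). Your route is more self-contained and makes explicit what the paper leaves to the references; the paper's route is shorter but opaque.

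There is one genuine gap in your sketch. You write that ``since $T$ is inward for $P_i$, every forward orbit emanating from a point of $T$ remains in $P_i$.'' This is false: most forward orbits from $T$ exit $P_i$ through an outward boundary component (these are exactly the orbit segments described in \Cref{lemma:flowgraphedges=orbitsegment}). The orbits that stay in $P_i$ forever are precisely those lying on the local stable half-annuli $\pi_i^{-1}(\alpha)$ of the vertical orbits. So you cannot deduce that $\gamma'$ is vertical by appealing to all forward orbits; instead, argue directly from the \cite{BF15} structure recalled in \Cref{subsec:flowgraph}: the complement in $P_i$ of the annuli $\pi_i^{-1}(F_i)$ is a union of product flow-boxes going from inward to outward tori, and on such a product region the induced stable foliation on $T$ has no closed leaves (it is a foliation by arcs or by lines spiraling toward the bounding fibers). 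Hence the closed $T^s$-leaves are exactly the circles $T \cap \pi_i^{-1}(\alpha)$, which are regular fibers of $P_i$. With this correction your plan for the first and third conditions goes through; the second condition (alternating spiraling orientations), which you correctly flag as the delicate point, then follows from the alternation of coorientations on adjacent intervals of $F_i$ meeting a given boundary component, itself a consequence of the sign/coorientation conditions on $X_i$ recorded in \Cref{thm:bftotallyperiodic}.
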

\begin{proof}
This is contained within \cite{BF15}. For an explicit proof, one can reason as follows: $T$ is a scalloped torus in the sense of \cite[Definition 2.13]{BFM23}, since a closed $T^{s/u}$-leaf of an inward/outward boundary component is parallel to a Seifert fiber, and the homology classes of these differ on each pair of glued tori. As explained in \cite[Example 3.11]{Tsa24}, the notion of a scalloped torus in \cite{BFM23} agrees with \Cref{defn:scalloptorus}.
\end{proof}

Suppose $T \cong c \times S^1$ is an inward/outward boundary component, where $c$ is a boundary component of $S_i$. Then $c$ retracts onto a cycle $c'$ of $X_i$ consisting of edges cooriented away/towards $c$, respectively. We define the homology class $[\ell^{s/u}]$ on $T$ to be the homology class of a closed leaf of the induced stable/unstable foliation on $T$, respectively. This is also the homology class of a fiber of $P_i$ with coorientation induced from the orientation of a positive edge on $c'$.

Let $T_+$ be an outward boundary component and $T_-$ be an inward boundary component which are glued in $M$. Then upon identifying $H_1(T_\pm; \mathbb{R})$ using the gluing map, $([\ell^s],[\ell^u])$ is a positive basis.

It turns out that the spine graphs and the gluing maps suffice to recover the flow $\phi^t$. More precisely,

\begin{thm}[{\cite{BF15}}] \label{thm:bftotallyperiodic}
Given $4$-valent graphs $X_i$ embedded in surfaces with boundary $S_i$ with a choice of coorientation, orientation, and sign on each edge, such that:
\begin{itemize}
    \item Each $X_i$ is a deformation retract of $S_i$.
    \item Adjacent edges at a vertex have opposite coorientations and signs.
    \item Opposite edges at a vertex have opposite orientations.
\end{itemize}
We set $P_i = S_i \times S^1$ and refer to a boundary component $c \times S^1$ to be inward/outward if it retracts onto a cycle $c'$ of $X_i$ consisting of edges cooriented away/towards $c$, respectively. In this case, we define the homology class $[\ell^{s/u}]$ on $T$ to be the homology class of a fiber of $P_i$ with coorientation induced from the orientation of a positive edge on $c'$. 

Suppose that we are given a pairing between the inward and outward boundary components of $\bigsqcup_i P_i$, and for each pair $(T_+,T_-)$ we are given a isotopy class of maps $T_- \cong T_+$ so that upon identifying $H_1(T_\pm; \mathbb{R})$ using the gluing map, $([\ell^s],[\ell^u])$ is a positive basis.

Then there is a unique totally periodic Anosov flow $\phi^t$ defined on the graph manifold $M = \bigcup_i P_i$ with oriented stable and unstable foliations so that the spine graphs of $\phi^t$ are $X_i$ and the gluing maps of $M$ are those chosen.
\end{thm}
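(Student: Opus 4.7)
The plan is to build $\phi^t$ on each $P_i = S_i \times S^1$ from a local model determined by $X_i$, and then glue across paired boundary tori. For each vertex $v$ of $X_i$ I declare $\pi_i^{-1}(v)$ to be a hyperbolic closed orbit, and I build four local Birkhoff annuli emanating from it, one for each incident edge. The sign, coorientation, and orientation data on those edges pin down the local stable and unstable foliations at $\pi_i^{-1}(v)$; the hypotheses that adjacent edges carry opposite coorientations and signs, and that opposite edges carry opposite orientations, are precisely what is needed for these local pieces to assemble consistently into the standard picture of a hyperbolic closed orbit with oriented invariant manifolds. Taking $B_e = \pi_i^{-1}(e)$ of the prescribed sign along each edge and attaching them along the vertical orbits gives a branched surface $\Sigma_i \subset P_i$ onto which one can thicken an Anosov vector field in a small neighborhood, with $\Sigma_i$ as its branched backbone.

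I next extend $\phi^t$ across the collar $P_i \setminus N(\Sigma_i)$, which is a disjoint union of thickened boundary tori. By \Cref{lemma:totallyperiodicboundarytorusscallop} each boundary component $T$ must end up scalloped, and for $T = c \times S^1$ the closed $T^{s/u}$-leaves are forced to lie in the fiber direction in the homology class $[\ell^{s/u}]$ read off from the sign/orientation data on the cycle $c'$ in $X_i$ onto which $c$ retracts. Because the asymptotic behavior of the non-closed leaves spiraling into the closed leaves of $T^{s/u}$ is dictated by where the branches of $\Sigma_i$ terminate, the extension exists and is unique up to isotopy preserving $[\ell^{s/u}]$.

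To glue, I identify each paired $(T_+,T_-)$ via the prescribed isotopy class of maps. The positivity of the basis $([\ell^s],[\ell^u])$ across the identification is exactly the condition that the closed stable leaves on one side meet the closed unstable leaves on the other transversally with the correct orientation, so that the two scalloped foliations concatenate into a pair of transverse foliations on a bicollar of the glued torus. A small smoothing yields a continuous splitting $TM = E^s \oplus T\phi \oplus E^u$; hyperbolic contraction on $E^s$ and expansion on $E^u$ propagate outward from the vertical orbits along all orbits by a standard cone-field argument, so $\phi^t$ is Anosov. Totally periodic-ness holds by construction, since each regular fiber of $P_i$ is isotopic to any vertical orbit in the same connected component of $S_i \setminus X_i$'s neighborhood structure.

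The main obstacle is uniqueness. Given another flow $\psi^t$ realizing the same combinatorial data, I would first isotope $\psi^t$ to align its vertical orbits and Birkhoff annuli with those of $\phi^t$: the sign, coorientation, and orientation on each edge rigidify the corresponding Birkhoff annulus up to ambient isotopy along the flow, and the vertex combinatorics determines the vertical orbits. Once the two flows agree on a neighborhood of $\bigcup_i \Sigma_i$, what remains is a union of thickened scalloped tori where, by the rigidity of the extension noted above, the two flows differ only by an isotopy supported in the collars and compatible with the prescribed gluings. The delicate point is assembling these local isotopies into a single global orbit equivalence that respects the pairings; for this I would invoke the analysis of scalloped regions carried out in \cite{BF15}, which provides exactly the rigidity required to promote the piecewise agreement into a genuine orbit equivalence.
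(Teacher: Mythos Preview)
The paper does not prove this theorem; it is stated as a result of Barbot--Fenley \cite{BF15}, and the surrounding text explicitly says ``We refer the reader to \cite{BF15} and \cite{BFM23} for proofs.'' So there is no in-paper proof to compare against.

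Your sketch is a reasonable outline of the Barbot--Fenley construction, but as written it is not a self-contained proof. A few concrete issues: you invoke \Cref{lemma:totallyperiodicboundarytorusscallop} during the construction, but that lemma is stated for an already-existing totally periodic flow, so using it to constrain the extension over the collars is circular. The claim that the extension over each collar ``exists and is unique up to isotopy preserving $[\ell^{s/u}]$'' is exactly the nontrivial content and needs an argument. Likewise, the uniqueness paragraph ultimately defers to ``the analysis of scalloped regions carried out in \cite{BF15},'' which is where the actual work lies; your local-to-global isotopy assembly is only a plan, not an argument. Finally, the assertion that hyperbolicity ``propagates outward from the vertical orbits along all orbits by a standard cone-field argument'' hides real difficulty: the flow near the glued tori is built by hand, and one must verify that the cone fields match across the gluing and contract uniformly, which is sensitive to the positive-basis condition in a way you have not made explicit.

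In short: your proposal is a plausible roadmap, but the substantive steps all point back to \cite{BF15}, which is precisely what the paper does by citing the result rather than proving it.
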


This provides a neat combinatorial representation of all totally periodic Anosov flows. However, for our purposes, it will be more convenient to record this data in an alternate format. We describe this for the rest of this subsection. 

Let $\overline{S}_i$ be the closed surface obtained by collapsing each boundary component of $S_i$ to a point. We refer to the image of an inward/outward boundary component as an \textbf{inward/outward joint}, respectively.
Let $\overline{F}_i$ be the image of $F_i$ in $\overline{S}_i$. We can consider $\overline{F}_i$ as a graph whose vertices are the vertices of $X_i$ and the joints.
Observe that $\overline{F}_i$ is $4$-valent at the vertices of $X_i$. 
From \Cref{lemma:totallyperiodicboundarytorusscallop}, we see that $\overline{F}_i$ is of even valence at each joint as well.

Each complementary region of $\overline{F}_i$ in $\overline{S}_i$ is a quadrilateral which has a pair of opposite vertices with one vertex lying on an inward joint and the other lying on an outward joint, and which contains a unique edge of $X_i$ separating this pair of vertices.
We define the \textbf{flow graph} $\Phi_i$ on $\overline{S}_i$ by setting the vertices to be the joints and adding one directed edge within each complementary region $C$ of $\overline{F}_i$, going from the inward joint to the outward joint, passing through the edge of $X_i$ in $C$ once. 
We coorient each edge of $\Phi_i$ using the orientation on the edges of $X_i$. We also say that an edge of $\Phi_i$ is \textbf{negative/positive} if it passes through a positive/negative edge of $X_i$ respectively. See \Cref{fig:totallyperiodicgraphs} bottom.

Observe that $\Phi_i$ on $\overline{S}_i$ contains the same amount of data as $X_i$ on $S_i$. In fact, $\Phi_i$ is the dual quandrangulation to $X_i$, with dual orientation/coorientation and with dual edges having opposite sign. As a result, by \Cref{thm:bftotallyperiodic}, we can recover the flow $\phi^t$ from the $\Phi_i$ and the gluing maps. 

The advantage of working with $\Phi_i$ instead of $X_i$ is that the dynamics of the flow can be read off more easily. To illustrate this, we show \Cref{prop:totallyperiodictransitive} below. On the way, we record some other handy properties.

\begin{lemma} \label{lemma:flowgrapheulerchar}
The flow graph $\Phi_i$ divides the closed surface $\overline{S}_i$ into quadrilaterals, each with two opposite positive sides and two opposite negative sides. The number of quadrilaterals equals to the number of vertical orbits in $P_i$, which is in turn equal to $-\chi(S_i)$.
\end{lemma}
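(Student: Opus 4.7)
My plan is to leverage the duality between $\Phi_i$ and $X_i$ that is described in the paragraph preceding the lemma statement. First I would identify the faces of $\Phi_i$ as the regions dual to the vertices of $X_i$. Since $X_i$ is $4$-valent at every vertex, each such dual face is bounded by exactly four edges of $\Phi_i$, so each face is a quadrilateral. I would then read off the sign pattern on the boundary of each quadrilateral from the signs of the four edges of $X_i$ at the corresponding vertex: by the hypotheses on $X_i$ in \Cref{thm:bftotallyperiodic}, adjacent edges at a vertex of $X_i$ carry opposite signs, and since dualization flips the sign on each individual edge, the signs on adjacent sides of each face of $\Phi_i$ still alternate. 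Thus each quadrilateral has two opposite positive sides and two opposite negative sides.

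For the count, the first equality --- that the number of quadrilaterals equals the number of vertical orbits --- follows directly from the face--vertex duality between $\Phi_i$ and $X_i$, combined with the fact that vertical orbits are in bijection with vertices of $X_i$. The second equality, that this number equals $-\chi(S_i)$, I would obtain by an Euler characteristic computation on $\overline{S}_i$. Let $b$ denote the number of boundary components of $S_i$ and $n$ the number of quadrilaterals. The vertices of $\Phi_i$ are exactly the $b$ joints, so $V(\Phi_i) = b$, and since each edge of $\Phi_i$ borders two quadrilaterals, each with four sides, I would get $E(\Phi_i) = 2n$. Applying Euler's formula on $\overline{S}_i$ and using $\chi(\overline{S}_i) = \chi(S_i) + b$ (since $\overline{S}_i$ is obtained from $S_i$ by capping each boundary circle with a point) gives $b - 2n + n = \chi(S_i) + b$, from which $n = -\chi(S_i)$.

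There is no substantive obstacle here; the lemma is essentially a bookkeeping consequence of the duality and the combinatorial constraints on $X_i$ recorded in \Cref{thm:bftotallyperiodic}. The only care required is keeping the sign flip under dualization straight and correctly relating $\chi(S_i)$ to $\chi(\overline{S}_i)$ --- both of which are routine.
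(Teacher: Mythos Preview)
Your proof is correct. The first part (identifying the faces and their sign pattern via duality with $X_i$) matches the paper's reasoning, which simply declares it ``clear from construction''; you have spelled out more of the details.

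For the Euler characteristic computation you take a slightly different route. The paper pulls the quadrilateral cellulation of $\overline{S}_i$ back to a cellulation of $S_i$ by octagons (each joint blows up to a boundary arc, turning each quadrilateral into an $8$-gon) and then invokes the fact that an octagon has index $-1$, so additivity gives $\chi(S_i) = -n$. You instead stay on $\overline{S}_i$, do a direct $V - E + F$ count, and then use $\chi(\overline{S}_i) = \chi(S_i) + b$. Both are straightforward Euler characteristic arguments and yield the same conclusion. Your version is arguably more self-contained; the paper's version has the minor advantage that the ``octagon has index $-1$'' statement is reused verbatim later (in the proof of \Cref{lemma:totallyperiodiccutinteriortorus} and in the cutting-move discussion), so establishing it here streamlines those later arguments.
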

\begin{proof}
The first statement is clear from construction. For the second statement, notice that each quadrilateral contains exactly one vertex of $X_i$, hence the number of quadrilaterals equals the number of vertical orbits. Meanwhile, the cellulation of $\overline{S}_i$ by quadrilaterals pulls back to a cellulation of $S_i$ by octagons. The index of an octagon is $-1$, thus by additivity of the index, the number of octagons is $-\chi(S_i)$.
\end{proof}

\begin{lemma} \label{lemma:flowgraphedges=orbitsegment}
There is an edge $e$ of $\Phi_i$ from an inward joint $v_-$ to an outward joint $v_+$ in an isotopy class $[e]$ rel $v_\pm$ if and only if there is an orbit segment $\gamma$ of $\phi^t$ from the inward boundary component of $P_i$ corresponding to $v_-$ to the outward boundary component of $P_i$ corresponding to $v_+$ such that $\pi_i([\gamma]) = [e]$.
\end{lemma}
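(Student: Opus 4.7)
The plan is to establish the lemma by first proving the following key geometric fact: any orbit segment $\gamma$ of $\phi^t$ in $P_i$ going from $T_-$ to $T_+$ must be entirely disjoint from $\pi_i^{-1}(F_i)$, so that its projection $\pi_i(\gamma)$ is trapped in the closure of a single complementary quadrilateral of $\overline{F}_i$ in $\overline{S}_i$.

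First I would observe that $\pi_i^{-1}(F_i)$ is the union of the properly embedded stable and unstable annuli of the vertical orbits $\gamma_w$, and is therefore a union of $\phi^t$-orbit segments in $P_i$. Consequently, if $\gamma$ meets $\pi_i^{-1}(F_i)$ at any point, then the whole $\phi^t$-orbit through that point lies within a single stable or unstable annulus of some vertical orbit $\gamma_w$. An orbit lying in a stable annulus of $\gamma_w$ is asymptotic to $\gamma_w$ in forward time and so cannot exit through an outward boundary; symmetrically, an orbit in an unstable annulus cannot enter through an inward boundary. Either possibility contradicts $\gamma$ being a segment from $T_-$ to $T_+$. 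Hence $\gamma$ is disjoint from $\pi_i^{-1}(F_i)$, and $\pi_i(\gamma)$ has its interior disjoint from $F_i$, forcing $\pi_i(\gamma) \subset \overline{Q}$ for a unique complementary quadrilateral $Q$ of $\overline{F}_i$; the endpoints of $\pi_i(\gamma)$ in $\overline{S}_i$ are necessarily the two opposite joint corners of $Q$, which must therefore be $v_-$ and $v_+$.

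For the ``orbit implies edge'' direction, the closed region $\overline{Q}$ is a topological disk with $v_\pm$ on its boundary, so any two arcs in $\overline{Q}$ joining $v_-$ and $v_+$ are isotopic rel endpoints; in particular $\pi_i(\gamma)$ is isotopic to the unique edge $e$ of $\Phi_i$ contained in $Q$. Conversely, given an edge $e$ of $\Phi_i$ in its quadrilateral $Q$, I would select a point $p$ on the interior of the Birkhoff annulus over the $X_i$-edge contained in $Q$, which lies in $\pi_i^{-1}(Q \cap S_i)$. The boundary of $\pi_i^{-1}(Q \cap S_i)$ consists of flow-invariant stable/unstable annulus pieces together with portions of $T_-$ and $T_+$, so the $\phi^t$-orbit through $p$ cannot exit through the invariant pieces and must therefore exit through $T_+$ in forward time and through $T_-$ in backward time. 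This produces an orbit segment from $T_-$ to $T_+$ whose projection lies in $\overline{Q}$, hence is isotopic rel endpoints to $e$ by the disk argument.

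The only real obstacle is the flow-invariance step in the first paragraph, and this reduces immediately to the fact that the stable and unstable annuli of the vertical orbits are unions of entire $\phi^t$-orbits; once this is in hand the remainder of the argument is a straightforward disk-isotopy observation.
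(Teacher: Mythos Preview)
Your proof is correct and takes essentially the same approach as the paper's. The paper's three-sentence argument simply asserts that orbit segments in the complement of the local stable and unstable leaves of the vertical orbits all run from an inward to an outward boundary component, and that their isotopy classes are constant on each complementary component; you unpack exactly this by using flow-invariance of the stable/unstable annuli and the disk property of $\overline{Q}$.

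One small point worth tightening in your ``edge implies orbit'' direction: after observing that the orbit through $p$ cannot cross the flow-invariant annulus pieces, you conclude it ``must therefore exit'' through $T_+$ forward and $T_-$ backward. Strictly speaking you should rule out the orbit remaining in $\pi_i^{-1}(Q\cap S_i)$ forever. This follows from the deformation retraction of $P_i$ onto $\pi_i^{-1}(X_i)$ along flowlines (stated in the paper just before the spine graph definition): an orbit trapped in the region would have to limit onto a vertical orbit and hence lie on one of the stable/unstable annuli you have already excluded. The paper's proof glosses over this as well, so this is not a defect relative to the original.
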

\begin{proof}
The orbit segments in the complement of the local stable and unstable leaves of the vertical orbits all go from an inward boundary component $T_-$ to an outward boundary component $T_+$. Moreover, the isotopy class of these orbit segments rel $T_\pm$ remain constant in a single component of the complement. By construction, these isotopy classes projects down to the isotopy classes of the edges of $\Phi_i$ rel endpoints.
\end{proof}

\begin{defn} \label{defn:totalflowgraph}
The \textbf{total flow graph} $\Phi$ is the directed graph constructed by starting with the disjoint union of directed graphs $\bigsqcup_i \Phi_i$ and identifying every pair of joints $(j_+,j_-)$ corresponding to a pair $(T_+,T_-)$ of boundary components that are glued in $M$. See \Cref{fig:totalflowgraph}.

\begin{figure}
    \centering
    \fontsize{10pt}{10pt}\selectfont
\begingroup%
  \makeatletter%
  \providecommand\color[2][]{%
    \errmessage{(Inkscape) Color is used for the text in Inkscape, but the package 'color.sty' is not loaded}%
    \renewcommand\color[2][]{}%
  }%
  \providecommand\transparent[1]{%
    \errmessage{(Inkscape) Transparency is used (non-zero) for the text in Inkscape, but the package 'transparent.sty' is not loaded}%
    \renewcommand\transparent[1]{}%
  }%
  \providecommand\rotatebox[2]{#2}%
  \newcommand*\fsize{\dimexpr\f@size pt\relax}%
  \newcommand*\lineheight[1]{\fontsize{\fsize}{#1\fsize}\selectfont}%
  \ifx\svgwidth\undefined%
    \setlength{\unitlength}{319.53616405bp}%
    \ifx\svgscale\undefined%
      \relax%
    \else%
      \setlength{\unitlength}{\unitlength * \real{\svgscale}}%
    \fi%
  \else%
    \setlength{\unitlength}{\svgwidth}%
  \fi%
  \global\let\svgwidth\undefined%
  \global\let\svgscale\undefined%
  \makeatother%
  \begin{picture}(1,0.3326601)%
    \lineheight{1}%
    \setlength\tabcolsep{0pt}%
    \put(0,0){\includegraphics[width=\unitlength,page=1]{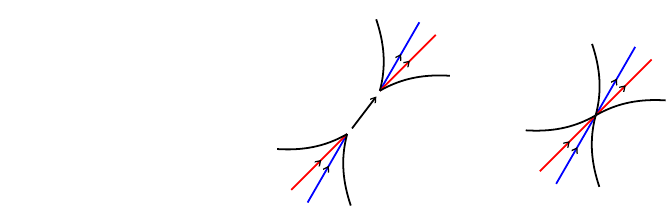}}%
    \put(0.10872601,0.27526947){\color[rgb]{0,0,0}\makebox(0,0)[lt]{\lineheight{1.25}\smash{\begin{tabular}[t]{l}$S_j$\end{tabular}}}}%
    \put(0.18398527,0.052161){\color[rgb]{0,0,0}\makebox(0,0)[lt]{\lineheight{1.25}\smash{\begin{tabular}[t]{l}$S_i$\end{tabular}}}}%
    \put(0.44057978,0.24380933){\color[rgb]{0,0,0}\makebox(0,0)[lt]{\lineheight{1.25}\smash{\begin{tabular}[t]{l}$\Phi_j \subset \overline{S}_j$\end{tabular}}}}%
    \put(0.5327834,0.06510309){\color[rgb]{0,0,0}\makebox(0,0)[lt]{\lineheight{1.25}\smash{\begin{tabular}[t]{l}$\Phi_i \subset \overline{S}_i$\end{tabular}}}}%
    \put(0.5757829,0.17124155){\color[rgb]{0,0,0}\makebox(0,0)[lt]{\lineheight{1.25}\smash{\begin{tabular}[t]{l}$j_-$\end{tabular}}}}%
    \put(0.53155769,0.11960422){\color[rgb]{0,0,0}\makebox(0,0)[lt]{\lineheight{1.25}\smash{\begin{tabular}[t]{l}$j_+$\end{tabular}}}}%
    \put(0.90966502,0.11572818){\color[rgb]{0,0,0}\makebox(0,0)[lt]{\lineheight{1.25}\smash{\begin{tabular}[t]{l}$\Phi$\end{tabular}}}}%
    \put(0,0){\includegraphics[width=\unitlength,page=2]{totalflowgraph.pdf}}%
  \end{picture}%
\endgroup%

    \caption{The total flow graph $\Phi$ is obtained by starting with the disjoint union $\bigsqcup_i \Phi_i$ and identifying every pair of joints $(j_+,j_-)$ corresponding to a pair of boundary components that are glued in $M$.}
    \label{fig:totalflowgraph}
\end{figure}

We can consider $\Phi$ to be embedded in the space obtained by identifying pairs of joints $(j_+,j_-)$ in the union of surfaces $\bigsqcup_i \overline{S}_i$. This also justifies our terminology of `joints'. With this embedding, the coorientations and signs of edges in $\bigsqcup_i \Phi_i$ induces coorientations and signs of edges in $\Phi$. We consider these as part of the data of $\Phi$.

For emphasis, we will sometimes refer to the flow graphs $\Phi_i$ as the \textbf{individual} flow graphs. Note that these can be recovered from the total flow graph $\Phi$ by separating the incoming and outgoing edges at each vertex.
\end{defn}

\begin{prop} \label{prop:totallyperiodictransitive}
The totally periodic Anosov flow $\phi^t$ is transitive if and only if the total flow graph $\Phi$ is strongly connected.
\end{prop}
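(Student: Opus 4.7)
The plan is to prove the two implications separately, with \Cref{lemma:flowgraphedges=orbitsegment} serving as the bridge between orbit segments of $\phi^t$ and edges of $\Phi$ in both directions.

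For the forward direction, I would use that a transitive Anosov flow admits a dense orbit $\gamma$ (a standard consequence of the spectral decomposition for Anosov flows). Given any two vertices $v_1, v_2$ of $\Phi$ corresponding to boundary tori $T_1, T_2$ between Seifert pieces, density forces $\gamma$ to meet both tori, so there is a segment $\gamma|_{[t_1, t_2]}$ starting on $T_1$ and ending on $T_2$. This segment crosses a finite sequence of boundary tori, with each subsegment between consecutive crossings lying entirely in one Seifert piece. Applying \Cref{lemma:flowgraphedges=orbitsegment} to each such subsegment converts it into an edge of the corresponding individual flow graph, and concatenating yields a directed walk in $\Phi$ from $v_1$ to $v_2$.

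For the backward direction, suppose $\Phi$ is strongly connected and fix two non-empty open sets $U_1, U_2 \subset M$. The set of points in $M$ whose forward (resp.\ backward) orbit eventually crosses a boundary torus is a dense open subset of $M$, since its complement is contained in the union of the finitely many properly embedded stable (resp.\ unstable) cylinders of the vertical orbits, each of which is nowhere dense. So there exist $p_1 \in U_1$, $p_2 \in U_2$ whose orbits cross boundary tori $T_1, T_2$, giving vertices $v_1, v_2$ of $\Phi$. Strong connectivity produces a directed walk $e_1, \ldots, e_n$ from $v_1$ to $v_2$, which I would realize by an actual orbit segment using a Markov partition coming from the scalloped structure on the boundary tori: the rectangles bounded by closed $T^s$ and $T^u$ leaves are cross-sections to $\phi^t$, the first-return Poincar\'e maps between them are affine hyperbolic, and the associated symbolic transition graph refines $\Phi$. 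A standard rectangle-intersection argument then produces an orbit segment tracing the walk from a prescribed neighborhood of the crossing on $T_1$ to a prescribed neighborhood of the crossing on $T_2$; concatenating with the orbit pieces from $p_1$ and to $p_2$ yields the desired orbit from $U_1$ to $U_2$, which gives transitivity.

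The main obstacle is the realizability step in the backward direction. Making it precise requires setting up the Markov partition explicitly: one must match each rectangle on an inward boundary torus of $P_i$ bijectively with an outgoing edge of $\Phi_i$ at the corresponding joint, and verify that the first-return map sends rectangles affinely and hyperbolically to rectangles in a pattern whose transition graph is strongly connected exactly when $\Phi$ is. With this in hand, strong connectivity of $\Phi$ translates directly to transitivity of the associated subshift of finite type, and hyperbolicity upgrades symbolic transitivity to topological transitivity of $\phi^t$.
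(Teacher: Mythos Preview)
Your forward direction matches the paper's: both produce an orbit visiting every boundary torus (you via a dense orbit, the paper via a single closed orbit passing through all the tori) and then invoke \Cref{lemma:flowgraphedges=orbitsegment} to convert its passages through the Seifert pieces into a directed walk in $\Phi$.

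Your backward direction is genuinely different. The paper argues by contradiction via Brunella's structure theorem \cite{Bru93}: if $\phi^t$ were not transitive it would admit an attractor $N$ with essential torus boundary; after isotoping $\partial N$ into the Seifert pieces, the set of joints whose tori lie in $N$ is forward-closed in $\Phi$, so strong connectivity forces $N$ to miss or to contain every decomposition torus, and either case contradicts the fact that the flow restricted to a single piece has no proper attractor. This is short and leans on an external structural result. Your Markov-partition route is self-contained and constructive, but the obstacle you flag is real: one must use the infinite spiraling of the non-closed $T^{s/u}$-leaves on each scalloped torus to verify that the first-return image of a rectangle fully crosses every rectangle in the target annulus, and then check that the resulting transition graph is strongly connected exactly when $\Phi$ is. Both approaches are valid; the paper's is quicker, yours yields an explicit symbolic coding of the dynamics.
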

\begin{proof}
If $\phi^t$ is transitive, then there is a closed orbit $\gamma$ passing through all the glued tori. The orbit segments of $\gamma$ lying within each Seifert fibered piece induces a cycle of $\Phi$ by \Cref{lemma:flowgraphedges=orbitsegment}. That $\gamma$ passes through all the glued tori means that $\gamma$ visits every vertex of $\Phi$. Therefore $\Phi$ is strongly connected.

Conversely, if $\phi^t$ is not transitive, then there is a submanifold $N$ of $M$ that is a sink for $\phi^t$. In \cite{Bru93}, it is shown that $\partial N$ consists of essential tori. Hence up to isotopy along orbits, we can assume that each component of $\partial N$ lies within a Seifert fibered piece. 

Let $V$ denote the collection of vertices of $\Phi$ whose corresponding tori lies within $N$. Note that if $v \in V$, then every vertex $w$ with an edge from $v$ to $w$ lies within $V$ as well. But since $\Phi$ is strongly connected, either $V$ is empty or contains every vertex of $\Phi$. In the former case, $N$ must be contained within a single Seifert fibered piece. But this is impossible since $\phi^t$ restricted to each Seifert fibered piece has no sinks.
In the latter case, we obtain a similar contradiction.
\end{proof}

\subsection{Independence on gluing maps} \label{subsec:indgluingmap}

In this subsection we make the first steps towards showing \Cref{thm:totallyperiodicgenusone} by showing that up to performing horizontal surgery, we can discard the data of the gluing maps.

The key fact is the following result from \cite{Tsa24}.

\begin{thm}[{\cite[Corollary 1.6]{Tsa24}}] \label{thm:hsurscalloptorus}
Let $\phi^t$ be a transitive Anosov flow on a closed oriented $3$-manifold $M$. Suppose $T$ is a scalloped transverse torus. Furthermore, suppose that each closed leaf $\ell^s$ of the induced stable foliation on $T$ intersects each closed leaf $\ell^u$ of the induced unstable foliation on $T$ exactly once. In other words, with respect to any fixed bases $(\ell^s, m^s)$ and $(\ell^u, m^u)$, the identity map on $T$ is represented by some matrix 
$\begin{bmatrix}
a & b \\
-1 & d
\end{bmatrix}$.

Then for every matrix of the form
$\begin{bmatrix}
m & n \\
p & q
\end{bmatrix}$
where $p < 0$, one can construct a pseudo-Anosov flow by cutting along $T$ and regluing using a map with the given matrix representative. Furthermore, this flow is almost equivalent to the initial flow $\phi^t$.
\end{thm}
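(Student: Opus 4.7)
The plan is to realize the cut-and-reglue operation as a composition of horizontal Goodman surgeries along curves lying on (or near) the scalloped transverse torus $T$, and then invoke \Cref{thm:horsuralmostequiv} to conclude almost equivalence. The scalloped structure is crucial here: each closed leaf of the induced stable foliation on $T$ is tangent to $T\phi \oplus E^s$, so a small perturbation into $T$ that gives it some $E^u$ component yields a positive horizontal surgery curve; symmetrically, each closed leaf of the induced unstable foliation on $T$ can be perturbed into a negative horizontal surgery curve. The steady condition for these perturbations is automatic because the closed leaves are arbitrarily close to an extreme slope, and contraction/expansion along $\Lambda^{s/u}$ keeps them in the steady cone.

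First I would verify that $\frac{1}{n}$ horizontal Goodman surgery along a curve parallel to a closed $\ell^s$-leaf changes the ambient $3$-manifold by Dehn surgery along a curve on $T$ that is isotopic to $\ell^s$, and compute the effect on the gluing matrix. Since the closed $\ell^s$-leaf has homology class $(1,0)$ in the basis $(\ell^s, m^s)$, Dehn surgery with coefficient $\tfrac{1}{n}$ along it acts on the gluing matrix by right-multiplication by $\begin{bmatrix} 1 & 0 \\ n & 1 \end{bmatrix}$, and positive Goodman surgery corresponds to $n > 0$. Symmetrically, $\frac{1}{n}$ surgery along a curve parallel to a closed $\ell^u$-leaf acts by $\begin{bmatrix} 1 & n \\ 0 & 1 \end{bmatrix}$, with negative Goodman surgery corresponding to $n < 0$.

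Next I would reduce the regluing to a sequence of such elementary moves. Starting from the identity matrix $\begin{bmatrix} a & b \\ -1 & d \end{bmatrix}$, I want to reach an arbitrary target $\begin{bmatrix} m & n \\ p & q \end{bmatrix}$ with $p < 0$ by right-multiplying by finitely many elementary matrices coming from allowed surgeries (positive $n$ in the first type, negative $n$ in the second). This is essentially a constrained Euclidean-algorithm argument: I would track $(p,q)$-entries along the sequence and show that one can always decrease $|p|+|q|$ by a permissible elementary move while keeping $p < 0$ throughout, the sign constraint $p<0$ being exactly the invariant condition that the cut-and-reglue flow remains well-defined as a pseudo-Anosov flow (the inward/outward orientation of the flow across $T$ is preserved).

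The main obstacle will be the last step: ensuring that the intermediate gluing matrices encountered in the Euclidean reduction continue to be realized by horizontal surgery curves of the \emph{appropriate sign} on $T$, and that one never needs to perform a positive surgery along an $\ell^u$-parallel curve or a negative surgery along an $\ell^s$-parallel curve (neither of which yields a horizontal surgery curve in the sense of \Cref{defn:horsurcurve}). The scalloped hypothesis, which guarantees that $T$ has closed leaves of both foliations meeting transversely with $\ell^s \cdot \ell^u = 1$, is exactly what makes this reduction possible; the assumption $p<0$ is exactly what keeps us inside the semigroup generated by the allowed moves. A secondary point requiring care is that the surgered flow may acquire a finite number of singular (multi-pronged) orbits where the new scalloped torus sits, which is why the statement produces a pseudo-Anosov flow rather than necessarily an Anosov one; these singular orbits arise precisely at the images of the collapsed closed leaves.
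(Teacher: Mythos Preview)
This theorem is not proved in the present paper: it is quoted verbatim as \cite[Corollary 1.6]{Tsa24}, so there is no in-paper proof to compare your proposal against. Your sketch is nonetheless the natural strategy and is consistent with how the paper \emph{uses} scalloped tori elsewhere (e.g.\ in \Cref{subsec:insertionmove} it invokes \cite[Example 3.13]{Tsa24} to get a positive horizontal surgery curve on a scalloped torus), so your approach is almost certainly aligned with the argument in \cite{Tsa24}.

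Two points where your outline would need tightening if you actually carried it out. First, the steadiness of the perturbed $\ell^s$- and $\ell^u$-parallel curves is not quite ``automatic'': you need the curve to have constant extreme slope and to check crossings of the curve over itself on $T$, which uses the scalloped structure (the spiraling orientations alternate) rather than just proximity to a foliation leaf. Second, the matrix reduction is not a standard Euclidean algorithm because you are restricted to the \emph{semigroup} generated by $\begin{bmatrix}1&0\\ n&1\end{bmatrix}$ with $n>0$ and $\begin{bmatrix}1&n\\0&1\end{bmatrix}$ with $n<0$, acting on one side; showing this semigroup acts transitively on matrices with $p<0$ (modulo the starting matrix) is the actual content, and your sketch of decreasing $|p|+|q|$ while preserving $p<0$ is the right shape but would need to be checked carefully against the sign constraints to avoid getting stuck.
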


\begin{lemma} \label{lemma:indgluingmap}
Let $\phi^t_1$ and $\phi^t_2$ be transitive totally periodic Anosov maps with orientable stable and unstable foliations. If $\phi^t_1$ and $\phi^t_2$ have the same total flow graph then $\phi^t_1$ and $\phi^t_2$ are almost equivalent.
\end{lemma}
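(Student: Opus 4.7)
The plan is to exhibit a chain of horizontal Goodman surgeries taking $\phi^t_1$ to $\phi^t_2$, so that iterating \Cref{thm:horsuralmostequiv} packages the chain into an almost equivalence. First I would observe that since $\phi^t_1$ and $\phi^t_2$ share the same total flow graph, they share the individual flow graphs $\Phi_i$, hence the dual spine graphs $X_i$ on the base surfaces $S_i$, and the pairing of joints. By \Cref{thm:bftotallyperiodic} the only datum that can distinguish the two flows is the isotopy class of the gluing map $g_T$ assigned to each paired boundary torus $T$; in particular $\phi^t_1$ and $\phi^t_2$ agree on each Seifert fibered piece $P_i \cong S_i \times S^1$ up to orbit equivalence, and their underlying 3-manifolds $M_1$ and $M_2$ differ only through these gluings.

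At every such $T$, \Cref{lemma:totallyperiodicboundarytorusscallop} gives that $T$ is a scalloped transverse torus for $\phi^t_1$. A direct matrix computation in the bases $(\ell^s, m^s)$ and $(\ell^u, m^u)$ shows that the positive basis condition on $g_T$ imposed by \Cref{thm:bftotallyperiodic} is equivalent to the sign condition $p<0$ appearing in \Cref{thm:hsurscalloptorus}, so both $g^{(1)}_T$ and $g^{(2)}_T$ lie in the admissible family of that theorem. Thus, whenever the intersection-once hypothesis of \Cref{thm:hsurscalloptorus} is met at $T$, I would apply it to cut along $T$ and reglue with a representative of $g^{(2)}_T$, producing a pseudo-Anosov flow almost equivalent to $\phi^t_1$. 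Performing this at every $T$ simultaneously and composing the resulting almost equivalences yields a totally periodic Anosov flow on the 3-manifold underlying $\phi^t_2$, with the same spine graphs and the same gluing maps as $\phi^t_2$; by the uniqueness statement in \Cref{thm:bftotallyperiodic} this flow is orbit equivalent to $\phi^t_2$, so $\phi^t_1 \sim \phi^t_2$.

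The hard part will be glued tori at which the intersection-once hypothesis of \Cref{thm:hsurscalloptorus} fails, namely where the fiber intersection number $|[\ell^s]\cdot[\ell^u]|$ on $T$ exceeds one. To handle these I would insert preliminary horizontal Goodman surgeries along small perturbations of closed stable and unstable leaves, and more generally along other positive and negative horizontal surgery curves carried by the scalloped torus. Each such surgery is an almost equivalence by \Cref{thm:horsuralmostequiv} and enacts a Dehn twist on the gluing map at $T$. Since $[\ell^s]$ and $[\ell^u]$ are primitive classes in $H_1(T;\mathbb{Z})$ and the Dehn twists realizable through horizontal Goodman surgery along curves of varying slopes together generate a large subgroup of $\mathrm{SL}_2 \mathbb{Z}$, the expectation is that a suitable finite sequence of such twists conjugates $g^{(1)}_T$ into a gluing at which $\ell^s$ and $\ell^u$ intersect once, putting us in the regime where \Cref{thm:hsurscalloptorus} can be applied in one final step to land on $g^{(2)}_T$. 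The delicate part of the argument will be verifying that the homological combinatorics always permits such a reduction, and that the intermediate flows remain (pseudo-)Anosov so that one can keep applying the surgery theorems.
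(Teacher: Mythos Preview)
Your overall strategy is right, and you have correctly isolated the crux: the intersection-once hypothesis of \Cref{thm:hsurscalloptorus} need not hold for $\phi^t_1$ (or $\phi^t_2$), so you cannot apply that theorem directly to pass from one to the other. Where your proposal becomes vague is exactly at this point: you hope to fix the hypothesis by a preliminary sequence of horizontal Goodman surgeries along curves on $T$, but you do not actually establish that the available Dehn twists suffice to reach an intersection-once gluing, nor that the intermediate flows stay in the class where the theorems apply. As written this is a genuine gap, not a routine verification.

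The paper bypasses this difficulty with a single clean idea: rather than surgering $\phi^t_1$ into $\phi^t_2$, it introduces a third flow $\phi^t_3$ with the same spine graphs but with gluing maps \emph{chosen} so that $([\ell^s],[\ell^u])$ is a positive integral basis of $H_1(T_\pm;\mathbb{Z})$ at every glued pair. Such gluing maps exist by \Cref{thm:bftotallyperiodic}, and by construction $\phi^t_3$ satisfies the intersection-once hypothesis at every torus. Now \Cref{thm:hsurscalloptorus} is applied with $\phi^t_3$ as the \emph{starting} flow: cutting $\phi^t_3$ along each $T$ and regluing by the $p<0$ matrix representing the gluing of $\phi^t_i$ yields a flow almost equivalent to $\phi^t_3$; by the uniqueness in \Cref{thm:bftotallyperiodic} that flow is $\phi^t_i$. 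Hence $\phi^t_1 \sim \phi^t_3 \sim \phi^t_2$. (Transitivity of $\phi^t_3$ is inherited from the shared total flow graph via \Cref{prop:totallyperiodictransitive}.) The point is that the hypothesis of \Cref{thm:hsurscalloptorus} constrains the flow you start from, not the one you land on, so choosing the intermediate flow rather than trying to reach it by surgery removes the entire ``delicate part'' of your plan.
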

\begin{proof}
Recall that $\phi^t_1$ and $\phi^t_2$ having the same total flow graph implies that they have the same spine graphs. From \Cref{thm:bftotallyperiodic}, we can construct a totally periodic Anosov flow $\phi^t_3$ with the same spine graph but with gluing maps chosen so that for each pair $(T_+,T_-)$ to be glued, $[\ell^s]$ and $[\ell^u]$ intersect exactly once, i.e. $([\ell^s],[\ell^u])$ is a positive basis for $H_1(T_\pm,\mathbb{Z})$.

With respect to the bases $(\ell^s, \ell^u)$ and $(\ell^u, -\ell^s)$, the identity map on $T_\pm$ is represented by
$\begin{bmatrix}
0 & 1 \\
-1 & 0
\end{bmatrix}$.
Meanwhile, the gluing map that determines $\phi^t_i$ is represented by some matrix
$\begin{bmatrix}
m_i & n_i \\
p_i & q_i
\end{bmatrix}$
where $p_i < 0$. By \Cref{thm:hsurscalloptorus} and the uniqueness in \Cref{thm:bftotallyperiodic}, we have $\phi^t_1 \sim \phi^t_3 \sim \phi^t_2$. Here we use the fact that the total flow graph of $\phi^t_3$ equals to that of $\phi^t_1$ and $\phi^t_2$, so that transitivity of $\phi^t_1$ and $\phi^t_2$ implies transitivity of $\phi^t_3$ using \Cref{prop:totallyperiodictransitive}.
\end{proof}

For the rest of this paper, we will write $\phi^t_\Phi$ for the uniquely determined almost equivalence class of totally periodic Anosov flows that have total flow graph $\Phi$.

\section{Basic moves} \label{sec:basicmoves}

The strategy of the proof of \Cref{thm:totallyperiodicgenusone} is to modify the flow graph $\Phi$ by performing horizontal Goodman surgery on $\phi^t_\Phi$ thus staying in the same almost equivalence class. To this end, we need to devise some admissible moves on $\Phi$. In this section, we introduce three basic types of moves --- a cutting move, a gluing move, and an insertion move.

\subsection{Cutting move} \label{subsec:cuttingmove}

Let $\phi^t$ be a totally periodic Anosov flow. Let $\Phi_i$ be the flow graphs of $\phi^t$. In this subsection, we describe a way of identifying non-scalloped Reebless transverse tori from certain cycles in $\Phi_i$. We then explain how performing horizontal Goodman surgery along such a transverse tori corresponds to a `cutting move' on the level of the total flow graph $\Phi$. 

We first recall the definition of a Reebless non-scalloped transverse torus.

\begin{defn} \label{defn:interiortorus}
Let $T$ be a transverse torus. Let $T^{s/u}$ be the induced stable/unstable foliations on $T$. We say that $T$ is a \textbf{Reebless non-scalloped transverse torus} if
\begin{itemize}
    \item Each of $T^s$ and $T^u$ contains a number of closed leaves.
    \item Each closed leaf of $T^s$ and $T^u$ can be given an orientation, which we refer to as the \textbf{spiraling orientation}, so that
    \begin{itemize}
        \item each non-closed $T^{s/u}$-leaf spirals into a closed $T^{s/u}$-leaf in their spiraling orientations on each of its ends, and
        \item adjacent closed $T^{s/u}$-leaves have opposite spiraling orientations.
    \end{itemize}
    \item The closed $T^s$-leaves are parallel to the closed $T^u$-leaves.
\end{itemize}
See \Cref{fig:interiortorusfol} for an illustration of such a pair of foliations $(T^s, T^u)$.
\end{defn}

\begin{figure}
    \centering
    \resizebox{!}{3cm}{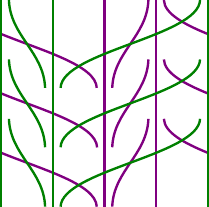}
    \caption{The form of the foliations $(T^s, T^u)$ on a Reebless non-scalloped transverse torus.}
    \label{fig:interiortorusfol}
\end{figure}

Let $c$ be an embedded and cooriented cycle in a flow graph $\Phi_i$ that consists of positive edges. We define a cooriented curve $\alpha$ lying on $S_i$ as follows:
\begin{itemize}
    \item For each edge $e$ in $c$, we take an interval $\alpha_e$ between two opposite sides of the complementary region of $\overline{F}_i$ containing $e$, so that the coorientation on $\alpha_e$ induced by the orientations on $F_i$ agrees with the coorientation on $e$. See \Cref{fig:totallyperiodiccutcurve} first row.
    \item For each vertex $v$ in $c$, say $e_1$ and $e_2$ are the two edges adjacent to $v$ in $c$, we take an interval $\alpha_v$ between the endpoints of $\alpha_{e_1}$ and $\alpha_{e_2}$ near $v$, such that the coorientation on $\alpha_v$ induced by the orientations on $F_i$ agrees with the coorientation on $e_1$ and $e_2$. See \Cref{fig:totallyperiodiccutcurve} second and third rows.
\end{itemize}
We consider each $\alpha_e$ and each $\alpha_v$ to be an interval lying in $S_i$, and take the union of $\alpha_e$ and $\alpha_v$ to get a cooriented curve $\alpha$ on $S_i$.

\begin{figure}
    \centering
    \fontsize{8pt}{8pt}\selectfont
    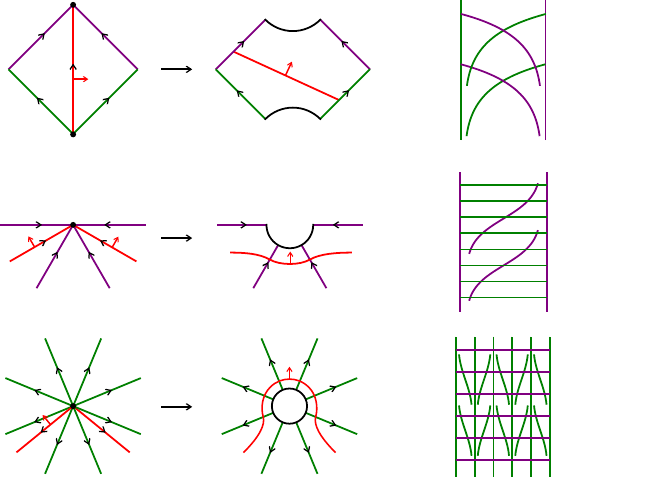
    \caption{Constructing a cooriented curve $\alpha$ on $S_i$ from a cycle of $\Phi_i$ that consists of positive curves and which is cooriented and embedded on $\overline{S}_i$.}
    \label{fig:totallyperiodiccutcurve}
\end{figure}

\begin{lemma} \label{lemma:totallyperiodiccutinteriortorus}
The torus $T=\pi_i^{-1}(\alpha)$ is a Reebless non-scalloped transverse torus that is not boundary parallel in $P_i$.
\end{lemma}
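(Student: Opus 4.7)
The plan is to verify in turn the three defining properties of a Reebless non-scalloped transverse torus (transversality, structure of the induced foliations, and the parallelism of the closed stable and unstable leaves) and then check that $T$ is not boundary parallel in $P_i$.

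First, transversality. It suffices to check on each piece $\pi_i^{-1}(\alpha_e)$ and $\pi_i^{-1}(\alpha_v)$ separately. For $\pi_i^{-1}(\alpha_e)$, the arc $\alpha_e$ crosses, with matching coorientation, the unique edge $d$ of $X_i$ lying in its quadrilateral. Since $\pi_i^{-1}(d)$ is a Birkhoff annulus of $\phi^t$ whose flow-coorientation is that chosen for $d$, the annulus $\pi_i^{-1}(\alpha_e)$ is positively transverse to $\phi^t$. For $\pi_i^{-1}(\alpha_v)$, note that $\alpha_v$ can be pushed into (a collar of) the boundary component $c_v$ of $S_i$ associated to the joint $v$. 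Since the boundary torus $T_v = \pi_i^{-1}(c_v)$ is a scalloped transverse torus (\Cref{lemma:totallyperiodicboundarytorusscallop}), and the coorientation of $\alpha_v$ agrees with that inherited from the flow on $T_v$, the annulus $\pi_i^{-1}(\alpha_v)$ is positively transverse to $\phi^t$ as well.

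Second, the structure of the induced foliations. Since $\alpha$ avoids every vertex of $X_i$, $T$ is disjoint from the vertical orbits of $\phi^t$ in $P_i$. Near each segment $\pi_i^{-1}(\alpha_v)$, the torus $T$ can be flowed into $T_v$ by a flow-homotopy, so the restrictions of $T^s$ and $T^u$ to this portion are conjugate to the restrictions of the scalloped foliations on $T_v$. In particular the closed leaves of $T^s$ and $T^u$ near $\pi_i^{-1}(\alpha_v)$ are Seifert fibers of $P_i$, and each non-closed leaf spirals into them in the spiraling orientation inherited from $T_v$. Along each $\pi_i^{-1}(\alpha_e)$, the flow $\phi^t$ is locally a product within the quadrilateral, so the restrictions of $T^s$ and $T^u$ are without closed leaves and simply interpolate between the closed-leaf regions at the two ends. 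Combining these pictures on all pieces, every closed leaf of $T^s$ and $T^u$ is a Seifert fiber, hence these closed leaves are all parallel on $T$; the spiraling behavior and alternation of spiraling orientations of adjacent closed leaves is exactly as required by \Cref{defn:interiortorus}.

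Third, non-boundary-parallelism. The curve $\alpha$ in $S_i$ projects, after collapsing the boundary components of $S_i$ to joints, to the embedded cycle $c$ in $\overline{S}_i$. Being an embedded cycle in the $4$-valent graph $\Phi_i \subset \overline{S}_i$, $c$ represents a nontrivial free-homotopy class in $\overline{S}_i$, so $\alpha$ represents a nontrivial class in $S_i$ that is not homotopic to any single boundary component. Hence $\alpha$ is essential in $S_i$ and not boundary parallel, and consequently $T = \pi_i^{-1}(\alpha)$ is not boundary parallel in $P_i = S_i \times S^1$.

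The main obstacle will be Step 2, and specifically checking that the spiraling behavior of $T^s$ and $T^u$ is consistent across the transitions between the $\pi_i^{-1}(\alpha_v)$ and $\pi_i^{-1}(\alpha_e)$ regions, and that adjacent closed leaves on $T$ indeed carry opposite spiraling orientations. This comes down to tracking how the stable and unstable leaves of the vertical orbits behave as one crosses the Birkhoff annuli $\pi_i^{-1}(X_i)$, using the alternation of signs and coorientations at each vertex of $X_i$ guaranteed by \Cref{thm:bftotallyperiodic}. The transversality and non-boundary-parallel statements, by contrast, should be routine once the local data of the construction of $\alpha$ is unpacked.
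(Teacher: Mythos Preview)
Your transversality and foliation sketch is in the right spirit, though the paper's analysis is more explicit: it records precisely which closed $T^s$- and $T^u$-leaves appear on each piece $\pi_i^{-1}(\alpha_e)$ and $\pi_i^{-1}(\alpha_v)$ (the former carries one closed stable and one closed unstable leaf, sitting on its boundary circles; the latter carries only closed leaves of one type), and reads off the spiraling orientations from the orientations of the nearby vertical orbits. Your ``flow into $T_v$'' shortcut is plausible but would need care, since $\alpha_v$ is only an arc near the boundary, not a full boundary-parallel curve.

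The real gap is in your third step. You assert that an embedded cycle $c$ of positive edges in the $4$-valent graph $\Phi_i \subset \overline{S}_i$ automatically represents a nontrivial free-homotopy class, and you label this part ``routine.'' But nothing prevents, a priori, an embedded edge-cycle from bounding a disc tiled by quadrilaterals. The paper treats this as the substantive part of the argument: assuming $\alpha$ is boundary parallel in $S_i$, it uses the coorientation of $\alpha$ to show that $c$ can pass through only one outward joint, hence (by embeddedness and the alternation of inward/outward joints along $c$) only one inward joint, so $c$ has length two. Then the two edges of $c$ would bound a disc in $\overline{S}_i$, which pulls back to an index-zero region in $S_i$ that must be tiled by index $-1$ octagons --- a contradiction via \Cref{lemma:flowgrapheulerchar}. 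Your proposal skips exactly this content, so as written it does not establish non-boundary-parallelism.
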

\begin{proof}
For each edge $e$ in $c$, from the fact that the complementary region of $F_i$ contains a Birkhoff annulus, we deduce that $\pi_i^{-1}(\alpha_e)$ consists of one closed $T^s$-leaf $\ell^s$ and one closed $T^u$-leaf $\ell^u$ lying on each of its boundary components. The noncompact leaves of $T^s$ spiral into $\ell^s$ in the direction opposite to the orientation of the vertical orbit lying on the same stable leaf as $\ell^s$. The noncompact leaves of $T^u$ spiral into $\ell^u$ in the direction of the orientation of the vertical orbit lying on the same unstable leaf as $\ell^u$. See \Cref{fig:totallyperiodiccutcurve} first row.

For each vertex $v$ in $c$ that is an inward joint, using again the fact that every complementary region of $F_i$ contains a Birkhoff annulus, we deduce that $\pi_i^{-1}(\alpha_v)$ consists an even number of closed $T^s$-leaves. The noncompact leaves of $T^s$ spiral into these closed leaves in the direction opposite to the orientation of the vertical orbit lying on the same stable leaves. The induced unstable foliation $T^u$ on $\pi_i^{-1}(\alpha_v)$ consists of non-separating intervals. The symmetric description holds for $\pi_i^{-1}(\alpha_v)$ for each vertex $v$ in $c$ that is an outward joint. See \Cref{fig:totallyperiodiccutcurve} second and third rows.

The torus $T=\pi_i^{-1}(\alpha)$ can be obtained by taking the union of the annuli $\pi_i^{-1}(\alpha_e)$ and $\pi_i^{-1}(\alpha_v)$. The properties in \Cref{defn:interiortorus} follows from the recorded properties of $T^{s/u}$ in each annulus.

Finally, if $T$ is boundary parallel in $P_i$, then $\alpha$ is a boundary parallel curve in $S_i$. Without loss of generality suppose that $\alpha$ is parallel to a boundary component lying on the side that $\alpha$ is cooriented into. 

For each outward joint that $c$ passes through, the corresponding boundary component of $S_i$ lies on the side that $\alpha$ is cooriented into, thus $c$ only passes through one outward joint. Since $c$ is embedded, this implies that $c$ can only pass through one inward joint. Hence $\alpha$ consists of two edges with endpoints on the same pair of joints. 

In this case, $\alpha$ can only be boundary parallel if these two edges are isotopic in $\overline{S_i}$, i.e. they bound a disc. Such a disc pulls back to a quadrilateral in $S_i$, which has index $0$. Meanwhile, as pointed out in the proof of \Cref{lemma:flowgrapheulerchar}, this quadrilateral must be tiled by octagons, each of which has index $-1$, giving us a contradiction.
\end{proof}

We will use the following result from \cite{Tsa24}.

\begin{thm}[{\cite[Corollary 1.7]{Tsa24}}] \label{thm:hsurinteriortorus}
Let $\phi^t$ be a transitive Anosov flow on a closed oriented $3$-manifold $M$. Suppose $T$ is a Reebless non-scalloped transverse torus. 
Then one can construct an Anosov flow $\overline{\phi}^t$ by cutting along $T$ and regluing it along some map, so that $T$ becomes a scalloped torus and so that each closed leaf $\ell^s$ of the induced stable foliation on $T$ intersects each closed leaf $\ell^u$ of the induced unstable foliation on $T$ exactly once. 
Furthermore, this flow is almost equivalent to $\phi^t$.
\end{thm}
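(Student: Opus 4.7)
The plan is to construct a positive horizontal surgery curve $c$ lying on $T$ such that a single horizontal Goodman surgery along $c$ has the effect of shearing the closed stable and unstable leaves on $T$ into transverse homotopy classes; almost equivalence is then immediate from \Cref{thm:horsuralmostequiv}. First I would set up the tangential picture on $T$: since $T$ is transverse to the flow, at every $x \in T$ the plane $T_xT$ meets $E^s \oplus T\phi|_x$ and $E^u \oplus T\phi|_x$ in two distinct lines, decomposing $T_xT$ into four open quadrants. Two opposite quadrants consist of positive tangent directions in the sense of \Cref{defn:posnegcurve}, while the other two consist of negative ones.

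Next, I would build the surgery curve. Because $T$ is Reebless and non-scalloped, all closed leaves of $T^s$ and of $T^u$ share a single homotopy class $[\ell]$. I would take $c$ to be a smooth embedded simple closed curve on $T$ in a class $[c]$ with $[c] \cdot [\ell] = 1$, obtained by perturbing a curve near a closed $T^s$-leaf so that its tangent line lies in the positive quadrant at every point. The key check is steadiness of $Tc$: for every crossing $(x,y,t)$ of $c$, one fixes an instantaneous metric (\Cref{defn:instantmetric}) and uses hyperbolicity of $\phi^t$ to see that $d\phi^t$ pushes positive directions strictly towards $E^u$ while $Tc|_y$ itself remains bounded away from $E^u$, so that $\slope(Tc|_y) > \slope(d\phi^t(Tc|_x))$, as required by \Cref{defn:horsurcurve}. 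The main technical obstacle will be short-time self-crossings of $c$ living inside a thin collar of $T$, where the expansion estimate has no time to take effect; I would handle this by choosing $c$ sufficiently $C^1$-close to a fixed closed $T^s$-leaf, so that all returns of $c$ to itself take time bounded below by the first return time of a neighborhood of the closed $T^s$-leaves.

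With $c$ established as a positive horizontal surgery curve, I would invoke \Cref{constr:sur} to build a surgery annulus containing $c$ as a leaf of its horizontal foliation $\mathcal{H}$, and set $\overline{\phi}^t := \phi^t_{\frac{1}{1}}(c)$. By the local structure of horizontal Goodman surgery, the effect at $T$ is to precompose the identification of the two sides of $T$ in $M \cut T$ with a single Dehn twist along $c$. A direct analysis of how this Dehn twist acts on the two induced foliations then shows that after surgery, the closed leaves of the new $T^s$ remain in class $[\ell]$ while the closed leaves of the new $T^u$ lie in class $[\ell] \pm [c]$; since these classes intersect exactly once on $T$, the torus $T$ becomes a scalloped transverse torus satisfying the stated intersection property. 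Finally, almost equivalence of $\overline{\phi}^t$ and $\phi^t$ is a direct application of \Cref{thm:horsuralmostequiv}, completing the proof.
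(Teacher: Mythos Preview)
This theorem is not proved in the present paper; it is quoted as Corollary~1.7 of \cite{Tsa24} and used as a black box in \Cref{subsec:cuttingmove}. So there is no in-paper proof to compare against.

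Your overall strategy---locate a horizontal surgery curve on $T$, perform a single horizontal Goodman surgery, and invoke \Cref{thm:horsuralmostequiv}---is the natural route and is presumably how \cite{Tsa24} derives its Corollary~1.7 from its Theorem~1.3. However, your construction of the curve $c$ contains a genuine gap. You require $[c]\cdot[\ell]=1$, yet you propose to obtain $c$ by ``perturbing a curve near a closed $T^s$-leaf''; any curve $C^1$-close to a closed $T^s$-leaf lies in the class $[\ell]$ itself, so these two requirements are incompatible. Your steadiness argument (bounding self-return times from below via closeness to a closed $T^s$-leaf) rests on exactly this $C^1$-closeness, so it collapses once $c$ is taken in a class transverse to $[\ell]$.

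The underlying difficulty is that on a Reebless non-scalloped torus the closed leaves of $T^s$ and $T^u$ are \emph{parallel}, so one cannot imitate the scalloped-torus picture in which a positive curve near a closed $T^s$-leaf is automatically transverse to the closed $T^u$-leaves. Producing a positive steady curve on such a $T$---in whatever homotopy class is actually required---demands an honest analysis of how the positive cone bounded by $T^s_x$ and $T^u_x$ rotates as $x$ moves across the bands between closed leaves with alternating spiraling orientations; your sketch does not engage with this. Finally, the assertion that after surgery ``the closed leaves of the new $T^s$ remain in class $[\ell]$ while the closed leaves of the new $T^u$ lie in class $[\ell]\pm[c]$'' also needs justification: the post-surgery stable and unstable foliations are determined dynamically by the reglued flow, not by naively Dehn-twisting the old induced foliations on one side of $T$.
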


We apply \Cref{thm:hsurinteriortorus} to $T=\pi_i^{-1}(\alpha)$. This results in an Anosov flow $\overline{\phi}^t$ defined on another graph manifold $\overline{M}$. The Seifert fibered pieces of $\overline{M}$ are the same as $M$ except with $P_i$ cut along $T$. Correspondingly the base surfaces of $\overline{M}$ are the same as $M$ except with $S_i$ cut along $\alpha$. 

We claim that $\overline{\phi}^t$ is totally periodic:
First observe that each component of $P_i \cut T$ contains at least one vertical orbit of $\phi^t$. Indeed, using an index argument as in the proof of \Cref{lemma:flowgrapheulerchar}, we see that the Euler characteristic of the base surface is equal to minus the number of vertical orbits, so this follows from the assumption that $\alpha$ is not boundary parallel. After surgery, the vertical orbits of $\phi^t$ remain homotopic to the Seifert fibers of $P_i \cut T$, hence remain vertical orbits of $\overline{\phi}^t$.

Next, we explain how the total flow graph of $\overline{\phi}^t$ relates to that of $\phi^t$. Let us label the edges of $c$ cyclically as $(e_i)$, $i \in \mathbb{Z}/2n$, where $e_{2i-1}$ and $e_{2i}$ share a vertex at an inward joint and $e_{2i}$ and $e_{2i+1}$ share a vertex at an outward joint. We cut $\overline{S}_i$ along $c$. This gives a surface with two boundary components. We label the one where $c$ is cooriented out of by $c^+$ and the one where $c$ is cooriented into by $c^-$, and we label the copies of the edges $e_i$ by $e^\pm_i$ correspondingly. We now identify $e^-_{2i-1}$ and $e^-_{2i}$, and $e^+_{2i}$ and $e^+_{2i+1}$ in orientation preserving ways for each $i$, and call the resulting graph $\overline{\Phi}_i$. $\overline{\Phi}_i$ has two more vertices than $\Phi_i$. We construct $\overline{\Phi}$ by identifying the two new vertices, then identifying the remaining vertices as for $\Phi_i$. We illustrate one example in \Cref{fig:totallyperiodiccuteg}.

\begin{figure}
    \centering
    \fontsize{10pt}{10pt}\selectfont
\begingroup%
  \makeatletter%
  \providecommand\color[2][]{%
    \errmessage{(Inkscape) Color is used for the text in Inkscape, but the package 'color.sty' is not loaded}%
    \renewcommand\color[2][]{}%
  }%
  \providecommand\transparent[1]{%
    \errmessage{(Inkscape) Transparency is used (non-zero) for the text in Inkscape, but the package 'transparent.sty' is not loaded}%
    \renewcommand\transparent[1]{}%
  }%
  \providecommand\rotatebox[2]{#2}%
  \newcommand*\fsize{\dimexpr\f@size pt\relax}%
  \newcommand*\lineheight[1]{\fontsize{\fsize}{#1\fsize}\selectfont}%
  \ifx\svgwidth\undefined%
    \setlength{\unitlength}{289.14214127bp}%
    \ifx\svgscale\undefined%
      \relax%
    \else%
      \setlength{\unitlength}{\unitlength * \real{\svgscale}}%
    \fi%
  \else%
    \setlength{\unitlength}{\svgwidth}%
  \fi%
  \global\let\svgwidth\undefined%
  \global\let\svgscale\undefined%
  \makeatother%
  \begin{picture}(1,0.55509297)%
    \lineheight{1}%
    \setlength\tabcolsep{0pt}%
    \put(0,0){\includegraphics[width=\unitlength,page=1]{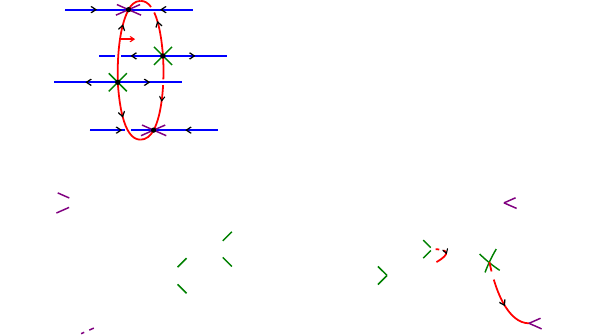}}%
    \put(0.17281253,0.48292118){\color[rgb]{1,0,0}\makebox(0,0)[lt]{\lineheight{1.25}\smash{\begin{tabular}[t]{l}$c$\end{tabular}}}}%
    \put(0,0){\includegraphics[width=\unitlength,page=2]{cuttingeg.pdf}}%
  \end{picture}%
\endgroup%

    \caption{An example of cutting along an embedded and cooriented cycle $c$.}
    \label{fig:totallyperiodiccuteg}
\end{figure}

\begin{lemma} \label{lemma:cuttingmovegluedflowgraph}
The total flow graph of $\overline{\phi}^t$ is given by $\overline{\Phi}$.
\end{lemma}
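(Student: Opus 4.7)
The key step is to track how the base surfaces, the auxiliary graph $F_i$, and the Birkhoff annuli behave under the surgery. First I would apply \Cref{thm:hsurinteriortorus} to the Reebless non-scalloped transverse torus $T = \pi_i^{-1}(\alpha)$ guaranteed by \Cref{lemma:totallyperiodiccutinteriortorus}, producing an Anosov flow $\overline{\phi}^t \sim \phi^t$ on the manifold $\overline{M}$ obtained by cutting and regluing along $T$. The JSJ decomposition of $\overline{M}$ replaces $P_i$ by the pieces of $P_i \cut T$, whose base surfaces are the components of $S_i \cut \alpha$; these are trivially Seifert fibered by fibers of $P_i$. The vertical orbits of $\phi^t$ avoid $T$ and remain closed orbits of $\overline{\phi}^t$ homotopic to the fibers of the new pieces, so they still serve as vertical orbits.

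Next I would read off the spine graph, the auxiliary graph $\overline{F}$, and hence the flow graph of $\overline{\phi}^t$ directly from the corresponding portions of $X_i$ and $F_i$ on the components of $S_i \cut \alpha$, with the new boundary components contributing new joints. Call these the \emph{cut joints}. From the local analysis performed in the proof of \Cref{lemma:totallyperiodiccutinteriortorus}, the closed stable leaves of $T^s$ on the $-$ side of $T$ are oriented inward and on the $+$ side are oriented outward; thus the $-$ side cut joint is an inward joint and the $+$ side cut joint is an outward joint. The surgery identifies these two boundary tori into the single scalloped torus $T$, so in the total flow graph the two cut joints are identified, producing the single new vertex described in the construction of $\overline{\Phi}$.

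The combinatorial heart of the argument is the identification of the edges incident to the cut joints. By \Cref{lemma:flowgrapheulerchar} the edges of the flow graph correspond to quadrilateral complementary regions of $\overline{F}$, and by \Cref{lemma:flowgraphedges=orbitsegment} to isotopy classes of orbit segments between joints. Quadrilaterals of $\overline{F}_i$ disjoint from $\alpha$ descend unchanged, accounting for the edges of $\Phi_i$ not lying on $c$. For each edge $e_i$ of $c$ the arc $\alpha_{e_i}$ slices the quadrilateral carrying $e_i$ into two new quadrilaterals on the two sides of $\alpha$, producing $e^+_i$ and $e^-_i$. Merging happens near a vertex $v$ of $c$: if $v$ is an inward joint shared by $e_{2i-1}$ and $e_{2i}$, then on $S_i$ the arc $\alpha_v$ runs inside the collar of the boundary component at $v$ joining the two relevant quadrilaterals; after cutting $S_i$ along $\alpha$ and passing to the new $\overline{S}$, on the $-$ side the original boundary at $v$ is amalgamated with $\alpha^-_v$ into a single joint, and the two quadrilaterals carrying $e^-_{2i-1}$ and $e^-_{2i}$ merge into one complementary region whose unique edge realizes the identification $e^-_{2i-1} \sim e^-_{2i}$. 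The symmetric analysis at outward joints gives $e^+_{2i} \sim e^+_{2i+1}$. Together with the previously identified cut joints, this reproduces exactly the description of $\overline{\Phi}$ provided in the construction.

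The main obstacle I expect is not logical but bookkeeping: one must verify that orientations, coorientations, and signs are inherited consistently through the surgery — in particular, that the merged edges remain positive (which follows from the hypothesis that $c$ consists of positive edges), that the cut joints really are one inward and one outward (which follows from the orientation data on $\alpha$ inherited from the coorientation on the edges of $c$), and that the gluing between the two cut joints matches the scalloped regluing produced by \Cref{thm:hsurinteriortorus} rather than some other slope.
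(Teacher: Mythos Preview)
Your approach is correct but takes a different route from the paper. You track the quadrilateral decomposition of $\overline{F}_i$ directly: unchanged quads away from $\alpha$, sliced quads along the $\alpha_e$, and merged quads near each $\alpha_v$. This is a perfectly valid constructive argument, and you correctly flag that the main burden is the orientation/coorientation bookkeeping at the cut joints.

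The paper instead runs a subset-plus-counting argument. It first observes, via \Cref{lemma:flowgraphedges=orbitsegment}, that every edge of the proposed $\overline{\Phi}_i$ is witnessed by an orbit segment of $\overline{\phi}^t$ (obtained simply by restricting an orbit segment for $\Phi_i$ to the appropriate side of $T$), so $\overline{\Phi}_i$ is a subgraph of the true flow graph. It then invokes \Cref{lemma:flowgrapheulerchar}: the number of complementary quadrilaterals of $\overline{\Phi}_i$ equals the number for $\Phi_i$, which equals $-\chi(S_i) = -\chi(S_i \cut \alpha)$, so there can be no additional edges. This sidesteps entirely the local analysis of how quadrilaterals split and merge, and in particular avoids the orientation bookkeeping you anticipate. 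Your approach, on the other hand, makes the correspondence between old and new edges completely explicit, which is conceptually illuminating even if longer to write out carefully.
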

\begin{proof}
We first check that the components of $\overline{\Phi}_i$ are the flow graphs of $\overline{\phi}^t$ on the components of $P_i \cut T$. 
It is straightforward to check that for every edge $e$ in $\overline{\Phi}_i$, there is an orbit segment of $\overline{\phi}^t$ in an isotopy class that projects down to that of $e$. Indeed, one can simply take restrictions of orbit segments for $\Phi_i$. This implies that the $\overline{\Phi}_i$ is a subset of the correct flow graph. Meanwhile, the number of complementary regions of $\overline{\Phi}_i$ in $S_i \cut c$ equals the number of complementary regions of $\Phi_i$ in $S_i$, which equals $-\chi(S_i)=\chi(S_i \cut c)$ by \Cref{lemma:flowgrapheulerchar}. Hence there are no additional edges to $\overline{\Phi}_i$.

The two additional joints in $\overline{\Phi}_i$ correspond to the tori $c_\pm \times S^1$, hence are to be identified. The rest of the boundary components are not affected by the surgery operation, hence their corresponding joints are to be identified in the same way.
\end{proof}

We refer to the operation of obtaining $\overline{\Phi}$ from $\Phi$ as \textbf{cutting along $c$}. 
An important special case that will come up often later is if $n=1$, that is, when the length of $c$ is $2$. In this case, the operation amounts to cutting $\overline{S}_i$ along $c$ and identifying the two edges in each of the resulting two boundary components.

A symmetric version of this construction holds where $c$ is an embedded and cooriented cycle in $\Phi_i$ that consists of negative edges. From \Cref{thm:horsuralmostequiv} and \Cref{prop:totallyperiodictransitive}, we have the following proposition.

\begin{prop} \label{prop:cuttingmove}
Let $\Phi$ be the total flow graph of a totally periodic Anosov flow with orientable stable and unstable foliations. Let $c$ be an embedded and cooriented cycle of a flow graph $\Phi_i$ that consists of edges of the same sign. Let $\overline{\Phi}$ be obtained by cutting $\Phi$ along $c$. Suppose $\Phi$ is strongly connected. Then $\phi^t_\Phi \sim \phi^t_{\overline{\Phi}}$.
\end{prop}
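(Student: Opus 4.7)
The plan is to construct a Reebless non-scalloped transverse torus associated to $c$, perform horizontal Goodman surgery along it using \Cref{thm:hsurinteriortorus}, and identify the resulting flow up to almost equivalence with $\phi^t_{\overline{\Phi}}$ via the flow graph calculation of \Cref{lemma:cuttingmovegluedflowgraph}.

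First I would pick a representative flow $\phi^t$ in the almost equivalence class of $\phi^t_\Phi$, which exists by definition. From the cycle $c$, form the cooriented curve $\alpha$ on $S_i$ as described just before \Cref{lemma:totallyperiodiccutinteriortorus}, and set $T = \pi_i^{-1}(\alpha)$. By \Cref{lemma:totallyperiodiccutinteriortorus}, $T$ is a Reebless non-scalloped transverse torus to $\phi^t$ that is not boundary parallel in $P_i$. Since $\Phi$ is strongly connected, \Cref{prop:totallyperiodictransitive} gives that $\phi^t$ is transitive, so the hypotheses of \Cref{thm:hsurinteriortorus} are satisfied.

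Next, applying \Cref{thm:hsurinteriortorus}, we obtain an Anosov flow $\overline{\phi}^t$ on a closed oriented $3$-manifold $\overline{M}$ obtained by cutting $M$ along $T$ and regluing, with $\overline{\phi}^t \sim \phi^t$. I would then verify that $\overline{\phi}^t$ is totally periodic with orientable stable and unstable foliations: the Seifert fibered pieces of $\overline{M}$ are precisely those of $M$ with $P_i$ replaced by the components of $P_i \cut T$, and since $\alpha$ is non-boundary-parallel (\Cref{lemma:totallyperiodiccutinteriortorus}), each component of $P_i \cut T$ contains a vertical orbit (by the Euler characteristic/index count from \Cref{lemma:flowgrapheulerchar}). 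These vertical orbits are homotopic to regular fibers of the new Seifert pieces after surgery on $T$, so the periodicity condition in \Cref{defn:totallyperiodic} persists. Orientability of the stable and unstable foliations is preserved automatically under cut-and-reglue surgery along a transverse torus.

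With $\overline{\phi}^t$ identified as a transitive totally periodic Anosov flow with orientable stable and unstable foliations, \Cref{lemma:cuttingmovegluedflowgraph} identifies its total flow graph as $\overline{\Phi}$. By \Cref{lemma:indgluingmap}, this pins down $\overline{\phi}^t$ up to almost equivalence as $\phi^t_{\overline{\Phi}}$, provided $\overline{\phi}^t$ is transitive; but transitivity is inherited from $\phi^t$ through almost equivalence (or equivalently, one checks that $\overline{\Phi}$ remains strongly connected since cutting replaces the cycle $c$ by paths through the new joint without destroying any existing directed connectivity, then invokes \Cref{prop:totallyperiodictransitive}). Chaining the equivalences $\phi^t_\Phi \sim \phi^t \sim \overline{\phi}^t \sim \phi^t_{\overline{\Phi}}$ gives the conclusion.

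The main obstacle I anticipate is the bookkeeping needed to confirm that $\overline{\phi}^t$ is still totally periodic and to match its total flow graph with the combinatorially defined $\overline{\Phi}$; both of these are dispatched by \Cref{lemma:cuttingmovegluedflowgraph} together with the non-boundary-parallelism statement in \Cref{lemma:totallyperiodiccutinteriortorus}, so the remaining work is just to assemble the citations. The symmetric case where $c$ consists of negative edges is handled by the same argument after swapping the roles of the stable and unstable foliations throughout.
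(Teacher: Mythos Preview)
Your proposal is correct and follows essentially the same approach as the paper: the proposition is stated as a summary of the preceding discussion in \Cref{subsec:cuttingmove}, and you have accurately reassembled that argument by invoking \Cref{lemma:totallyperiodiccutinteriortorus}, \Cref{thm:hsurinteriortorus}, the total-periodicity check via the Euler characteristic count, and \Cref{lemma:cuttingmovegluedflowgraph}, then closing with \Cref{lemma:indgluingmap}. The only cosmetic difference is that the paper cites \Cref{thm:horsuralmostequiv} (of which \Cref{thm:hsurinteriortorus} is a corollary) in the sentence introducing the proposition, whereas you cite \Cref{thm:hsurinteriortorus} directly; both are fine.
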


Note that from \Cref{prop:totallyperiodictransitive}, this in particular means that $\Phi$ being strongly connected implies that $\overline{\Phi}$ is strongly connected. This is also not too difficult to check by hand.

\subsection{Gluing move} \label{subsec:gluingmove}

Next, we add a `gluing move' to our toolkit. This is simply the reverse of the cutting move in the case when $n=1$. A subtle point, however, concerns the strongly connectedness of the \emph{resulting} total flow graph.

Let $\phi^t$ be a totally periodic Anosov flow. Let $\Phi$ be the total flow graph of $\phi^t$. Let $e_1$ and $e_2$ be two distinct positive edges of $\Phi$ where
\begin{itemize}
    \item the forward endpoint $e^+_1$ of $e_1$ equals the backward endpoint $e^-_2$ of $e_2$,
    \item the forward endpoint $e^+_1$ of $e_1$ does not equal the forward endpoint $e^+_2$ of $e_2$, and
    \item the backward endpoint $e^-_1$ of $e_1$ does not equal the backward endpoint $e^-_2$ of $e_2$.
\end{itemize} 
That is, $(e_1,e_2)$ is an embedded edge path in $\Phi$. We also let $\Phi_i$ be the individual flow graph that contains $e_i$ as an edge, for $i=1,2$. Here we might have $\Phi_1=\Phi_2$.

For each $i$, we slit $S_i$ along $e_i$. This creates a boundary component consisting of the two copies of $e_i$ that get created. We label these as $e'_i$ and $e''_i$. We then glue $e'_1$ to $e'_2$ and $e''_1$ to $e''_2$ by orientation preserving maps. This glues up $S_1$ and $S_2$ into a single surface $\overline{S}_1$ with an embedded graph $\overline{\Phi}_1$. The two inward joints $e^-_1$ and $e^-_2$ get identified into a single inward joint $\overline{e}^-_1$ and the two outward joints $e^+_1$ and $e^+_2$ get identified into a single outward joint $\overline{e}^+_1$. We construct $\overline{\Phi}$ by identifying $\overline{e}^-_1$ with the outward joint that $e^-_1$ used to be identified with, and identifying $\overline{e}^+_1$ with the inward joint that $e^+_2$ used to be identified with. Applying \Cref{thm:bftotallyperiodic}, we let $\overline{\phi}^t$ be the totally periodic Anosov flow with total flow graph $\overline{\Phi}$. 

Notice that there are in fact two possibilities for $\overline{\Phi}_1$, hence for $\overline{\Phi}$ and $\overline{\phi}^t$. These differ by the choice of labelling $e'_i$ and $e''_i$ for each $i$. See \Cref{fig:gluingeg} for an example where we show both possibilities for $\overline{\Phi}_1$. We refer to the operation of obtaining either $\overline{\Phi}$ from $\Phi$ as \textbf{gluing $e_1$ to $e_2$}.

\begin{figure}
    \centering
    \fontsize{8pt}{8pt}\selectfont
    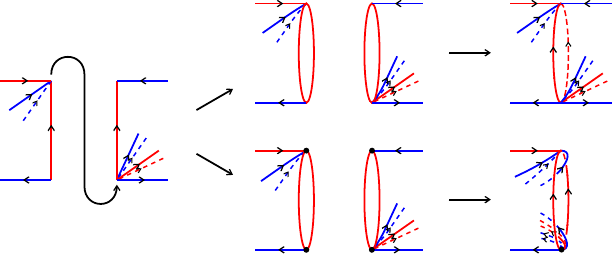
    \caption{Modifying a total flow graph by gluing two distinct positive edges $e_1$ and $e_2$.}
    \label{fig:gluingeg}
\end{figure}

In either case, the images of $e'_1$ and $e''_1$ in $\overline{\Phi}_1$ form an embedded cycle $c$ that consists of positive edges. We coorient $c$ to point towards $S_2$ and cut $\overline{\Phi}$ along $c$. It is straightforward to verify that the result recovers $\Phi$. See \Cref{fig:slidingmove} top.

\begin{figure}
    \centering
    \fontsize{8pt}{8pt}\selectfont
\begingroup%
  \makeatletter%
  \providecommand\color[2][]{%
    \errmessage{(Inkscape) Color is used for the text in Inkscape, but the package 'color.sty' is not loaded}%
    \renewcommand\color[2][]{}%
  }%
  \providecommand\transparent[1]{%
    \errmessage{(Inkscape) Transparency is used (non-zero) for the text in Inkscape, but the package 'transparent.sty' is not loaded}%
    \renewcommand\transparent[1]{}%
  }%
  \providecommand\rotatebox[2]{#2}%
  \newcommand*\fsize{\dimexpr\f@size pt\relax}%
  \newcommand*\lineheight[1]{\fontsize{\fsize}{#1\fsize}\selectfont}%
  \ifx\svgwidth\undefined%
    \setlength{\unitlength}{211.73636609bp}%
    \ifx\svgscale\undefined%
      \relax%
    \else%
      \setlength{\unitlength}{\unitlength * \real{\svgscale}}%
    \fi%
  \else%
    \setlength{\unitlength}{\svgwidth}%
  \fi%
  \global\let\svgwidth\undefined%
  \global\let\svgscale\undefined%
  \makeatother%
  \begin{picture}(1,0.77624965)%
    \lineheight{1}%
    \setlength\tabcolsep{0pt}%
    \put(0,0){\includegraphics[width=\unitlength,page=1]{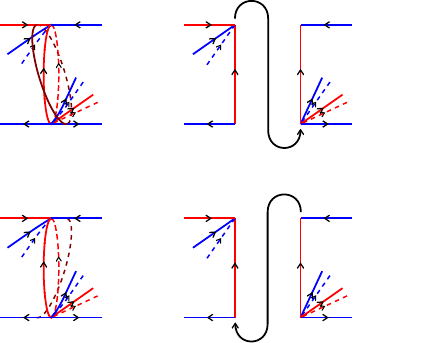}}%
    \put(0.05148448,0.59703201){\color[rgb]{0.50196078,0,0}\makebox(0,0)[lt]{\lineheight{1.25}\smash{\begin{tabular}[t]{l}$c$\end{tabular}}}}%
    \put(0.16752529,0.19412804){\color[rgb]{0.50196078,0,0}\makebox(0,0)[lt]{\lineheight{1.25}\smash{\begin{tabular}[t]{l}$c$\end{tabular}}}}%
    \put(0,0){\includegraphics[width=\unitlength,page=2]{slidingmove.pdf}}%
    \put(0.24465536,0.48650295){\color[rgb]{0,0,0}\makebox(0,0)[lt]{\lineheight{1.25}\smash{\begin{tabular}[t]{l}$\overline{\Phi}$\end{tabular}}}}%
    \put(0.80987582,0.48650295){\color[rgb]{0,0,0}\makebox(0,0)[lt]{\lineheight{1.25}\smash{\begin{tabular}[t]{l}$\Phi$\end{tabular}}}}%
    \put(0.80987766,0.0477791){\color[rgb]{0,0,0}\makebox(0,0)[lt]{\lineheight{1.25}\smash{\begin{tabular}[t]{l}$\widecheck{\Phi}$\end{tabular}}}}%
    \put(0.24465699,0.0477791){\color[rgb]{0,0,0}\makebox(0,0)[lt]{\lineheight{1.25}\smash{\begin{tabular}[t]{l}$\overline{\Phi}$\end{tabular}}}}%
    \put(0,0){\includegraphics[width=\unitlength,page=3]{slidingmove.pdf}}%
  \end{picture}%
\endgroup%

    \caption{Cutting along the cycle $c$ formed by the images of $e'_1$ and $e''_1$. Top: If we coorient $c$ towards $S_2$, then we recover $\Phi$. Bottom: If we coorient $c$ towards $S_1$, then we slide $S_1$ and $S_2$ across each other.}
    \label{fig:slidingmove}
\end{figure}

\Cref{prop:cuttingmove} thus applies in this context to give the following proposition.

\begin{prop} \label{prop:gluingmove}
Let $\Phi$ be the total flow graph of a totally periodic Anosov flow with orientable stable and unstable foliations. Let $e_1$ and $e_2$ be two distinct edges of $\Phi$ of the same sign so that $(e_1,e_2)$ is an embedded edge path of $\Phi$. Let $\overline{\Phi}$ be obtained by gluing $e_1$ to $e_2$. Suppose $\overline{\Phi}$ is strongly connected. Then $\phi^t_\Phi \sim \phi^t_{\overline{\Phi}}$.
\end{prop}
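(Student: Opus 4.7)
The plan is to obtain \Cref{prop:gluingmove} as an immediate reverse of \Cref{prop:cuttingmove}. The construction of the gluing move was designed so that the images of the slit copies $e'_1$ and $e''_1$ of $e_1$ assemble into an embedded cycle $c$ in $\overline{\Phi}_1$, and as noted in the discussion preceding the statement, cutting $\overline{\Phi}$ along $c$ with coorientation pointing toward $S_2$ recovers $\Phi$. Since $\phi^t_{\overline{\Phi}}$ is a totally periodic Anosov flow with orientable stable and unstable foliations, and since strong connectedness of $\overline{\Phi}$ is precisely the hypothesis needed so that $\phi^t_{\overline{\Phi}}$ is transitive by \Cref{prop:totallyperiodictransitive}, we are in a position to apply \Cref{prop:cuttingmove}.

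First I would verify that $c$ qualifies as an admissible cycle for the cutting move: it is embedded in $\overline{\Phi}_1$ by construction (the three bulleted conditions imposed on $(e_1,e_2)$ — distinct, and distinct backward/forward endpoints — ensure that $c$ does not self-intersect after the identification); it consists entirely of edges of the same sign, since both $e'_1$ and $e''_1$ are slit copies of $e_1$, which has the same sign as $e_2$ by hypothesis; and it admits the required coorientation, namely the one obtained by transferring the coorientation of $e_1$ through the slitting. I would then check in detail, working locally near $c$, that the cutting procedure (as described in \Cref{subsec:cuttingmove}, producing $\overline{\Phi}_i$ from $\Phi_i$ by identifying the slit edges in pairs around each alternating sequence of inward and outward joints on $c$) reverses exactly the gluing procedure defined just above the statement. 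This is mostly a bookkeeping exercise confirming that the two labellings $(e',e'')$ used in cutting and gluing line up.

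Having established these points, the conclusion is simply \Cref{prop:cuttingmove} applied to $\overline{\Phi}$ along $c$: it yields $\phi^t_{\overline{\Phi}} \sim \phi^t_{\Phi}$, which is the desired statement.

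The main obstacle, if any, is purely combinatorial: ensuring that the cycle $c$ produced in $\overline{\Phi}_1$ is genuinely embedded and cooriented in the sense required by \Cref{defn:interiortorus} and the construction in \Cref{subsec:cuttingmove}. The conditions $e^+_1 \neq e^+_2$ and $e^-_1 \neq e^-_2$ are precisely what prevents $c$ from degenerating (for instance, into a bigon self-identified at a joint), and the assumption that $e_1$ and $e_2$ have the same sign is precisely what allows $c$ to be of uniform sign. Once these checks are made, no further surgery-theoretic work is needed — the gluing move proposition is a formal corollary of the cutting move proposition.
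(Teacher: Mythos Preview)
Your proposal is correct and follows essentially the same approach as the paper: the paper observes that the images of $e'_1$ and $e''_1$ form an embedded cycle $c$ in $\overline{\Phi}_1$, that cutting $\overline{\Phi}$ along $c$ (cooriented toward $S_2$) recovers $\Phi$, and then invokes \Cref{prop:cuttingmove} directly. Your additional verification that $c$ is admissible for the cutting move is a reasonable elaboration of what the paper calls ``straightforward to verify.''
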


A subtle point here, however, is that the assumption of $\overline{\Phi}$ being strongly connected is \emph{not} implied by $\Phi$ being strongly connected. This means that starting from a strongly connected total flow graph, one cannot modify it by applying gluing moves as one wishes (and remain in the same almost equivalence class of flows); instead one has to first verify that the resulting graph is strongly connected.

We record one situation where we can know in advance that $\overline{\Phi}$ is strongly connected.

\begin{prop} \label{prop:cyclicgluingmove}
Let $\Phi$ be the total flow graph of a totally periodic Anosov flow with orientable stable and unstable foliations. Let $e_1$ and $e_2$ be two distinct edges of $\Phi$ of the same sign so that the forward endpoint $e_1$ equals the backward endpoint $e_2$. Suppose $\Phi$ is strongly connected and suppose $(e_1,e_2)$ is a segment of an embedded directed cycle $c$ of $\Phi$. Let $\overline{\Phi}$ be obtained by gluing $e_1$ to $e_2$. Then $\phi^t_\Phi \sim \phi^t_{\overline{\Phi}}$.
\end{prop}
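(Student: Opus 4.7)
The plan is to reduce to \Cref{prop:gluingmove}, so all the work goes into showing that $\overline{\Phi}$ is strongly connected. First I would unpack the combinatorics of the gluing: the common endpoint $v := e_1^+ = e_2^-$ of $\Phi$ disappears in $\overline{\Phi}$, being effectively split into the new outward joint $\overline{e}_1^+$ (which also absorbs $e_2^+$) and the new inward joint $\overline{e}_1^-$ (which also absorbs $e_1^-$); the edges $e_1, e_2$ are replaced by two new edges $\epsilon_1, \epsilon_2$ from $\overline{e}_1^-$ to $\overline{e}_1^+$; every other edge $f$ of $\Phi$ descends to an edge $\rho(f)$ of $\overline{\Phi}$, with endpoints obtained by the natural identification (an edge of $\Phi$ ending at $v$ has image ending at $\overline{e}_1^+$, and an edge beginning at $v$ has image beginning at $\overline{e}_1^-$).

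Next I would extract from the embedded cycle $c$ the key connecting path. Writing $c$ as $e_1 \to e_2 \to f_1 \to \cdots \to f_k \to e_1$, the edges $f_1,\ldots,f_k$ form a directed path in $\Phi$ from $e_2^+$ to $e_1^-$. Since $c$ is embedded, the intermediate vertices of this sub-path all lie outside $\{e_1^-, v, e_2^+\}$ (and hence outside $\{e_1^\pm, e_2^\pm\}$, since $e_1^+ = e_2^- = v$), so $P_c := \rho(f_1) \to \cdots \to \rho(f_k)$ is a bona fide directed path in $\overline{\Phi}$ from $\overline{e}_1^+$ (the image of $e_2^+$) to $\overline{e}_1^-$ (the image of $e_1^-$). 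The degenerate case $k=0$ is consistent: it forces $\overline{e}_1^+ = \overline{e}_1^-$ in $\overline{\Phi}$, and $P_c$ is taken to be trivial.

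With $P_c$ in hand I can verify strong connectedness of $\overline{\Phi}$ by showing that any vertex $w$ is mutually reachable with $\overline{e}_1^-$. Pick a $\Phi$-preimage $w'$ of $w$ (if $w = \overline{e}_1^-$ take $w' = e_1^-$, if $w = \overline{e}_1^+$ take $w' = e_2^+$, so as to avoid $v$), then apply strong connectedness of $\Phi$ to obtain a directed path $Q$ from $w'$ to $e_1^-$ and push $Q$ through $\rho$. The only places where $\rho(Q)$ can fail to be a valid walk in $\overline{\Phi}$ are transitions through $v$: at such a transition, two consecutive images end at $\overline{e}_1^+$ and start at $\overline{e}_1^-$ respectively, and I splice in a copy of $P_c$ to bridge the gap. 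The symmetric argument (taking a path from $e_1^-$ back to $w'$ in $\Phi$) yields the opposite direction, so $\overline{\Phi}$ is strongly connected, and \Cref{prop:gluingmove} then delivers $\phi^t_\Phi \sim \phi^t_{\overline{\Phi}}$. The only real subtlety is keeping track of how $v$ splits into the pair $(\overline{e}_1^-, \overline{e}_1^+)$ under the gluing and recognizing that $P_c$ exists precisely to repair the gap this splitting creates.
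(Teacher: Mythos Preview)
Your proposal is correct and follows essentially the same approach as the paper: both reduce to \Cref{prop:gluingmove} by verifying that $\overline{\Phi}$ is strongly connected, and both repair broken paths at the deleted vertex $v=e_1^+=e_2^-$ using the cycle $c$. The only cosmetic differences are that you explicitly extract the bridging path $P_c$ from $\overline{e}_1^+$ to $\overline{e}_1^-$ and splice it in where needed, whereas the paper phrases the same fix as ``concatenate $\gamma$ with the cycle $c$ at $v$'' and then drops the now-redundant occurrences of $e_1$ or $e_2$; and you check mutual reachability with a fixed vertex while the paper checks it for arbitrary pairs directly.
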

\begin{proof}
It suffices to show that the existence of $c$ implies that $\overline{\Phi}$ is strongly connected. Let $\overline{v}_1$ and $\overline{v}_2$ be two vertices of $\overline{\Phi}$. We have to show that there is a directed path from $\overline{v}_1$ to $\overline{v}_2$. For each $i$, we associate to $\overline{v}_i$ a vertex $v_i$ of $\Phi$:
\begin{itemize}
    \item If $\overline{v}_i \neq \overline{e}^{\pm}_1$, then $v_i$ is just the vertex in $\Phi$ that corresponds to $\overline{v}_i$.
    \item If $\overline{v}_i = \overline{e}^{\pm}_1$, then $v_i = e^-_2 = e^+_1$.
\end{itemize}

Since $\overline{\Phi}$ is strongly connected, there exists a directed path $\gamma$ from $v_1$ to $v_2$. Notice that if $\gamma$ does not pass through $e^-_2 = e^+_1$, then the image of $\gamma$ in $\overline{\Phi}$ is a directed path from $\overline{v}_1$ to $\overline{v}_2$. If $\gamma$ does pass through $e^-_2 = e^+_1$, but does so via entering by $e_1$ or exiting $e_2$, the image of $\gamma$ minus $e_1$ or $e_2$, respectively, is a directed path from $\overline{v}_1$ to $\overline{v}_2$ as well.

The nontrivial case is when $\gamma$ does pass through $e^-_2 = e^+_1$, but does so via entering by an edge other than $e_1$ and exiting via an edge other than $e_2$. But in this case, we can modify $\gamma$ by concatenating it with the cycle $c$ at $e^-_2 = e^+_1$. It is now true that $\gamma$ pass through $e^-_2 = e^+_1$ via entering by $e_1$ or exiting $e_2$ (at both such instances), so the argument above applies.
\end{proof}

Recall that to recover $\Phi$ from $\overline{\Phi}$, we cut along the cycle $c$ formed by the images of $e'_1$ and $e''_1$ and cooriented towards $S_2$. Suppose that we coorient $c$ towards $S_1$ instead, and let $\widecheck{\Phi}$ be obtained by cutting $\overline{\Phi}$ along $c$ with this alternate coorientation. See \Cref{fig:slidingmove} bottom. Then the collection of individual flow graphs in $\widecheck{\Phi}$ would agree with that of $\Phi$, but they would be connected up in a different way. Namely, 
\begin{itemize}
    \item $e^-_1$ would be identified with $e^+_2$,
    \item $e^+_1$ would be identified with the inward joint originally identified with $e^+_2$, and
    \item $e^-_2$ would be identified with the outward joint originally identified with $e^-_1$.
\end{itemize}
Intuitively, the base surfaces $S_1$ and $S_2$ are `slid across one another' over $e_1$ and $e_2$.

By repeating this procedure, one can glue two non-adjacent edges by first sliding one edge towards another until they are adjacent. We explain the precise setting of this in the following proposition.

\begin{prop} \label{prop:transportmove}
Let $\Phi$ be the total flow graph of a totally periodic Anosov flow with orientable stable and unstable foliations. Let $e_1$ and $e_2$ be two distinct edges of $\Phi$, belonging to flow graphs $\Phi_1$ and $\Phi_2$ respectively. Suppose $\Phi$ is strongly connected and suppose $e_1$ and $e_2$ are part of an embedded directed cycle $c$ of $\Phi$ that consists of edges of the same sign. Then by repeated cutting and gluing, we can obtain $\overline{\Phi}$ so that the collection of flow graphs in $\overline{\Phi}$ equals that of $\Phi$, but with $e^+_1=e^-_2$. Moreover, $\phi^t_\Phi \sim \phi^t_{\overline{\Phi}}$ and $(e_1,e_2)$ is a segment of an embedded directed cycle of $\overline{\Phi}$.
\end{prop}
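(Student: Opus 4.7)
The plan is to advance $e_1$ forward along the cycle $c$ by iterated application of the sliding move discussed just before the proposition statement (see \Cref{fig:slidingmove} bottom). Recall that this move takes a pair of same-sign edges $(a, b)$ with $a^+ = b^-$ lying on an embedded directed cycle of the total flow graph, and outputs a new total flow graph $\widecheck{\Phi}$ with the same individual flow graphs and the same cyclic sequence of edges but with $a$ and $b$ transposed. It is realized as a gluing move (justified by \Cref{prop:cyclicgluingmove}) followed by a cutting move (\Cref{prop:cuttingmove}), so each application preserves the almost equivalence class.

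Concretely, I will write the cycle in cyclic order as $(e_1 = g_0, g_1, \ldots, g_{k-1}, g_k = e_2, g_{k+1}, \ldots, g_{m-1})$, where $k \geq 1$ is the forward distance from $e_1$ to $e_2$ along $c$. If $k = 1$ there is nothing to do. Otherwise, for $j = 1, 2, \ldots, k-1$ in order, I will apply the sliding move to the pair $(e_1, g_j)$ in the current total flow graph $\Phi^{(j-1)}$, setting $\Phi^{(0)} := \Phi$. The inductive claim is that after step $j$ the modified cycle reads $(g_1, g_2, \ldots, g_j, e_1, g_{j+1}, \ldots, g_{k-1}, e_2, g_{k+1}, \ldots, g_{m-1})$ as an embedded monochromatic directed cycle in $\Phi^{(j)}$, with $e_1^+ = g_{j+1}^-$ in $\Phi^{(j)}$ so that the next iteration is set up. After step $k-1$ I arrive at $\overline{\Phi} := \Phi^{(k-1)}$, in which $e_1^+ = e_2^-$ and $(e_1, e_2)$ sits as a segment of the modified embedded directed cycle, as required.

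At each step I must verify that (i) $e_1$ and $g_j$ are distinct and of the same sign, (ii) the modified cycle remains an embedded directed cycle, and (iii) $\Phi^{(j-1)}$ remains strongly connected so that \Cref{prop:cyclicgluingmove} and \Cref{prop:cuttingmove} continue to apply. Points (i) and (iii) are immediate: the individual flow graphs never change so signs are preserved and edges stay distinct, and strong connectedness is inherited from $\phi^t_\Phi$ via \Cref{prop:totallyperiodictransitive} since each $\Phi^{(j-1)}$ is the total flow graph of a flow almost equivalent to the initial transitive $\phi^t_\Phi$. The main --- though still routine --- obstacle is (ii): reading off the vertex identifications listed in the definition of the sliding move, I will confirm that the transposition replaces the single shared vertex $e_1^+ = g_j^-$ by a fresh vertex while leaving all other vertices of the cycle untouched, so that no new coincidences arise and the embeddedness of the modified cycle is inherited from that of $c$.
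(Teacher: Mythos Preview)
Your approach is the same as the paper's: iterate the slide (glue then cut with the opposite coorientation) to push $e_1$ past each intermediate edge until it is adjacent to $e_2$. The overall argument is correct.

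One small slip in your justification of (ii): the sliding move does \emph{not} leave all vertices of the cycle except $e_1^+ = g_j^-$ untouched. Reading off the three bullet points defining $\widecheck{\Phi}$ in \Cref{subsec:gluingmove}, all three vertices $u = (g_{j-1}^+, e_1^-)$, $v = (e_1^+, g_j^-)$, $w = (g_j^+, g_{j+1}^-)$ are destroyed and replaced by $u' = (g_{j-1}^+, g_j^-)$, $v' = (g_j^+, e_1^-)$, $w' = (e_1^+, g_{j+1}^-)$. Your conclusion still holds: since each vertex of $\Phi$ is a unique (outward joint, inward joint) pair, the six joints involved in $u,v,w$ are disjoint from those at every other vertex of $c$, and $u',v',w'$ are pairwise distinct because their outward joints are; hence the transposed cycle remains embedded. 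The paper simply asserts this step, so your more careful check (once corrected) is an improvement.
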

\begin{proof}
We denote the edges on $c$ between $e_1$ and $e_2$ as $f_1,...,f_k$. We first glue $e_1$ to $f_1$. The existence of $c$ implies that we stay in the same almost equivalence class, by \Cref{prop:cyclicgluingmove}. We then cut along the cycle formed by the images of $e'_1$ and $e''_1$ and cooriented towards $S_1$. By \Cref{prop:cuttingmove}, we again stay in the same almost equivalence class. 

This swaps the positions of $e_1$ and $f_1$ on $c$. In particular, $e_1$ and $e_2$ are still part of an embedded directed cycle that consists of positive edges, but now there is one less edge between them on the cycle. We can thus repeat this procedure until $e_1$ is adjacent to $e_2$. See \Cref{fig:transportmove} for a pictorial summary of the argument.
\end{proof}

\begin{figure}
    \centering
    \fontsize{8pt}{8pt}\selectfont
\begingroup%
  \makeatletter%
  \providecommand\color[2][]{%
    \errmessage{(Inkscape) Color is used for the text in Inkscape, but the package 'color.sty' is not loaded}%
    \renewcommand\color[2][]{}%
  }%
  \providecommand\transparent[1]{%
    \errmessage{(Inkscape) Transparency is used (non-zero) for the text in Inkscape, but the package 'transparent.sty' is not loaded}%
    \renewcommand\transparent[1]{}%
  }%
  \providecommand\rotatebox[2]{#2}%
  \newcommand*\fsize{\dimexpr\f@size pt\relax}%
  \newcommand*\lineheight[1]{\fontsize{\fsize}{#1\fsize}\selectfont}%
  \ifx\svgwidth\undefined%
    \setlength{\unitlength}{371.49416465bp}%
    \ifx\svgscale\undefined%
      \relax%
    \else%
      \setlength{\unitlength}{\unitlength * \real{\svgscale}}%
    \fi%
  \else%
    \setlength{\unitlength}{\svgwidth}%
  \fi%
  \global\let\svgwidth\undefined%
  \global\let\svgscale\undefined%
  \makeatother%
  \begin{picture}(1,0.38151954)%
    \lineheight{1}%
    \setlength\tabcolsep{0pt}%
    \put(0,0){\includegraphics[width=\unitlength,page=1]{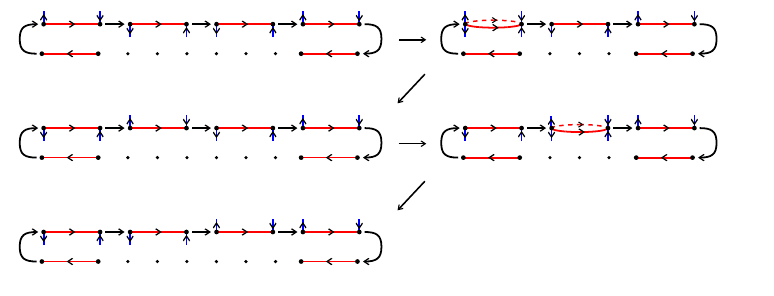}}%
    \put(0.08262962,0.36343148){\color[rgb]{1,0,0}\makebox(0,0)[lt]{\lineheight{1.25}\smash{\begin{tabular}[t]{l}$e_1$\end{tabular}}}}%
    \put(0.19428413,0.22917286){\color[rgb]{1,0,0}\makebox(0,0)[lt]{\lineheight{1.25}\smash{\begin{tabular}[t]{l}$e_1$\end{tabular}}}}%
    \put(0.41757905,0.36343148){\color[rgb]{1,0,0}\makebox(0,0)[lt]{\lineheight{1.25}\smash{\begin{tabular}[t]{l}$e_2$\end{tabular}}}}%
    \put(0.8505589,0.36342831){\color[rgb]{1,0,0}\makebox(0,0)[lt]{\lineheight{1.25}\smash{\begin{tabular}[t]{l}$e_2$\end{tabular}}}}%
    \put(0.30593289,0.09491423){\color[rgb]{1,0,0}\makebox(0,0)[lt]{\lineheight{1.25}\smash{\begin{tabular}[t]{l}$e_1$\end{tabular}}}}%
    \put(0.85055972,0.22916965){\color[rgb]{1,0,0}\makebox(0,0)[lt]{\lineheight{1.25}\smash{\begin{tabular}[t]{l}$e_2$\end{tabular}}}}%
    \put(0.41757992,0.22917283){\color[rgb]{1,0,0}\makebox(0,0)[lt]{\lineheight{1.25}\smash{\begin{tabular}[t]{l}$e_2$\end{tabular}}}}%
    \put(0.41758079,0.09491423){\color[rgb]{1,0,0}\makebox(0,0)[lt]{\lineheight{1.25}\smash{\begin{tabular}[t]{l}$e_2$\end{tabular}}}}%
  \end{picture}%
\endgroup%

    \caption{By repeated cuting and gluing, we can slide $e_1$ towards $e_2$.}
    \label{fig:transportmove}
\end{figure}

\subsection{Insertion move} \label{subsec:insertionmove}

In this subsection, we introduce the final type of basic move in our toolkit. The idea is that we can insert additional Seifert fibered pieces by performing horizontal Goodman surgery along a closed braid lying close to a boundary component of a Seifert fibered piece.

Let $\phi^t$ be a totally periodic Anosov flow with orientable stable and unstable foliations on a graph manifold $M$.
Let $T$ be a boundary component of a Seifert fibered piece.
By \Cref{lemma:totallyperiodicboundarytorusscallop}, $T$ is a scalloped transverse torus. By \cite[Example 3.13]{Tsa24}, there is a positive horizontal surgery curve $b$ lying on $T$. As explained in \cite[Example 3.15]{Tsa24}, we can construct another positive horizontal curve $\beta$ by inserting the $2$-strand closed braid with a single negative crossing along $b$. We illustrate a picture of $\beta$ in a neighborhood $N(T)$ of $T$ in \Cref{fig:insertionbraid}.

\begin{figure}
    \centering
    \fontsize{10pt}{10pt}\selectfont
\begingroup%
  \makeatletter%
  \providecommand\color[2][]{%
    \errmessage{(Inkscape) Color is used for the text in Inkscape, but the package 'color.sty' is not loaded}%
    \renewcommand\color[2][]{}%
  }%
  \providecommand\transparent[1]{%
    \errmessage{(Inkscape) Transparency is used (non-zero) for the text in Inkscape, but the package 'transparent.sty' is not loaded}%
    \renewcommand\transparent[1]{}%
  }%
  \providecommand\rotatebox[2]{#2}%
  \newcommand*\fsize{\dimexpr\f@size pt\relax}%
  \newcommand*\lineheight[1]{\fontsize{\fsize}{#1\fsize}\selectfont}%
  \ifx\svgwidth\undefined%
    \setlength{\unitlength}{156.85322318bp}%
    \ifx\svgscale\undefined%
      \relax%
    \else%
      \setlength{\unitlength}{\unitlength * \real{\svgscale}}%
    \fi%
  \else%
    \setlength{\unitlength}{\svgwidth}%
  \fi%
  \global\let\svgwidth\undefined%
  \global\let\svgscale\undefined%
  \makeatother%
  \begin{picture}(1,0.49416041)%
    \lineheight{1}%
    \setlength\tabcolsep{0pt}%
    \put(0,0){\includegraphics[width=\unitlength,page=1]{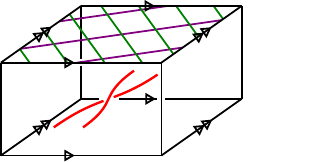}}%
    \put(0.33696973,0.09664374){\color[rgb]{1,0,0}\makebox(0,0)[lt]{\lineheight{1.25}\smash{\begin{tabular}[t]{l}$\beta$\end{tabular}}}}%
    \put(0.77848007,0.27332436){\color[rgb]{0,0,0}\makebox(0,0)[lt]{\lineheight{1.25}\smash{\begin{tabular}[t]{l}$N(T)$\end{tabular}}}}%
  \end{picture}%
\endgroup%

    \caption{The horizontal surgery curve that we use for the insertion move.}
    \label{fig:insertionbraid}
\end{figure}

The claim is that the surgered flow $\phi^t_1(\beta)$ is a totally periodic flow whose collection of flow graphs differ from that of $\phi^t$ by adding the graph labelled (1QP) in \Cref{fig:simplepieces} top. 
A version of this claim was established in unpublished work of Sergio Fenley, Jessica Purcell, and Mario Shannon. We thank them for showing us their work.
We will show the claim by first proving the topological \Cref{lemma:insertiontopology} below.

\begin{lemma} \label{lemma:insertiontopology}
The $3$-manifold $N(T)_1(\beta)$ obtained by performing Dehn surgery on $N(T)$ along $\beta$ with coefficient $1$ is homeomorphic to $(\mathbb{R}P^2 \backslash \text{two discs}) \times S^1$. Moreover, under this homeomorphism, the orbit segment from $\beta$ to itself within $N(T)$ is mapped to a fiber $\{*\} \times S^1$. 
\end{lemma}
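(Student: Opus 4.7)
The proof rests on realizing $\beta$ as the boundary of an embedded Möbius band in $N(T)$. In suitable local coordinates on a tubular neighborhood of $b$, say $S^1_x \times D^2_{yz}$ with $b = S^1_x \times \{0\}$, the $2$-strand closed braid with a single negative crossing produced by the insertion procedure of \cite[Example 3.15]{Tsa24} is isotopic to $\partial M$, where
\[
M = \{(x, r\cos(\pi x), r\sin(\pi x)) : x \in \mathbb{R}/\mathbb{Z}, \; r \in [-\delta,\delta]\}
\]
is an embedded Möbius band in $S^1_x \times D^2_{yz}$. The first step of the proof is to verify this identification directly, by checking that $\partial M$ traverses $S^1_x$ twice with exactly one crossing of the correct sign.

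The second step is a framing comparison. The Möbius band $M$ induces one framing on $\beta$ via a parallel pushoff inside $M$, while the surgery annulus $(A, \mathcal{H}, \mathcal{K})$ containing $\beta$ as a leaf of $\mathcal{H}$ induces another framing on $\beta$. A direct computation in the local model, using the defining properties of the surgery annulus and of the braid insertion, shows that these framings agree up to the appropriate sign, so that $+1$ Dehn surgery in the surgery framing amounts to filling in the meridian disk of the new solid torus along the Möbius-band framing. Consequently, $M$ caps off to an embedded surface $P \subset N(T)_1(\beta)$ whose regular neighborhood is the orientable twisted $I$-bundle over $\mathbb{R}P^2$.

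For the final step, I construct an explicit $S^1$-fibration of $N(T)_1(\beta)$ whose base is $\mathbb{R}P^2 \setminus 2D^2$. Outside a tubular neighborhood of $P$, the fibers are short closed loops formed by flow segments from one strand of $\beta$ through $T$ to the other, closed up via the gluing of the surgery solid torus; near $P$, the fibers come from the natural $S^1$-structure on the neighborhood of $P$. Matching these pieces globalizes to a Seifert fibration whose base is a compact non-orientable surface with two boundary circles (inherited from $\partial N(T)$) and Euler characteristic $-1$, hence homeomorphic to $\mathbb{R}P^2 \setminus 2D^2$. Since orientable $S^1$-fibrations over this base are classified by an $H^2$ with twisted coefficients which vanishes, the fibration is unique up to isomorphism, and we obtain the desired homeomorphism. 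The moreover statement is immediate from the construction: the orbit segment from $\beta$ through $T$ back to itself is, after surgery, precisely one of these fibers.

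The main obstacle I foresee is the framing comparison in the second step, which requires carefully unpacking the surgery-annulus conventions from \Cref{subsec:flowhsurcurve} and \Cref{subsec:flowhsuralmostequiv} and verifying how the insertion of a negative crossing from \cite[Example 3.15]{Tsa24} shifts the surgery framing relative to that induced by $M$. The other steps are essentially standard Seifert-fibered-space topology.
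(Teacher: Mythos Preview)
Your Step 2 contains a genuine error, not merely a technical obstacle. You claim that $+1$ surgery in the flow framing coincides with the Möbius-band framing, so that $M$ caps to an embedded closed $\mathbb{R}P^2$ in $N(T)_1(\beta)$. But the target manifold is the orientable $S^1$-bundle over $\mathbb{R}P^2 \setminus 2D^2$ (the paper writes ``$\times\, S^1$'', but since $N(T)$ is oriented the bundle must be the twisted one), which is an irreducible Haken manifold with torus boundary. An embedded closed $\mathbb{R}P^2$ in an orientable $3$-manifold is one-sided, so its tubular neighborhood has $S^2$ boundary; irreducibility then forces an $\mathbb{R}P^3$ summand, which is absent here. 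Hence no such $\mathbb{R}P^2$ exists, and the Möbius band does \emph{not} cap off under $+1$ surgery. Step 3 then also fails: there is no ``natural $S^1$-structure on the neighborhood of $P$'' to splice into a global fibration, because $P$ does not exist.

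The paper's argument also begins with the Möbius band (called $A_1$), but uses it only as a cutting surface. The key second ingredient is a surface $B_1$, built from the \emph{annular} complementary region of the projected braid on $T$ together with a band across the crossing; it is $\partial B_1$ that has slope $1$ in the flow framing, so it is $B_1$, not $A_1$, that caps after surgery. The proof then matches the pair $(A_1,B_1)$ in $N_1 \setminus \nu_1$ with an explicit analogous pair $(A_2,B_2)$ in $N_2 \setminus \nu_2$: cutting each side along $A_i \cup B_i$ yields two copies of $T^2 \times I$ with identical face patterns, giving $N_1 \setminus \nu_1 \cong N_2 \setminus \nu_2$, and the filling slope is read off from $\partial B_1$. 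The ``moreover'' clause follows because $A_1 \cap B_1$ is the orbit segment while $A_2 \cap B_2$ lies on a fiber. To repair your approach you would need a second surface playing the role of $B_1$; the Möbius band alone does not control the surgery.
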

\begin{proof}
Let $N_1=N(T)$. Let $\nu_1$ be a tubular neighborhood of $\beta$. Consider the two surfaces $A_1$ and $B_1$ in $N_1 \backslash \nu_1$ illustrated in \Cref{fig:insertiontopology} top left. The surface $A_1$ is a Mobius band bounded by $\beta$ with core $b$. Alternatively, one can consider projecting $\beta$ to $T$ to obtain a curve that self-intersects once. The surface $A_1$ can be constructed by taking the complementary region of this curve that is homeomorphic to a disc and adding a band across the crossing of $\beta$. Under this perspective, the surface $B_1$ can be obtained by taking the other complementary region and adding a band across the crossing of $\beta$. Notice that, up to isotopy, $A_1$ and $B_1$ intersect in the orbit segment from $\beta$ to itself within $N(T)$.

Meanwhile, let $N_2 = (\mathbb{R}P^2 \backslash \text{two discs}) \times S^1$. Let $a$ be a nonseparating curve on $\mathbb{R}P^2 \backslash \text{two discs}$. Let $\alpha$ be the curve $a \times \{*\}$ in $N_2$. Let $\nu_2$ be a tubular neighborhood of $\alpha$. Consider the two surfaces $A_2$ and $B_2$ in $N_2 \backslash \nu_2$ illustrated in \Cref{fig:insertiontopology} top right. The surface $A_2$ is $(a \times S^1) \backslash \nu_2$. The surface $B_2$ can be obtained by taking a nonseparating curve $a'$ that intersects $a$ once and forming $(a' \times S^1) \backslash \nu_2$. Notice that $A_2$ and $B_2$ intersect in an arc that lies on a fiber $\{*\} \times S^1$. 

\begin{figure}
    \centering
    \fontsize{6pt}{6pt}\selectfont
\begingroup%
  \makeatletter%
  \providecommand\color[2][]{%
    \errmessage{(Inkscape) Color is used for the text in Inkscape, but the package 'color.sty' is not loaded}%
    \renewcommand\color[2][]{}%
  }%
  \providecommand\transparent[1]{%
    \errmessage{(Inkscape) Transparency is used (non-zero) for the text in Inkscape, but the package 'transparent.sty' is not loaded}%
    \renewcommand\transparent[1]{}%
  }%
  \providecommand\rotatebox[2]{#2}%
  \newcommand*\fsize{\dimexpr\f@size pt\relax}%
  \newcommand*\lineheight[1]{\fontsize{\fsize}{#1\fsize}\selectfont}%
  \ifx\svgwidth\undefined%
    \setlength{\unitlength}{231.9994156bp}%
    \ifx\svgscale\undefined%
      \relax%
    \else%
      \setlength{\unitlength}{\unitlength * \real{\svgscale}}%
    \fi%
  \else%
    \setlength{\unitlength}{\svgwidth}%
  \fi%
  \global\let\svgwidth\undefined%
  \global\let\svgscale\undefined%
  \makeatother%
  \begin{picture}(1,0.95791326)%
    \lineheight{1}%
    \setlength\tabcolsep{0pt}%
    \put(0,0){\includegraphics[width=\unitlength,page=1]{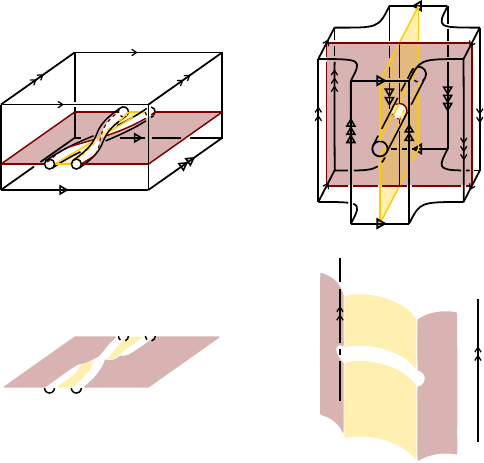}}%
    \put(0.18356141,0.75241372){\color[rgb]{0.50196078,0,0}\makebox(0,0)[lt]{\lineheight{1.25}\smash{\begin{tabular}[t]{l}$B_1$\end{tabular}}}}%
    \put(0.87675985,0.87989464){\color[rgb]{0.50196078,0,0}\makebox(0,0)[lt]{\lineheight{1.25}\smash{\begin{tabular}[t]{l}$B_2$\end{tabular}}}}%
    \put(0.51645276,0.71466076){\color[rgb]{0,0,0}\makebox(0,0)[lt]{\lineheight{1.25}\smash{\begin{tabular}[t]{l}\huge $\cong$\end{tabular}}}}%
    \put(0.51644922,0.21046513){\color[rgb]{0,0,0}\makebox(0,0)[lt]{\lineheight{1.25}\smash{\begin{tabular}[t]{l}\huge $\cong$\end{tabular}}}}%
    \put(0.27359106,0.75240906){\color[rgb]{1,0.8,0}\makebox(0,0)[lt]{\lineheight{1.25}\smash{\begin{tabular}[t]{l}$A_1$\end{tabular}}}}%
    \put(0.80448217,0.90868423){\color[rgb]{1,0.8,0}\makebox(0,0)[lt]{\lineheight{1.25}\smash{\begin{tabular}[t]{l}$A_2$\end{tabular}}}}%
    \put(0,0){\includegraphics[width=\unitlength,page=2]{insertiontopology.pdf}}%
  \end{picture}%
\endgroup%

    \caption{Seeing that $N_1 \backslash \nu_1 \cong N_2 \backslash \nu_2$ by considering the surfaces $A_1,B_1,A_2,B_2$.}
    \label{fig:insertiontopology}
\end{figure}

We claim that $N_1 \backslash \nu_1$ is homeomorphic to $N_2 \backslash \nu_2$ by a map that sends $A_1$ to $B_1$ and $A_2$ to $B_2$. One way to see this is to analyze the topology of $(N_i \backslash \nu_i) \cut (A_i \cup B_i)$. One checks that both $(N_1 \backslash \nu_1) \cut (A_1 \cup B_1)$ and $(N_2 \backslash \nu_2) \cut (A_2 \cup B_2)$ consist of two complementary regions. Each component is homeomorphic to $T^2 \times I$ with one boundary component on $\partial N_i$ and the other component divided into an annulus face that lies on $B_i$, a disc face that lies on $A_i$, and a disc face that lies on $\partial \nu_i$. See \Cref{fig:insertiontopology} bottom left and bottom right. To recover $N_i \backslash \nu_i$ one has to glue two of such components together so that the faces along $A_i$ get identified and the faces along $B_i$ get identified. Such a gluing is unique up to Dehn twists along the $B_i$ faces, and each Dehn twist can be extended into a homeomorphism of $(N_i \backslash \nu_i) \cut (A_i \cup B_i)$, showing the claim.

Recall that the longitude we take on $\partial \nu_1$ is that induced by the flow. In particular, the boundary slope of $B_1$ is $1$. Hence the manifold $(N_1)_1(\beta)$ is homeomorphic to the $3$-manifold obtained by filling in $N_2 \backslash \nu_2$ along the boundary slope of $B_2$, which is just $N_2$. The orbit segment from $\beta$ to itself within $N(T)$ is mapped to a fiber $\{*\} \times S^1$ under our homeomorphism.
\end{proof}

The first statement of \Cref{lemma:insertiontopology} implies that the manifold $M_1(\beta)$ is a graph manifold whose collection of Seifert fibered pieces is that of $M$ with $(\mathbb{R}P^2 \backslash \text{two discs}) \times S^1$ added. The surgery operation does not affect the vertical orbits of $\phi^t$ in each Seifert fibered piece of $M$, so it remains true for those Seifert fibered pieces that their fibers are homotopic to a closed orbit of $\phi^t_1(\beta)$. 

In performing the surgery, we have to cut along a surgery annulus $A \cong S^1 \times I$ and reglue by a map of the form $\sigma(h,k)=(h+\rho(k),k)$ where $\rho$ is a non-increasing function with $\rho=0$ near $0$ and $\rho=-1$ near $1$. 

We claim that there is an orbit segment from a point $x \in A$ to $\sigma^{-1}(x) \in A$. To see this, consider the segment $J_k \times \{k\}$ of $S^1 \times \{k\}$ consisting of points $x$ for which there is an orbit segment from $x$ to $A$ within $N(T)$. The values of $k$ such that there is an orbit segment from $J_k$ to $\sigma^{-1}(J_k)$ within $N(T)$ contains some interval $[k_1,k_2]$. Here the orbit segment for $k=k_1$ ends at the rightmost point of $\sigma^{-1}(J_{k_1})$, while that for $k=k_2$ ends at the leftmost point of $\sigma^{-1}(J_{k_2})$, hence applying the intermediate value theorem, there is some $k$ so that the orbit segment goes from a point $x \in J_k$ to $\sigma^{-1}(x) \in A$. See \Cref{fig:insertioncrossing} for an illustration of the argument.

\begin{figure}
    \centering
    \fontsize{6pt}{6pt}\selectfont
\begingroup%
  \makeatletter%
  \providecommand\color[2][]{%
    \errmessage{(Inkscape) Color is used for the text in Inkscape, but the package 'color.sty' is not loaded}%
    \renewcommand\color[2][]{}%
  }%
  \providecommand\transparent[1]{%
    \errmessage{(Inkscape) Transparency is used (non-zero) for the text in Inkscape, but the package 'transparent.sty' is not loaded}%
    \renewcommand\transparent[1]{}%
  }%
  \providecommand\rotatebox[2]{#2}%
  \newcommand*\fsize{\dimexpr\f@size pt\relax}%
  \newcommand*\lineheight[1]{\fontsize{\fsize}{#1\fsize}\selectfont}%
  \ifx\svgwidth\undefined%
    \setlength{\unitlength}{135.68922989bp}%
    \ifx\svgscale\undefined%
      \relax%
    \else%
      \setlength{\unitlength}{\unitlength * \real{\svgscale}}%
    \fi%
  \else%
    \setlength{\unitlength}{\svgwidth}%
  \fi%
  \global\let\svgwidth\undefined%
  \global\let\svgscale\undefined%
  \makeatother%
  \begin{picture}(1,0.85002351)%
    \lineheight{1}%
    \setlength\tabcolsep{0pt}%
    \put(0,0){\includegraphics[width=\unitlength,page=1]{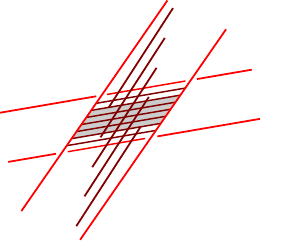}}%
    \put(0.02833103,0.01625032){\color[rgb]{0.50196078,0,0}\makebox(0,0)[lt]{\lineheight{1.25}\smash{\begin{tabular}[t]{l}$\sigma^{-1}(J_{k_1})$\end{tabular}}}}%
    \put(0.61803834,0.78853777){\color[rgb]{0.50196078,0,0}\makebox(0,0)[lt]{\lineheight{1.25}\smash{\begin{tabular}[t]{l}$\sigma^{-1}(J_{k_2})$\end{tabular}}}}%
    \put(0.59111193,0.41452102){\color[rgb]{0.50196078,0,0}\makebox(0,0)[lt]{\lineheight{1.25}\smash{\begin{tabular}[t]{l}$J_{k_1}$\end{tabular}}}}%
    \put(0.65056518,0.49845829){\color[rgb]{0.50196078,0,0}\makebox(0,0)[lt]{\lineheight{1.25}\smash{\begin{tabular}[t]{l}$J_{k_2}$\end{tabular}}}}%
  \end{picture}%
\endgroup%

    \caption{Arguing that there is an orbit segment from a point $x \in A$ to $\sigma^{-1}(x) \in A$.}
    \label{fig:insertioncrossing}
\end{figure}

This orbit segment closes up to a closed orbit of $\phi^t_1(\beta)$. By the second statement of \Cref{lemma:insertiontopology}, this is homotopic to the fiber of $(\mathbb{R}P^2 \backslash \text{two discs}) \times S^1$. Hence $\phi^t_1(\beta)$ is totally periodic.

It remains to compute the flow graph $\Phi_i$ corresponding to the base surface $S_i = \mathbb{R}P^2 \backslash \text{two discs}$. Note that $\chi(S_i)=-1$ so by \Cref{lemma:flowgrapheulerchar}, $\Phi_i$ can only have one quadrilateral in its complement. This leaves only one possibility, which is that illustrated as (1QP) --- $1$ Quad Projective plane in \Cref{fig:simplepieces}.

\begin{figure}
    \centering
\begingroup%
  \makeatletter%
  \providecommand\color[2][]{%
    \errmessage{(Inkscape) Color is used for the text in Inkscape, but the package 'color.sty' is not loaded}%
    \renewcommand\color[2][]{}%
  }%
  \providecommand\transparent[1]{%
    \errmessage{(Inkscape) Transparency is used (non-zero) for the text in Inkscape, but the package 'transparent.sty' is not loaded}%
    \renewcommand\transparent[1]{}%
  }%
  \providecommand\rotatebox[2]{#2}%
  \newcommand*\fsize{\dimexpr\f@size pt\relax}%
  \newcommand*\lineheight[1]{\fontsize{\fsize}{#1\fsize}\selectfont}%
  \ifx\svgwidth\undefined%
    \setlength{\unitlength}{137.92812678bp}%
    \ifx\svgscale\undefined%
      \relax%
    \else%
      \setlength{\unitlength}{\unitlength * \real{\svgscale}}%
    \fi%
  \else%
    \setlength{\unitlength}{\svgwidth}%
  \fi%
  \global\let\svgwidth\undefined%
  \global\let\svgscale\undefined%
  \makeatother%
  \begin{picture}(1,0.79873492)%
    \lineheight{1}%
    \setlength\tabcolsep{0pt}%
    \put(0,0){\includegraphics[width=\unitlength,page=1]{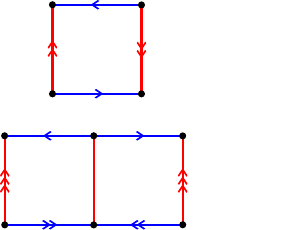}}%
    \put(0.6019287,0.61062393){\color[rgb]{0,0,0}\makebox(0,0)[lt]{\lineheight{1.25}\smash{\begin{tabular}[t]{l}(1QP)\end{tabular}}}}%
    \put(0.74540946,0.15460284){\color[rgb]{0,0,0}\makebox(0,0)[lt]{\lineheight{1.25}\smash{\begin{tabular}[t]{l}(2QS)\end{tabular}}}}%
  \end{picture}%
\endgroup%

    \caption{Two simple examples of flow graphs. Here the arrows on the sides of the quadrilaterals indicate identifications. We label the top one as (1QP) for 1 Quad Projective plane, and the bottom one as (2QS) for 2 Quad Sphere.}
    \label{fig:simplepieces}
\end{figure}

The torus $T$ corresponds to a vertex $v$ of the total flow graph $\Phi$. The effect of performing this horizontal Goodman surgery along $\beta$ is that we separate $v$ into an inward and outward joint $v_-$ and $v_+$ respectively, then identify $v_-$ with the outward joint and $v_+$ with the inward joint of a (1QP). We refer to this operation as \textbf{inserting a (1QP) at $v$}.

\Cref{thm:horsuralmostequiv} implies the following proposition.

\begin{prop} \label{prop:insertionmove}
Let $\Phi$ be the total flow graph of a totally periodic Anosov flow with orientable stable and unstable foliations. Let $v$ be a vertex of $\Phi$. Let $\overline{\Phi}$ be obtained by inserting a (1QP) at $v$. Suppose $\Phi$ is strongly connected. Then $\phi^t_\Phi \sim \phi^t_{\overline{\Phi}}$.
\end{prop}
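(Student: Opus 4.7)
The plan is to realize the insertion move as the single horizontal Goodman surgery constructed in the discussion immediately preceding the statement, then invoke \Cref{thm:horsuralmostequiv}. I begin by fixing a representative $\phi^t$ of the almost equivalence class $\phi^t_\Phi$; the vertex $v$ of $\Phi$ corresponds to a scalloped transverse torus $T$ in $M$ by \Cref{lemma:totallyperiodicboundarytorusscallop}. Following the construction outlined above, take the positive horizontal surgery curve $b$ on $T$ from \cite[Example 3.13]{Tsa24} and insert a $2$-strand closed braid with a single negative crossing to obtain the positive horizontal surgery curve $\beta$. Applying \Cref{thm:horsuralmostequiv} with coefficient $n=1$ gives $\phi^t_{\frac{1}{1}}(\beta) \sim \phi^t$.

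Next I identify the surgered flow $\phi^t_{\frac{1}{1}}(\beta)$. By \Cref{lemma:insertiontopology}, $N(T)_1(\beta) \cong (\mathbb{R}P^2 \backslash \text{two discs}) \times S^1$, so the surgered manifold $M_{\frac{1}{1}}(\beta)$ is $M$ with a neighborhood of $T$ replaced by this new Seifert fibered piece. The vertical orbits of $\phi^t$ in the unchanged pieces survive as vertical orbits of the surgered flow, while the intermediate value theorem argument presented around \Cref{fig:insertioncrossing}, together with the second clause of \Cref{lemma:insertiontopology}, supplies a closed orbit in the new piece that is homotopic to its regular fiber. Hence the surgered flow is totally periodic.

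To pin down the flow graph on the new piece, I appeal to \Cref{lemma:flowgrapheulerchar}: since $\chi(\mathbb{R}P^2 \backslash \text{two discs}) = -1$, the flow graph must have exactly one complementary quadrilateral, and the orientation and sign data induced from the surgery leaves (1QP) as the only possibility. Tracing the gluing maps produced by the surgery shows that $v$ is split into $v_-$ and $v_+$ and identified with the outward and inward joints of (1QP) respectively, which is exactly the definition of $\overline{\Phi}$. Strong connectivity of $\overline{\Phi}$ follows from that of $\Phi$ because the new (1QP) edge from $v_-$ to $v_+$ lets any directed path through $v$ in $\Phi$ be rerouted through the new piece; transitivity of $\phi^t_{\frac{1}{1}}(\beta)$ then follows from \Cref{prop:totallyperiodictransitive}. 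Finally, \Cref{lemma:indgluingmap} yields $\phi^t_{\frac{1}{1}}(\beta) \sim \phi^t_{\overline{\Phi}}$, and concatenating almost equivalences completes the argument. The one nontrivial technical input is \Cref{lemma:insertiontopology}, which has already been established; once it is in hand, no further obstacle arises, and the proof reduces to the bookkeeping just outlined.
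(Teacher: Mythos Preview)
Your proposal is correct and follows essentially the same approach as the paper. The paper's own proof is a single line (``\Cref{thm:horsuralmostequiv} implies the following proposition''), because all the work---constructing $\beta$, invoking \Cref{lemma:insertiontopology}, the intermediate value argument, and the Euler characteristic count forcing the new flow graph to be (1QP)---is carried out in the discussion of \Cref{subsec:insertionmove} immediately preceding the statement; you have simply recapitulated that discussion inside the proof and added the explicit bookkeeping with \Cref{lemma:indgluingmap} at the end.
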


Like \Cref{prop:cuttingmove}, \Cref{prop:insertionmove} contains the fact that if $\Phi$ is strongly connected then $\overline{\Phi}$ is strongly connected. In fact, it is easy to check directly that in this context, $\Phi$ is strongly connected if and only if $\overline{\Phi}$ is strongly connected.

\section{Proof of \Cref{thm:totallyperiodicgenusone}} \label{sec:totallyperiodicgenusoneproof}

\subsection{Bridge move} \label{subsec:bridgemove}

Aside from (1QP), another flow graph which will play a big role in the proof of \Cref{thm:totallyperiodicgenusone} is the flow graph which we name (2QS) --- 2 Quad Sphere in \Cref{fig:simplepieces} bottom.

In this subsection, we explain how to perform the bridge move. This move inserts a (2QS) between any two specified vertices of the total flow graph, akin to building a bridge. This move will be used to arrange for the conditions in \Cref{prop:cyclicgluingmove} and \Cref{prop:transportmove} concerning existence of certain directed cycles.

Let $\phi^t$ be a totally periodic Anosov flow with orientable stable and unstable foliations. Let $\Phi$ be the total flow graph of $\phi^t$. Suppose $\Phi$ is strongly connected. Let $v$ and $w$ be two distinct vertices of $\Phi$. We write $v^-$ for the inward joint that gets identified to $v$, and $v^+$ for the outward joint that gets identified to $v$, and similarly for $w^\pm$. 

Meanwhile, let $s_1$ and $s_2$ be the two positive edges of a (2QS). The goal is to transform $\Phi$ via the moves of \Cref{sec:basicmoves} so that the (2QS) is added to the collection of flow graphs and so that
\begin{itemize}
    \item $s^-_1$ is identified with $v^+$,
    \item $s^+_1$ is identified with $v^-$,
    \item $s^-_2$ is identified with $w^+$,
    \item $s^+_2$ is identified with $w^-$, and
    \item the rest of the joints are identified as in $\Phi$.
\end{itemize}
Let us denote this target total flow graph as $\overline{\Phi}$. Observe that $\overline{\Phi}$ is automatically strongly connected.

Since $\Phi$ is strongly connected, there exists an embedded directed path from $v$ to $w$. Suppose the vertices along the path are $v=v_1,v_2,...,v_n=w$. Let $e_i$ be the edge between $v_i$ and $v_{i+1}$. Suppose $e_i$ lies in the flow graph $\Phi_i$. 

We first arrange it so that we can assume the edges $e_i$ are all positive: For every $i$ where $e_i$ is negative, we perform the insertion move at $v_{i+1}$ to add a (1QP), then glue the negative edge of the (1QP) to $e_i$. We illustrate these operations in \Cref{fig:swapcolor}, where we use a $\copyright$ symbol to denote a crosscap (i.e. one cuts out each $\copyright$ and glues in a Mobius band). The effect on $\Phi$ can be described as splitting $e_i$ into two copies and adding a positive edge with the same endpoints as $e_i$. In particular, $\Phi$ remains strongly connected after this procedure. Hence we can apply \Cref{prop:gluingmove} to conclude that we remain in the same almost equivalence class.

\begin{figure}
    \centering
    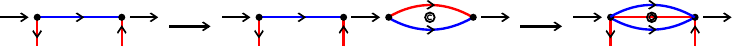
    \caption{Performing an insertion move and a gluing move to arrange it so that we can assume the edges $e_i$ are all positive. Here we use a $\otimes$ symbol to denote a crosscap.}
    \label{fig:swapcolor}
\end{figure}

With this arranged, we perform the insertion move at $v$ twice to get two (1QP)s. See \Cref{fig:bridgemove} first arrow. We then observe the following cutting moves: We start with the flow graph shown in \Cref{fig:simpleequalities} left. If we cut along the cycle formed by the two negative edges, we obtain two (1QP) with a pair of joints identified. See \Cref{fig:simpleequalities} top right. If we cut along the cycle formed by the two positive edges, we obtain (2QS) with the endpoints of a negative edge identified. See \Cref{fig:simpleequalities} bottom right.

\begin{figure}
    \centering
    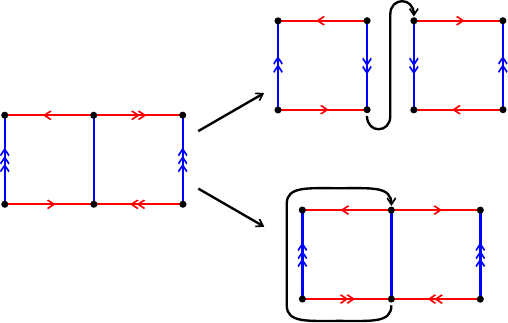
    \caption{The flow graph on the left can be cut along two choices of cycles to produce two (1QP)s or one (2QS).}
    \label{fig:simpleequalities}
\end{figure}

Hence applying \Cref{prop:cuttingmove} twice, we conclude that we can replace $\Phi$ with the graph obtained by separating $v$ and inserting a (2QS), such that
\begin{itemize}
    \item $s^-_1$ is identified with $v^+$,
    \item $s^+_2$ is identified with $v^-$, and
    \item $s^+_1$ is identified with $s^-_2$,
\end{itemize}
and remain in the same almost equivalence class. See \Cref{fig:bridgemove} second arrow.
Here we use the fact that the graph obtained by separating $v$ and inserting the flow graph in \Cref{fig:simpleequalities} left is strongly connected.

The rest of the operation involves sliding the edge $s_2$ towards $w$: We glue $s_2$ to $e_1$. See \Cref{fig:bridgemove} third arrow. The total flow graph at this stage can be obtained from the initial one by replacing $v$ by a positive edge, splitting $e_1$ into two, and adding two negative edges. In particular the total flow graph is strongly connected after gluing $s_2$ to $e_1$, hence we can apply \Cref{prop:gluingmove} to see that we stay in the same almost equivalence class. We then cut along the cycle formed by the gluing move but cooriented in the opposite way. See \Cref{fig:bridgemove} fourth arrow. This slides $s_2$ one step closer to $w$, and we repeat this procedure until $s^-_2$ is identified with $w^+$ and $s^+_2$ is identified with $w^-$. See \Cref{fig:bridgemove} last frame.

The individual flow graphs at this point are those of the initial total flow graph, except with a (2QS) added and certain components glued with (1QP)s. We undo the gluing with (1QP)s by cutting along the cycles produced by the gluings. The total flow graph will then be the target $\overline{\Phi}$ with certain (1QP)s inserted. By \Cref{prop:insertionmove} and the observation that $\overline{\Phi}$ is strongly connected, we can remove those extra (1QP)s and remain in the same almost equivalence class.

\begin{figure}
    \centering
    \fontsize{6pt}{6pt}\selectfont
    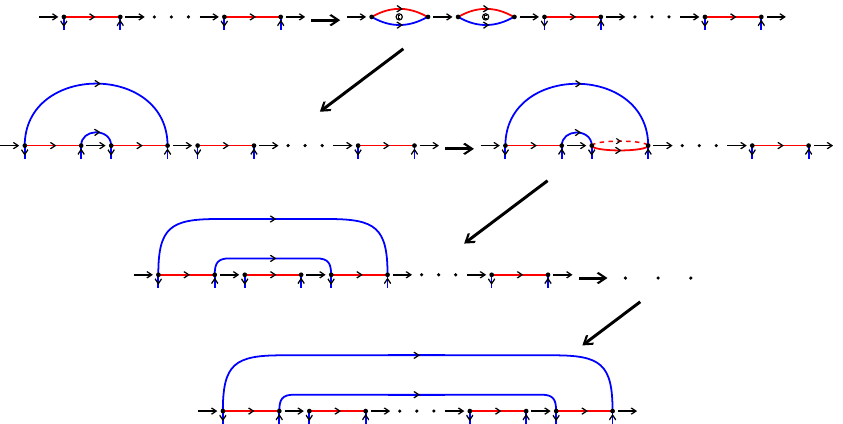
    \caption{Performing the bridge move at vertices $v=v_1$ and $w=v_n$.}
    \label{fig:bridgemove}
\end{figure}

We refer to the operation of obtaining $\overline{\Phi}$ from $\Phi$ as performing a \textbf{bridge move on vertices $v$ and $w$}. The symmetric version of this construction where we insert the (2QS) along its negative edges also holds. For our applications, the context will make it clear which version of the construction we wish to apply.

We record this discussion as the following proposition.

\begin{prop} \label{prop:bridgemove}
Let $\phi^t$ be a totally periodic Anosov flow with orientable stable and unstable foliations. Let $\Phi$ be the total flow graph of $\phi^t$. Let $v$ and $w$ be two vertices of $\Phi$. Suppose $\Phi$ is strongly connected. Let $\overline{\Phi}$ be obtained from $\Phi$ by performing a bridge move on $v$ and $w$. Then $\phi^t_{\Phi} \sim \phi^t_{\overline{\Phi}}$.
\end{prop}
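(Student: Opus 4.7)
The plan is to verify that the multi-step construction described in the paragraphs preceding the statement actually assembles into a chain of basic moves that all stay in the same almost equivalence class. Since the construction is already laid out in the discussion, the proof reduces to explicitly checking, at each step, that the hypothesis of the appropriate proposition (\Cref{prop:cuttingmove}, \Cref{prop:gluingmove}, \Cref{prop:cyclicgluingmove}, or \Cref{prop:insertionmove}) is satisfied. The geometric content is already done; the work is the strong-connectedness bookkeeping.

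First I would use strong connectedness of $\Phi$ to fix an embedded directed path $v = v_1, v_2, \ldots, v_n = w$ with edges $e_1, \ldots, e_{n-1}$. For each negative $e_i$, I insert a (1QP) at $v_{i+1}$ via \Cref{prop:insertionmove}, then glue the negative edge of the inserted (1QP) to $e_i$. Inspection of the resulting graph shows that $e_i$ is split in two and an additional positive edge with the same endpoints is created, so the total flow graph remains strongly connected; this licenses the application of \Cref{prop:gluingmove}. After this preliminary adjustment we may assume all $e_i$ are positive, and we have a new positive directed path from $v$ to $w$ which I will still call $(e_1, \ldots, e_{n-1})$.

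Next I would perform two insertion moves at $v$ (both permitted by \Cref{prop:insertionmove}) to stack two (1QP)s, and then apply the two cutting moves encoded by \Cref{fig:simpleequalities}: the first cut replaces the stacked configuration by a (2QS) with its two negative edges identified to form a loop, and the second cut (in the sense of \Cref{prop:cuttingmove}, applied in reverse) brings the (2QS) into the position in which $s_1^-$ is identified with $v^+$ and $s_1^+$ with $v^-$, while $s_2$ has both endpoints at a single vertex adjacent to $v$ on the path. I would then transport $s_2$ along the path $e_1, e_2, \ldots, e_{n-1}$ toward $w$ by iterating the two-step procedure of \Cref{prop:transportmove}: glue $s_2$ to the next edge $e_i$ and then cut along the created cycle with the opposite coorientation. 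The crucial cycle needed to validate each gluing step exists because $s_2$ and $e_i$ have been arranged to share both endpoints at the preceding iteration, so the hypothesis of \Cref{prop:cyclicgluingmove} applies.

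After $n-1$ slides, $s_2$ is identified with $w$ in the required way, so the total flow graph is precisely $\overline{\Phi}$ with a number of surplus (1QP)s attached at the vertices $v_2, \ldots, v_{n-1}$ (inherited from the preliminary color-adjustment step). Since $\overline{\Phi}$ itself is strongly connected by construction, inserting or removing (1QP)s at any vertex of the enlarged graph preserves strong connectedness, so I can apply \Cref{prop:insertionmove} in reverse to strip off each surplus (1QP) and arrive exactly at $\overline{\Phi}$. The main obstacle I expect is not any one step but the cumulative case-analysis: \Cref{prop:gluingmove} requires strong connectedness of the \emph{target} graph, whereas \Cref{prop:cuttingmove} and \Cref{prop:insertionmove} require it of the \emph{source}, and \Cref{prop:cyclicgluingmove} requires an explicit cycle. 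Keeping track of which hypothesis applies at each of the roughly $O(n)$ intermediate graphs, and verifying that the relevant cycle or path through the newly inserted (1QP)s and (2QS) is always present, is the only genuinely technical part of the argument.
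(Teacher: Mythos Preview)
Your overall architecture matches the paper's construction exactly: fix a directed path, flip negative edges to positive by inserting and gluing (1QP)s, insert two (1QP)s at $v$ and convert them to a (2QS) via \Cref{fig:simpleequalities}, slide $s_2$ along the path by repeated glue-then-cut, and finally undo the auxiliary (1QP)s. The only substantive issue is in your justification of the sliding step.

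You claim that at each stage $s_2$ and the next path edge $e_i$ ``share both endpoints'', so that \Cref{prop:cyclicgluingmove} applies. This is not correct: after creating the (2QS) (and after each subsequent slide) one has only $s_2^+ = e_i^-$; the other endpoints $s_2^-$ and $e_i^+$ are distinct vertices. Hence $(s_2,e_i)$ is merely an embedded edge path, not a length-two cycle, and to invoke \Cref{prop:cyclicgluingmove} you would need an embedded directed cycle containing both edges, which is not guaranteed (a return path from $e_i^+$ to $s_2^-$ exists by strong connectedness, but it may revisit vertices on your chosen path). The paper avoids this by using \Cref{prop:gluingmove} directly: it observes that the graph obtained after gluing $s_2$ to $e_i$ can be described as the original $\Phi$ with $v$ replaced by a positive edge, $e_i$ doubled, and two negative edges added, which is visibly still strongly connected. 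That is the verification you should substitute at this step. A smaller point: in the clean-up, the (1QP)s from the color-swap step are \emph{glued} to flow graphs, so you must first cut along the length-two cycles created by those gluings before you can remove the (1QP)s via the reverse of \Cref{prop:insertionmove}.
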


Using the bridge move, we can extend the gluing move to glue any pair of edges whose forward and backward endpoints do not coincide, at the cost of adding (2QS)s. 

To set this up more precisely, suppose $e_1$ and $e_2$ are positive edges of $\Phi$ so that $e^+_1 \neq e^+_2$ and $e^-_1 \neq e^-_2$. We let $\Phi_i$ be the individual flow graph that contains $e_i$. Here we might have $\Phi_1=\Phi_2$. 
We apply the bridge move to the vertices $(e^+_1, e^-_2)$ and to the vertices $(e^-_1,e^+_2)$ so that $e_1$ and $e_2$ are part of an embedded directed cycle formed by $e_1,e_2$, and one positive side from each inserted (2QS). We can now apply \Cref{prop:transportmove} to slide $e_1$ towards $e_2$ so that these edges are adjacent, then apply \Cref{prop:gluingmove} to glue $e_1$ to $e_2$. As remarked in \Cref{subsec:gluingmove}, there are two choices for this gluing.

We illustrate one example in \Cref{fig:distantgluingmove}.

\begin{figure}
    \centering
    \fontsize{6pt}{6pt}\selectfont
    \resizebox{!}{8cm}{
\begingroup%
  \makeatletter%
  \providecommand\color[2][]{%
    \errmessage{(Inkscape) Color is used for the text in Inkscape, but the package 'color.sty' is not loaded}%
    \renewcommand\color[2][]{}%
  }%
  \providecommand\transparent[1]{%
    \errmessage{(Inkscape) Transparency is used (non-zero) for the text in Inkscape, but the package 'transparent.sty' is not loaded}%
    \renewcommand\transparent[1]{}%
  }%
  \providecommand\rotatebox[2]{#2}%
  \newcommand*\fsize{\dimexpr\f@size pt\relax}%
  \newcommand*\lineheight[1]{\fontsize{\fsize}{#1\fsize}\selectfont}%
  \ifx\svgwidth\undefined%
    \setlength{\unitlength}{232.96162763bp}%
    \ifx\svgscale\undefined%
      \relax%
    \else%
      \setlength{\unitlength}{\unitlength * \real{\svgscale}}%
    \fi%
  \else%
    \setlength{\unitlength}{\svgwidth}%
  \fi%
  \global\let\svgwidth\undefined%
  \global\let\svgscale\undefined%
  \makeatother%
  \begin{picture}(1,1.07358573)%
    \lineheight{1}%
    \setlength\tabcolsep{0pt}%
    \put(0,0){\includegraphics[width=\unitlength,page=1]{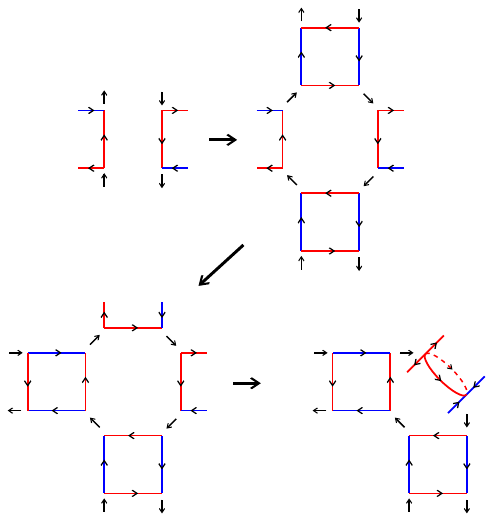}}%
    \put(0.16294736,0.78017014){\color[rgb]{0,0,0}\makebox(0,0)[lt]{\lineheight{1.25}\smash{\begin{tabular}[t]{l}$e_1$\end{tabular}}}}%
    \put(0.3545386,0.78016842){\color[rgb]{0,0,0}\makebox(0,0)[lt]{\lineheight{1.25}\smash{\begin{tabular}[t]{l}$e_2$\end{tabular}}}}%
    \put(0,0){\includegraphics[width=\unitlength,page=2]{distantgluingmove.pdf}}%
  \end{picture}%
\endgroup%
}
    \caption{Modifying the total flow graph by gluing $e_1$ to $e_2$.}
    \label{fig:distantgluingmove}
\end{figure}

We refer to this operation as \textbf{gluing $e_1$ to $e_2$}. The effect on the individual flow graphs is as follows: 
\begin{itemize}
    \item Two (2QS)s are added.
    \item All the flow graphs in $\Phi$ except $\Phi_1$ and $\Phi_2$ are unchanged.
    \item $\Phi_1$ is slit along $e_1$, $\Phi_2$ is slit along $e_2$, and the two boundary cycles are glued.
\end{itemize}  

\Cref{prop:bridgemove} and \Cref{prop:gluingmove} give the following proposition.

\begin{prop} \label{prop:distantgluingmove}
Let $\Phi$ be the total flow graph of a transitive totally periodic Anosov flow with orientable stable and unstable foliations. Let $e_1$ and $e_2$ be two distinct edges of $\Phi$ of the same sign so that $e^+_1 \neq e^+_2$ and $e^-_1 \neq e^-_2$. Let $\overline{\Phi}$ be obtained by gluing $e_1$ to $e_2$. Then $\phi^t_\Phi \sim \phi^t_{\overline{\Phi}}$.
\end{prop}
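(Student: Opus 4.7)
The plan is to execute the construction described in the paragraph preceding the proposition, namely two bridge moves followed by a transport move and a cyclic gluing move, and to chain together the almost equivalences provided by \Cref{prop:bridgemove}, \Cref{prop:transportmove}, and \Cref{prop:cyclicgluingmove}. Each step will preserve both strong connectedness of the total flow graph (so the next step applies) and the almost equivalence class of the associated Anosov flow.

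First I will apply \Cref{prop:bridgemove} to the pair of vertices $(e^+_1, e^-_2)$, and then again to the pair $(e^-_1, e^+_2)$, producing an intermediate total flow graph $\Phi'$ in which two (2QS) pieces are inserted and the four joints $e^\pm_1, e^\pm_2$ are split. Tracing through the joint identifications prescribed by the bridge move ($s^-_1 = v^+$, $s^+_1 = v^-$, $s^-_2 = w^+$, $s^+_2 = w^-$, applied with the two choices of $(v,w)$), one checks that $e_1$, $e_2$, together with one positive edge from each of the two inserted (2QS) pieces, close up into an embedded directed cycle of positive edges in $\Phi'$. By \Cref{prop:bridgemove}, $\phi^t_\Phi \sim \phi^t_{\Phi'}$ and $\Phi'$ remains strongly connected.

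Second, I will apply \Cref{prop:transportmove} to $\Phi'$ with this embedded positive cycle and the pair $(e_1, e_2)$. This produces a total flow graph $\Phi''$ whose collection of individual flow graphs matches that of $\Phi'$, but with $e^+_1 = e^-_2$, and such that $(e_1, e_2)$ remains a segment of an embedded directed cycle in $\Phi''$. The almost equivalence $\phi^t_{\Phi'} \sim \phi^t_{\Phi''}$ is part of the statement of \Cref{prop:transportmove}.

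Finally, I will apply \Cref{prop:cyclicgluingmove} to $\Phi''$ with the pair $(e_1, e_2)$: the hypotheses are met because $e^+_1 = e^-_2$ and the two edges sit on an embedded directed cycle of positive edges. Comparing the resulting total flow graph against the description of $\overline{\Phi}$ given in the paragraph preceding the proposition (two (2QS) pieces added, all other individual flow graphs unchanged, and $\Phi_1, \Phi_2$ slit along $e_1, e_2$ with boundary cycles glued), one verifies that the output is precisely $\overline{\Phi}$, and \Cref{prop:cyclicgluingmove} yields $\phi^t_{\Phi''} \sim \phi^t_{\overline{\Phi}}$. Chaining the three almost equivalences gives the result.

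The only nontrivial verification in the whole argument is the claim in Step 1 that the two bridge moves produce an embedded directed positive cycle through $e_1$ and $e_2$. This reduces to a combinatorial check from the identifications defining the bridge move together with the structure of the two positive edges of a (2QS); once the cycle is in hand, the rest of the proof is a bookkeeping exercise concatenating the already-established basic moves.
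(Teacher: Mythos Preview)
Your proof is correct and follows essentially the same route as the paper: two bridge moves to create an embedded directed cycle of same-sign edges through $e_1$ and $e_2$, then \Cref{prop:transportmove} to make them adjacent, then the gluing move. The only cosmetic difference is that the paper cites \Cref{prop:gluingmove} at the final step while you cite \Cref{prop:cyclicgluingmove}; yours is in fact the more precise reference, since the embedded cycle guaranteed by \Cref{prop:transportmove} is exactly the hypothesis that verifies strong connectedness of the glued graph.
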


\subsection{Reducing to only (2QS)s}

We begin the proof of \Cref{thm:totallyperiodicgenusone}. Let $\phi^t$ be a transitive totally periodic Anosov flow with orientable stable and unstable foliations. Let $\Phi$ be the total flow graph of $\phi^t$. The first part of the proof involves performing operations on $\Phi$ until its individual flow graphs consist of only (2QS)s. We do this by inducting on 
$$(N := \sum (\text{\# quads in $\overline{S}_i$}), G:= \sum (\text{genus}(\overline{S}_i)-1))$$
where the sums are taken over all flow graphs $\Phi_i$ that are not (2QS)s, with the lexicographic order.

\begin{defn} \label{defn:quadtypes}
Let $Q$ be a quadrilateral in the complement of an individual flow graph $\Phi_i$ in $S_i$. We say that 
\begin{itemize}
    \item $Q$ is \textbf{free} if the two inward joints that lie on $\partial Q$ are distinct and the two outward joints that lie on $\partial Q$ are distinct.
    \item $Q$ is \textbf{outwardly rigid} if the two inward joints that lie on $\partial Q$ are distinct and the two outward joints that lie on $\partial Q$ are the same.
    \item $Q$ is \textbf{inwardly rigid} if the two inward joints that lie on $\partial Q$ are the same and the two outward joints that lie on $\partial Q$ are distinct.
    \item $Q$ is \textbf{totally rigid} if the two inward joints that lie on $\partial Q$ are the same and the two outward joints that lie on $\partial Q$ are the same.
\end{itemize}
\end{defn}

\begin{prop} \label{prop:flowgraphtype}
Let $\Phi_i$ be an individual flow graph in $\Phi$. Then one of the following statements is true:
\begin{enumerate}
    \item $\Phi_i$ is a (1QP).
    \item $\Phi_i$ is a (2QS).
    \item There is an embedded and coorientable cycle of $\Phi_i$ that consists of edges of the same sign.
    \item There is a free quadrilateral in the complement of $\Phi_i$ in $\overline{S}_i$.
    \item There is an outwardly rigid quadrilateral $Q_+$ and an inwardly rigid quadrilateral $Q_-$ in the complement of $\Phi_i$ in $\overline{S}_i$ so that $Q_+$ and $Q_-$ share a side.
    \item All quadrilaterals are outwardly or totally rigid, there is at least one outwardly rigid quadrilateral, and there is only one outward joint but $\geq 2$ inward joints in $\overline{S}_i$. 
    \item All quadrilaterals are inwardly or totally rigid, there is at least one inwardly rigid quadrilateral, and there is only one inward joint but $\geq 2$ outward joints in $\overline{S}_i$.
\end{enumerate}
In case (6) we say that $\Phi_i$ is \textbf{outwardly rigid}. In case (7) we say that $\Phi_i$ is \textbf{inwardly rigid}.
\end{prop}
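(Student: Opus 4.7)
The plan is to prove the proposition by contradiction: suppose that none of (1)--(7) holds for $\Phi_i$. Since (4) fails, every complementary quadrilateral is outwardly rigid (OR), inwardly rigid (IR), or totally rigid (TR). The key observation driving the analysis is that in an OR or TR quadrilateral $Q$, the two outward corners are identified to a single outward joint $o(Q)$, so all four sides of $Q$ have their outward endpoint at $o(Q)$; symmetrically, all four sides of an IR or TR quadrilateral share a single inward joint $i(Q)$. Since (5) fails, no OR quadrilateral is adjacent to an IR quadrilateral, so two adjacent OR/TR quadrilaterals must share their outward joint, and two adjacent IR/TR quadrilaterals must share their inward joint.

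We then split into cases based on which types of quadrilaterals are present. In \textbf{Case A} (no IR quadrilateral), connectedness of $\overline{S}_i$ via quadrilateral adjacency together with the key observation forces all quadrilaterals to share a single outward joint $o_0$; if any OR quadrilateral is present, its two distinct inward corners give $\geq 2$ inward joints, yielding case (6). Otherwise every quadrilateral is TR, the joints reduce to a single pair $(i_0, o_0)$ with $N$ quadrilaterals and $2N$ edges between them, and $\chi(\overline{S}_i) = 2 - N$; then $N = 1$ gives $\Phi_i = $ (1QP), while for $N \geq 2$ we produce an embedded coorientable cycle of one sign by choosing two of the $N$ positive edges between $i_0$ and $o_0$. \textbf{Case B} is symmetric to Case A under the involution that swaps OR with IR and inward with outward. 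In \textbf{Case C} (both OR and IR exist), the absence of OR-IR adjacency and the connectedness of the quadrilateral adjacency graph force the existence of a TR ``bridge'' quadrilateral that inherits the outward joint $o_0$ from its OR-type neighbours and the inward joint $i_0$ from its IR-type neighbours; the pair of opposite positive (or negative) sides of this bridge forms a length-2 cycle between $i_0$ and $o_0$, which we verify is coorientable, contradicting (3).

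The main obstacle in the plan is the coorientability step, which arises in the all-TR subcase of Case A for $N \geq 2$ and in the bridging-TR construction of Case C. A candidate length-2 cycle formed by two opposite positive sides of a TR quadrilateral $Q$ bounds a bigon on the interior side of $Q$, so the question of whether the cycle is two-sided in $\overline{S}_i$ reduces to whether the identifications of the remaining pair of sides of $Q$ preserve orientation in a neighborhood of the cycle. The strategy is a parity count: if every candidate length-2 positive cycle were one-sided, the gluings of the $N$ quadrilaterals around $i_0$ and $o_0$ would be pinned down to an arrangement that forces $\overline{S}_i$ to coincide with a surface already excluded by hypothesis --- either $\mathbb{R}P^2$ underlying (1QP), or (via an Euler characteristic comparison for $N = 2$) the $S^2$ underlying (2QS). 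An analogous local analysis around the bridging TR in Case C completes the contradiction.
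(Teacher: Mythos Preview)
Your overall strategy---reducing to the OR/IR/TR trichotomy and exploiting the observation that adjacent non-free quadrilaterals of compatible type share a joint---matches the paper's, and your Case~A with at least one OR quadrilateral correctly yields (6). The parity idea (via $w_1$) is also fine whenever there are $\geq 3$ edges of one sign between a fixed pair of joints, since among any three such edges $e_a,e_b,e_c$ the relation $\langle w_1,c_{ab}\rangle+\langle w_1,c_{ac}\rangle=\langle w_1,c_{bc}\rangle$ forces one of the three bigons to be two-sided. But there are two genuine gaps.

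\textbf{The all-TR subcase with $N=2$.} Here there are exactly two positive and two negative edges between the unique pair of joints, so the parity argument is vacuous. Your Euler-characteristic claim is simply wrong: $\chi(\overline{S}_i)=2-N=0$, so the surface is a torus or Klein bottle, not the $S^2$ underlying (2QS); the exclusion of (2QS) gives you nothing here. This case must be handled by a direct enumeration of the few flow graphs with two TR quadrilaterals and two joints---this is what the paper does, producing the single remaining possibility and checking that its positive $2$-cycle is in fact coorientable.

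\textbf{Case C.} ``An analogous local analysis completes the contradiction'' is not a proof. Two things can go wrong with the positive (or negative) $2$-cycle of your bridging TR quad $Q$: its two opposite sides may be the \emph{same} edge of $\Phi_i$ (a self-gluing), and even when distinct the resulting length-$2$ cycle may be one-sided. The paper avoids both issues by first reducing to a \emph{unique positive strip} and \emph{unique negative strip} (if either fails, a strip boundary already gives (3)). With that reduction in place the bridging TR sits in an adjacent chain $Q_1,\dots,Q_n$ inside the positive strip with $Q_1$ OR, $Q_n$ IR, and all intermediates TR; the uniqueness of the negative strip then forces the two positive sides of each intermediate TR to be distinct, and the cases $n=2$, $n=3$, $n\ge 4$ are dispatched separately (the last via the $\geq 3$-edges parity argument, the $n=3$ case by a nontrivial local gluing analysis). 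Without the strip reduction your Case~C does not go through as written.
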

\begin{proof}
Let $Q$ be a quadrilateral in the complement of $\Phi_i$. We can construct a map of an annulus into $\overline{S}_i$ that is injective in the interior by starting with $Q$ and successively including quadrilaterals that are adjacent across positive edges. We call the image of such a map a \textbf{negative strip} on $\overline{S}_i$. If there is more than one negative strip on $\overline{S}_i$, then a boundary component of a negative strip satisfies (3). Symmetrically, one can define a notion of \textbf{positive strips}, and see that if there is more than one positive strip, then (3) is true. Hence we can assume that there is exactly one positive and one negative strip on $\overline{S}_i$.

We can also assume that none of the quadrilaterals in $\overline{S}_i$ are free, for otherwise (4) is true. 

Consider the unique positive strip on $\overline{S}_i$. If there are both outwardly rigid and inwardly rigid quadrilaterals, then there must be a sequence of adjacent quadrilaterals $Q_1,...,Q_n$ so that $Q_1$ is outwardly rigid, $Q_2,...,Q_{n-1}$ are totally rigid, and $Q_n$ is inwardly rigid. 
If $n=2$ then (5) is true.
If $n \geq 4$, then there are $n-1 \geq 3$ negative sides with the same forward and backward endpoints, namely the negative sides of the totally rigid quadrilaterals $Q_2,...,Q_n$. In that case, there must exist a pair of such edges so that the cycle formed by them is coorientable, i.e. (3) is true.

It remains to analyze the case when $n=3$. Notice that the two positive sides of the totally rigid quadrilateral $Q_2$ are distinct, since we have assumed that there is a unique negative strip. One positive side of $Q_1$ and one positive side of $Q_3$ has the same endpoints as these positive sides. We can assume that this positive side of $Q_1$ coincides with a positive side of $Q_2$ and the positive side of $Q_3$ coincides with the other positive side of $Q_2$, for otherwise there are $\geq 3$ positive sides with the same endpoints, which implies (3) is true as above. There are two cases here as indicated in \Cref{fig:posrigidnegcontradiction}. But either cases cannot occur, for otherwise one endpoint of these positive sides only meet $2$ quadrilaterals each, contradicting the fact that $Q_1,Q_2,Q_3$ are distinct.

\begin{figure}
    \centering
    \fontsize{10pt}{10pt}\selectfont
\begingroup%
  \makeatletter%
  \providecommand\color[2][]{%
    \errmessage{(Inkscape) Color is used for the text in Inkscape, but the package 'color.sty' is not loaded}%
    \renewcommand\color[2][]{}%
  }%
  \providecommand\transparent[1]{%
    \errmessage{(Inkscape) Transparency is used (non-zero) for the text in Inkscape, but the package 'transparent.sty' is not loaded}%
    \renewcommand\transparent[1]{}%
  }%
  \providecommand\rotatebox[2]{#2}%
  \newcommand*\fsize{\dimexpr\f@size pt\relax}%
  \newcommand*\lineheight[1]{\fontsize{\fsize}{#1\fsize}\selectfont}%
  \ifx\svgwidth\undefined%
    \setlength{\unitlength}{237.64878881bp}%
    \ifx\svgscale\undefined%
      \relax%
    \else%
      \setlength{\unitlength}{\unitlength * \real{\svgscale}}%
    \fi%
  \else%
    \setlength{\unitlength}{\svgwidth}%
  \fi%
  \global\let\svgwidth\undefined%
  \global\let\svgscale\undefined%
  \makeatother%
  \begin{picture}(1,0.43560359)%
    \lineheight{1}%
    \setlength\tabcolsep{0pt}%
    \put(0,0){\includegraphics[width=\unitlength,page=1]{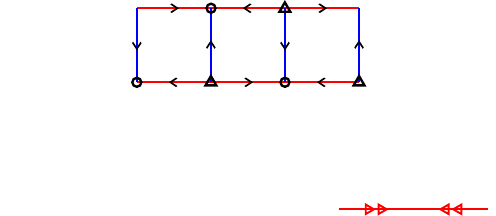}}%
    \put(0.32504707,0.32960098){\color[rgb]{0,0,0}\makebox(0,0)[lt]{\lineheight{1.25}\smash{\begin{tabular}[t]{l}$Q_1$\end{tabular}}}}%
    \put(0.47467144,0.32960098){\color[rgb]{0,0,0}\makebox(0,0)[lt]{\lineheight{1.25}\smash{\begin{tabular}[t]{l}$Q_2$\end{tabular}}}}%
    \put(0.62429582,0.32960098){\color[rgb]{0,0,0}\makebox(0,0)[lt]{\lineheight{1.25}\smash{\begin{tabular}[t]{l}$Q_3$\end{tabular}}}}%
    \put(0,0){\includegraphics[width=\unitlength,page=2]{posrigidnegcontradiction.pdf}}%
  \end{picture}%
\endgroup%

    \caption{Assuming that there is a unique positive strip and a unique negative strip, if we have adjacent quadrilaterals $Q_1,Q_2,Q_3$ where $Q_1$ is outwardly rigid, $Q_2$ is totally rigid, $Q_3$ is inwardly rigid, then we will have $\geq 3$ edges with the same endpoints, giving (3), for otherwise we will be in the bottom two cases, both cases leading to contradictions.}
    \label{fig:posrigidnegcontradiction}
\end{figure}

Thus we can assume that all quadrilaterals are outwardly rigid or totally rigid, or all quadrilaterals are inwardly rigid or totally rigid. Without loss of generality suppose we are in the former case.

In this case there is only one outward joint, so it suffices to show that there are $\geq 2$ inward joints. Suppose otherwise that there is only one inward joint. Consider the number of quadrilaterals.
If $\Phi_i$ only has one quadrilateral, then it is a (1QP). If $\Phi_i$ only has two quadrilaterals, then from our assumptions, either $\Phi_i$ is a (2QS) or $\Phi_i$ is the flow graph shown in \Cref{fig:2qt}. In the latter case, the cycle formed by the positive edges satisfies (3).
If $\Phi_i$ has $\geq 3$ quadrilaterals, then there are $\geq 3$ positive edges with the same forward and backward endpoints.
In that case, (3) is true as above.
\end{proof}

\begin{figure}
    \centering
    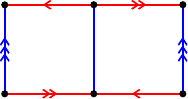
    \caption{If $\Phi_i$ is of this form, then the cycle formed by the positive edges satisfies (3).}
    \label{fig:2qt}
\end{figure}

Suppose $N > 0$. Then one of (1),(3)-(8) in \Cref{prop:flowgraphtype} is true for some individual flow graph. If (1) is true for some flow graph, we perform the reverse of the insertion move to remove this individual flow graph from $\Phi$. As remarked below \Cref{prop:insertionmove}, the total flow graph remains strongly connected after this removal, hence \Cref{prop:insertionmove} applies to show that we remain in the same almost equivalence class. $N$ decreases by $1$ for every (1QP) removed. 

If (3) is true, then we cut along that cycle. \Cref{prop:cuttingmove} implies that we stay in the same almost equivalence class. $N$ stays constant or decreases after this operation, while $G$ decreases.

If (4) is true, we apply \Cref{prop:distantgluingmove} to glue the two positive sides of the free quadrilateral $Q$. We perform the gluing so that the cycle formed by the two negative side of $Q$ form a separating embedded curve $c$. We then cut along $c$. A computation shows that this creates a (1QP). See \Cref{fig:freequadarg}. On the level of the closed surface $\overline{S}_i$, we are attaching a non-orientable $1$-handle, which is equivalent to taking a connect sum with a Klein bottle $\cong \mathbb{R}P^2 \# \mathbb{R}P^2$ and detaching one of the $\mathbb{R}P^2$. We can then remove this (1QP) as in case (1), decreasing $N$ by $1$. 

\begin{figure}
    \centering
\begingroup%
  \makeatletter%
  \providecommand\color[2][]{%
    \errmessage{(Inkscape) Color is used for the text in Inkscape, but the package 'color.sty' is not loaded}%
    \renewcommand\color[2][]{}%
  }%
  \providecommand\transparent[1]{%
    \errmessage{(Inkscape) Transparency is used (non-zero) for the text in Inkscape, but the package 'transparent.sty' is not loaded}%
    \renewcommand\transparent[1]{}%
  }%
  \providecommand\rotatebox[2]{#2}%
  \newcommand*\fsize{\dimexpr\f@size pt\relax}%
  \newcommand*\lineheight[1]{\fontsize{\fsize}{#1\fsize}\selectfont}%
  \ifx\svgwidth\undefined%
    \setlength{\unitlength}{302.76981624bp}%
    \ifx\svgscale\undefined%
      \relax%
    \else%
      \setlength{\unitlength}{\unitlength * \real{\svgscale}}%
    \fi%
  \else%
    \setlength{\unitlength}{\svgwidth}%
  \fi%
  \global\let\svgwidth\undefined%
  \global\let\svgscale\undefined%
  \makeatother%
  \begin{picture}(1,0.2494018)%
    \lineheight{1}%
    \setlength\tabcolsep{0pt}%
    \put(0,0){\includegraphics[width=\unitlength,page=1]{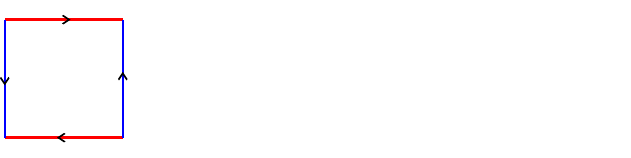}}%
    \put(0.08491694,0.11470345){\color[rgb]{0,0,0}\makebox(0,0)[lt]{\lineheight{1.25}\smash{\begin{tabular}[t]{l}$Q$\end{tabular}}}}%
    \put(0,0){\includegraphics[width=\unitlength,page=2]{freequadarg.pdf}}%
  \end{picture}%
\endgroup%

    \caption{If we have a free quadrilateral $Q$, we apply this sequence of operations to produce a (1QP).}
    \label{fig:freequadarg}
\end{figure}

If (5) is true, without loss of generality suppose that $Q_+$ and $Q_-$ share a positive side. Notice that the positive side of $Q_+$ not shared with $Q_-$ and the positive side of $Q_-$ not shared with $Q_+$ have distinct endpoints, for otherwise $Q_+$ or $Q_-$ is totally rigid. We apply \Cref{prop:distantgluingmove} to glue these two sides. We perform the gluing so that the negative sides of the two quadrilaterals form a pair of embedded cycles $c_1$ and $c_2$, so that each of $c_i$ is non-separating, but $c_1 \cup c_2$ is separating. We cut along $c_1$ then $c_2$. A computation shows that this creates a (2QS). See \Cref{fig:posnegquadarg}. On the level of the closed surface $\overline{S}_i$, we are attaching an orientable $1$-handle, then attaching two copies of the dual $2$-handle to separate out a sphere. This decreases $N$ by $2$. 

\begin{figure}
    \centering
    \fontsize{10pt}{10pt}\selectfont
\begingroup%
  \makeatletter%
  \providecommand\color[2][]{%
    \errmessage{(Inkscape) Color is used for the text in Inkscape, but the package 'color.sty' is not loaded}%
    \renewcommand\color[2][]{}%
  }%
  \providecommand\transparent[1]{%
    \errmessage{(Inkscape) Transparency is used (non-zero) for the text in Inkscape, but the package 'transparent.sty' is not loaded}%
    \renewcommand\transparent[1]{}%
  }%
  \providecommand\rotatebox[2]{#2}%
  \newcommand*\fsize{\dimexpr\f@size pt\relax}%
  \newcommand*\lineheight[1]{\fontsize{\fsize}{#1\fsize}\selectfont}%
  \ifx\svgwidth\undefined%
    \setlength{\unitlength}{313.75028355bp}%
    \ifx\svgscale\undefined%
      \relax%
    \else%
      \setlength{\unitlength}{\unitlength * \real{\svgscale}}%
    \fi%
  \else%
    \setlength{\unitlength}{\svgwidth}%
  \fi%
  \global\let\svgwidth\undefined%
  \global\let\svgscale\undefined%
  \makeatother%
  \begin{picture}(1,0.30220857)%
    \lineheight{1}%
    \setlength\tabcolsep{0pt}%
    \put(0,0){\includegraphics[width=\unitlength,page=1]{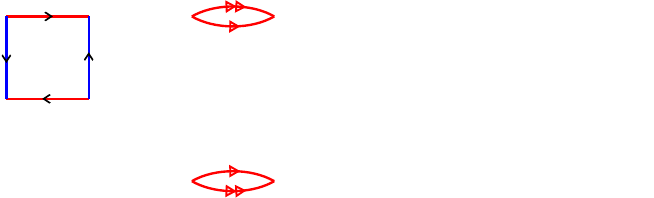}}%
    \put(0.05372866,0.20731622){\color[rgb]{0,0,0}\makebox(0,0)[lt]{\lineheight{1.25}\smash{\begin{tabular}[t]{l}$Q_+$\end{tabular}}}}%
    \put(0,0){\includegraphics[width=\unitlength,page=2]{posnegquadarg.pdf}}%
    \put(0.05372866,0.08130787){\color[rgb]{0,0,0}\makebox(0,0)[lt]{\lineheight{1.25}\smash{\begin{tabular}[t]{l}$Q_-$\end{tabular}}}}%
    \put(0,0){\includegraphics[width=\unitlength,page=3]{posnegquadarg.pdf}}%
  \end{picture}%
\endgroup%

    \caption{If (5) is true, then we apply this sequence of operations to produce a (2QS).}
    \label{fig:posnegquadarg}
\end{figure}

With the analysis above, we can assume that each flow graph is either a (2QS) or outwardly rigid or inwardly rigid. But recall that the total number of inward joints and outward joints among all base surfaces are equal. This implies that under our assumptions, there is an outwardly rigid flow graph if and only if there is an inwardly rigid flow graph. In this case, we apply \Cref{prop:distantgluingmove} to glue a positive side of an outwardly rigid quadrilateral in an outwardly rigid flow graph to a positive side of an inwardly rigid quadrilateral in an inwardly rigid flow graph. This returns us to case (5), where we showed how to decrease $N$ by $2$.

By induction, we eventually arrange it so that every individual flow graph is a (2QS).

\subsection{Simplifying (2QS) graphs and concluding}

For the next part of the proof, we stay within the category of total flow graphs where every individual flow graph is a (2QS). For convenience, let us refer to such total flow graphs as \textbf{(2QS) graphs}.

Recall that a \textbf{fat graph} is a graph with the data of a cyclic ordering of half edges at each vertex. Let $\Phi$ be a (2QS) graph. We define a fat graph $G$ from $\Phi$ as follows: The vertices of $G$ are the directed cycles of positive edges of $\Phi$. We place an edge between two vertices for every (2QS) with its positive edges belonging to the two corresponding cycles. The ordering of half-edges is determined by the cyclic ordering of edges in each cycle. See \Cref{fig:2qsfatgraph} for an example. 
Conversely, given a fat graph $G$, we can define a (2QS) graph by replacing each edge by a (2QS) and identifying the incoming and outgoing joints of the positive edges according to the cyclic ordering at each vertex. 

\begin{figure}
    \centering
    \resizebox{!}{4.5cm}{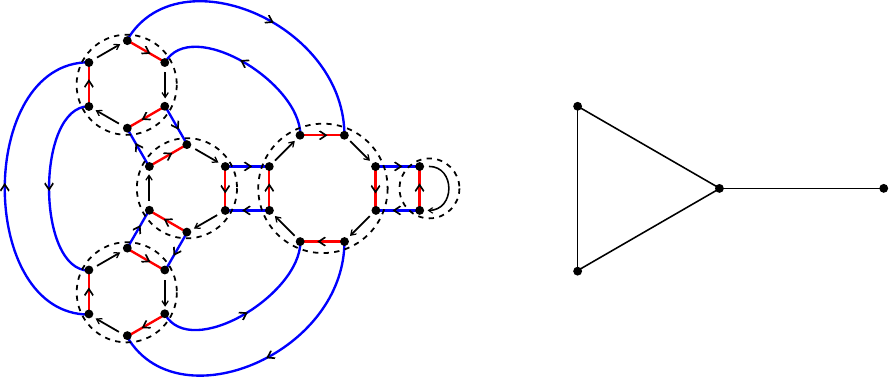}
    \caption{There is a correspondence between (2QS) graphs $\Phi$ and fat graphs $G$.}
    \label{fig:2qsfatgraph}
\end{figure}
This defines a correspondence between (2QS) graphs $\Phi$ and fat graphs $G$. Notice that under this correspondence, $\Phi$ is strongly connected if and only if $G$ is connected. 

Suppose $\Phi'$ is obtained by performing a bridge move on a (2QS) graph $\Phi$. Then the corresponding fat graph $G'$ is obtained by adding an edge to $G$. Conversely, if we are given a (2QS) graph $\Phi$, then as long as $G$ does not consist of only one edge, we can take a non-separating edge $e$ of $G$ and define a fat graph $G'$ by removing $e$. Let $\Phi'$ be the corresponding (2QS) graph. Then $\phi^t_{\Phi} \sim \phi^t_{\Phi'}$ by \Cref{prop:bridgemove}. 

Eventually, we reduce to the (2QS) graph whose corresponding fat graph only has one edge. That is, we get to the total flow graph that is a single (2QS) with its joints identified. Using \Cref{fig:simpleequalities}, we can transform that into the total flow graph that consists of two (1QP)s with joints identified. We apply the reverse of the insertion move to remove one of the (1QP)s, so that we are left with the total flow graph that is a single (1QP) with its joints identified.

We claim that the corresponding totally periodic Anosov flow is almost equivalent to a suspension Anosov flow. Once we show this, then \Cref{thm:totallyperiodicgenusone} follows from \Cref{prop:genusonesection=sus}.
We note that this claim is also observed in unpublished work of Sergio Fenley, Jessica Purcell, and Mario Shannon that we alluded to before. Once again, we thank them for showing us their work.

The argument for this is very similar to the insertion move. Let $\phi^t_A$ be some suspension Anosov flow with orientable stable and unstable foliations, defined on mapping torus $T_A$. Let $T$ be a fiber of $T_A$. By \cite[Example 3.7]{Tsa24}, there is a positive horizontal surgery curve $b$ lying on $T$. As explained in \cite[Example 3.15]{Tsa24}, we can construct another positive horizontal curve $\beta$ by inserting the $2$-strand closed braid with a single negative crossing along $b$. We perform horizontal Goodman surgery along $\beta$ with coefficient $1$. \Cref{lemma:insertiontopology} implies that the surgered $3$-manifold $(T_A)_1(\beta)$ is a graph manifold consisting of one Seifert fibered piece that is $(\mathbb{R}P^2 \backslash \text{two discs}) \times S^1$. Moreover the fiber of this Seifert fibered piece is homotopic to a closed orbit. Hence the surgered flow is totally periodic.

As in \Cref{subsec:insertionmove}, the glued flow graph of this totally periodic Anosov flow can only be a single (1QP). Thus \Cref{lemma:indgluingmap} concludes the proof of \Cref{thm:totallyperiodicgenusone}.

\bibliographystyle{alpha}

\bibliography{bib.bib}

\end{document}